\scrollmode

\documentclass{amsart}
   \usepackage{graphicx}
\usepackage{amssymb}
\usepackage[all]{xy}

\theoremstyle{plain}
\newtheorem{prop}{Proposition}
\newtheorem{thm}[prop]{Theorem}
\newtheorem{cor}[prop]{Corollary}
\newtheorem{lem}[prop]{Lemma}
\newtheorem{fact}[prop]{Fact}
\newtheorem{claim}{Claim}
\newtheorem{ques}{Question}

\newtheorem*{thmA}{Theorem A}
\newtheorem*{thmB}{Theorem B}

\theoremstyle{definition}

\theoremstyle{remark}

\newtheorem{rem}[prop]{Remark}
\newtheorem{example}[prop]{Example}

\numberwithin{prop}{section} 
\numberwithin{prob}{section} 
\numberwithin{claim}{prop}
\numberwithin{equation}{section}

\newcommand{\image}{\mathrm{im}}
\newcommand{\kernel}{\mathrm{ker}}
\newcommand{\coker}{\mathrm{coker}}
\newcommand{\Hom}{\mathrm{Hom}}

\newcommand{\dExt}{\mathrm{dExt}}
\newcommand{\dTor}{\mathrm{dTor}}
\newcommand{\dH}{\mathrm{dH}}
\newcommand{\dd}{\mathrm{d}}
\newcommand{\ff}{\mathrm{f}}
\newcommand{\iid}{\mathrm{id}}
\newcommand{\frk}{\mathrm{frk}}
\newcommand{\spn}{\mathrm{span}}
\newcommand{\stab}{\mathrm{stab}}
\newcommand{\QZ}{\mathrm{QZ}}
\newcommand{\ccd}{\mathrm{cd}}

\newcommand{\ob}{\mathrm{ob}}
\newcommand{\op}{\mathrm{op}}
\newcommand{\Aut}{\mathrm{Aut}}

\newcommand{\coind}{\mathrm{coind}}
\newcommand{\dcoind}{\mathrm{dcoind}}
\newcommand{\idn}{\mathrm{ind}}
\newcommand{\rst}{\mathrm{res}}
\newcommand{\ifl}{\mathrm{inf}}

\newcommand{\soc}{\mathrm{soc}}
\newcommand{\Osgrp}{\mathrm{Osgrp}}
\newcommand{\rk}{\mathrm{rk}}
\newcommand{\eurk}{\mathfrak{rk}}
\newcommand{\euF}{\eu{F}}
\newcommand{\euC}{\eu{C}}
\newcommand{\euN}{\eu{N}}
\newcommand{\euvN}{\eu{vN}}
\newcommand{\euO}{\eu{O}}
\newcommand{\uE}{\underline{E}}

\newcommand{\sdim}{\mathrm{sdim}}
\newcommand{\ev}{\mathrm{ev}}
\newcommand{\uev}{\underline{\ev}}
\newcommand{\End}{\mathrm{End}}

\newcommand{\OS}{\textup{OS}}
\newcommand{\FP}{\textup{FP}}
\newcommand{\tF}{\textup{F}}
\newcommand{\sF}{\textup{sF}}

\newcommand{\ca}[1]{\mathcal{#1}}
\newcommand{\eu}[1]{\mathfrak{#1}}

\newcommand{\Z}{\mathbb{Z}}
\newcommand{\Q}{\mathbb{Q}}
\newcommand{\R}{\mathbb{R}}
\newcommand{\F}{\mathbb{F}}
\newcommand{\N}{\mathbb{N}}

\newcommand{\caO}{\ca{O}}
\newcommand{\caU}{\ca{U}}
\newcommand{\caV}{\ca{V}}
\newcommand{\caB}{\ca{B}}
\newcommand{\caF}{\ca{F}}
\newcommand{\boh}{\mathbf{h}}
\newcommand{\CO}{\mathfrak{CO}}
\newcommand{\Hier}{\mathrm{Hier}}
\newcommand{\tW}{\widetilde{W}}
\newcommand{\Iw}{\mathrm{Iw}}
\newcommand{\bDelta}{\mathbf{\Delta}}
\newcommand{\bG}{\bar{G}}
\newcommand{\met}{\mathrm{d}}

\newcommand{\caC}{\mathcal{C}}

\newcommand{\ualpha}{\underline{\alpha}}

\newcommand{\caE}{\mathcal{E}}
\newcommand{\eue}{\mathbf{e}}
\newcommand{\beue}{\bar{\eue}}
\newcommand{\boV}{\mathbf{V}}
\newcommand{\boE}{\mathbf{E}}
\newcommand{\boL}{\mathbf{L}}
\newcommand{\eup}{\mathfrak{p}}
\newcommand{\caT}{\mathcal{T}}
\newcommand{\tSigma}{\widetilde{\Sigma}}
\newcommand{\hSigma}{\widehat{\Sigma}}
\newcommand{\eust}{\mathfrak{st}}

\newcommand{\ueue}{\underline{\eue}}
\newcommand{\ubeue}{\underline{\beue}}

\newcommand{\caA}{\mathcal{A}}
\newcommand{\HNN}{\mathrm{HNN}}


\newcommand{\sgn}{\mathrm{sgn}}
\newcommand{\hA}{\widehat{A}}
\newcommand{\Ort}{\mathrm{O}}

\newcommand{\Eub}{\underline{E}}
\newcommand{\der}{\partial}
\newcommand{\eps}{\varepsilon}
\newcommand{\tder}{\tilde{\partial}}
\newcommand{\tdelta}{\tilde{\delta}}
\newcommand{\tphi}{\tilde{\phi}}
\newcommand{\card}{\mathrm{card}}
\newcommand{\teta}{\tilde{\eta}}
\newcommand{\caP}{\ca{P}}
\newcommand{\Bor}{\mathrm{Bor}}
\newcommand{\uX}{\underline{X}}
\newcommand{\uY}{\underline{Y}}
\newcommand{\ux}{\underline{x}}
\newcommand{\uy}{\underline{y}}
\newcommand{\supp}{\mathrm{supp}}
\newcommand{\csupp}{\mathrm{csupp}}
\newcommand{\utSigma}{\underline{\tSigma}}

\newcommand{\CAT}{\mathrm{CAT}}

\newcommand{\baG}{\bar{G}}

\newcommand{\RG}{R[G]}
\newcommand{\RH}{R[H]}
\newcommand{\RbG}{R[\baG]}

\newcommand{\QG}{\Q[G]}
\newcommand{\QO}{\Q[\caO]}
\newcommand{\QH}{\Q[H]}
\newcommand{\QU}{\Q[U]}

\newcommand{\dbl}{[\![}
\newcommand{\dbr}{]\!]}

\newcommand{\dis}{\mathbf{dis}}
\newcommand{\Mod}{\mathbf{mod}}
\newcommand{\RMod}{{}_R\Mod}
\newcommand{\QGmod}{{}_{\QG}\Mod}
\newcommand{\RGdis}{{}_{R[G]}\dis}
\newcommand{\QOdis}{{}_{\QO}\dis}
\newcommand{\RGmod}{{}_{R[G]}\Mod}
\newcommand{\RHdis}{{}_{R[H]}\dis}
\newcommand{\RbGdis}{{}_{R[\baG]}\dis}

\newcommand{\QGdis}{{}_{\QG}\dis}
\newcommand{\QHdis}{{}_{\QH}\dis}
\newcommand{\QUdis}{{}_{\QU}\dis}

\newcommand{\disQG}{\dis_{\QG}}
\newcommand{\disQO}{\dis_{\QO}}

\newcommand{\Qvec}{{}_{\Q}\Mod}
\newcommand{\pdim}{\mathrm{proj}_{\Q}\mathrm{dim}}

\newcommand{\caR}{\ca{R}}
\newcommand{\caS}{\ca{S}}
\newcommand{\caL}{\ca{L}}
\newcommand{\biB}{\mathrm{Bi}}

\newcommand{\tr}{\mathrm{tr}}
\newcommand{\utr}{\underline{\tr}}
\newcommand{\ubG}{\underline{\biB}(G)}

\newcommand{\uder}{\underline{\partial}}

\newcommand{\uOmega}{\underline{\Omega}}
\newcommand{\uXi}{\underline{\Xi}}
\newcommand{\uomega}{\underline{\omega}}
\newcommand{\bomega}{\bar{\omega}}
\newcommand{\argu}{\hbox to 7truept{\hrulefill}}

\begin{document}

\title[Rational discrete cohomology for t.d.l.c. groups]{Rational discrete cohomology for\\
totally disconnected locally compact groups}
\author{I.~Castellano and Th.~Weigel}
\date{\today}
\address{I.~Castellano\\
Dipartimento di Matematica\\
Universit\`a di Bari "Aldo Moro"\\
Via E.~Orabona 4\\
70125 Bari, Italy}
\email{ilaria.castellano@uniba.it}

\address{Th. Weigel\\ Dipartimento di Matematica e Applicazioni\\
Universit\`a degli Studi di Milano-Bicocca\\
Ed.~U5, Via R.Cozzi 55\\
20125 Milano, Italy}
\email{thomas.weigel@unimib.it}

\begin{abstract}
Rational discrete cohomology and homology for a totally disconnected locally compact group $G$
is introduced and studied. The $\Hom$-$\otimes$ identities associated
to the rational discrete bimodule $\biB(G)$ allow to introduce the notion of rational duality groups
in analogy to the discrete case. It is shown that semi-simple groups defined over a non-discrete, non-archimedean
local field are rational t.d.l.c. duality groups, and the same is true for certain topological Kac-Moody groups.
However, Y.~Neretin's group of spheromorphisms of a locally finite regular tree is not even of finite rational discrete
cohomological dimension. For a unimodular t.d.l.c. group $G$ of type $\FP$ it is possible to define an Euler-Poincar\'e 
characteristic $\chi(G)$ which is a rational multiple of a Haar measure. 
This value is calculated explicitly for Chevalley groups
defined over a non-discrete, non-archimedean local field $K$ and some other examples.
\end{abstract}

\keywords{Discrete cohomology, totally disconnected locally compact groups, duality groups,
discrete actions on simplicial complexes, $\uE$-spaces. }
\subjclass[2010]{20J05, 22D05, 57T99, 22E20, 51E42, 17B67}

\maketitle


\section{Introduction}
\label{s:intro}
For a topological group $G$ several cohomology
theories have been introduced and studied in the past.
In many cases the main motivation was to obtain an interpretation of the 
low-dimensional cohomology groups in analogy to discrete groups
(cf. \cite{aumo:conmea}, \cite{moore:grext2}, \cite{moore:grext3}, \cite{moore:grext4}).

The main subject of this paper is \emph{rational discrete cohomology}
for totally disconnected, locally compact (= t.d.l.c.) groups. 
Discrete cohomology can be defined in a very general frame work. We will call a topological group $G$ to be of type \OS, if
\begin{equation}
\Osgrp(G)=\{\,\caO\subseteq G\mid \text{$\caO$ an open subgroup of $G$}\,\}
\end{equation}
is a basis of neighbourhoods of $1$ in $G$, e.g., any profinite group is of type \OS, and,
by van Dantzig's theorem (cf. \cite{vdan:diss}),
any t.d.l.c. group is of type \OS. For any topological group $G$ of type \OS\
and any commutative ring $R$ the category $\RGdis$ of discrete left $\RG$-modules is an abelian category
with enough injectives. This allows to define $\dExt_{\RG}^\bullet(\argu,\argu)$
as the right derived functors of $\Hom_{\RG}(\argu,\argu)$, and $\dH^\bullet(\RG,\argu)$
as the right derived functors of the fixed point functor $\argu^G$. Even in this general
context one has a Hochschild-Lyndon-Serre spectral sequence for 
short exact sequences of topological groups of type \OS\ (cf. \S\ref{ss:LHS}), and a
generalized Eckmann-Shapiro type lemma in the first argument
(cf. \S\ref{ss:ecksha}).

In case that $G$ is a t.d.l.c. group it turns out 
that $\QGdis$ is also an abelian category with enough projectives (cf. Prop.~\ref{prop:permod}). 
Moreover, any projective rational discrete right $\QG$-module is flat with respect to short exact sequences
of rational discrete left $\QG$-modules (cf. \eqref{eq:flat4}).
Thus for a discrete left $\QG$-module $M$ one may define
\emph{rational discrete cohomology} with coefficients in $M$ by
\begin{align}
\dH^k(G,M)&=\dExt^k_G(\Q,M),&&k\geq 0,\label{eq:defdisco}
\intertext{and \emph{discrete rational homology} with coefficients in $M$ by}
\dH_k(G,M)&=\dTor_k^G(\Q,M),&&k\geq 0\label{eq:defdisho}
\end{align}
in analogy to discrete groups (cf. \S\ref{s:rathom}). 

The rational discrete cohomological dimension $\ccd_{\Q}(G)$ (cf. \eqref{eq:pdim3}) reflects
structural information on a t.d.l.c. group $G$, e.g.,
\begin{itemize}
\item[(1)] $G$ is compact if, and only if, $\ccd_{\Q}(G)=0$ (cf. Prop.~\ref{prop:cd}(a));
\item[(2)] the flat rank of $G$ is bounded by $\ccd_{\Q}(G)$ (cf. Prop.~\ref{prop:flat}(b));
\item[(3)] if $G$ is acting discretely on a tree with compact stabilizers then $\ccd_{\Q}(G)\leq 1$
(cf. Prop.~\ref{prop:treeact}(b));
\item[(4)] more generally, if $G$ is acting discretely on a contractable simplical complex $\Sigma$
with compact stabilizers, then $\ccd_{\Q}(G)\leq \dim(\Sigma)$ (cf. Prop.~\ref{prop:ftopcd}(a));
\item[(5)] a nested t.d.l.c. group $G$ satisfies $\ccd_{\Q}(G)\leq 1$ (cf. Prop.~\ref{prop:nested}).
\end{itemize}
The existence of finitely generated projective rational discrete $\QG$-modules
allows to define the \emph{\FP$_k$-property}, $k\in\N_0\cup\{\infty\}$, (cf. \S\ref{ss:FPinf})
in analogy to discrete groups 
(cf. \cite[\S VIII.5]{brown:coh}).
A t.d.l.c. group $G$ is compactly generated if, and only
if, it is of type $\FP_1$ (cf. Thm.~\ref{thm:FP1}).
Using the theory of rough Cayley graphs due to H.~Abels (cf. \cite{abels:spec})
and Bass-Serre theory,
one may also introduce the notion of a {\it compactly presented t.d.l.c. groups} (cf. \S\ref{ss:comppres}).
Such a group must be of type $\FP_2$. 

For a t.d.l.c. group $G$ there is not necessarily a group algebra 
but two natural rational discrete $\QG$-bimodules $\biB(G)$ and $\caC_c(G,\Q)$
and both turn out to be useful in this context.
If $G$ is unimodular, they are isomorphic, but not necessarily
canonically isomorphic; if $G$ is not unimodular, they are non-isomorphic (cf. Prop.~\ref{prop:compar}(b)).
The $\QG$-bimodule $\caC_c(G,\Q)$ just consists of the continuous functions 
with compact support from $G$ to the discrete field $\Q$.
The rational discrete left $\QG$-module $\biB(G)$ is defined by
\begin{equation}
\label{eq:BiG}
\biB(G)=\varinjlim_{\caO\in\CO(G)} \Q[G/\caO],
\end{equation}
where $\CO(G)=\{\,\caO\in\Osgrp(G)\mid \caO\ \text{compact}\,\}$ and the direct limit
is taken along a rational multiple of the transfers (cf. \eqref{eq:biB2}). 
It carries naturally also the structure of a discrete right $\QG$-module.

The rational discrete $\QG$-bimodule $\biB(G)$ can be used to establish
$\Hom$-$\otimes$ identities which are well known for discrete groups (cf. \S\ref{ss:homten}).
This allows to define the notion of a {\it rational t.d.l.c. duality group}. In more detail,
a t.d.l.c. group $G$ will be said to be a rational duality group of dimension $d\geq 0$, if
\begin{itemize}
\item[(i)] $G$ is of type $\FP_\infty$;
\item[(ii)] $\ccd_{\Q}(G)<\infty$;
\item[(iii)] $\dH^k(G,\biB(G))=0$ for $k\not=d$.
\end{itemize}
Note that any discrete virtual duality group in the sense of R.~Bieri and
B.~Eckmann (cf. \cite{biec:dual}) of virtual cohomological
dimension $d\geq 0$ is indeed a rational t.d.l.c. duality group of dimension $d\geq 0$,
and the same is true for discrete duality groups of dimension $d$ over $\Q$
(cf. \cite[\S 9.2]{bieri:hom}). 
For these t.d.l.c. groups one has a non-trivial relation between 
discrete cohomology and discrete homology (cf. Prop.~\ref{prop:dual}).
However, it differs slightly from the usual form.
A t.d.l.c. group $G$ satisfying (i) and (ii) will be said to be of type $\FP$.
Note that a t.d.l.c. group $G$ satisfying (i), (ii) and (iii) must also satisfy
$\ccd_{\Q}(G)=d$ (cf. Prop.~\ref{prop:FP}). 
The rational discrete right $\QG$-module 
\begin{equation}
\label{eq:dualint}
D_G=\dH^d(G,\biB(G))
\end{equation}
will be called the {\it rational dualizing module}.
A compact t.d.l.c. group $G$ is a rational t.d.l.c. duality group of dimension $0$
with dualizing module isomorphic to the trivial right $\QG$-module $\Q$.
If $G$ is a compactly generated t.d.l.c. group satisfying $\ccd_{\Q}(G)=1$,
then $G$ is a rational t.d.l.c. duality group of dimension $1$.
Provided that $G$ acts discretely on a locally finite 
tree $\caT$ with compact stabilizers one has an isomorphism of rational discrete left $\QG$-modules
\begin{equation}
\label{eq:dualinttree}
{}^\times D_G\simeq H^1_c(|\caT|,\Q)\otimes\Q(\Delta)
\end{equation}
(cf. \S\ref{sss:fund}), where $|\caT|$ denotes the {\it topological realization}\footnote{Although this topological space is
also known as the {\it realization}  of the tree $\caT$ (cf. \cite[\S I.2, p.~14]{serre:trees}), 
we have chosen this name in order to avoid the linguistic ambiguity with the {\it geometric realization} of buildings.}
of $\caT$(cf. \S\ref{ss:geore}),
and $H^1_c(\argu,\Q)$ denotes cohomology with compact support with coefficients in the discrete field $\Q$.
If $\Sigma$ is a locally finite $\euC$-discrete simplicial $G$-complex (cf. \S\ref{ss:Fdissimp}),
then $H_c^k(|\Sigma|,\Q)$ carries naturally the structure of a rational discrete left $\QG$-module
(cf. \S\ref{ss:cosup}).
Here $\Q(\Delta)$ denotes the $1$-dimensional rational discrete left $\QG$-module which action is given by
the modular function\footnote{Throughout this paper Haar measures are considered to be
left invariant.} $\Delta\colon G\to\Q^+$ of $G$ (cf. Remark~\ref{rem:biC}).
For a right $\QG$-module $M$ we denote by ${}^\times M$ the associated left $\QG$-module,
i.e., for $m\in M$ and $g\in G$ one has $g\cdot m=m\cdot g^{-1}$.

Let $G$ be a t.d.l.c. group.
The family of compact open subgroups $\euC$ of $G$
is closed under conjugation and finite intersections.
Hence there exists a \emph{$G$-classifying space $\Eub_\euC(G)$}
for the family $\euC$ (cf. \cite[\S 2]{misva:proper}, \cite[Chap.~1, Thm.~6.6]{tdieck:trans})
which is unique up to $G$-homotopy equivalence. 
E.g., if $\Pi=\pi_1(\caA,\Lambda,x_0)$ is the fundamental group of a 
graph of profinite groups, then the tree $\caT=\caT(\caA,\Lambda,\Xi,\caE^+)$
arising in Bass-Serre theory (cf. \cite[\S I.5.1, Thm.~12]{serre:trees}) is an
$\uE_{\euC}(\Pi)$-space (cf. \S\ref{sss:fund}).
For algebraic groups defined over a non-discrete non-archimedean t.d.l.c. field $K$ a 
celebrated theorem of A.~Borel and {J-P.}~Serre yields the following (cf. \S\ref{sss:alg}).

\begin{thmA}
Let $G$ be a simply-connected semi-simple algebraic group defined over a 
non-discrete non-archimedian local field $K$, let $(\caC,S)$ be 
the affine building associated to $G$, and let $|\Sigma(\caC,S)|$
denote the topological realization of the Bruhat-Tits realization of $(\caC,S)$.
Then $G(K)$ is a rational t.d.l.c. duality group of dimension $d=\rk(G)=\dim(\Sigma(\caC,S))$, and
\begin{equation}
\label{eq:dualalg}
{}^\times D_{G(K)}\simeq H^{d}_c(|\Sigma(\caC,S)|,\Q).
\end{equation}
Moreover, $|\Sigma(\caC,S)|$ is an $\uE_{\euC}(G(K))$-space.
\end{thmA}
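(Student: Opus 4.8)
The plan is to use the Bruhat--Tits building of $G(K)$ directly as a model for $\Eub_\euC(G(K))$ and to read rational discrete cohomology off its compactly supported cohomology. Write $X=|\Sigma(\caC,S)|$ for the topological realization of the building, with its simplicial $G(K)$-action. First I would collect the standard facts from Bruhat--Tits theory: $X$ is a complete $\CAT(0)$ space, hence contractible, of dimension $d=\rk(G)$; the residue field of the local field $K$ is finite, so the links in $X$ are finite (spherical) buildings and $\Sigma(\caC,S)$ is locally finite; the $G(K)$-stabilizers of simplices are parahoric subgroups, which are compact and open in $G(K)$; and $G(K)$ acts strongly transitively, so $\Sigma(\caC,S)/G(K)$ is a single closed chamber and the action is cocompact. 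Thus $\Sigma(\caC,S)$ is a locally finite, cocompact, $\euC$-discrete simplicial $G(K)$-complex in the sense of \S\ref{ss:Fdissimp}.

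Next I would check that $X$ is an $\Eub_\euC(G(K))$-space. By the characterisation (up to $G(K)$-homotopy) of the classifying space for the family $\euC$ (cf. \cite[\S 2]{misva:proper}, \cite[Chap.~1, Thm.~6.6]{tdieck:trans}) it remains, after the stabilizer condition just verified, to show that the fixed point set $X^U$ is contractible for every $U\in\euC$. A compact open subgroup $U\leq G(K)$ acts on $X$ with bounded orbits, so $X^U\neq\emptyset$ by the Bruhat--Tits fixed point theorem, and $X^U$ is a closed convex subset of the $\CAT(0)$ space $X$, hence contractible. Together with the facts listed above this exhibits $X$ as a cocompact, finite-dimensional model of $\Eub_\euC(G(K))$, which is the last assertion of the theorem.

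From the contractibility of $X$ the augmented simplicial chain complex $0\to C_d(\Sigma,\Q)\to\cdots\to C_0(\Sigma,\Q)\to\Q\to 0$ is a resolution of the trivial rational discrete left $\QG$-module $\Q$ of length $d$; by cocompactness together with compactness and openness of the stabilizers, each $C_i(\Sigma,\Q)$ is a finite direct sum of permutation modules $\Q[G(K)/\caO]$ with $\caO\in\CO(G)$, hence finitely generated projective by Prop.~\ref{prop:permod}. Consequently $G(K)$ is of type $\FP$ --- in particular of type $\FP_\infty$, which is condition (i) --- and $\ccd_\Q(G(K))\leq d<\infty$, which is condition (ii). (A maximal $K$-split torus of $G$ moreover provides a flat subgroup of rank $d$, so $\ccd_\Q(G(K))\geq d$ by Prop.~\ref{prop:flat}(b) and hence $\ccd_\Q(G(K))=d$; alternatively the equality follows a posteriori from Prop.~\ref{prop:FP}.)

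It remains to handle condition (iii) and to identify $D_{G(K)}$. Applying $\Hom_{\QG}(\argu,\biB(G(K)))$ to the projective resolution above, invoking the $\Hom$--$\otimes$ identities for $\biB(G)$ of \S\ref{ss:homten}, and using the identification of the resulting cochain complex with the compactly supported cochain complex of $|\Sigma|$ carried out for $\euC$-discrete simplicial $G$-complexes in \S\ref{ss:cosup} and \S\ref{sss:fund}, I expect to obtain for each $k$ a natural isomorphism of rational discrete left $\QG$-modules
\begin{equation*}
{}^\times\dH^k\bigl(G(K),\biB(G(K))\bigr)\;\simeq\;H^k_c\bigl(|\Sigma(\caC,S)|,\Q\bigr);
\end{equation*}
the modular twist $\Q(\Delta)$ appearing in \eqref{eq:dualinttree} is absent here since semi-simple groups are unimodular. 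Finally, the theorem of Borel and Serre on the cohomology of Euclidean buildings --- which in turn rests on the Solomon--Tits theorem for the spherical building at infinity of $X$ --- gives $H^k_c(|\Sigma(\caC,S)|,\Q)=0$ for $k\neq d$. This yields (iii), so $G(K)$ is a rational t.d.l.c. duality group of dimension $d$ and ${}^\times D_{G(K)}\simeq H^d_c(|\Sigma(\caC,S)|,\Q)$. The main obstacle is this last step: pinning down the $\Hom$--$\otimes$ bookkeeping that transports $\dH^\bullet(G,\biB(G))$ onto compactly supported cohomology of the $\Eub_\euC(G)$-space --- tracking the left/right module structures and the (here trivial) modular twist --- and correctly invoking Borel--Serre for the concentration of that cohomology in the single degree $d$.
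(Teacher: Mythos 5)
Your proposal follows the same route as the paper: both collect the standard Bruhat--Tits facts (contractibility via the $\CAT(0)$ metric, local finiteness, compact open parahoric stabilizers, finitely many orbits, Borel--Serre concentration of $H^\bullet_c$ in degree $d$) and then feed them into the general machinery developed for $\euC$-discrete simplicial $G$-complexes; the paper does this by quoting Prop.~\ref{prop:cat0} for the $\uE_{\euC}$-assertion and Theorem~\ref{thm:ratdual} for the duality statement, whereas you unwind these lemmas slightly. One small point worth making explicit: you use that each $C_q(\Sigma)$ is a sum of honest permutation modules $\Q[G(K)/\caO]$ rather than signed ones, and that $|\Sigma|^U$ is a subcomplex; both depend on the type-preserving (hence $G$-tame) nature of the $G(K)$-action, which the paper records as one of its enumerated properties but which you invoke only implicitly. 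Since $G$ is simply-connected the action is indeed type preserving, so the argument is correct, but the tameness hypothesis should be stated.
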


Here $\rk(G)$ denotes the rank of $G$ as algebraic group defined over $K$,
i.e., $\rk(G)$ coincides with the rank of a maximal split torus of $G$.
As already mentioned in \cite{bose:top} one may think of ${}^\times D_{G(K)}$
as a {\it generalized Steinberg module} of $G(K)$. 

Another source of rational t.d.l.c. duality groups arises in the context of 
topolo\-gical Kac-Moody groups as introduced by B.~R\'emy and M.~Ronan
in \cite[\S 1B]{rr:KM}. In principal, these t.d.l.c. groups behave very similarly
as the examples described in Theorem~B, but there are also quite notable differences.
Any split Kac-Moody group $G(\F)$ defined over a finite field $\F$ has an action
on the positive part $\Xi^+$ of the associated twin building.
Assuming that the associated Weyl group $(W,S)$ is infinite one defines
$\bG(\F)$ as the permutation closure of $G(\F)$ on its action on the chambers of $\Xi^+$.
Then $\bG(\F)$ acts also on $\Sigma(\Xi^+)$, the Davis-Moussang realization of $\Xi^+$,
which is a finite dimensional locally finite simplicial complex.
The subgroup $\bG(\F)^\circ$ of $\bG(\F)$,
which is the closure of the image of the type preserving
automorphisms of $\Xi$, is of finite index in $\bG(\F)$.
This group has the following properties.

\begin{thmB}
Let $\F$ be a finite field, let $\bG(\F)$ be a topological Kac-Moody group
obtained by completing the split Kac-Moody group $G(\F)$, and let $(W,S)$
denote its associated Weyl group.
\begin{itemize}
\item[(a)] Then $\Sigma(\Xi)$ is a tame $\euC$-discrete simplicial $\bG(\F)^\circ$-complex,
and $|\Sigma(\Xi)|$ is an $\uE_{\euC}(\bG(\F)^\circ)$-space.
\item[(b)] $\ccd_{\Q}(\bG^\circ(\F))=\ccd_{\Q}(\bG(\F))=\ccd_{\Q}(W)$.
\item[(c)] For $d=\ccd_{\Q}(\bG^\circ(\F))$ one has
${}^\times D_{\bG(\F)^\circ}\simeq H^{d}_c(|\Sigma(\Xi)|,\Q)$.
\item[(d)] $\bG(\F)^\circ$ (and hence $\bG(\F)$) is a rational t.d.l.c. duality group if,
and only if, $W$ is a rational duality group. 
\end{itemize}
\end{thmB}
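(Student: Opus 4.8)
The plan is to prove the four assertions in turn, deriving \textbf{(a)} from the $\CAT(0)$ geometry of the Davis--Moussong realization and \textbf{(b)}--\textbf{(d)} from the homological machinery already employed for Theorem~A, together with one input from the cohomology theory of Coxeter groups and buildings, namely the comparison between the compactly supported cohomology of the Davis realization of a building and that of the Davis complex of its Weyl group.

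\emph{Part (a).} Recall that $\Sigma(\Xi)$ is the Davis--Moussang realization of $\Xi=\Xi^+$. By Moussong's theorem and its extension to buildings (Davis), $|\Sigma(\Xi)|$ carries a complete piecewise-Euclidean metric, invariant under $\bG(\F)^\circ$, which is $\CAT(0)$; in particular it is contractible, and $\dim|\Sigma(\Xi)|$ is the largest cardinality of a spherical subset of $S$, hence equals $\dim|\Sigma(W)|<\infty$. The stabilizer in $\bG(\F)^\circ$ of a simplex of $\Sigma(\Xi)$ is the pointwise stabilizer of a spherical residue of $\Xi$; it is open in the permutation topology, and it is profinite, being the closure of a parabolic $P_T\supseteq B$ of $G(\F)$ whose orbits on chambers are finite (as $\F$ is finite). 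Thus all simplex stabilizers lie in $\euC$ and the action is $\euC$-discrete; since $G(\F)$, hence $\bG(\F)^\circ$, is transitive on the chambers of $\Xi$ and $S$ is finite, there are finitely many orbits of simplices in each dimension, so the action is cocompact on skeleta and $\Sigma(\Xi)$ is a tame $\euC$-discrete simplicial $\bG(\F)^\circ$-complex in the sense of \S\ref{ss:Fdissimp}. For the $\uE_\euC$-property: a compact $K\le\bG(\F)^\circ$ fixes a point of $|\Sigma(\Xi)|$ by the Bruhat--Tits fixed-point theorem, and $|\Sigma(\Xi)|^K$, being closed and convex in a $\CAT(0)$ space, is itself $\CAT(0)$, hence contractible; conversely any subgroup fixing a point lies, after barycentric subdivision, in a simplex stabilizer, hence is compact, so $|\Sigma(\Xi)|^K=\emptyset$ for $K\notin\euC$. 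Therefore $|\Sigma(\Xi)|$ is an $\uE_\euC(\bG(\F)^\circ)$-space.

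\emph{Parts (b) and (c).} By (a), $\bG(\F)^\circ$ acts tamely $\euC$-discretely on the $\uE_\euC(\bG(\F)^\circ)$-space $|\Sigma(\Xi)|$, so exactly as in the proof of Theorem~A it is of type $\FP$, one has $\ccd_\Q(\bG(\F)^\circ)\le\dim|\Sigma(\Xi)|<\infty$ (Prop.~\ref{prop:ftopcd}), and, generalizing \eqref{eq:dualinttree}, there is an isomorphism
\[\dH^k\big(\bG(\F)^\circ,\biB(\bG(\F)^\circ)\big)\;\cong\;H^k_c(|\Sigma(\Xi)|,\Q)\otimes\Q(\Delta)\qquad(k\ge 0).\]
A short computation with the Bruhat decomposition shows $\bG(\F)^\circ$ is unimodular: it is topologically generated by the compact subgroup $B$ and the reflections $n_s$, and $\Delta(n_s)=1$ since, $G(\F)$ being split, the two sides of an $s$-panel of $\Xi$ have the same thickness $|\F|$; so the twist $\Q(\Delta)$ is trivial. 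By the rational-discrete Bieri criterion underlying Prop.~\ref{prop:FP}, for a group of type $\FP$ the invariant $\ccd_\Q$ is detected by $\biB$-coefficients, whence with $d:=\ccd_\Q(\bG(\F)^\circ)$ one gets ${}^\times D_{\bG(\F)^\circ}\cong H^d_c(|\Sigma(\Xi)|,\Q)$, which is (c), together with $d=\max\{k\mid H^k_c(|\Sigma(\Xi)|,\Q)\ne 0\}$. It remains to identify $d$ with $\ccd_\Q(W)$: by Davis's computation of the compactly supported cohomology of the Davis realization of a building of type $(W,S)$ (see also Dymara--Januszkiewicz), $H^\ast_c(|\Sigma(\Xi)|,\Q)$ is non-zero in exactly the same degrees as $H^\ast_c(|\Sigma(W)|,\Q)$, the thickness affecting only the multiplicities; and since $|\Sigma(W)|$ is a cocompact model of $\underline{E}W$ with finite stabilizers, $W$ is of type $\FP_\infty$ over $\Q$ and $H^k_c(|\Sigma(W)|,\Q)\cong H^k(W,\Q[W])$, so by the classical rational Bieri criterion $\max\{k\mid H^k_c(|\Sigma(W)|,\Q)\ne 0\}=\ccd_\Q(W)$ (here, $W$ being discrete, this is the usual rational cohomological dimension of $W$). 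Hence $d=\ccd_\Q(W)$. Finally $\ccd_\Q(\bG(\F))=\ccd_\Q(\bG(\F)^\circ)$, since $\bG(\F)^\circ$ is open of finite index in $\bG(\F)$ and $\ccd_\Q$ is unchanged on passage to such a subgroup, the index being invertible in $\Q$.

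\emph{Part (d), and the main obstacle.} Unwinding the definition, $\bG(\F)^\circ$ is a rational t.d.l.c. duality group iff, beyond being of type $\FP$ with $\ccd_\Q<\infty$ (already established), $\dH^k(\bG(\F)^\circ,\biB(\bG(\F)^\circ))=0$ for $k\ne d$; by the displayed isomorphism and unimodularity this means $H^k_c(|\Sigma(\Xi)|,\Q)=0$ for $k\ne d$, which by the building-versus-Coxeter comparison is equivalent to $H^k_c(|\Sigma(W)|,\Q)=0$ for $k\ne d$, i.e.\ (using $H^\ast_c(|\Sigma(W)|,\Q)\cong H^\ast(W,\Q[W])$ and $d=\ccd_\Q(W)$) to $W$ being a rational duality group of dimension $d$ in the sense of \cite[\S 9.2]{bieri:hom}. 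For the parenthetical statement about $\bG(\F)$, observe that $\bG(\F)$ itself acts simplicially, tamely and $\euC$-discretely on $|\Sigma(\Xi)|$ (now possibly permuting the types of spherical residues), and the fixed-point argument of (a) makes $|\Sigma(\Xi)|$ an $\uE_\euC(\bG(\F))$-space as well, so the same analysis applies verbatim to $\bG(\F)$ with the identical right-hand side; alternatively one uses that type $\FP$, finiteness of $\ccd_\Q$ and the vanishing condition all transfer between a t.d.l.c. group and an open subgroup of finite index. The point at which real work is needed is the comparison in (b)/(d): one must know that replacing the Coxeter complex of $(W,S)$ by a building of constant finite thickness multiplies the local cohomological contributions but introduces none in new degrees — this rests on the fine structure of the Davis realization (the acyclicity of the relevant weighted chamber subcomplexes). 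By contrast, verifying unimodularity of $\bG(\F)^\circ$ and the finiteness conditions hidden in the word ``tame'' is routine.
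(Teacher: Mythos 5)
Your approach tracks the paper's closely: (a) follows from the $\CAT(0)$ geometry of the Davis--Moussong realization together with Prop.~\ref{prop:cat0}; type $\FP$ and the identification of ${}^\times D_{\bG(\F)^\circ}$ come from Theorem~\ref{thm:ratdual} applied to $|\Sigma(\Xi^+)|$; and (b),(d) reduce to comparing $H^\bullet_c(|\Sigma(\Xi^+)|,\Q)$ with $H^\bullet_c$ of the Davis realization of the Coxeter complex. Two places need tightening.

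On tameness: you infer that $\Sigma(\Xi)$ is tame as a byproduct of cocompactness of the action, but these are independent notions. In the sense of \S\ref{ss:simpGcom}, tameness means $\stab_G\{\omega\}=\stab_G(\omega)$ for every simplex $\omega$, and for $\bG(\F)^\circ$ this is a consequence of the fact that $\bG(\F)^\circ$ is the closure of the type-preserving subgroup $G(\F)^\circ$ and hence acts type-preservingly on $\Xi^+$, so no element can permute the vertices of a simplex. You use type-preservation implicitly when identifying simplex stabilizers with residue stabilizers, but the tameness claim should invoke it explicitly; the number of orbits is irrelevant to it.

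On the comparison step, which you correctly isolate as the crux: the paper does not argue via a general acyclicity principle but instead invokes the explicit Davis--Dymara--Januszkiewicz--Meier--Okun computation
\begin{equation*}
H^k_c(|\Sigma(\Xi^+)|,\Z)\;\simeq\;\coprod_{T\in\caS} H^k(K,K^{S-T})\otimes \hA^T(\Xi^+),
\end{equation*}
together with the parallel DDJO formula for the Davis complex $\Sigma(C)$ of $(W,S)$ with $\hA^T(C)$ in place of $\hA^T(\Xi^+)$, the facts that each $\hA^T(\cdot)$ is a nonzero free abelian group and each $H^k(K,K^{S-T})$ is a finitely generated abelian group, and the universal coefficient theorem. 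From this one reads off that $H^k_c(|\Sigma(\Xi^+)|,\Q)\ne 0$ iff some $H^k(K,K^{S-T})$ has positive rank iff $H^k_c(|\Sigma(C)|,\Q)\ne 0$, which is the precise form of your ``same degrees, different multiplicities'' claim and is what makes (b) and (d) go through; without a formula of this shape the step would remain a genuine gap. Finally, for unimodularity the paper simply cites R\'emy's theorem; your sketch via balanced panel thicknesses and $\Delta(n_s)=1$ is a plausible alternative but would need to be checked carefully (the relevant index computation for $B$ versus $n_sBn_s^{-1}$, and whether it extends to the almost split case the paper actually treats).
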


One may verify easily that whenever $W$ is a hyperbolic Coxeter group, 
the topological Kac-Moody group $\bG(\F)^\circ$
is indeed a rational t.d.l.c. duality group (cf. Remark~\ref{rem:hyp}).
However, one may also construct examples of topological Kac-Moody groups
which are not rational t.d.l.c. duality groups (cf. Remark~\ref{rem:notdu}).

We close our investigations with a short discussion of
the {\it Euler-Poincar\'e characteristic} $\chi(G)$ one may define
for a unimodular t.d.l.c. group $G$ of type $\FP$.
The value $\chi(G)$ is a rational multiple of a left invariant
Haar measure $\mu_\caO$, $\mu_\caO(\caO)=1$,
for some compact open subgroup $\caO$ of $G$.
Indeed, for a discrete group $\Pi$ of type $\FP$ it is straightforward to verify that
$\chi(\Pi)=\chi_{\Pi}\cdot\mu_{\{1\}}$,
where $\chi_{\Pi}$ denotes the classical Euler-Poincar\'e characteristic (cf. \cite[\S XI.7]{brown:coh}).
Although we present some explicit calculations in section~\ref{ss:euler}, we are still very far away
from understanding the generic behaviour of the Euler-Poincar\'e characteristic.
We conjecture that a unimodular compactly generated t.d.l.c. group $G$ satisfying
$\ccd_{\Q}(G)=1$ should have non-positive Euler-Poincar\'e characteristic
(cf. Question~\ref{ques:cd1}). An affirmative answer to this question
would resolve the accessibility problem, and thus would yield
a complete description of unimodular compactly generated t.d.l.c. groups
of rational discrete cohomological dimension $1$.
\vskip6pt

\noindent
{\bf Acknowledgements:} Sections 2, 3, 4 and 5 are part
of the first author's PhD thesis (cf. \cite{ila:phd}).
The authors would like to thank M.~Bridson for some helpful
discussions concerning $\CAT(0)$-spaces.


\section{Discrete cohomology}
\label{s:condis}

\subsection{Topological groups of type \OS}
\label{ss:TopOS} 
Let $G$ be a topological group of type \OS.
If $H$ is a closed subgroup of $G$, then
$\{\,\caO\cap H\mid \caO\in\Osgrp(G)\,\}$ is a basis of neighbourhood of $1$ in $H$.
Hence $H$ is also of type $\OS$. Moreover, if $\pi\colon G\to\baG$ is a continuous, open,
surjective homomorphism of topological groups,
then $\{\,\pi(\caO)\mid \caO\in\Osgrp(G)\,\}$ is a basis of neighbourhoods
of $1$ in $\baG$. Thus $\baG$ is also of type \OS.
Finally, if $G$ is homeomorphic to an open subgroup
of a topological group $Y$, then $Y$ is of type \OS\ as well.

Any compact totally-disconnected topological group is profinite
and therefore, by definition, of type \OS\ (cf. \cite[Prop.~1.1.0]{ser:gal}).
By van Dantzig's theorem (cf. \cite{vdan:diss}), every t.d.l.c. group  
contains a compact open subgroup. 
Hence from the previously mentioned properties one conlcudes the following.

\begin{fact}
\label{fact:tdlcOS}
Any totally-disconnected locally-compact group is of type \OS.
\end{fact}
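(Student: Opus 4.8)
The plan is to verify the three ingredients listed just before the statement and then invoke the closure properties of the class of \OS-groups already established in the paragraph preceding Fact~\ref{fact:tdlcOS}. First I would recall the two cited facts: by van Dantzig's theorem every t.d.l.c. group $G$ contains a compact open subgroup $U$, and every compact totally-disconnected group is profinite, hence of type \OS\ by \cite[Prop.~1.1.0]{ser:gal} (a profinite group has the open normal subgroups as a neighbourhood basis of $1$, and these are in particular open subgroups). So $U$ is a topological group of type \OS.

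Next I would use the last of the stability properties noted above, namely that if a topological group is homeomorphic to an open subgroup of $Y$, then $Y$ is of type \OS. Here $U$ is itself an open subgroup of $G$, so we are exactly in that situation with $Y=G$: a neighbourhood basis of $1$ consisting of open subgroups of $U$ remains a neighbourhood basis of $1$ in $G$ (since $U$ is open in $G$), and open subgroups of $U$ are open subgroups of $G$. Hence $G$ is of type \OS.

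I do not expect any genuine obstacle here; the statement is essentially a bookkeeping corollary of van Dantzig's theorem together with the elementary permanence properties of type \OS\ recorded immediately before it. The only point requiring a moment's care is the implication ``$U$ of type \OS\ and $U$ open in $G$ $\Rightarrow$ $G$ of type \OS'': one must observe that an open subgroup of an open subgroup is open, and that a neighbourhood basis of $1$ in an open subgroup is automatically a neighbourhood basis of $1$ in the ambient group. Both are immediate from the definition of the subspace topology on an open subgroup. This is exactly the content of the sentence ``if $G$ is homeomorphic to an open subgroup of a topological group $Y$, then $Y$ is of type \OS\ as well,'' so the proof reduces to citing that sentence with the compact open subgroup furnished by van Dantzig.
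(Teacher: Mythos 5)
Your proposal is correct and follows exactly the route the paper intends: van Dantzig's theorem provides a compact open subgroup $U$, compactness of the totally disconnected group $U$ makes it profinite and hence of type \OS, and the permanence property stated just before the Fact (an ambient group $Y$ inherits type \OS\ from an open subgroup) then transfers the property from $U$ to $G$. Nothing is missing; this is the same bookkeeping argument the authors compress into the two sentences preceding the Fact.
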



\subsection{Discrete modules of topological groups of type $\OS$}
\label{ss:disOS}
Let $R$ be a commutative ring with $1\in R$.
For a group $G$ we denote by $\RG$ its {\it $R$-group algebra},
and by $\RGmod$ the abelian category of left $\RG$-modules.
Let $G$ be a topological group of type $\OS$. 
For $M\in\ob(\RGmod)$ the subset
\begin{equation}
\label{eq:defd}
\dd(M)=\{\,m\in M\mid \text{$\stab_G(m)$ open in $G$}\,\},
\end{equation}
where $\stab_G(m)=\{\,g\in G\mid g\cdot m=m\,\}$, is an $\RG$-submodule of $M$,
the {\it largest discrete $\RG$-submodule of $M$}.
Here we call a left $\RG$-module $B$ {\it discrete}, if the map
$\argu\cdot\argu\colon G\times B\to B$ is continuous, where $B$ carries the discrete topology.
If $\phi\colon B\to M$ is a morphism of left $\RG$-modules
and $B$ is discrete, one concludes that $\image(\phi)\subseteq \dd(M)$.
This has the following consequences (cf. \cite[Prop. 2.3.10]{weib:hom}).

\begin{fact}
\label{fact:prin}
Let $G$ be a topological group of type \OS, and let
$\RGdis$ denote the full subcategory of $\RGmod$ the objects of which are
discrete left $\RG$-modules. Then
\begin{itemize}
\item[(a)] $\RGdis$ is an abelian category;
\item[(b)] $\dd\colon \RGmod\to\RGdis$
is a covariant additive left exact functor, which is the
right-adjoint of the forgetful functor
$\ff\colon\RGdis\to\RGmod$; 
\item[(c)] $\dd$ is mapping injectives to injectives;
\item[(d)] $\RGdis$ has enough injectives.
\end{itemize}
\end{fact}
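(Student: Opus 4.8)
The plan is to verify the four assertions of Fact~\ref{fact:prin} by exhibiting $\RGdis$ as a reflective (more precisely, coreflective) subcategory of $\RGmod$ via the functor $\dd$, and then transporting exactness and injectivity properties along the adjunction $\ff\dashv\dd$. First I would check that $\dd(M)$ is genuinely an $\RG$-submodule: if $m,m'\in\dd(M)$ then $\stab_G(m)\cap\stab_G(m')$ is open (finite intersection of opens in a topological group), so it stabilizes $m+m'$; and for $g\in G$ one has $\stab_G(g\cdot m)=g\,\stab_G(m)\,g^{-1}$, which is open since conjugation is a homeomorphism. Scalar multiplication by $r\in R$ only enlarges the stabilizer. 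Discreteness of $\dd(M)$ as a $G$-module is immediate: continuity of $G\times\dd(M)\to\dd(M)$ with the discrete topology on $\dd(M)$ amounts to each point-stabilizer being open, which is exactly the defining condition, using that $\Osgrp(G)$ is a neighbourhood basis of $1$ (type $\OS$).

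Next I would establish the adjunction. Given a morphism $\phi\colon\ff(B)\to M$ in $\RGmod$ with $B$ discrete, every $b\in B$ has open stabilizer, hence so does $\phi(b)$ (its stabilizer contains $\stab_G(b)$), so $\image(\phi)\subseteq\dd(M)$; thus $\phi$ factors uniquely through the inclusion $\dd(M)\hookrightarrow M$. This gives a natural bijection $\Hom_{\RGmod}(\ff(B),M)\cong\Hom_{\RGdis}(B,\dd(M))$, i.e.\ $\dd$ is right-adjoint to the (exact, fully faithful) forgetful functor $\ff$. Additivity of $\dd$ is clear from the construction, and left-exactness follows formally since a right adjoint preserves all limits, in particular kernels; alternatively one checks directly that $\dd$ preserves monomorphisms and that if $0\to M'\to M\to M''$ is exact then $\dd(M')=M'\cap\dd(M)=\kernel(\dd(M)\to\dd(M''))$.

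For part~(a), $\RGdis$ is a full subcategory of the abelian category $\RGmod$ closed under the formation of finite direct sums, kernels and cokernels: a submodule or quotient of a discrete module is discrete (stabilizers only grow), and a finite direct sum of discrete modules is discrete. Hence $\RGdis$ inherits an abelian structure with the same kernels and cokernels, and $\ff$ is exact. Part~(c) is the standard fact that a right adjoint whose left adjoint is exact preserves injectives: if $I\in\ob(\RGmod)$ is injective, then $\Hom_{\RGdis}(\argu,\dd(I))\cong\Hom_{\RGmod}(\ff(\argu),I)$ is exact because $\ff$ is exact and $I$ is injective, so $\dd(I)$ is injective in $\RGdis$. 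Finally part~(d): $\RGmod$ has enough injectives (it is a module category), so for $B\in\ob(\RGdis)$ pick a monomorphism $\ff(B)\hookrightarrow I$ with $I$ injective; applying $\dd$ and using that $\dd$ is left exact together with $\dd(\ff(B))=B$ yields a monomorphism $B\hookrightarrow\dd(I)$ into an injective of $\RGdis$. The only genuinely delicate point — and the one I would present most carefully — is the interplay between the topology on $G$ and the algebra: namely that openness of stabilizers is preserved under the module operations and under conjugation, and that this openness is equivalent to $G$-continuity precisely because $G$ is of type $\OS$; everything else is a formal consequence of having a coreflective abelian subcategory defined by an idempotent left-exact comonad.
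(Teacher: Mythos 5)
Your proof is correct and follows essentially the same route as the paper: the paper establishes the key observation that any morphism $\phi\colon B\to M$ with $B$ discrete has $\image(\phi)\subseteq\dd(M)$, which is precisely the adjunction $\ff\dashv\dd$, and then cites Weibel's Proposition~2.3.10 for the formal consequences (closure properties making $\RGdis$ abelian, right adjoints of exact functors preserving injectives, enough injectives via embedding and applying $\dd$). You have merely spelled out explicitly what the paper leaves to the standard reference, and the details — openness of stabilizers under the module operations and conjugation, idempotence of $\dd$ on discrete modules, left exactness — are all accurate.
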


The abelian category $\RGmod$ admits minimal injective envelopes,
i.e., for $M\in\ob(\RGmod)$ the maximal essential extension $i_M\colon M\to I(M)$
is a minimal injective envelope (cf. \cite[\S III.11]{mcl:hom}).
Thus, if $M\in\ob(\RGdis)$, then $\dd(i_M)\colon M\to \dd(I(M))$ is  a minimal injective
envelope in $\RGdis$. This has the following consequence.

\begin{fact}
\label{fact:mininjres}
The abelian category $\RGdis$ admits minimal injective resolutions.
\end{fact}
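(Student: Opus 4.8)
The plan is to assemble a minimal injective resolution one step at a time out of the minimal injective envelopes in $\RGdis$ exhibited above. The first thing to record is that $\RGdis$ is closed under quotients: if $B\in\ob(\RGdis)$ and $A\subseteq B$ is an $\RG$-submodule, then $\stab_G(b)\subseteq\stab_G(b+A)$ for every $b\in B$, so $B/A\in\ob(\RGdis)$; in particular cokernels in $\RGdis$ coincide with those computed in $\RGmod$, and the cocycle modules produced below remain inside $\RGdis$.

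Given $M\in\ob(\RGdis)$ I would argue by induction. Set $I^{-1}:=M$, let $\der^{-1}=\iota\colon M\to I^0:=\dd(I(M))$ be the minimal injective envelope $\dd(i_M)$ in $\RGdis$, and assume an exact sequence $0\to M\to I^0\to\cdots\to I^{k}$ of discrete left $\RG$-modules with injective terms has already been built. Put $C^{k+1}:=\coker(\der^{k-1})$, which is discrete by the first paragraph, and pick as above a minimal injective envelope $\eps^{k+1}\colon C^{k+1}\to I^{k+1}:=\dd(I(C^{k+1}))$ in $\RGdis$; let $\der^{k}\colon I^{k}\to I^{k+1}$ be $\eps^{k+1}$ composed with the canonical epimorphism $I^{k}\to C^{k+1}$. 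Since $\eps^{k+1}$ is a monomorphism one gets $\kernel(\der^{k})=\image(\der^{k-1})$, so splicing the maps $\der^{k}$ produces an injective resolution $0\to M\to I^0\to I^1\to\cdots$ of $M$ in $\RGdis$.

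It remains to check minimality, which in this general abelian setting (cf. \cite[\S III.11]{mcl:hom}) means that $\image(\der^{k-1})$ is an essential subobject of $I^{k}$ for every $k\geq 0$. For $k=0$ this is exactly the assertion that $\iota$ is a minimal injective envelope; for $k\geq 1$ the canonical epimorphism is surjective, so $\image(\der^{k-1})=\image(\eps^{k})$, which is essential in $I^{k}$ because $\eps^{k}$ is a minimal injective envelope. Uniqueness of minimal injective envelopes then propagates through the induction and yields that the minimal injective resolution of $M$ is unique up to (non-canonical) isomorphism of complexes under $M$.

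I do not expect a genuine obstacle here. The only points needing any care are the closure of $\RGdis$ under quotients — this is what keeps the construction inside the subcategory and makes the cokernels behave — and the purely formal matter of phrasing \emph{minimal injective resolution} correctly for an arbitrary abelian category (via essential extensions rather than socles); the rest is the textbook inductive construction of an injective resolution.
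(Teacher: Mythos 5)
Your argument is correct and is precisely the standard construction the paper leaves implicit: the text preceding the Fact establishes that $\dd(i_M)\colon M\to\dd(I(M))$ is a minimal injective envelope in $\RGdis$, and the Fact follows by splicing such envelopes, exactly as you do. The one point the paper does not spell out and you rightly supply is that $\RGdis$ is closed under quotients so the cokernels stay discrete; beyond that your induction and essentiality check match the intended (textbook) proof.
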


If $M$ and $N$ are left $\RG$-modules, then $M\otimes N=M\otimes_R N$
is again a left $\RG$-module. Moreover, if $G$ is a topological group of
type \OS\ and $M$ and $N$ are discrete left $\RG$-modules, then $M\otimes N$
is discrete as well.


\subsection{Closed subgroups}
\label{ss:closed}
Let $H$ be a closed subgroup of the topological group $G$ of type \OS.
Then one has a {\it restriction functor}
\begin{equation}
\label{eq:rst}
\rst^G_H(\argu)\colon\RGdis\longrightarrow\RHdis
\end{equation}
which is exact. The {\it discrete coinduction functor} 
\begin{equation}
\label{eq:dcoind}
\dcoind_H^G(\argu)=\dd\circ\coind_H^G(\argu)\colon\RHdis\longrightarrow\RGdis
\end{equation}
where $\coind_H^G(M)=\Hom_{\RH}(\RG,M)$, $M\in\ob({}_{\RH}\Mod)$,
is the coinduction functor, is left exact.
It is the right-adjoint of the restriction functor.
In particular, $\dcoind_H^G(\argu)$ is mapping injectives to injectives
(cf. \cite[Prop. 2.3.10]{weib:hom}).

Let $N$ be a closed normal subgroup of the topological group $G$ of type \OS.
The canonical projection $\pi\colon G\to \baG$, where $\baG=G/N$,
is surjective, continuous and open. One has an {\it inflation functor}
\begin{equation}
\label{eq:ifl}
\ifl_{\baG}^G(\argu)\colon \RbGdis\longrightarrow\RGdis
\end{equation}
which is exact. The inflation functor is left-adjoint to the
{\it $N$-fixed point functor}
\begin{equation}
\label{eq:Nfix}
\argu^N\colon\RGdis\longrightarrow\RbGdis
\end{equation}
which is left exact. In particular, one has the following (cf. \cite[Prop. 2.3.10]{weib:hom}).

\begin{fact}
\label{fact:extra}
Let $G$ be a topological group of type \OS, and
let $N$ be a closed normal subgroup of $G$. 
Then the $N$-fixed point functor 
$\argu^N\colon\RGdis\longrightarrow\RbGdis$, $\baG=G/N$,
is mapping injectives to injectives.
\end{fact}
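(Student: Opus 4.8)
The plan is to deduce this from the general categorical principle that the right adjoint of an exact functor preserves injective objects, applied to the adjunction between the inflation functor $\ifl_{\baG}^G$ and the $N$-fixed point functor $\argu^N$ recorded just above the statement. First I would check that $\ifl_{\baG}^G\colon\RbGdis\to\RGdis$ of \eqref{eq:ifl} is genuinely exact: exactness of a sequence of discrete modules is tested on the underlying $R$-modules by Fact~\ref{fact:prin}(a), while pulling a $\baG$-module structure back along the continuous surjection $\pi\colon G\to\baG$ alters neither the underlying $R$-module nor the property of being a discrete module. Then I would record the adjunction isomorphism $\Hom_{\RG}\bigl(\ifl_{\baG}^G(A),M\bigr)\cong\Hom_{\RbG}\bigl(A,M^N\bigr)$, natural in $A\in\ob(\RbGdis)$ and $M\in\ob(\RGdis)$, which is precisely the assertion of \eqref{eq:ifl} and \eqref{eq:Nfix}.

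Given these preliminaries the proof is a short diagram chase. Let $I\in\ob(\RGdis)$ be injective. To show that $I^N$ is injective in $\RbGdis$ it suffices to show that the contravariant functor $\Hom_{\RbG}(\argu,I^N)$ is exact. By the adjunction it is naturally isomorphic to $\Hom_{\RG}\bigl(\ifl_{\baG}^G(\argu),I\bigr)$, which is the composite of the covariant functor $\ifl_{\baG}^G$ with the contravariant functor $\Hom_{\RG}(\argu,I)$; the first is exact by the previous paragraph and the second is exact because $I$ is injective in the abelian category $\RGdis$ (Fact~\ref{fact:prin}(a)). A composite of exact functors is exact, so $\Hom_{\RbG}(\argu,I^N)$ is exact and $I^N$ is injective, as claimed.

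I do not expect any genuine obstacle here: the statement is the direct analogue, in the discrete setting, of \cite[Prop.~2.3.10]{weib:hom}, and the identical scheme also yields Fact~\ref{fact:prin}(c) as well as the fact, used in \S\ref{ss:closed}, that $\dcoind_H^G$ preserves injectives. The only points requiring a word of care are the two bookkeeping remarks isolated in the first paragraph — that $\ifl_{\baG}^G$ really takes values in $\RGdis$ and is exact there, and that the displayed Hom-adjunction is the adjunction between the two categories of discrete modules rather than merely between $\RGmod$ and ${}_{\RbG}\Mod$ — both of which were arranged when these functors were introduced above.
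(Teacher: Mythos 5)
Your proof is correct and follows the same route as the paper: the paper asserts the fact as an immediate consequence of the adjunction $\ifl_{\baG}^G\dashv\argu^N$ recorded in \S\ref{ss:closed} together with the cited general principle \cite[Prop.~2.3.10]{weib:hom} that a right adjoint of an exact functor preserves injectives, and you have simply unpacked that citation and verified the two small bookkeeping points (exactness of inflation, that the adjunction lives in the discrete categories) which the paper leaves implicit.
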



\subsection{Open subgroups}
\label{ss:open}
Let $H\subseteq G$ be an open subgroup of the topological group $G$ of type \OS, and let
$M\in\ob(\RHdis)$. Then for $y=\sum_{1\leq i\leq n} r_i g_i\otimes m_i\in\idn_H^G(M)$ one has
$\bigcap_{1\leq i\leq n} {}^g \stab_H(m_i)\subseteq \stab_G(y)$. In particular, $M$ is a discrete left
$\RG$-module, and one has an exact {\it induction functor}
\begin{equation}
\label{eq:ind}
\idn_H^G(\argu)\colon\RHdis\longrightarrow\RGdis.
\end{equation}
which is the left adjoint of the restriction functor $\rst^G_H(\argu)$. 
Hence one has the following (cf. \cite[Prop. 2.3.10]{weib:hom}).

\begin{fact}
\label{fact:resopen}
Let $H\subseteq G$ be an open subgroup of the topological group $G$ of type \OS.
Then $\rst^G_H(\argu)$ is mapping injectives to injectives.
\end{fact}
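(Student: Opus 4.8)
The plan is to deduce the statement from the adjunction recorded in \S\ref{ss:open} together with the general homological principle already invoked several times above. Recall that, because $H$ is \emph{open} in $G$, the induction functor $\idn_H^G(\argu)\colon\RHdis\to\RGdis$ is not merely well defined but \emph{exact}: on underlying $R$-modules it is $\RG\otimes_{\RH}\argu$, and $\RG$ is free as a right $\RH$-module, a free basis being any set of representatives for the left cosets $G/H$; hence tensoring over $\RH$ is exact, and the inclusion $\bigcap_{i}{}^{g_i}\stab_H(m_i)\subseteq\stab_G(\sum_i r_i g_i\otimes m_i)$ shows that the output is again a discrete module, so exactness persists after passing to $\RGdis$. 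This is precisely the property used in \S\ref{ss:open} to identify $\idn_H^G(\argu)$ as the left adjoint of $\rst^G_H(\argu)$.

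First I would isolate the categorical lemma: if $F\colon\caA\to\caB$ is an exact functor between abelian categories that admits a right adjoint $U\colon\caB\to\caA$, then $U$ carries injective objects to injective objects. The argument is the standard one --- for $I\in\ob(\caB)$ injective, the functor $\Hom_{\caA}(\argu,U(I))\cong\Hom_{\caB}(F(\argu),I)$ is the composite of the exact functor $F$ with the exact functor $\Hom_{\caB}(\argu,I)$, hence exact, so $U(I)$ is injective. This is the content of \cite[Prop.~2.3.10]{weib:hom} already quoted above for $\dd$, $\dcoind_H^G(\argu)$, $\argu^N$ and $\idn_H^G(\argu)$, so it may simply be cited.

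The conclusion is then immediate: apply the lemma with $F=\idn_H^G(\argu)$ and $U=\rst^G_H(\argu)$. Since $F$ is exact and $U$ is its right adjoint, $U$ preserves injectives, i.e.\ $\rst^G_H(\argu)$ sends an injective object of $\RGdis$ to an injective object of $\RHdis$. There is essentially no obstacle here; the one point that genuinely uses the hypothesis that $H$ is open --- and hence the only step worth double-checking --- is the exactness of the left adjoint $\idn_H^G(\argu)$, which would fail for a general closed subgroup, where on the other side one has only the left-exact functor $\dcoind_H^G(\argu)$.
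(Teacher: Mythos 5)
Your argument is exactly the one the paper gives in \S\ref{ss:open}: since $H$ is open, $\idn_H^G(\argu)$ is an exact left adjoint of $\rst^G_H(\argu)$, and therefore $\rst^G_H(\argu)$ preserves injectives by the standard adjunction principle quoted from \cite[Prop.~2.3.10]{weib:hom}. Nothing to add; the proposal is correct and identical in approach.
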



\subsection{Discrete cohomology}
\label{ss:discoh}
Let $G$ be a topological group of type \OS. For $M\in\ob(\RGdis)$ we denote by
\begin{equation}
\label{eq:dExt}
\dExt^k_{\RG}(M,\argu)=\caR^k\Hom_{\RGdis}(M,\argu)
\end{equation}
the right derived functors of $\Hom_{\RG}(M,\argu)$ in $\RGdis$.
The contra-/covariant bifunctors 
\begin{equation}
\label{eq:dExt2}
\dExt^k_{\RG}(\argu,\argu)\colon
\RGdis^{\op}\times\RGdis\longrightarrow \RMod,
\end{equation}
$k\geq 0$, admit long exact sequences in the first and in the second
argument. Moreover, $\dExt^0_{\RG}(M,\argu)\simeq \Hom_{\RG}(M,\argu)$,
and $\dExt^k_{\RG}(M,I)=0$ for every injective, discrete left $\RG$-module $I$.
We define the {\it $k^{th}$ discrete cohomology group} of $G$
with coefficients in $\RGdis$ by
\begin{equation}
\label{eq:dExt3}
\dH^k(\RG,\argu)=\dExt_{\RG}^k(R,\argu),\qquad k\geq 0,
\end{equation}
where $R$ denotes the trivial left $\RG$-module.

\begin{example}
\label{ex:dH}
Let $R=\Z$.

\noindent
(a) If $G$ is a discrete group, $G$ is of type \OS\ and ${}_{\Z[G]}\dis={}_{\Z[G]}\Mod$.
In particular, $\dH^\bullet(\Z[G],\argu)=H^\bullet(G,\argu)$ coincides
with ordinary group cohomology (cf. \cite{brown:coh}).

\noindent
(b) If $G$ is a profinite group, $G$ is of type \OS\ and ${}_{\Z[G]}\dis$ coincides with
the abelian category of discrete $G$-modules. Thus
$\dH^\bullet(\Z[G],\argu)=H^\bullet(G,\argu)$ coincides
with the Galois cohomology groups discussed in \cite{ser:gal}.
\end{example}


\subsection{The restriction functor}
\label{ss:resdH}
Let $H\subseteq G$ be a closed subgroup of the topological group $G$ of type \OS, and let
$M\in\ob(\RGdis)$. Let $(I^\bullet,\der^\bullet,\mu_G)$ be an injective resolution
of $M$ in $\ob(\RGdis)$, and let $(J^\bullet,\delta^\bullet,\mu_H)$ be an injective
resolution of $\rst^G_H(M)$ in $\RHdis$. By the comparison theorem in homological algebra, one has
a morphism of cochain complexes of left $\RH$-modules
$\phi^\bullet\colon \rst^G_H(I^\bullet)\to J^\bullet$ such that the diagram
\begin{equation}
\label{eq:resdH}
\xymatrix{
\rst^G_H(M)\ar[r]^{\mu_G}\ar@{=}[d]&\rst^G_H(I^0)\ar[r]^{\der^0}\ar[d]^{\phi^0}
&\rst^G_H(I^1)\ar[r]^{\der^1}\ar[d]^{\phi^1}&
\rst^G_H(I^2)\ar[r]^{\der^2}\ar[d]^{\phi^2}&\ldots\\
\rst^G_H(M)\ar[r]^{\mu_H}&J^0\ar[r]^{\delta^0}&J^1\ar[r]^{\delta^1}&J^2\ar[r]^{\delta^2}&\ldots
}
\end{equation}
commutes.
Moreover, $\phi^\bullet$ is unique up to chain homotopy equivalence
with respect to this property. Let 
\begin{equation}
\label{eq:resdH1}
\tphi^k\colon \xymatrix{(I^k)^G\ar[r]& (I^k)^H\ar[r]^{(\phi^k)^H}&(J^k)^H.}
\end{equation}
Then $\tphi^\bullet\colon ((I^\bullet)^G,\tder^\bullet)\to (J^\bullet)^H,\tdelta^\bullet)$ is a
mapping of cochain complexes of $R$-modules. Applying the cohomology functor yields a 
mapping of cohomological functors
\begin{equation}
\label{eq:resdH2}
\rst^\bullet_{G,H}(M)\colon\dH^\bullet(\RG,M)\longrightarrow\dH^\bullet(\RH,\rst^G_H(M)).
\end{equation}
The map $\rst^\bullet_{G,H}(\argu)$ will be called the {\it restriction functor} in discrete
cohomology.


\subsection{The inflation functor}
\label{ss:infdH}
Let $N$ be a closed normal subgroup of the topological group of type \OS,
and let $\pi\colon G\to\baG$, $\baG=G/N$, denote the canonical projection.
Let $M\in\ob(\RbGdis)$, and let $(I^\bullet,\der^\bullet,\mu_{\baG})$ be an injective
resolution of $M$ in $\RbGdis$.
Let $(J^\bullet,\delta^\bullet,\mu_G)$ be an injective resolution of
$\ifl_{\baG}^G(M)$ in $\RGdis$. By the comparison theorem in homological algebra, one has a
mapping of cochain complexes of left $\RG$-modules
$\phi^\bullet\colon \ifl_{\baG}^G(I^\bullet)\to J^\bullet$
which is unique up to chain homotopy equivalence such that the diagram
\begin{equation}
\label{eq:ifldH}
\xymatrix@C=1.1truecm{
\ifl_{\baG}^G(M)\ar[r]^{\ifl_{\baG}^G(\mu_{\baG})}\ar@{=}[d]&
\ifl_{\baG}^G(I^0)\ar[r]^{\der^0}\ar[d]^{\phi^0}&
\ifl_{\baG}^G(I^1)\ar[r]^{\der^1}\ar[d]^{\phi^1}&
\ifl_{\baG}^G(I^2)\ar[r]^{\der^2}\ar[d]^{\phi^2}&\ldots\\
\ifl_{\baG}^G(M)\ar[r]^{\mu_G}&
J^0\ar[r]^{\delta^0}&
J^1\ar[r]^{\delta^1}&
J^2\ar[r]^{\delta^2}&\ldots
}
\end{equation}
commutes. Applying first the fixed point functor $\argu^G$ and then the cohomology functor
one obtains a functor
\begin{equation}
\label{eq:ifldH2}
\ifl^\bullet_{\baG,G}(M)\colon\dH^\bullet(\RbG,M)\longrightarrow \dH^\bullet(\RG,\ifl_{\baG}^G(M)),
\end{equation}
which will be called the {\it inflation functor} in discrete cohomology.


\subsection{The Hochschild - Lyndon - Serre spectral sequence}
\label{ss:LHS}
Let $N$ be a closed normal subgroup of the topological group $G$ of type \OS,
and let $\pi\colon G\to\baG$, $\baG=G/N$, denote the canonical projection.
The functor
$\argu^G\colon\RGdis\longrightarrow\RMod$ can be decomposed by
$\argu^G=(\argu^{\baG})\circ(\argu^N)$. Moreover,
$\argu^N\colon\RGdis\longrightarrow\RbGdis$ is mapping injectives to injectives (cf. Fact~\ref{fact:extra}).
Thus for $M\in\ob(\RGdis)$ the Grothendieck spectral sequence (cf. \cite[Thm.~5.8.3]{weib:hom})
yields a {\it Hochschild-Lyndon-Serre
spectral sequence}
\begin{equation}
\label{eq:HLS}
E_2^{s,t}(M)=\dH^s(\RbG,\dH^t(R[N],\rst^G_N(M)))\ \Longrightarrow \dH^{s+t}(\RG,M)
\end{equation}
of cohomological type concentrated in the first quadrant which is functorial in $M$. The edge homomorphisms are given by
\begin{equation}
\label{eq:HLS1}
\begin{aligned}
e_h^s\colon&\xymatrix@C=1.8truecm{\dH^s(\RbG,M^N)\ar[r]^-{\ifl_{\baG,G}^s(M^N)}&E_2^{s,0}(M)\ar[r]&
E_\infty^{s,0}(M),}\\
e_v^t\colon&\xymatrix@C=1.8truecm{E_\infty^{0,t}(M)\ar[r]&E_2^{0,t}(M)\ar[r]^-{\rst^t_{G,N}(M)^N}&
\dH^t(R[N],M)^{G/N},}\\
\end{aligned}
\end{equation}
and one has a $5$-term exact sequence
\begin{equation}
\label{eq:HLS2}
\xymatrix{
0\ar[r]&\dH^1(\RbG,M^N)\ar[r]^{e_h^1}&
\dH^1(\RG,M)\ar[r]^{e_v^1}&
\dH^1(R[N],M)^{G/N}\ar[d]^{d_2^{0,1}}\\
&&\dH^2(\RG,M)&
\dH^2(\RbG,M^N)\ar[l]_{e_h^2}
}
\end{equation}


\subsection{An Eckmann-Shapiro type lemma}
\label{ss:ecksha}
Let $H$ be an open subgroup of the topological group $G$ of type \OS,
and let $M\in\ob(\RGdis)$. Let $(I^\bullet,\der^\bullet,\mu_G)$ be an injective
resolution of $M$ in $\RGdis$. Then
$(\rst^G_H(I^\bullet),\rst^G_H(\der^\bullet),\rst^G_H(\mu_G))$ is an injective
resolution of $\rst^G_H(M)$ in $\RHdis$ (cf. Fact~\ref{fact:resopen}).
For $B\in\ob(\RHdis)$ one has natural isomorphisms
\begin{equation}
\label{eq:ecksha1}
\Hom_{\RG}(\idn_H^G(B),I^k)\simeq \Hom_{R[H]}(B,\rst^G_H(I^k))
\end{equation}
$k\geq 0$, which yield isomorphisms
\begin{equation}
\label{eq:ecksha2}
\dExt^k_{\RG}(\idn_H^G(B),M)\simeq\dExt^k_{R[H]}(B,\rst^G_H(M)),
\end{equation}
$k\geq 0$. In particular, for $B=R$ one obtains natural isomorphisms
\begin{equation}
\label{eq:ecksha3}
\eps^k\colon\dExt^k_{\RG}(\idn_H^G(R),\argu)\longrightarrow\dH^k(R[H],\argu)
\end{equation}
such that 
\begin{equation}
\label{eq:ecksha4}
\xymatrix{
&\dExt^\bullet_{\RG}(\idn^G_H(R),\argu)\ar[rd]^{\eps^\bullet}&\\
\dExt_{\RG}^\bullet(R,\argu)\ar[0,2]^{\rst^\bullet_{G,H}(\argu)}\ar[ru]^{\dExt^\bullet_{\RG}(\eps)}&&\dH^\bullet(R[H],\rst^G_H(\argu))
}
\end{equation}
is a commutative diagram of cohomological functors with values in $\RGdis$,
where $\eps\colon\idn_H^G(R)\to R$ is the canonical map.


\section{Rational discrete cohomology for t.d.l.c. groups}
\label{s:ratdis}
Throughout this section we will assume that 
$G$ is a t.d.l.c. group. Furthermore, we
choose $R=\Q$ to be the field of rational numbers.
From now on we will omit the appearance of the field $\Q$ in the notation,
i.e., we put $\dH^\bullet(G,\argu)=\dH^\bullet(\Q[G],\argu)$,
$\dExt^\bullet_{\Q[G]}(\argu,\argu)=\dExt^\bullet_G(\argu,\argu)$, etc.


\subsection{Profinite groups}
\label{ss:prgr}
It is well known that a t.d.l.c. group is compact if, and only if, it is profinite 
(cf. \cite[Prop.~1.1.0]{ser:gal}). The main goal of this subsection is to establish the following property
which will turn out to be essential for our purpose.

\begin{prop}
\label{prop:prof}
Let $G$ be a profinite group. Then every discrete left $\QG$-module $M$ is injective and projective in $\QGdis$.
\end{prop}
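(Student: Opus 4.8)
The plan is to reduce the statement to a computation about finite quotients, using that $G$ is profinite. First I would recall that any discrete $\QG$-module $M$ is the filtered union of its finite-level pieces: $M=\varinjlim_{\caO} M^\caO$, where $\caO$ ranges over the open normal subgroups of $G$, and each $M^\caO$ is a $\Q[G/\caO]$-module for the \emph{finite} group $G/\caO$. Dually, the $G$-action on $M$ factors, on each element, through such a finite quotient. The key structural input is that $\Q[Q]$ is a semisimple ring for every finite group $Q$ (Maschke's theorem, valid since $\mathrm{char}\,\Q=0$), so every $\Q[Q]$-module is both injective and projective over $\Q[Q]$.

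For \textbf{injectivity} in $\QGdis$: given a monomorphism $A\hookrightarrow B$ of discrete $\QG$-modules and a map $A\to M$, I want to extend it to $B\to M$. I would argue by Zorn's lemma on partial extensions, reducing to the case where $B=A+\QG b$ for a single $b$ whose stabilizer contains an open normal subgroup $\caO$. Then everything in sight — the relevant finitely generated submodules of $A$, $B$, and their images in $M$ — is fixed by some open normal subgroup $\caO'\subseteq\caO$, so the extension problem takes place entirely inside $\Q[G/\caO']$-modules, where it is solvable because $\Q[G/\caO']$ is semisimple. Here one uses that $\dd$ (Fact~\ref{fact:prin}) and the fixed-point functors behave well, but really only the finite-level semisimplicity is doing the work. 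An alternative, cleaner route: show directly that $M$ is a direct summand, as a discrete $\QG$-module, of a product of modules of the form $\dcoind$ from the trivial subgroup (or of $\caC(G,\Q)$-type modules), each of which is injective by Fact~\ref{fact:prin}(c) together with the adjunctions of \S\ref{ss:closed}; but the semisimple-reduction argument is the most transparent.

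For \textbf{projectivity} in $\QGdis$: given an epimorphism $B\twoheadrightarrow C$ of discrete $\QG$-modules and a map $M\to C$, I want a lift $M\to B$. The subtlety is that $\QGdis$ has enough projectives but $M$ need not be finitely generated, so I cannot just "lift on generators" naively — the lifts on different generators must be compatible. The right approach is to write $M=\varinjlim_\caO M^\caO$ over open normal $\caO$; each $M^\caO$ is a $\Q[G/\caO]$-module, hence projective over $\Q[G/\caO]$ by Maschke, hence (by the exactness of inflation, \S\ref{ss:infdH}, and the fact that restriction along $G\to G/\caO$ of an epimorphism of discrete modules followed by taking $\caO$-fixed points is again epi — using $\mathrm{char}=0$ to split the $\caO$-fixed-point functor) one gets a compatible system of lifts $M^\caO\to B^\caO\subseteq B$, and passing to the colimit yields $M\to B$. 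The one point needing care is \emph{compatibility} of the lifts as $\caO$ shrinks; I would handle this by choosing the lifts inductively along a cofinal chain, or by the semisimplicity of $\Q[G/\caO]$ which makes the space of lifts an affine space over a vector space and lets one correct at each stage.

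\textbf{Main obstacle.} The genuine difficulty is entirely on the projective side: injectivity follows quickly once one localizes to finite quotients, but projectivity requires controlling an \emph{inverse-limit-of-finite-quotients} situation against a module $M$ that is a \emph{direct} limit, and ensuring the per-level lifts assemble into a genuine $\QG$-module map. I expect the proof in the paper to organize this via the structure of projective objects in $\QGdis$ established in Prop.~\ref{prop:permod} (that $\biB(G)$-related modules, or the $\Q[G/\caO]$ themselves, are projective), reducing to: every discrete $\QG$-module is a direct summand of a direct sum of modules $\Q[G/\caO]$ — which again is just Maschke at each finite level plus a colimit argument. So the whole proposition is, in essence, "Maschke's theorem, propagated through the (co)limit structure of discrete modules", and the only real work is bookkeeping the limits.
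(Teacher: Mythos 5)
Your overall strategy — reduce everything to Maschke's theorem over the finite quotients $G/\caO$ — is the right one, and your injectivity sketch (Zorn's lemma, one‐step extensions carried out inside some $\Q[G/\caO']$) can be made to work. The paper actually proves injectivity differently: it first shows $\dExt^k_G(S,M)=0$ for every irreducible $S$ (via $\dH^k(G,\argu)=\varinjlim_U\dH^k(G/U,\argu^U)$ and Maschke), then takes a \emph{minimal} injective resolution $(I^\bullet)$ of $M$ (Fact~\ref{fact:mininjres}) and observes $\Hom_G(S,I^k)=\dExt^k_G(S,M)=0$ for $k\geq 1$, so $\soc(I^k)=0$ and hence $I^k=0$. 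Both routes are fine; yours is more hands-on, the paper's is slicker but needs minimal injective resolutions as input.

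The genuine gap is on the projective side, exactly where you flag your own uncertainty. Your plan — lift level by level from $M^{\caO}$ and assemble — runs into a real obstruction: the open normal subgroups form a directed poset, not a chain, so ``inductive lifting along a cofinal chain'' is only available when $G$ is second countable, and the ``affine space of lifts'' observation, while true at each level, does not by itself produce a coherent choice across the whole directed system; that is an inverse-limit problem with no Mittag-Leffler control, and such limits of nonempty sets can be empty. The paper avoids all of this bookkeeping with one move you did not find: it first proves injectivity of every object and enough projectives (each $\QG\cdot m$ is finite-dimensional, hence a direct summand of some $\Q[G/U]$, hence projective; coproduct over $m\in M$ surjects onto $M$). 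Then for any surjection $\pi\colon P\twoheadrightarrow M$ from a projective, $\kernel(\pi)$ is \emph{injective} (by the part already proved), so the sequence splits and $M$ is a direct summand of $P$, hence projective. This ``every object injective $+$ enough projectives $\Rightarrow$ every object projective'' reduction is the missing idea; without it your projectivity argument does not close.
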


\begin{proof} The proposition is a consequence of several facts.

\begin{claim}
\label{cl:prf1}
One has $\dH^k(G,M)=0$ for every $M\in \ob(\QGdis)$ and $k\geq 1$.
In particular, every finite dimensional discrete left $\QG$-module is projective.
\end{claim}

\begin{proof}[Proof of Claim~\ref{cl:prf1}]
By \cite[\S 2.2, Prop.~8]{ser:gal}, one has 
\begin{equation}
\label{eq:sergal8}
\textstyle{\dH^k(G,M)=\varinjlim_{U\triangleleft_\circ G} \dH^k(G/U,M^U),}
\end{equation} 
where the projective limit is running over all open normal subgroups $U$ of $G$.
By Maschke's theorem, one has $\dH^k(G/U,M^U)=0$ for $k\geq 1$,
and thus, by \eqref{eq:sergal8}, $\dH^k(G,M)=0$ for all $k\geq 1$.
Let $U$ be an open subgroup of $G$. Since $\dExt_U^1(\Q,\argu)=0$,
$\Hom_U(\Q,\argu)\colon\QUdis\to\Qvec$ is an exact
functor. In particular, $\Q\in\ob(\QUdis)$ is projective. Thus, as $\rst^G_U(\argu)$ is exact,
$\Q[G/U]\simeq\idn_U^G(\Q)$ is projective 
for every open subgroup $U$ of $G$ (cf. \cite[Prop.~2.3.10]{weib:hom}).
By Maschke's theorem, every finite dimensional discrete left $\QG$-module is
a direct summand of $\Q[G/U]$ for some open subgroup $U$ of $G$. This yields the claim.
\end{proof}

As a consequence one obtains the following.

\begin{claim}
\label{cl:prf2}
Let $S\in \ob(\QGdis)$ be irreducible, and let $M\in \ob(\QGdis)$. Then $\dExt^k_G(S,M)=0$ for all $k\geq 1$.
\end{claim}

\begin{proof}[Proof of Claim~\ref{cl:prf2}]
Note that every irreducible discrete left $\QG$-module $S$ is of finite dimension,
and therefore projective (cf. Claim~\ref{cl:prf1}).
\end{proof}

For a discrete left $\QG$-module $M$ we denote by
$\soc(M)\subseteq M$ the {\it socle} of $M$, i.e., $\soc(M)$ is the
sum of all irreducible left $\QG$-submodules of $M$.
Moreover, as $m\in M$ is contained in the finite dimensional discrete left $\QG$-module $\QG\cdot m$, one has that 
$\soc(M)=0$ if, and only if, $M=0$.

\begin{claim} 
\label{cl:prf3}
Every discrete left $\QG$-module $M$ is injective in $\ob(\QGdis)$. 
\end{claim}

\begin{proof}[Proof of Claim~\ref{cl:prf3}]
Let $(I^\bullet,\delta^\bullet)$ be a minimal injective resolution of $M$ in $\QGdis$ (cf. Fact~\ref{fact:mininjres}). 
By Claim~\ref{cl:prf2}, for any irreducible $S\in \ob(\QGdis)$ one has for $k\geq 1$ that
$0=\dExt^k_G(S,M)=\Hom_G(S,I^k)$
(cf. \cite[Cor.~2.5.4(ii)]{ben:coho1}). In particular, $\soc(I^k)=0$, and therefore $I^k=0$. Hence $M$ is injective. 
\end{proof}

\begin{claim}
\label{cl:prf4}
The category $\QGdis$ has enough projectives.
\end{claim}

\begin{proof}[Proof of Claim~\ref{cl:prf4}]
Let $M\in \ob(\QGdis)$.
By definition, for $m\in M$ the left $\QG$-submodule $\Q[G]\cdot m\subseteq M$ is finite dimensional and discrete.
Thus, by Claim~\ref{cl:prf1}, $\Q[G]\cdot m$ is projective in $\QGdis$, 
and therefore, $P=\coprod_{m\in M}\Q[G]\cdot m$ is projective in $\QGdis$.
The canonical map $\pi\colon P\to M$ is surjective, and this yields the claim.
\end{proof}

\begin{claim}
\label{cl:prf5}
Every discrete left $\Q[G]$-module is projective in $\QGdis$. 
\end{claim}

\begin{proof}[Proof of Claim~\ref{cl:prf5}] 
Let $M\in\ob(\QGdis)$. 
By Claim~\ref{cl:prf4}, there exist a projective discrete left $\QG$-module $P$ and a map $\pi\colon P\to M$ such that 
\begin{equation}
\label{eq:seqprf}
\xymatrix{
0\ar[r]& \kernel(\pi)\ar[r]& P\ar[r]^{\pi}& M\ar[r]& 0
}
\end{equation}
is a short exact sequence in $\QGdis$. By Claim~\ref{cl:prf3}, $\kernel(\pi)$ is injective.
Hence \eqref{eq:seqprf} splits, and $M$ is isomorphic to a direct summand of $P$, i.e.,
$M$ is projective.
\end{proof}
\end{proof}


\subsection{Discrete permutation modules}
\label{ss:projdis}
Let $G$ be a t.d.l.c. group. A left $G$-set $\Omega$ 
is said to be {\it discrete}, if $\stab_G(\omega)$ is an open subgroup of $G$ for any $\omega\in\Omega$.
In this case $\argu\cdot\argu\colon G\times\Omega\to\Omega$ is continuous, where $\Omega$ is considered
to be a discrete topological space and $G\times\Omega$ carries the product topology.
We say that $\Omega$ is a {\it discrete left $G$-set with compact stabilizers}, if
$\stab_G(\omega)$ is a compact and open subgroup for any $\omega\in\Omega$.

Let $\Omega$ be a left $G$-set. Then $\Q[\Omega]$ - the free $\Q$-vector space over the set $\Omega$ -
carries canonically the structure of left $\QG$-module.
Moreover, $\Omega$ is a discrete left $G$-set if, and only if, $\Q[\Omega]$ is a discrete left $\QG$-module.
For a discrete left $G$-set $\Omega$, $\Q[\Omega]$ is also called a {\it discrete left $\QG$-permutation module}.
Additionally, one has the following property.

\begin{prop}\label{prop:permod}
Let $G$ be a t.d.l.c. group, and let $\Omega$ be a discrete left $G$-set with compact stabilizers.
Then $\Q[\Omega]$ is projective in $\QGdis$. 
In particular, the abelian category $\QGdis$ has enough projectives.
\end{prop}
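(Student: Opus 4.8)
The plan is to reduce the general statement to the case already handled in Proposition~\ref{prop:prof}. First I would decompose $\Q[\Omega]$ according to the orbits of $G$ on $\Omega$: writing $\Omega=\coprod_{i\in I}\Omega_i$ as a disjoint union of $G$-orbits, one obtains an isomorphism of discrete left $\QG$-modules $\Q[\Omega]\simeq\coprod_{i\in I}\Q[\Omega_i]$. Since an arbitrary coproduct of projective objects in $\QGdis$ is projective, it suffices to treat a single transitive discrete $G$-set with compact stabilizers, i.e. it suffices to show that $\Q[G/\caO]$ is projective in $\QGdis$ for every compact open subgroup $\caO$ of $G$.

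Next I would identify $\Q[G/\caO]$ with an induced module. Since $\caO$ is open in $G$, the induction functor $\idn_\caO^G(\argu)\colon\QOdis\to\QGdis$ of \S\ref{ss:open} is exact and is left adjoint to the exact restriction functor $\rst^G_\caO(\argu)$. A standard adjunction argument (cf. \cite[Prop.~2.3.10]{weib:hom}) then shows that $\idn_\caO^G(\argu)$ carries projectives to projectives: for if $P$ is projective in $\QOdis$, then $\Hom_{\QG}(\idn_\caO^G(P),\argu)\simeq\Hom_{\Q[\caO]}(P,\rst^G_\caO(\argu))$ is the composite of two exact functors, hence exact. Now $\Q[G/\caO]\simeq\idn_\caO^G(\Q)$ as discrete left $\QG$-modules, where $\Q$ denotes the trivial left $\Q[\caO]$-module. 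Since $\caO$ is profinite, Proposition~\ref{prop:prof} tells us that every discrete left $\Q[\caO]$-module — in particular the trivial module $\Q$ — is projective in $\QOdis$. Therefore $\Q[G/\caO]=\idn_\caO^G(\Q)$ is projective in $\QGdis$, and by the orbit decomposition so is $\Q[\Omega]$.

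For the final assertion, that $\QGdis$ has enough projectives, I would argue exactly as in the proof of Claim~\ref{cl:prf4}. Let $M\in\ob(\QGdis)$ be arbitrary. For each $m\in M$ the stabilizer $\stab_G(m)$ is open in $G$; by van Dantzig's theorem it contains a compact open subgroup $\caO_m$, so the cyclic submodule $\Q[G]\cdot m$ is a quotient of the discrete permutation module $\Q[G/\caO_m]$ via $g\caO_m\mapsto g\cdot m$. Hence $\Q[G/\caO_m]$ maps onto $\Q[G]\cdot m\subseteq M$, and by the first part $\Q[G/\caO_m]$ is projective. Setting $P=\coprod_{m\in M}\Q[G/\caO_m]$, which is projective as a coproduct of projectives, the induced map $\pi\colon P\to M$ is surjective. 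This exhibits $M$ as a quotient of a projective object, proving that $\QGdis$ has enough projectives.

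The only genuinely non-routine ingredient is the compatibility between the algebraic induced module $\idn_\caO^G(\Q)$ and the combinatorial permutation module $\Q[G/\caO]$, together with the passage of projectivity through $\idn_\caO^G$; both are standard, and the real content has already been isolated in Proposition~\ref{prop:prof}, whose proof uses Maschke's theorem on finite quotients. So I expect no serious obstacle here — the proposition is essentially a formal consequence of the profinite case via the exactness of induction on open subgroups. Care is needed only to ensure that arbitrary (not merely finite) coproducts of projectives are projective in $\QGdis$, which holds because $\QGdis$ is an abelian category in which coproducts are exact (coproducts of discrete modules are again discrete, and the $\Hom_{\QG}$ out of a coproduct is the product of the $\Hom_{\QG}$'s, a product of exact functors being exact).
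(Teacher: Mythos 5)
Your proof is correct and follows essentially the same route as the paper's: orbit decomposition to reduce to $\Q[G/\caO]$, identification with $\idn_\caO^G(\Q)$, projectivity of the trivial module over the profinite subgroup $\caO$ via Proposition~\ref{prop:prof}, and preservation of projectivity under induction from an open subgroup; the "enough projectives" part is likewise the paper's argument with the coproduct $\coprod_{m\in M}\Q[G/\caO_m]$ surjecting onto $M$. No gaps.
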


\begin{proof}
Let $\Omega=\bigsqcup_{i\in I}\Omega_i$ be the decomposition of $\Omega$ into $G$-orbits.
Since $\Q[\Omega]=\coprod_{i\in I}\Q[\Omega_i]$ and as the coproduct of projective objects
remains projective, it suffices to prove the claim for transitive discrete left $G$-sets $\Omega$
with compact stabilizers, i.e., we may assume that for $\omega\in \Omega$ one has an isomorphism of left $G$-sets
$\Omega\simeq G/\caO$, and an isomorphism of left $\QG$-modules 
$\Q[\Omega]\simeq\idn_{\caO}^G(\Q)$, where
$\caO=\stab_G(\omega)$.
The induction functor is left adjoint to the restriction functor which is exact. 
Since the trivial left $\QO$-module is projective in $\QOdis$ (cf. Prop.~\ref{prop:prof}),
and as $\idn_{\caO}^G(\argu)$ is mapping projectives to projectives
(cf. \S\ref{ss:open} and \cite[Prop. 2.3.10]{weib:hom}), one concludes that $\Q[\Omega]$ is projective.

Let $M\in \ob(\QGdis)$. For each element $m$ in $M$ let $\caO_m$ be a compact open subgroup of $\stab_G(m)$. 
Then $\pi_m\colon\idn^G_{\caO_m}(\Q)\to M$ given by
$\pi_m(g\otimes1)=g\cdot m$ is a mapping of left $\Q[G]$-modules.
It is straightforward to verify that 
\begin{equation}
\label{eq:coprodM}
\textstyle{\pi=\coprod_{m\in M}\pi_m\colon\coprod_{m\in M} \idn_{\caO_m}^G(\Q)\longrightarrow M}
\end{equation}
is a surjective mapping of discrete left $\QG$-modules, and 
$\coprod_{m\in M} \idn_{\caO_m}^G(\Q)\simeq\Q[\bigsqcup_{m\in M} G/\caO_m]$
is projective.
\end{proof}

It is somehow astonishing that for a t.d.l.c. group $G$, 
the category $\QGdis$ is a full subcategory of $\QGmod$ with enough projectives and enough injectives 
(cf. Fact~\ref{fact:prin}),
but the reader may have noticed that this is a direct consequence of Proposition~\ref{prop:prof}.

\begin{cor}
\label{cor:charproj}
Let $G$ be a t.d.l.c. group.
A discrete left $\QG$-module $M$ is projective if, and only if, 
it is a direct summand of a discrete left $\QG$-permutation module $\Q[\Omega]$
for some discrete left $G$-set $\Omega$ with compact stabilizers.
\end{cor}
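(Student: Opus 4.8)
The plan is to prove both implications of Corollary~\ref{cor:charproj} directly from Proposition~\ref{prop:permod} and standard properties of projective objects in an abelian category. First I would observe that one direction is immediate: if $\Q[\Omega]$ is a discrete left $\QG$-permutation module for a discrete left $G$-set $\Omega$ with compact stabilizers, then $\Q[\Omega]$ is projective in $\QGdis$ by Proposition~\ref{prop:permod}; and any direct summand of a projective object in an abelian category is again projective, since the lifting property for $M$ is inherited from the lifting property for $M\oplus M'$ (split the given map through the splitting idempotent). So the ``if'' part needs no new work.

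For the ``only if'' part, suppose $M\in\ob(\QGdis)$ is projective. The key is to use the explicit surjection constructed in the proof of Proposition~\ref{prop:permod}: for each $m\in M$ choose a compact open subgroup $\caO_m\subseteq\stab_G(m)$ (which exists by van Dantzig's theorem, since $\stab_G(m)$ is open in the t.d.l.c.\ group $G$ and hence itself t.d.l.c.), and form
\begin{equation}
\pi=\coprod_{m\in M}\pi_m\colon P:=\coprod_{m\in M}\idn_{\caO_m}^G(\Q)\longrightarrow M,
\end{equation}
which is a surjection of discrete left $\QG$-modules, with $P\simeq\Q[\Omega]$ for the discrete left $G$-set $\Omega=\bigsqcup_{m\in M}G/\caO_m$, whose point stabilizers are the conjugates of the $\caO_m$ and hence compact and open. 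Since $M$ is projective, the identity map $\iid_M\colon M\to M$ lifts along the epimorphism $\pi$ to a map $\sigma\colon M\to P$ with $\pi\circ\sigma=\iid_M$. Thus the short exact sequence $0\to\kernel(\pi)\to P\xrightarrow{\pi}M\to 0$ splits, and $M$ is isomorphic to a direct summand of $P=\Q[\Omega]$, as required.

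I do not expect any genuine obstacle here: the statement is essentially a repackaging of the construction already carried out inside the proof of Proposition~\ref{prop:permod}, combined with the elementary fact that a projective object is precisely a direct summand of a ``free-like'' generator (here the discrete permutation modules with compact stabilizers play the role of free modules). The only point meriting a word of care is that the index set of the coproduct is all of $M$, so $\Omega$ is genuinely a set and $\Q[\Omega]$ a bona fide object of $\QGdis$; this is unproblematic. If one wishes, the argument can be phrased slightly more economically by noting that the class of discrete permutation modules with compact stabilizers is closed under arbitrary coproducts and contains a generator, so the usual characterization of projectives over a ring with enough projectives applies verbatim.
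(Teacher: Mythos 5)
Your proof is correct and follows exactly the route the paper intends: the ``if'' direction is immediate from Proposition~\ref{prop:permod} plus closure of projectives under direct summands, and the ``only if'' direction splits the surjection $\coprod_{m\in M}\idn_{\caO_m}^G(\Q)\to M$ already exhibited in the proof of that proposition. The paper leaves the corollary unproved precisely because it is this standard consequence, so there is nothing to add.
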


From Proposition~\ref{prop:permod}
and Corollary~\ref{cor:charproj} one deduces the following properties.

\begin{prop}
\label{prop:rstidn}
Let $G$ be a t.d.l.c. group and, let $H\subseteq G$ be a closed subgroup of $G$.
Then $\rst^G_H\colon\QGdis\rightarrow\QHdis$ is mapping projectives to projectives.
Moreover, if $H$ is open in $G$, then
$\idn^G_H\colon\QHdis\rightarrow\QGdis$ is mapping projectives to projectives.
\end{prop}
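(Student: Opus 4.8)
The plan is to reduce both assertions to the characterization of projectives supplied by Corollary~\ref{cor:charproj} (projective $=$ direct summand of a permutation module with compact stabilizers), together with the elementary fact that additive functors preserve direct summands, so that it suffices to check the statements on permutation modules.

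First I would treat the restriction functor. Let $P\in\ob(\QGdis)$ be projective. By Corollary~\ref{cor:charproj} there is a discrete left $G$-set $\Omega$ with compact stabilizers such that $P$ is a direct summand of $\Q[\Omega]$. Since $\rst^G_H(\argu)$ is additive, it is enough to show that $\rst^G_H(\Q[\Omega])$ is projective in $\QHdis$. Now $\rst^G_H(\Q[\Omega])$ is canonically isomorphic to $\Q[\Omega_H]$, where $\Omega_H$ denotes the set $\Omega$ equipped with the $H$-action obtained by restricting the $G$-action. For $\omega\in\Omega$ one has $\stab_H(\omega)=\stab_G(\omega)\cap H$; this subgroup is open in $H$ because $\stab_G(\omega)$ is open in $G$, and it is compact because it is a closed subgroup of the compact group $\stab_G(\omega)$. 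Hence $\Omega_H$ is a discrete left $H$-set with compact stabilizers, so $\Q[\Omega_H]$ is projective by Proposition~\ref{prop:permod}, and therefore its direct summand $\rst^G_H(P)$ is projective as well.

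For the induction functor, assume $H$ is open in $G$. The quickest argument is the abstract one already invoked in the proof of Proposition~\ref{prop:permod}: $\idn^G_H(\argu)$ is left adjoint to the functor $\rst^G_H(\argu)$, which is exact (cf. \S\ref{ss:open}), and a left adjoint of an exact functor maps projectives to projectives (cf. \cite[Prop.~2.3.10]{weib:hom}). Alternatively, one can argue concretely in parallel with the first part: if $P\in\ob(\QHdis)$ is projective, write it as a direct summand of $\Q[\Omega]$ for a discrete left $H$-set $\Omega$ with compact stabilizers; then $\idn^G_H(\Q[\Omega])\simeq\Q[G\times_H\Omega]$, and the stabilizer in $G$ of the class of a pair $(g,\omega)$ equals $g\,\stab_H(\omega)\,g^{-1}$, a $G$-conjugate of a compact open subgroup of the open subgroup $H$, hence compact and open in $G$. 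Thus $G\times_H\Omega$ is a discrete left $G$-set with compact stabilizers, $\idn^G_H(\Q[\Omega])$ is projective by Proposition~\ref{prop:permod}, and so is its direct summand $\idn^G_H(P)$.

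I do not expect any genuine obstacle. The only points demanding (routine) care are the topological verifications that $\stab_G(\omega)\cap H$ is a compact open subgroup of $H$, and, in the open case, that conjugates of compact open subgroups of $H$ remain compact open in $G$; together with the observation that $\rst^G_H$ and $\idn^G_H$ commute with the formation of the relevant permutation modules, both of which are immediate from the definitions of the restriction and induction functors.
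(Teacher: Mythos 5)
Your proof is correct, and the treatment of the induction functor is literally the same as the paper's (the adjunction to the exact restriction functor). For the restriction functor, you take a slightly more economical route: the paper reduces to a transitive $\Omega\simeq G/\caO$ and invokes a Mackey-style double coset decomposition $\rst^G_H(\Q[G/\caO])\simeq\coprod_{r\in\caR}\Q[H/(H\cap{}^r\caO)]$ to see that the result is a permutation module with compact open stabilizers, whereas you bypass the orbit bookkeeping entirely by noting that $\rst^G_H\Q[\Omega]=\Q[\Omega_H]$ and that $\stab_H(\omega)=\stab_G(\omega)\cap H$ is automatically compact open in $H$. Both arguments bottom out in Proposition~\ref{prop:permod}; yours is shorter and avoids choosing double coset representatives, while the paper's version records the explicit orbit decomposition, which the Mackey formula makes available for later use. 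Your topological checks (intersection with a closed subgroup, conjugation of compact open subgroups) are exactly the ones required and are handled correctly.
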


\begin{proof}
By Corollary~\ref{cor:charproj}, it suffices to prove 
that $\rst^G_H(\Q[\Omega])$ is projective for any discrete left
$\Q[G]$-permutation with compact stabilizers. Since $\rst^G_H(\argu)$
commutes with coproducts, we may also assume that $\Omega$ is a transitive
discrete left $G$-set with compact stabilizers, i.e.,  for $\omega\in\Omega$
and $\caO=\stab_G(\omega)$ one has $\Omega\simeq G/\caO$. Let $\caR\subseteq G$
be a set of representatives of the $(H,\caO)$-double cosets in $G$, i.e., 
$G=\bigsqcup_{r\in \caR} H\,r\,\caO$. Then
\begin{equation}
\label{eq:rstcl}
\textstyle{\rst^G_H(\Q[G/\caO])\simeq\coprod_{r\in\caR} \Q[H/(H\cap {}^r\caO)].}
\end{equation}
Since $H$ is closed and $\caO$ compact and open, $H\cap{}^r\caO$ is compact and open in $H$.
Hence, by Proposition~\ref{prop:permod}, $\rst^G_H(\Q[G/\caO])$ is a projective
discrete left $\QH$-module.

Assume that $H$ is open in $G$. Since $\idn_H^G(\argu)\colon\QHdis\to\QGdis$
is the left adjoint of the exact functor $\rst^G_H(\argu)\colon\QGdis\to\QHdis$,
$\idn_H^G(\argu)$ is mapping projectives to projectives (cf. \cite[Prop. 2.3.10]{weib:hom}).
\end{proof}

We close this subsection with the following
characterization of compact t.d.l.c. groups.

\begin{prop}
\label{prop:ccom}
Let $G$ be a t.d.l.c. group, and let $\caO$ be a compact open subgroup of $G$.
Then $\Hom_G(\Q,\Q[G/\caO])\not=0$ if, and only if, $G$ is compact.
Moreover, if $G$ is compact, then $\Hom_G(\Q,\Q[G/\caO])\simeq \Q$.
\end{prop}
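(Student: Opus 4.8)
The plan is to identify $\Hom_G(\Q,\Q[G/\caO])$ with the $G$-fixed submodule $\Q[G/\caO]^G$ and then to exploit the transitivity of the $G$-action on $G/\caO$ together with the fact that every element of $\Q[G/\caO]$ has finite support. First I would observe that, since $\Q$ denotes the trivial left $\QG$-module, a $\Q$-linear map $\Q\to\Q[G/\caO]$ is determined by the image $v$ of $1$, and it is a morphism of discrete left $\QG$-modules precisely when $1\in\Q$ being $G$-fixed forces $v$ to be $G$-fixed; hence $\Hom_G(\Q,\Q[G/\caO])\simeq\Q[G/\caO]^G$.

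Next, suppose $v=\sum_{x\in G/\caO}a_x\,x\in\Q[G/\caO]^G$ is non-zero, where only finitely many coefficients $a_x\in\Q$ are non-zero, and let $T=\{\,x\in G/\caO\mid a_x\neq 0\,\}$, a non-empty finite set. Since $g\cdot v=v$ for all $g\in G$, the coefficient function $x\mapsto a_x$ is $G$-invariant, so $T$ is a $G$-stable subset of $G/\caO$. As $G$ acts transitively on $G/\caO$, this forces $T=G/\caO$; in particular $G/\caO$ is finite, i.e.\ $\caO$ has finite index in $G$. Since $\caO$ is open, $G$ is the disjoint union of the finitely many cosets $g\caO$, each of which is compact because $\caO$ is compact; hence $G$ is compact. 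This establishes that $\Hom_G(\Q,\Q[G/\caO])\neq 0$ implies $G$ compact.

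Conversely, if $G$ is compact then $\caO$, being open, has finite index, so $G/\caO$ is a finite set. The element $s=\sum_{x\in G/\caO}x$ is then a non-zero element of $\Q[G/\caO]^G$, so $\Hom_G(\Q,\Q[G/\caO])\neq 0$. Moreover, if $v=\sum_{x\in G/\caO}a_x\,x$ is any $G$-fixed vector, then as before the coefficient function is $G$-invariant, hence constant on the single $G$-orbit $G/\caO$; thus $v$ is a scalar multiple of $s$, and $\Q[G/\caO]^G=\Q\cdot s\simeq\Q$. Combining the two directions with the identification of the first paragraph gives the proposition.

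I do not anticipate a genuine obstacle: the argument is elementary, and the only point deserving a moment's care is the remark that a $G$-fixed vector has $G$-stable finite support, which combined with transitivity of the $G$-action on $G/\caO$ yields the dichotomy ``either $v=0$, or $G/\caO$ is finite and $G$ is compact''. Everything else (compactness of $G$ from finitely many compact cosets, one-dimensionality of the fixed space of a transitive permutation module over a finite set) is routine.
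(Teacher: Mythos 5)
Your argument is correct. It differs from the paper's proof mainly in being self-contained where the paper cites: for the ``only if'' direction the paper refers to an exercise in Brown's book (the standard fact that $\Q[G/\caO]^G=0$ when $|G:\caO|=\infty$, which is exactly your finite-support-plus-transitivity argument spelled out), and for the ``if'' direction the paper observes that when $|G:\caO|<\infty$ the permutation module coincides with the coinduced module $\coind_\caO^G(\Q)$, so Frobenius reciprocity gives $\Hom_G(\Q,\Q[G/\caO])\simeq\Hom_\caO(\Q,\Q)\simeq\Q$. You instead compute the fixed subspace directly and find it spanned by the norm element $s=\sum_{x\in G/\caO}x$. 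Both routes are equally short; the paper's coinduction phrasing is slightly more ``categorical'' and reuses general machinery, whereas yours is elementary and makes the one-dimensionality transparent without invoking adjunction. Nothing is missing in your write-up.
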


\begin{proof}
If $G$ is not compact, then $|G:\caO|=\infty$. Hence
$\Hom_G(\Q,\Q[G/\caO])=0$ (cf. \cite[\S III.5, Ex.~4]{brown:coh}).
If $G$ is compact, then $|G\colon \caO|<\infty$,
and $\Q[G/\caO]$ coincides with the coinduced module
$\coind_\caO^G(\Q)$ (cf. \cite[\S III, Prop.~5.9]{brown:coh}).
Hence $\Hom_G(\Q,\Q[G/\caO])\simeq\Hom_{\caO}(\Q,\Q)\simeq \Q$.
This yields the claim.
\end{proof}

\subsection{Signed rational discrete permutation modules}
\label{ss:signedperm}
For some application it will be useful to consider
{\it signed discrete permutation modules}.
Let $(\Omega,\bar{\argu},\cdot)$ be a {\it signed discrete left $G$-set}, i.e.,
$(\Omega,\bar{\argu})$ is a signed set (cf. \S\ref{ss:ssets}),
and $(\Omega,\cdot)$ is a left $G$-set satisfying
\begin{equation}
\label{eq:sdpm}
\overline{g\cdot \omega}=g\cdot \bar{\omega},
\end{equation}
for all $g\in G$ and $\omega\in\Omega$. Then (cf. \eqref{eq:sset1})
\begin{equation}
\label{eq:sdpm2}
\Q[\uOmega]=\Q[\Omega]/\spn_{\Q}\{\,\omega+\bomega\mid\omega\in\Omega\,\}
\end{equation}
is a discrete left $\QG$-module. 
For $\omega\in\Omega$ let $\uomega$ denote its canonical image in $\Q[\uOmega]$.

Let $\Omega=\bigsqcup_{j\in J}\Omega_r$ be a 
decomposition of $\Omega$ in $G\times C_2$-orbits, where $C_2=\Z/2\Z$ and $\sigma=1+2\Z\in C_2$
acts by application of $\bar{\argu}$. We say that $G$ acts {\it without inversion} on $\Omega_j$, if $g\cdot\omega\not=\omega$
for all $g\in G$ and $\omega\in\Omega_j$, otherwise we say that $G$ acts {\it with inversion}.
In this case, there exists $g\in G$ and $\omega\in\Omega_j$ such that $g\cdot \omega=\bomega$. Moreover,
the action of $G_{\pm\omega}=\stab_G(\{\omega,\bomega\})$ on $\Q\cdot\uomega\subseteq\Q[\uOmega]$ is given by
a {\it sign character}
\begin{equation}
\label{eq:sdpm3}
\sgn_\omega\colon G_{\pm\omega}\longrightarrow\{\pm 1\}.
\end{equation}
Moreover, by choosing representatives of the $G\times C_2$-orbits,
one obtains an isomorphism of discrete left $\QG$-modules
\begin{equation}
\label{eq:sdpm4}
\Q[\uOmega]\simeq\coprod_{\omega\in\caR_{\mathrm{w}}}\idn_{G_{\pm\omega}}^G(\Q(\sgn_\omega))\oplus
\coprod_{\varpi\in\caR_{\mathrm{wo}}}\idn_{G_{\varpi}}^G(\Q)).
\end{equation}
Note that by the $\argu^{\pm}$-construction (cf. \S\ref{ss:ssets}) any rational
discrete permutation module $\Q[\Omega]$ can be considered as the signed
rational discrete permutation module $\Q[\uOmega^{\pm}]$.

\subsection{The rational discrete cohomological dimension} 
\label{ss:cdq}
Let $M$ be a discrete left $\QG$-module for a t.d.l.c. group $G$.
Then $M$ is said to have {\it projective dimension} less or equal to $n\leq\infty$, if $M$ admits a projective resolution 
$(P_\bullet,\der_\bullet,\eps)$ in $\QGdis$ satisfying $P_k=0$ for $k>n$, i.e., the sequence
\begin{equation}
\label{eq:pdim1}
\xymatrix{
0\ar[r]&P_n\ar[r]^{\der_n}&P_{n-1}\ar[r]^{\der_{n-1}}&\ldots\ar[r]^{\der_2}&P_1\ar[r]^{\der_1}
&P_0\ar[r]^{\eps}&M\ar[r]&0}
\end{equation}
is exact. 
The {\it projective dimension} $\pdim(M)\in\N_0\cup\{\infty\}$ of $M\in\ob(\QGdis)$ 
is defined to be the minimum of such $n\leq\infty$.
As in the discrete case one has the following property.
	
\begin{lem} 
\label{lem:pdim}
Let $G$ be a t.d.l.c. group, and let $M$ be a discrete left $\QG$-module.
Then the following are equivalent.
\begin{itemize}
\item[(i)] $\pdim(M)\leq n$;
\item[(ii)] $\dExt^i(M,\argu)=0$ for all $i>n$;
\item[(iii)] $\dExt^{n+1}(M,\argu)=0$;
\item[(iv)] In any exact sequence 
\begin{equation}
\label{eq:pdim2}
\xymatrix{
0\ar[r]&K\ar[r]&P_{n-1}\ar[r]^{\der_{n-1}}&\ldots\ar[r]^{\der_2}&P_1\ar[r]^{\der_1}
&P_0\ar[r]^{\eps}&M\ar[r]&0}
\end{equation} 
in $\QGdis$ with $P_i$, $0\leq i\leq n-1$, projective, one has that $K$ is projective.
\end{itemize}
\end{lem}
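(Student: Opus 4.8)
The plan is to prove this as the standard equivalence characterizing projective dimension, following the template of \cite[\S VIII.2]{brown:coh} or \cite[\S 4.1]{weib:hom}, which goes through here verbatim because $\QGdis$ is an abelian category with enough projectives (cf.\ Prop.~\ref{prop:permod}) on which the derived functors $\dExt^\bullet(\argu,\argu)$ of $\Hom_G(\argu,\argu)$ are defined and admit long exact sequences in both variables (cf.\ \S\ref{ss:discoh}). I would prove the cycle of implications $(\mathrm{i})\Rightarrow(\mathrm{ii})\Rightarrow(\mathrm{iii})\Rightarrow(\mathrm{iv})\Rightarrow(\mathrm{i})$.

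First, $(\mathrm{i})\Rightarrow(\mathrm{ii})$: if $M$ has a projective resolution $P_\bullet$ with $P_k=0$ for $k>n$, then $\dExt^i(M,N)$ is computed as the $i$-th cohomology of $\Hom_G(P_\bullet,N)$, which vanishes in degrees $i>n$ since $P_i=0$ there. The implication $(\mathrm{ii})\Rightarrow(\mathrm{iii})$ is trivial. For $(\mathrm{iii})\Rightarrow(\mathrm{iv})$: given an exact sequence as in \eqref{eq:pdim2} with $P_0,\dots,P_{n-1}$ projective, I would use dimension shifting. Break the sequence into short exact sequences $0\to K\to P_{n-1}\to K_{n-2}\to 0$, $\dots$, $0\to K_0\to P_0\to M\to 0$, where $K_i=\image(\der_i)$ and $K_{n-1}=K$. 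The long exact sequence for $\dExt^\bullet(\argu,N)$ applied to each, together with $\dExt^{j}(P_i,N)=0$ for $j\geq 1$ (projectivity), gives natural isomorphisms $\dExt^{n+1}(M,N)\simeq\dExt^{n}(K_0,N)\simeq\cdots\simeq\dExt^{1}(K,N)$ for every $N\in\ob(\QGdis)$. By $(\mathrm{iii})$ the left-hand side vanishes, so $\dExt^1(K,N)=0$ for all $N$; applying this with $N=\kernel$ of a chosen surjection $P\twoheadrightarrow K$ from a projective $P$ (which exists by Prop.~\ref{prop:permod}) shows that short exact sequence splits, hence $K$ is a direct summand of $P$ and therefore projective.

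Finally $(\mathrm{iv})\Rightarrow(\mathrm{i})$: since $\QGdis$ has enough projectives, build a projective resolution $\cdots\to P_1\to P_0\to M\to 0$ and truncate it at stage $n$, setting $K=\kernel(\der_{n-1}\colon P_{n-1}\to P_{n-2})$ (with the evident convention that $P_{-1}=M$ when $n=0$). This $K$ fits into an exact sequence of the form \eqref{eq:pdim2}, so by $(\mathrm{iv})$ it is projective; replacing the truncated resolution's last term by $K$ yields a projective resolution of length $\leq n$, i.e.\ $\pdim(M)\leq n$.

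I do not expect a genuine obstacle here: the argument is purely formal homological algebra and every ingredient it needs — enough projectives, the long exact sequences for $\dExt^\bullet$, and vanishing of higher $\dExt$ out of a projective — has already been established earlier in the paper for the category $\QGdis$. The only point requiring a line of care is the bookkeeping of the dimension-shifting isomorphisms and the degenerate case $n=0$ (where \eqref{eq:pdim2} reads $0\to K\to P_0\to M\to 0$ and there are no $P_i$ to the left), but this is routine. The key structural input that makes everything work — and the reason the statement is not automatic for a general topological group — is Proposition~\ref{prop:permod}, which guarantees $\QGdis$ has enough projectives.
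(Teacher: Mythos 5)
Your proposal is correct and is in substance identical to the paper's proof, which simply remarks that the argument of \cite[Chap.~VIII, Lemma~2.1]{brown:coh} transfers verbatim to $\QGdis$; you have merely unpacked that citation into the explicit cycle $(\mathrm{i})\Rightarrow(\mathrm{ii})\Rightarrow(\mathrm{iii})\Rightarrow(\mathrm{iv})\Rightarrow(\mathrm{i})$ with the standard dimension-shifting and splitting steps, correctly identifying Proposition~\ref{prop:permod} (enough projectives) as the key structural input that makes the transfer legitimate.
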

	
\begin{proof}
The proof of \cite[Chap.~VIII, Lemma~2.1]{brown:coh} can be transferred verbatim.
\end{proof}

For a t.d.l.c. group $G$
\begin{equation}
\label{eq:pdim3}
\ccd_{\Q}(G)=\pdim(\Q)
\end{equation}
will be called the {\it rational discrete cohomological dimension} of $G$, i.e., one has
\begin{equation}
\label{eq:cd1}
\begin{aligned}
	\ccd_\Q(G) =& \pdim(\mathbb{Q}) \\
	= & \min(\{\,n\in\N_0\mid \dH^i(G,\argu)=0\ \text{for all $i>n$}\,\}\cup\{\infty\}) \\
	= & \sup\{\ n\in\N_0 \mid \dH^n(G,M)\neq 0\ \text{for some $M\in\ob(\QGdis)$}\,\}. 
\end{aligned}
\end{equation}

\begin{prop}
\label{prop:cd} 
Let $G$ be a t.d.l.c. group. 
\begin{itemize}
\item[(a)] One has $\ccd_\Q (G)=0$ if, and only if, $G$ is compact.
\item[(b)] If $N$ is a closed normal subgroup of $G$, then 
\begin{equation}
\label{eq:cd2}
\ccd_\Q (G)\leq \ccd_\Q(N) +\ccd_\Q (G/N).
\end{equation}
\item[(c)] If $H$ is a closed subgroup of $G$, then 
\begin{equation}
\label{eq:cd3}
\ccd_\Q (H)\leq \ccd_\Q (G).
\end{equation} 
In particular, if $H$ is normal and co-compact, then equality holds in \eqref{eq:cd3}.
\end{itemize}
\end{prop}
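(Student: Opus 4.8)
The plan is to treat the three parts in order, using the machinery of projective dimension (Lemma~\ref{lem:pdim}), the characterization of projectives (Corollary~\ref{cor:charproj}), the behaviour of $\rst$ and $\idn$ on projectives (Proposition~\ref{prop:rstidn}), the Hochschild--Lyndon--Serre spectral sequence (\S\ref{ss:LHS}), and the Eckmann--Shapiro lemma (\S\ref{ss:ecksha}). For part (a), I would argue that $\ccd_\Q(G)=0$ means $\pdim(\Q)=0$, i.e.\ the trivial module $\Q$ is projective in $\QGdis$; by Corollary~\ref{cor:charproj} this forces $\Q$ to be a direct summand of a discrete permutation module $\Q[\Omega]$ with compact open stabilizers, and the nonzero map $\Q\to\Q[\Omega]$ lands in a single orbit summand $\Q[G/\caO]$, giving a nonzero element of $\Hom_G(\Q,\Q[G/\caO])$; Proposition~\ref{prop:ccom} then yields that $G$ is compact. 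Conversely, if $G$ is compact (hence profinite), Proposition~\ref{prop:prof} says $\Q$ is projective, so $\ccd_\Q(G)=0$ (and it is nonzero unless... indeed $0$, since $\Q\neq 0$).

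For part (b), I would use the HLS spectral sequence \eqref{eq:HLS}: $E_2^{s,t}(M)=\dH^s(\QbG,\dH^t(\Q[N],\rst^G_N(M)))\Rightarrow \dH^{s+t}(\Q[G],M)$, where $\baG=G/N$. If $\ccd_\Q(N)=p$ and $\ccd_\Q(G/N)=q$ (assume both finite, else \eqref{eq:cd2} is trivial), then by the last line of \eqref{eq:cd1} the term $E_2^{s,t}(M)$ vanishes whenever $t>p$ (since $\dH^t(\Q[N],-)=0$) or $s>q$ (since $\dH^s(\QbG,-)=0$ applied to the coefficient module $\dH^t(\Q[N],\rst^G_N M)\in\ob(\QbGdis)$ — one must check this coefficient module really is a discrete $\baG$-module, which is part of why the fixed-point functor decomposition in \S\ref{ss:LHS} is set up as it is). Hence $\dH^{s+t}(G,M)=0$ for $s+t>p+q$ and all $M$, so $\ccd_\Q(G)\leq p+q$.

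For part (c), I would first handle the general closed subgroup $H$: take a projective resolution $(P_\bullet,\der_\bullet,\eps)$ of $\Q$ in $\QGdis$ of length $n=\ccd_\Q(G)$ (if $n=\infty$ nothing to prove); by Proposition~\ref{prop:rstidn}, $\rst^G_H$ sends projectives to projectives and is exact, so $\rst^G_H(P_\bullet)$ is a projective resolution of $\rst^G_H(\Q)=\Q$ in $\QHdis$ of length $\leq n$, giving $\ccd_\Q(H)\leq n$. For the "in particular" clause, suppose $H$ is normal and co-compact, so $G/H$ is compact; then $\ccd_\Q(G/H)=0$ by part (a), and part (b) gives $\ccd_\Q(G)\leq\ccd_\Q(H)+0$, which combined with $\ccd_\Q(H)\leq\ccd_\Q(G)$ yields equality.

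The main obstacle I anticipate is in part (b): verifying that the coefficient objects $\dH^t(\Q[N],\rst^G_N(M))$ appearing in $E_2^{s,t}$ are genuinely discrete $\QbG$-modules whose higher $\baG$-cohomology vanishes above degree $\ccd_\Q(\baG)$ — i.e.\ that the bound on $\ccd_\Q$ in terms of vanishing of $\dH^s(\baG,-)$ applies to \emph{these particular} coefficients and not merely to "some" module. This is exactly the content guaranteed by the formulation in \S\ref{ss:LHS} that $\argu^N\colon\QGdis\to\QbGdis$, together with the functoriality and first-quadrant concentration of \eqref{eq:HLS}; once that is in hand, the double vanishing $E_2^{s,t}=0$ for $s>q$ or $t>p$ is immediate and the convergence of the spectral sequence finishes the argument. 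Parts (a) and (c) are then essentially bookkeeping on top of the results already established.
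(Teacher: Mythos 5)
Your proposal is correct and follows essentially the same route as the paper's proof: part (b) is the HLS spectral sequence argument, part (c) is Proposition~\ref{prop:rstidn} applied to a projective resolution plus the combination of (a), (b), (\ref{eq:cd3}) for the co-compact case, and part (a) reduces to Proposition~\ref{prop:ccom}. The only cosmetic difference is in (a): the paper observes directly that projectivity of $\Q$ splits the augmentation $\eps\colon\Q[G/\caO]\to\Q$ to get a nonzero element of $\Hom_G(\Q,\Q[G/\caO])$, whereas you pass through Corollary~\ref{cor:charproj} to embed $\Q$ into some $\Q[\Omega]$ and then project to a nonzero orbit component; both land on the same application of Proposition~\ref{prop:ccom}. (Minor wording caveat: the image of $\Q$ in $\Q[\Omega]$ need not lie in a single orbit summand, but some component map $\Q\to\Q[G/\caO_i]$ is nonzero, which is all you need.)
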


\begin{proof} (a) If $G$ is compact, then $G$ is profinite, and thus, by
Proposition~\ref{prop:prof}, $\ccd_{\Q}(G)=\pdim(\Q)=0$. Suppose that
$G$ satisfies $\ccd_{\Q}(G)=0$, i.e., $\Q\in\ob(\QGdis)$ is projective.
By van Dantzig's theorem, there exists a compact open subgroup $\caO$ of $G$.
By hypothesis, the canonical projection $\eps\colon\Q[G/\caO]\to\Q$ admits a section.
In particular, $\Hom_G(\Q,\Q[G/\caO])\not=0$. Hence one concludes that $|G:\caO|<\infty$, and $G$ is compact.
(b) is a direct consequence of the Hochschild-Lyndon-Serre spectral sequence (cf. \eqref{eq:HLS})
and \eqref{eq:cd1}.
(c) By Proposition~\ref{prop:rstidn}, the restriction of a projection resultion of $\Q$ in $\QGdis$
is a projective resolution of $\Q$ in $\QHdis$ showing the inequality \eqref{eq:cd3}.
The final remark is a consequence of \eqref{eq:cd2}, \eqref{eq:cd3} and part (a).
\end{proof}

\begin{rem}
\label{rem:finind}
Let $G$ be a t.d.l.c. group, and let $H\subseteq G$ be such that $|G:H|<\infty$.\break
One concludes from Proposition~\ref{prop:cd}(c) that $\ccd_{\Q}(H)=\ccd_{\Q}(G)$.
However, the following question will remain unanswered in this paper.

\begin{ques}
Suppose that $H$ is a closed co-compact subgroup of the t.d.l.c. group $G$.
Is it true that $\ccd_\Q(H)=\ccd_\Q(G)$?
\end{ques}
\end{rem}

\begin{rem}
\label{rem:discsgrp}
Proposition~\ref{prop:cd}(c) implies that for
any discrete group $D$ of a t.d.l.c. group $G$
one has 
\begin{equation}
\label{eq:discd}
\ccd_{\Q}(D)\leq\ccd_{\Q}(G).
\end{equation}
\end{rem}

\subsection{The flat rank of a t.d.l.c. group}
\label{ss:frk}
Let $G$ be a t.d.l.c. group, and let $H$ be a closed subgroup of $G$.
Then $H$ is called a {\it flat subgroup} of $G$, if
there exists a compact open subgroup $\caO$ of $G$ which is
{\it tidy} for all $h\in H$ (cf. \cite[\S2, Def.~1]{geud:flat}).
For such a subgroup $H(1)=\{\,h\in H\mid s(h)=1\,\}$ is a normal subgroup, and
$H/H(1)$ is a free abelian group over some set $I_H$
(cf. \cite[Cor.~6.15]{geog:nyj}).
Moreover, $\eurk(H)=\card(I_H)\in\N\cup\{\infty\}$
is called the {\it rank of the flat subgroup $H$}.
The {\it flat rank} $\frk(G)$ of $G$ is defined by 
\begin{equation}
\label{eq:frkG}
\frk(G)=\sup(\{\,\eurk(H)\mid \text{$H$ a flat subgroup of $G$}\,\}\cup\{0\})\in\N_0\cup\{\infty\}
\end{equation}
(cf. \cite[\S 1.3]{geud:frk}).
We call the t.d.l.c. group $G$ to be {\it $N$-compact},
if for any compact open subgroup $\caO$ of $G$, the open subgroup
$N_G(\caO)$ is also compact.
One has the following property.

\begin{prop}
\label{prop:flat}
Let $G$ be an $N$-compact t.d.l.c. group.
\begin{itemize}
\item[(a)] Let $H$ be a flat subgroup of $G$.
Then $\eurk(H)=\ccd_{\Q}(H)$.
\item[(b)] $\frk(G)\leq\ccd_{\Q}(G)$.
\end{itemize}
\end{prop}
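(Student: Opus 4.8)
The plan is to pin down $\ccd_\Q(H)$ by running the Hochschild--Lyndon--Serre spectral sequence for $1\to H(1)\to H\to H/H(1)\to1$, after using $N$-compactness to see that $H(1)$ is compact and open in $H$. Let $\caO$ be a compact open subgroup of $G$ that is tidy for every element of $H$. First I would record that $H(1)=H\cap N_G(\caO)$: if $h\in H$ is uniscalar then, since $\caO$ is tidy for $h$ and for $h^{-1}$, it is normalised by $h$ (the standard compatibility of scale and tidiness, cf.\ \cite{geud:flat}), and conversely any $h\in H$ normalising $\caO$ has scale $1$. As $G$ is $N$-compact, $N_G(\caO)$ is compact, and $H$ is closed, so $H(1)$ is compact; by \cite[Cor.~6.15]{geog:nyj} it is moreover normal in $H$, with $H/H(1)$ free abelian of rank $\eurk(H)=\card(I_H)$. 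Since $\caO\cap H$ is a compact open subgroup of $H$ consisting of relatively compact (hence uniscalar) elements, $\caO\cap H\subseteq H(1)$, so $H(1)$ is open in $H$ and $H/H(1)$ is discrete; thus $\dH^\bullet(H/H(1),\argu)$ is ordinary group cohomology of $H/H(1)\cong\Z^{(I_H)}$ by Example~\ref{ex:dH}(a).

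For part (a), the upper bound is immediate: $\ccd_\Q(H(1))=0$ by Proposition~\ref{prop:cd}(a), so Proposition~\ref{prop:cd}(b) gives $\ccd_\Q(H)\le\ccd_\Q(H/H(1))=\ccd_\Q(\Z^{(I_H)})$, which equals $\card(I_H)=\eurk(H)$ (the classical value $n$ when $|I_H|=n$, and $\infty$ otherwise). For the reverse inequality, suppose first $n:=\eurk(H)<\infty$, so $H/H(1)\cong\Z^n$. By Proposition~\ref{prop:cd}(a) again, $\dH^t(H(1),\argu)=0$ for $t\ge1$, so the spectral sequence \eqref{eq:HLS} collapses onto the row $t=0$ and yields, for every $M\in\ob(\QHdis)$, a natural isomorphism $\dH^s(H,M)\cong H^s(\Z^n,M^{H(1)})$. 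Applying this with $M=\Q[H/H(1)]$, on which the normal subgroup $H(1)$ acts trivially, I get $M^{H(1)}=M\cong\Q[\Z^n]$ as $\Q[\Z^n]$-modules, and $H^n(\Z^n,\Q[\Z^n])\cong\Q\ne0$ since $\Z^n$ is a duality group of dimension $n$. Hence $\ccd_\Q(H)\ge n$, and so $\ccd_\Q(H)=n=\eurk(H)$.

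When $\eurk(H)=\infty$ I would reduce to the finite case: for each $n$ choose a rank-$n$ free abelian subgroup $\bar A\le H/H(1)$ and set $A=\pi^{-1}(\bar A)$, where $\pi\colon H\to H/H(1)$ is the quotient map. Then $A$ is a closed subgroup of $H$, flat with tidy subgroup $\caO$, and $A(1)=H(1)$, $A/A(1)\cong\Z^n$, so the finite-rank case gives $\ccd_\Q(A)=n$; by Proposition~\ref{prop:cd}(c), $\ccd_\Q(H)\ge\ccd_\Q(A)=n$ for every $n$, whence $\ccd_\Q(H)=\infty=\eurk(H)$. Finally part (b) follows formally: every flat subgroup $H$ of $G$ is closed, so by part (a) and Proposition~\ref{prop:cd}(c) we have $\eurk(H)=\ccd_\Q(H)\le\ccd_\Q(G)$, and taking the supremum over all flat subgroups gives $\frk(G)\le\ccd_\Q(G)$.

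The step I expect to be the real obstacle is the first one: identifying $H(1)$ with $H\cap N_G(\caO)$ and deducing that it is compact is exactly where the $N$-compactness hypothesis enters, and it rests on structural facts about tidy subgroups of flat groups (uniscalar elements are precisely those normalising a common tidy subgroup, and they form a normal subgroup with free abelian quotient). Once that is in place, the remainder is a routine degeneration of the Hochschild--Lyndon--Serre spectral sequence together with the well-known computation that $H^\ast(\Z^n,\Q[\Z^n])\cong\Q$ is concentrated in degree $n$.
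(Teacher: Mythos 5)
Your proposal is correct and follows essentially the same route as the paper: identify $C=H\cap N_G(\caO)$ as a compact open (normal) subgroup of $H$ using $N$-compactness, degenerate the Hochschild--Lyndon--Serre spectral sequence to reduce $\dH^\bullet(H,\argu)$ to $\dH^\bullet(H/C,\argu^C)$, and invoke the cohomological dimension of the free abelian group $H/C\cong H/H(1)$. You spell out several points the paper treats implicitly (the identification $H(1)=H\cap N_G(\caO)$, openness of $H(1)$, the lower-bound computation via $H^n(\Z^n,\Q[\Z^n])\cong\Q$, and the infinite-rank reduction), but the underlying argument is the same.
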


\begin{proof} (a)
For any abelian group $A$ which is a free over a set $I_A$ one has
\begin{equation}
\label{eq:abel1}
\ccd_{\Q}(A)=\ccd_{\Z}(A)=\card(I_A)\in\N_0\cup\{\infty\}.
\end{equation}
Let $\caO$ be a compact open subgroup such that $\caO$ is tidy for all $h\in H$.
By hypothesis, $N_G(\caO)$ is compact, and therefore 
$C=H\cap N_G(\caO)$ is compact. Then, by Proposition~\ref{prop:cd}(a) and the 
Hochschild-Lyndon-Serre spectral sequence (cf. \eqref{eq:HLS}),
one has natural isomorphisms 
$\dH^k(H,\argu)\simeq\dH^k(H/C,\argu^C)$ for all $k>0$. 
Hence, by \eqref{eq:abel1}, $\ccd_{\Q}(H)=\ccd_{\Q}(H/C)=\eurk(H)$.
(b) is a direct consequence of (a).
\end{proof}

\begin{rem} 
\label{rem:nere}
(Neretin's group of spheromorphisms)
Let $\caT_{d+1}$ be a $d+1$-regular tree, and let
$G=\Hier_\circ(\caT_{d+1})=N_{d+1,d}$ be the group of all {\it almost
automorphisms} (or spheromorphisms) of $\caT_{d+1}$
introduced by Y.~Neretin in \cite{ner:hier}.
It is well known that $G$ is a compactly generated
simple (in particular topologically simple) t.d.l.c. group.
The group $G$ contains a discrete subgroup $D$ which is isomorphic to
the Higman-Thompson group $F_{d+1,d}$.
From this fact one concludes that $G$ contains discrete subgroups 
isomorphic to $\Z^n$ for all $n\geq 1$. Hence, by Remark~\ref{rem:discsgrp}, $\ccd_{\Q}(G)=\infty$.
\end{rem}


\subsection{Totally disconnected locally compact groups of type \FP$_\infty$}
\label{ss:FPinf}
Let $G$ be a t.d.l.c. group, and let $M$ be a discrete left $\QG$-module.
Then $M$ is said to be {\it finitely generated}, if there exist compact open subgroups
$\caO_1,\ldots,\caO_n$ of $G$ and a surjective homomorphism
$\pi\colon\coprod_{1\leq j\leq n} \Q[G/\caO_j]\to M$.
A partial projective resolution
\begin{equation}
\label{eq:papr1}
\xymatrix{
P_n\ar[r]^{\der_n}&P_{n-1}\ar[r]^{\der_{n-1}}&\ldots\ar[r]^{\der_2}&P_1\ar[r]^{\der_1}
&P_0\ar[r]^{\eps}&M\ar[r]&0}
\end{equation}
of $M$ will be called to be of {\it finite type}, if $P_j$ is finitely generated for all $0\leq j\leq n$.
One has the following property.

\begin{prop}
\label{prop:FPn}
Let $G$ be a t.d.l.c. group, and let $M$ be a discrete left $\QG$-module.
Then the following are equivalent:
\begin{itemize}
\item[(i)] There is a partial projective resolution $(P_j,\der_j,\eps)_{0\leq j\leq n}$ of finite type
of $M$ in $\QGdis$;
\item[(ii)] the discrete left $\QG$-module $M$
is finitely generated and for every partial projective resolution of finite type
\label{eq:papr2}
\begin{equation}
\xymatrix{
Q_k\ar[r]^{\eth_k}&Q_{k-1}\ar[r]^{\eth_{k-1}}&\ldots\ar[r]^{\eth_2}&Q_1\ar[r]^{\eth_1}
&Q_0\ar[r]^{\eps^\prime}&M\ar[r]&0}
\end{equation}
in $\QGdis$ with $k<n$,  $\ker(\eth_k)$ is finitely generated.
\end{itemize}
\end{prop}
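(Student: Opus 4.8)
The plan is to follow the classical argument for the analogous statement about discrete groups (as in Bieri's notes or Brown's book, Chap.~VIII), adapting it to the category $\QGdis$. Since Proposition~\ref{prop:permod} guarantees that $\QGdis$ has enough projectives of a very concrete shape---namely (summands of) discrete permutation modules $\Q[\Omega]$ with compact stabilizers---and since finitely generated such modules are exactly the finite coproducts $\coprod_j\Q[G/\caO_j]$, the notion of ``finitely generated projective'' behaves just as it does over a ring. The implication (ii)$\Rightarrow$(i) is the easy direction: if $M$ is finitely generated, pick a surjection $P_0\to M$ from a finitely generated projective (this exists by the definition of finitely generated together with Prop.~\ref{prop:permod}); the hypothesis applied to the trivial partial resolution $0\to M\to 0$ of length $k=0$ shows $\ker(P_0\to M)$ is finitely generated, so one may pick a finitely generated projective $P_1$ surjecting onto it, and iterate up to stage $n$.

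The substance is in (i)$\Rightarrow$(ii). First I would record the standard \emph{generalized Schanuel lemma} in $\QGdis$: given two partial projective resolutions of $M$ of the same length $k$, the kernels $K=\ker(\der_k)$ and $K'=\ker(\eth_k)$ satisfy $K\oplus Q_k\oplus P_{k-1}\oplus Q_{k-2}\oplus\cdots\simeq K'\oplus P_k\oplus Q_{k-1}\oplus P_{k-2}\oplus\cdots$, where all the $P_i,Q_i$ on each side are finitely generated projective. This is proved exactly as in the module case---Schanuel's lemma for a single step uses only that $\QGdis$ is abelian with enough projectives and that projectives lift along epimorphisms, all of which hold here---and then one induces on $k$. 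The point of the formula is that one side has $K\oplus(\text{f.g.\ projective})$ and the other has $K'\oplus(\text{f.g.\ projective})$, with the two ``f.g.\ projective'' summands not necessarily isomorphic but both finitely generated.

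Now suppose (i) holds, so there is a partial projective resolution of finite type of length $n$, with $k$-th kernel, say, $K_0$; since it can be continued (the given resolution has a further term $P_n$ mapping onto $K_{n-1}$, and truncating gives $K_0$ sitting in a length-$k$ resolution that extends, so $K_0$ is a quotient of the finitely generated $P_{k}$... more simply, in the given resolution $K_0 = \ker(\der_k)$ is the image of $\der_{k+1}$ whose source $P_{k+1}$ is finitely generated when $k+1\le n$, and for $k=n$ take $K_0=\ker(\der_n)$ which is a submodule we must handle separately), $K_0$ is finitely generated whenever $k<n$. Given an arbitrary partial projective resolution of finite type of length $k<n$ with kernel $K'$, the Schanuel formula above exhibits $K'\oplus(\text{f.g.\ projective})\simeq K_0\oplus(\text{f.g.\ projective})$; the right-hand side is finitely generated, hence so is $K'$ (a direct summand of a finitely generated module is finitely generated, since $\QGdis$'s finitely generated objects are closed under quotients and under finite direct sums, and $K'$ is a quotient of the finitely generated $K'\oplus(\text{f.g.\ projective})$). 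That $M$ itself is finitely generated is immediate from (i) since $P_0\to M$ is onto with $P_0$ finitely generated.

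The main obstacle I anticipate is purely bookkeeping: making sure the class of finitely generated discrete $\QG$-modules is closed under the operations used (finite direct sums, quotients, and ``summands''), and that ``finitely generated projective'' is closed under finite direct sums and direct summands. The first is routine from the definition via surjections from $\coprod_j\Q[G/\caO_j]$; the second follows from Corollary~\ref{cor:charproj} together with the observation that a summand of $\coprod_{j=1}^m\Q[G/\caO_j]$ is a quotient of it, hence finitely generated, and is projective. Once these closure properties are in place, the proof is the verbatim transcription of \cite[Chap.~VIII, Prop.~2.2 and its proof]{brown:coh} (or the corresponding statement for $\FP_n$ in Bieri's notes), with $\QGdis$ in place of the module category and Prop.~\ref{prop:permod} supplying the f.g.\ projectives.
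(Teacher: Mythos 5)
Your proposal is correct and takes essentially the same route as the paper, which disposes of the proposition in one line by citing the generalized Schanuel lemma \cite[Chap.~VIII, Lemma~4.2 and Prop.~4.3]{brown:coh}; you have simply unwound that citation, correctly observing that Schanuel's lemma only needs an abelian category with enough projectives and that Proposition~\ref{prop:permod} supplies the finitely generated projectives. The brief detour where you worry about $k=n$ is unnecessary since (ii) only quantifies over $k<n$, but it does no harm.
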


\begin{proof}
The proposition is a direct consequence of generalized form of Schanuel's lemma 
(cf. \cite[Chap.~VIII, Lemma~4.2 and Prop.~4.3]{brown:coh}).
\end{proof}

The discrete left $\QG$-module $M$ will be said to be {\it of type $\FP_n$}, $n\geq 0$, if $M$
satisfies either of the hypothesis (i) or (ii) in Proposition~\ref{prop:FPn},
i.e., $M$ is of type $\FP_0$ if, and only if, $M$ is finitely generated.
If $M$ is of type $\FP_n$ for all $n\geq 0$, then $M$ will be said to be of type
$\FP_\infty$. The t.d.l.c. group $G$ will be called to be of type $\FP_n$, $n\in\N\cup\{\infty\}$,
if the trivial left $\QG$-module $\Q$ is of type $\FP_n$.

\section{Rational discrete homology for t.d.l.c. groups}
\label{s:rathom}
For a profinite group $\caO$ any short exact sequence
$0\rightarrow L\overset{\alpha}{\to} M\overset{\beta}{\to} N\to 0$ 
of discrete left $\Q[\caO]$-modules splits
(cf. Prop.~\ref{prop:prof}). Hence, if $\Q\in\ob(\disQO)$ denotes the trivial right $\QO$-module,
the sequence
\begin{equation}
\label{eq:flat1}
\xymatrix{
0\ar[r]&\Q\otimes_\caO L\ar[r]^{\iid_{\Q}\otimes\alpha}&
\Q\otimes_\caO M\ar[r]^{\iid_{\Q}\otimes\beta}&
\Q\otimes_\caO N\ar[r]&0
}
\end{equation} 
is exact.
Let $G$ be a t.d.l.c. group, let $\caO\subseteq G$ be a compact open subgroup,
and let $0\rightarrow L\overset{\alpha}{\to} M\overset{\beta}{\to} N\to 0$ be a short exact
sequence of discrete left $\QG$-modules.
Since $\idn_\caO^G(\argu)\colon\disQO\to\disQG$ is an exact functor, and as
\begin{equation}
\label{eq:flat2}
\idn_{\caO}^G(\Q)\otimes_G\argu\simeq\Q\otimes_{\caO}\rst^G_\caO(\argu)\colon
\QGdis\longrightarrow\Qvec
\end{equation}
are naturally isomorphic additive functors, the sequence
\begin{equation}
\label{eq:flat3}
\xymatrix{
0\ar[r]&\idn_\caO^G(\Q)\otimes_G L\ar[r]^{\iid\otimes\alpha}&
\idn_\caO^G(\Q)\otimes_G M\ar[r]^{\iid\otimes\beta}&
\idn_\caO^G(\Q)\otimes_G N\ar[r]&0
}
\end{equation}
is exact, i.e.,
$\idn_\caO^G(\Q)\otimes_G\argu\colon\QGdis\to\Qvec$ is an exact functor.
Moreover, as $\argu\otimes_G\argu$ commutes with direct limits in the first
and in the second argument (cf. \cite[\S VIII.4, p.~195]{brown:coh}), the sequence
\begin{equation}
\label{eq:flat4}
\xymatrix{
0\ar[r]&Q\otimes_G L\ar[r]^{\iid_Q\otimes\alpha}&
Q\otimes_G M\ar[r]^{\iid_Q\otimes\beta}&
Q\otimes_G N\ar[r]&0
}
\end{equation}
is exact for any projective discrete right $\QG$-module $Q$ (cf. Cor.~\ref{cor:charproj}),
i.e., $Q$ is {\it flat}. For a discrete right $\QG$-module $C$, we denote
by 
\begin{equation}
\label{eq:dTor}
\dTor^G_k(C,\argu)\colon\QGdis\longrightarrow\Qvec
\end{equation} 
the left derived functors
of the right exact functor $C\otimes_G\argu\colon \QGdis\to\Qvec$, and define the
{\it rational discrete homology} of $G$ by
\begin{equation}
\label{eq:dhom}
\dH_k(G,\argu)=\dTor^G_k(\Q,\argu).
\end{equation}
By definition, one has
the following properties.

\begin{prop}
\label{prop:dhom}
Let $G$ be a t.d.l.c. group,
let $C$ be a discrete right $\QG$-module, and let $M$ be a 
discrete left $\QG$-module. Then
\begin{itemize}
\item[(a)] $\dH_k(G,M)=0$ for all $k>\ccd_\Q(G)$;
\item[(b)] if $P\in\ob(\QGdis)$ is projective, one has $\dTor_k^G(C,P)=0$ for all $k\geq 1$;
\item[(c)] if $G$ is compact, one has $\dTor_k^G(C,M)=0$ for all $k\geq 1$.
\end{itemize}
\end{prop}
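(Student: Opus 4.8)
The plan is to obtain all three assertions from the homological machinery already in place, with only (a) requiring a real argument. For (b), I would note that a projective $P\in\ob(\QGdis)$ has $0\to P\xrightarrow{\iid}P\to 0$ as a projective resolution in $\QGdis$; applying the right exact functor $C\otimes_G\argu$ and passing to homology gives $\dTor_0^G(C,P)\simeq C\otimes_G P$ and $\dTor_k^G(C,P)=0$ for $k\geq 1$. For (c), if $G$ is compact it is profinite, so by Proposition~\ref{prop:prof} \emph{every} discrete left $\QG$-module $M$ is projective in $\QGdis$; hence (c) is immediate from (b). Part (a) is vacuous when $\ccd_\Q(G)=\infty$, so I will assume $d:=\ccd_\Q(G)<\infty$.

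For (a) the two key inputs are that $\dTor^G_\bullet$ is a balanced bifunctor and that the trivial \emph{right} $\QG$-module $\Q$ has projective dimension exactly $d$. Both come from the left-right symmetry of the set-up: the anti-isomorphism $\QG\to\QG$, $g\mapsto g^{-1}$, realizes $M\mapsto{}^\times M$ as an exact equivalence $\disQG\xrightarrow{\sim}\QGdis$ which preserves discreteness, injectivity and projectivity and carries the trivial right module to the trivial left module $\Q$. Consequently the projective dimension of $\Q$ in $\disQG$ equals $\pdim(\Q)=d$, and, transporting \eqref{eq:flat4} through this equivalence, every projective discrete \emph{left} $\QG$-module is flat with respect to short exact sequences of discrete right $\QG$-modules. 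Since projectives on both sides are flat, the standard balancing argument (a double complex of projective resolutions; cf. \cite[\S2.7]{weib:hom}) shows that $\dTor^G_k(C,M)$ may be computed either from a projective resolution of $M$ in $\QGdis$ or from a projective resolution of $C$ in $\disQG$.

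Granting this, (a) follows quickly: by \eqref{eq:dhom} one has $\dH_k(G,M)=\dTor^G_k(\Q,M)$, and, choosing a projective resolution $(Q_\bullet,\der_\bullet,\eps)$ of the trivial right module $\Q$ in $\disQG$ with $Q_i=0$ for $i>d$ (possible by the dimension computation above), balancing gives $\dH_k(G,M)\simeq H_k(Q_\bullet\otimes_G M)$, which vanishes for $k>d$ because the complex $Q_\bullet\otimes_G M$ is concentrated in degrees $0,\dots,d$.

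I do not expect a genuine obstacle here; the only step demanding real care is the left-right bookkeeping in (a) --- checking that ${}^\times(\argu)$ really is an exact equivalence respecting projectivity and discreteness, and that the balancing argument for $\dTor$ transfers verbatim to $\QGdis$ once both-sided flatness of projectives is available. One could instead package this via the natural isomorphism $C\otimes_G M\simeq M^\times\otimes_G{}^\times C$ (with $M^\times$ the right module associated to $M$) to rewrite $\dH_k(G,M)$ as $\dTor^G_k(M^\times,\Q)$ and then read off the vanishing from the length-$d$ projective resolution of the trivial left module $\Q$; but any such route still implicitly uses the symmetry, so it seems cleanest to isolate that symmetry once at the outset.
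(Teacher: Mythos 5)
Your proof is correct, and for parts (a) and (c) it follows essentially the route the paper takes. The difference is in part (b): the paper argues through balancing (it notes that $\argu\otimes_G P$ is exact and then invokes the identification $\dTor^G_k(C,M)\simeq\caL_k^M(C)$ of the derived functors taken in either variable), whereas you simply observe that $0\to P\xrightarrow{\iid}P\to 0$ is already a projective resolution of $P$ in $\QGdis$ and apply $C\otimes_G\argu$ directly. Your argument for (b) is genuinely more elementary and avoids balancing altogether; the paper's version buys one thing you don't need for (b), namely the balancing statement itself, which both you and the paper then rely on for (a). For (a) the paper is very terse (``a direct consequence of the definition of $\ccd_\Q(G)$''); what you supply --- that $\argu^\times\colon\disQG\xrightarrow{\sim}\QGdis$ is an exact equivalence preserving discreteness and projectivity, that it transports \eqref{eq:flat4} to show projective discrete left modules are flat over short exact sequences of discrete right modules, and that the resulting two-sided flatness yields balancing so one may compute $\dTor^G_k(\Q,M)$ from a length-$d$ projective resolution of the trivial \emph{right} module --- is exactly the content the paper leaves implicit. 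One small remark: you only need $\pdim(\Q)\leq d$ on the right, not equality, but that is harmless. Overall the proposal is a correct and somewhat more transparent rendering of the paper's argument.
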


\begin{proof}
(a) is a direct consequence of the definition of $\ccd_\Q(G)$ (cf. \eqref{eq:pdim3}).
(b) Using the previously mentioned arguments one concludes that $\argu\otimes_G P\colon
\disQG\to \Qvec$ is an exact functor. This property implies that for the left derived functors
$\caL_k^M$ of $\argu\otimes_G M$ one has that $\dTor^G_k(C,M)\simeq\caL_k^M(C)$.
Since $P$ is projective, $\caL_k^P=0$ for all $k>1$. This yields the claim.
(c) is a direct consequence of Proposition~\ref{prop:prof}.
\end{proof}


\subsection{Invariants and coinvariants}
\label{ss:inco}
Let $\caO$ be a profinite group, and let $M$ be a discrete left $\QO$-module.
The $\Q$-vector space
\begin{equation}
\label{eq:invM}
M^\caO=\{\,m\in M\mid g\cdot m=m\ \text{for all $g\in\caO$}\,\}\subseteq M
\end{equation} 
is called the {\it space of $\caO$-invariants} of $M$, and 
\begin{equation}
\label{eq:coinv}
M_{\caO}=\Q\otimes_\caO M,
\end{equation}
is called the {\it space of $\caO$-coinvariants} of $M$. By definition, one has
a canonical map
\begin{equation}
\label{eq:natcoinv}
\phi_{M,\caO}\colon M^\caO\longrightarrow M_\caO
\end{equation}
given by $\phi_{M,\caO}(m)=1\otimes_{\caO} m$ for $m\in M$.
One has the following.

\begin{prop}
\label{prop:phinat}
Let $\caO$ be a profinite group,
and let $M$ be a discrete left $\QO$-module. Then $\phi_{M,\caO}$ is an isomorphism.
Moreover, if $U\subseteq\caO$ is an open subgroup, one has commutative diagrams
\begin{equation}
\label{eq:coinvsquare}
\xymatrix@C=1.5truecm{
M^\caO\ar[r]^{\phi_{M,\caO}}\ar[d]& M_\caO\ar[d]^{\rho_{\caO,U}}\\
M^U\ar[r]^{\phi_{M,U}}& M_U}
\qquad\qquad
\xymatrix@C=1.5truecm{
M^\caO\ar[r]^{\phi_{M,\caO}}& M_\caO\\
M^U\ar[r]^{\phi_{M,U}}\ar[u]^{\lambda_{U,\caO}}& M_U\ar[u]}
\end{equation}
where 
\begin{align}
\lambda_{U,\caO}(m)&=\frac{1}{|\caO:U|}\sum_{r\in \caR} r\cdot m,\label{eq:lambdadef}\\
\rho_{\caO,U}(1\otimes_\caO n)&=\frac{1}{|\caO:U|}\sum_{r\in\caR} 1\otimes_U r^{-1}\cdot n,\label{eq:mudef}
\end{align}
$m\in M^U$, $n\in N$, and $\caR\subseteq\caO$ is any set of coset representatives of $\caO/U$.
\end{prop}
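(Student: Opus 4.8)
The plan is to produce an explicit two-sided inverse of $\phi_{M,\caO}$ by an averaging (Reynolds) operator, using that $M$ is a discrete $\QO$-module and that $\Q$ has characteristic $0$; the commutativity of the two squares will then come out of routine bookkeeping with coset representatives. First I would set up the averaging operator. Since $M$ is discrete, $M=\bigcup_{U}M^{U}$ with $U$ running over the open normal subgroups of $\caO$. For $m\in M$ choose such a $U$ with $U\subseteq\stab_{\caO}(m)$, pick a set $\caR$ of representatives of the left cosets of $U$ in $\caO$, and set $e(m)=\frac{1}{|\caO:U|}\sum_{r\in\caR}r\cdot m$. The first thing to check is that $e(m)\in M^{\caO}$ and is independent of the choices: independence of $\caR$ holds because $r\cdot m$ depends only on $rU$ (as $m\in M^{U}$); independence of $U$ follows by a refinement argument grouping the cosets of a smaller $V$ over those of $U$; and $\caO$-invariance holds because left translation by any $g\in\caO$ permutes $\{rU\}$ while $r\cdot m$ depends only on $rU$. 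This yields a $\Q$-linear retraction $e\colon M\to M^{\caO}$ with $e|_{M^{\caO}}=\iid$.

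Next I would show $\phi_{M,\caO}$ is an isomorphism. For $g\in\caO$ and $m\in M$ the same permutation argument gives $e(g\cdot m)=e(m)$, and $e(m)-m=\frac{1}{|\caO:U|}\sum_{r}(r\cdot m-m)$ lies in $\spn_{\Q}\{\,x\cdot m'-m'\mid x\in\caO,\ m'\in M\,\}$, which is exactly the kernel of the canonical surjection $M\twoheadrightarrow M_{\caO}=\Q\otimes_{\caO}M$. Hence $e$ factors through a map $\psi\colon M_{\caO}\to M^{\caO}$ with $\psi(1\otimes_{\caO}m)=e(m)$. Then $\psi\circ\phi_{M,\caO}=\iid_{M^{\caO}}$ by the previous paragraph, and $\phi_{M,\caO}\circ\psi=\iid_{M_{\caO}}$ since $e(m)\equiv m$ modulo the kernel of $M\to M_{\caO}$; so $\phi_{M,\caO}$ is an isomorphism. (Equivalently, one may reduce along $M=\varinjlim_{U}M^{U}$ to the finite groups $\caO/U$, where this is Maschke's theorem applied to the idempotent $\frac{1}{|\caO/U|}\sum_{g}g$.)

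For the two squares, the unlabelled vertical maps are the evident ones: the inclusion $M^{\caO}\hookrightarrow M^{U}$ on the left, and the natural map $\Q\otimes_{U}M\to\Q\otimes_{\caO}M$, $1\otimes_{U}m\mapsto1\otimes_{\caO}m$, on the right; moreover $\lambda_{U,\caO}$ is the restriction of the analogue of $e$ with $U$ in place of an open normal subgroup, so $\lambda_{U,\caO}(M^{U})\subseteq M^{\caO}$ as in the first paragraph. I would first check that $\rho_{\caO,U}$ is well defined: the element $1\otimes_{U}r^{-1}\cdot n$ depends only on the right coset $Ur^{-1}$, and replacing $n$ by $g\cdot n$ for $g\in\caO$ merely permutes these right cosets, so the formula descends to $\Q\otimes_{\caO}M$. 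Each square is then a one-line verification: for $m\in M^{\caO}$, the right-then-down composite in the left square is $\frac{1}{|\caO:U|}\sum_{r}1\otimes_{U}r^{-1}m=1\otimes_{U}m$ because $r^{-1}m=m$, which is the down-then-right composite; and for $m\in M^{U}$, the up-then-right composite in the right square is $1\otimes_{\caO}\bigl(\frac{1}{|\caO:U|}\sum_{r}r\cdot m\bigr)=\frac{1}{|\caO:U|}\sum_{r}(1\cdot r)\otimes_{\caO}m=1\otimes_{\caO}m$, which is the right-then-up composite.

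I expect no serious obstacle. The only point that needs care is consistently distinguishing the left cosets $rU$ (which govern $r\cdot m$ for $m\in M^{U}$ and appear in $\lambda_{U,\caO}$) from the right cosets $Ur^{-1}$ (which govern $1\otimes_{U}r^{-1}\cdot n$ and appear in $\rho_{\caO,U}$), together with the observation that multiplication by $g\in\caO$ permutes each of these coset spaces and hence leaves the displayed sums unchanged.
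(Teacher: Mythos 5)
Your proof is correct, and it takes a genuinely different route from the paper's. The paper leverages the machinery of Proposition~\ref{prop:prof}: it writes $M=M^{\caO}\oplus C$ using the injectivity of $M^{\caO}$ (Claim~\ref{cl:prf3}), shows $C_{\caO}=0$ by decomposing $C$ as a direct limit of finite-dimensional modules and applying Maschke, and then invokes the vanishing of $\dH_1(\caO,\argu)$ together with the long exact sequence in $\dH_\bullet(\caO,\argu)$ to see that $(M^{\caO})_{\caO}\to M_{\caO}$ is injective; the commutativity of \eqref{eq:coinvsquare} is left to the reader. You instead build the inverse of $\phi_{M,\caO}$ explicitly via the averaging (Reynolds) operator $e(m)=\frac{1}{|\caO:U|}\sum_{r\in\caR}r\cdot m$, verify its well-definedness and $\caO$-invariance directly, and check that $e-\iid$ lands in the augmentation submodule so that $e$ descends to $\psi\colon M_{\caO}\to M^{\caO}$; the two squares then follow by short computations. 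Your argument is more elementary and self-contained (it uses only discreteness and $\operatorname{char}\Q=0$, not the injectivity of discrete $\Q[\caO]$-modules or any derived-functor vanishing) and it produces the inverse map explicitly, which also makes the verification of the compatibility squares transparent. The paper's argument is shorter given the results already in place and ties the statement into the structural theory developed in \S\ref{ss:prgr}. One small remark on presentation: in checking that $\rho_{\caO,U}$ is well defined, the key point is that $1\otimes_{U}r^{-1}n$ depends only on $Ur^{-1}$, and the bijection $rU\leftrightarrow Ur^{-1}$ between left and right cosets is what makes the passage between $\caR$-indexed sums and right-coset sums legitimate; you use this correctly, but it is worth stating explicitly since the same subtlety recurs when showing $\rho_{\caO,U}$ kills the augmentation submodule.
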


\begin{proof}
The space of $\caO$-invariants $M^\caO\subseteq M$ is a discrete left $\QO$-submodule of $M$.
Hence, by Claim~\ref{cl:prf3}, $M=M^\caO\oplus C$ for some discrete left $\QO$-submodule $C$ of $M$.
Since $C$ is a discrete left $\QO$-module, one has $C=\varinjlim_{i\in I} C_i$, where
$C_i$ are finite-dimensional discrete left $\QG$-submodules of $C$.
By construction, $C_i^{\caO}=0$, and thus, by Maschke's theorem, $(C_i)_{\caO}=0$.
Since $\Q\otimes_{\caO}\argu$ commutes with direct limits, this implies that $C_\caO=0$.
Hence $C$ is contained in the kernel of the canonical map $\tau_M\colon M\to M_{\caO}$.
Let $\phi_{M,\caO}=(\tau_M)\vert_{M^\caO}$ denote the restriction of $\tau_M$ to the $\QO$-submodule
$M^{\caO}$. Since $\dH_1(\caO,\argu)=0$ (cf. Prop~\ref{prop:cd}(a), Prop.~\ref{prop:dhom}(a)),
the long exact sequence for $\dH_\bullet(\caO,\argu)$ implies that
the map $(M^\caO)_{\caO}\to M_{\caO}$ is injective.
As $\tau_{M^\caO}\colon M^\caO\to (M^\caO)_{\caO}$ is a bijection, the commutativity of the diagram
\begin{equation}
\label{eq:diainv}
\xymatrix@C=1.5truecm{
M^\caO\ar[r]\ar[d]_{\tau_{M^\caO}}\ar[dr]^{\phi_{M,\caO}}&M\ar[d]^{\tau_M}\\
(M^\caO)_{\caO}\ar[r]&M_{\caO}}
\end{equation}
yields that $\phi_{M,\caO}$ is injective. 
As $\tau_M$ is surjective and $C\subseteq\kernel(\tau_M)$,
one concludes that $\phi_{M,\caO}$ is bijective, and $C=\kernel(\tau_M)$.
The commutativity of the diagrams \eqref{eq:coinvsquare} can be verified by a straightforward calculation.
\end{proof}


\subsection{The rational discrete standard bimodule $\biB(G)$}
\label{ss:bimo} 
For a t.d.l.c. group $G$ the set of compact open subgroups
$\CO(G)$ of $G$ with the inclusion relation "$\subseteq$" is a directed set, 
i.e., $(\CO(G),\subseteq)$ is a partially ordered set, and for 
$U,V\in\CO(G)$ one has also $U\cap V\in \CO(G)$.
For $U,V\in\CO(G)$, $V\subseteq U$, one has an injective mapping of discrete left $\Q[U]$-modules
\begin{equation}
\label{eq:biB1}
\teta_{U,V}\colon\Q\longrightarrow\Q[U/V],\qquad\teta_{U,V}(1)=\frac{1}{|U:V|}\sum_{r\in\caR} r\, V,
\end{equation}
where $\caR\subset U$ is any set of coset representatives of $U/V$.
As $\idn_V^G=\idn_U^G\circ\idn_V^U$, $\teta_{U,V}$ induces an injective mapping
\begin{equation}
\label{eq:biB2}
\eta_{U,V}\colon\Q[G/U]\longrightarrow\Q[G/V],\ \ \ 
\eta_{U,V}(x\,U)=\frac{1}{|U:V|}\sum_{r\in\caR} x\, r\, V,\ \ \ x\in G.
\end{equation}
By construction, one has for $W\in\CO(G)$, $W\subseteq V\subseteq U$, that 
$\eta_{U,W}=\eta_{V,W}\circ\eta_{U,V}$.
For $g\in G$ one has an isomorphism of discrete left $\QG$-modules
\begin{equation}
\label{eq:biB3}
c_{U,g}\colon \Q[G/U]\longrightarrow\Q[G/U^g],\qquad c_{U,g}(x\,U)= x\,g\,U^g,
\qquad x\in G,
\end{equation}
where $U^g=g^{-1} U g$.
Moreover, for $g,h\in G$ and $U,V\in\CO(G)$, $V\subseteq U$, one has
\begin{align}
c_{V,g}\circ\eta_{U,V}&=\eta_{U^g,V^g}\circ c_{U,g},\label{eq:biB4}\\
c_{U^g,h}\circ c_{U,g}&=c_{U,gh}.\label{eq:biB5}
\end{align}
Let $\biB[G]=\varinjlim_{U\in\CO(G)} (\Q[G/U],\eta_{U,V})$.
Then, by definition, $\biB[G]$ is a discrete left $\QG$-module.
By \eqref{eq:biB4}, the assignment
\begin{equation}
\label{eq:biB6}
(xU)\cdot g= c_{U,g}(x\,U)=xg\,U^g, \ \ g,x\in G,\ \ U\in\CO(G),
\end{equation}
defines a $\Q$-linear map $\argu\cdot\argu\colon \biB(G)\times \Q[G]\to\biB(G)$, and, by \eqref{eq:biB5},
this map defines a right $\QG$-module structure on $\biB(G)$.
By \eqref{eq:biB6}, $\biB(G)$ is a discrete right $\QG$-module making $\biB(G)$
a rational discrete $\QG$-bimodule. Therefore, we will call $\biB(G)$
the {\it rational discrete standard bimodule} of $G$. It has the following straightforward
properties.

\begin{prop}
\label{prop:biG}
Let $G$ be a t.d.l.c. group. 
\begin{itemize}
\item[(a)] $\biB(G)$ is a flat rational discrete right $\QG$-module, and a flat
rational discrete left $\QG$-module, i.e.,
\begin{equation}
\label{eq:dTorbiB}
\dTor_k^G(A,\biB(G))=\dTor_k^G(\biB(G),B)=0
\end{equation}
for all $A\in\ob(\disQG)$, $B\in \ob(\QGdis)$ and $k\geq 1$.
\item[(b)] One has
\begin{equation}
\label{eq:invbiB}
\Hom_G(\Q,\biB(G))\simeq
\begin{cases}
\Q&\ \text{if $G$ is compact,}\\
0&\ \text{if $G$ is not compact.}
\end{cases}
\end{equation}
\end{itemize}
\end{prop}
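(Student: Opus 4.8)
The plan is to prove (a) and (b) separately, using the directed-limit description $\biB(G)=\varinjlim_{U\in\CO(G)}\Q[G/U]$ together with the fact that tensor and $\Hom$ interact well with filtered colimits in this setting.

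For part (a), I would first treat flatness as a right module. Each term $\Q[G/U]$ of the colimit is a projective discrete right $\QG$-module by Proposition~\ref{prop:permod} (applied on the right), hence flat by \eqref{eq:flat4}. Since $\biB(G)=\varinjlim_U\Q[G/U]$ and $\argu\otimes_G\argu$ commutes with filtered colimits in each argument (cf. \cite[\S VIII.4]{brown:coh}), a filtered colimit of flat modules is flat; concretely, for any short exact sequence $0\to L\to M\to N\to 0$ in $\QGdis$, tensoring with $\Q[G/U]$ is exact for each $U$, and passing to the colimit over the directed set $(\CO(G),\subseteq)$ preserves exactness because filtered colimits are exact in $\Qvec$. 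This gives $\dTor^G_k(A,\biB(G))=0$ for $k\geq1$ and all $A\in\ob(\disQG)$, and the symmetric argument (using that $\biB(G)$ is also a filtered colimit of the projective \emph{left} modules $\Q[G/U]$, via the same maps $\eta_{U,V}$ which are left $\QG$-linear) gives $\dTor^G_k(\biB(G),B)=0$ for $k\geq1$ and all $B\in\ob(\QGdis)$. The only mild subtlety is to note that the transition maps $\eta_{U,V}$ are morphisms of \emph{bi}modules up to the conjugation twist recorded in \eqref{eq:biB4}, so the colimit is simultaneously a colimit of left modules and (after reindexing by conjugation) of right modules; this is exactly what was arranged in \S\ref{ss:bimo}.

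For part (b), I would compute $\Hom_G(\Q,\biB(G))$ directly. Because $\Q$ is a finitely generated (indeed cyclic) discrete left $\QG$-module — it is a quotient of $\Q[G/\caO]\simeq\idn_\caO^G(\Q)$ for any compact open $\caO$ — and because the $\eta_{U,V}$ are \emph{injective}, one expects $\Hom_G(\Q,\argu)$ to commute with this particular filtered colimit; more robustly, a $\QG$-homomorphism $\Q\to\biB(G)$ has image a finitely generated submodule, which (since the system is a union of the images of the $\Q[G/U]$ under the structure maps, and these form a directed family of submodules) lands in the image of some single $\Q[G/U]$. Thus $\Hom_G(\Q,\biB(G))=\varinjlim_U\Hom_G(\Q,\Q[G/U])$. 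By Proposition~\ref{prop:ccom}, each $\Hom_G(\Q,\Q[G/U])$ is $0$ when $G$ is non-compact, giving $\Hom_G(\Q,\biB(G))=0$; and when $G$ is compact it is $\simeq\Q$, with the transition maps $\eta_{U,V}$ inducing (via \eqref{eq:biB1}) the identity on these one-dimensional spaces — the normalized averaging $\teta_{U,V}$ sends the invariant generator to the invariant generator — so the colimit is again $\Q$.

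The main obstacle is the colimit-commutation step in (b): $\Hom$ does \emph{not} commute with filtered colimits in general, so I must justify it using the specific features here. Two routes are available. The clean one: $\Q$ is finitely \emph{presented} as a discrete $\QG$-module when $G$ is compactly generated, but in general I would instead argue directly that any element of $\Hom_G(\Q,\biB(G))$ is determined by the image of $1\in\Q$, which is a single element of $\biB(G)=\bigcup_U\image(\Q[G/U]\to\biB(G))$, hence lies in one term; and conversely the relations defining $\Q$ (namely $g\cdot1=1$) are already satisfied inside that term since the structure maps are $\QG$-linear injections. This reduces the whole computation to the finite-index situation handled by Proposition~\ref{prop:ccom}, and then one only needs to check compatibility of the identifications $\Hom_G(\Q,\Q[G/U])\simeq\Q$ with the maps $\eta_{U,V}$, which is the short direct calculation indicated above.
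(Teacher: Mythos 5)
Your overall strategy --- write $\biB(G)$ as a directed colimit of projective (hence flat) modules and invoke exactness of filtered colimits, and for (b) use injectivity of the transition maps to reduce to Proposition~\ref{prop:ccom} --- is exactly what the paper does, and your argument for (b) is correct (whether the induced maps on the one-dimensional invariant spaces are the identity depends on the chosen normalization, but they are nonzero, which is all the colimit computation needs).

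There is, however, a genuine error in your argument for (a). The claim that each $\Q[G/U]$ is a projective discrete \emph{right} $\QG$-module is false: by \eqref{eq:biB6} (and \eqref{eq:biB4}, which you cite), the right action of $g\in G$ carries $\Q[G/U]$ into $\Q[G/U^g]$, so $\Q[G/U]$ is not a right $\QG$-submodule of $\biB(G)$ at all, and ``Proposition~\ref{prop:permod} applied on the right'' does not apply to these subspaces. Your remark about ``reindexing by conjugation'' acknowledges the issue but does not supply the fix: what is needed is the parallel right-module colimit presentation $\biB(G)\simeq\varinjlim_{U\in\CO(G)}\Q[U\backslash G]$ with transition maps $\nu_{U,V}$ as in \eqref{eq:biB9}, each $\Q[U\backslash G]\simeq\idn^G_U(\Q)$ being projective as a discrete right $\QG$-module (the right-sided analogue of Proposition~\ref{prop:permod}), hence flat. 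Relatedly, your Tor labels are transposed: tensoring a short exact sequence in $\QGdis$ by $\Q[G/U]$ on the left is the computation controlling $\dTor^G_\bullet(\biB(G),B)$, whereas the conclusion you write, $\dTor^G_k(A,\biB(G))=0$ for $A\in\ob(\disQG)$, is the statement that $\biB(G)$ is flat as a \emph{left} module --- and that follows directly from Proposition~\ref{prop:dhom}(b) applied to the projective left modules $\Q[G/U]$, with no conjugation subtlety at all.
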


\begin{proof}
(a) The direct limit of flat objects is flat (cf. \cite[\S VIII.4, p.~195]{brown:coh}).
Thus, by Proposition~\ref{prop:dhom}(b), $\biB(G)$ is a flat rational discrete left $\QG$-module.
A similar argument shows that $\biB(G)$ is also a flat rational discrete right $\QG$-module.

\noindent
(b) is a direct consequence of Proposition~\ref{prop:ccom}.
\end{proof}

Let $M\in\ob(\QGdis)$ and $U\in\CO(G)$.
The composition of maps (cf. Prop.~\ref{prop:phinat})
\begin{equation}
\label{eq:biB7}
\xymatrix@C=2truecm{
\theta_{M,U}\colon \Q[U\backslash G]\otimes_G M\ar[r]^-{\xi_{M,U}}&
\Q\otimes_U M\ar[r]^-{\phi^{-1}_{M,U}}&M^U},
\end{equation}
where $\xi_{M,U}(Ug\otimes_G m)=1\otimes_U gm$,
$g\in G$, $m\in M$, is an isomorphism. Moreover, if $V\in\CO(G)$, $V\subseteq U$,
one has by \eqref{eq:coinvsquare} a commutative diagram
\begin{equation}
\label{eq:biB8}
\xymatrix@C=2truecm{
\Q[U\backslash G]\otimes_G M\ar[r]^-{\theta_{M,U}}\ar[d]_{\nu_{U,V}\otimes_G\iid_M}& M^U\ar[d]\\
\Q[V\backslash G]\otimes_G M\ar[r]^-{\theta_{M,V}}& M^V}
\end{equation}
which is natural in $M$, where  $\nu_{U,V}\colon \Q[U\backslash G]\to \Q[V\backslash G]$
is given by 
\begin{equation}
\label{eq:biB9}
\nu_{U,V}(U x)=\frac{1}{|U:V|}\sum_{r\in\caR} V r^{-1} x,
\end{equation} 
$x\in G$, and $\caR\subseteq U$
is a set of coset representatives of $U/V$, i.e., $U=\bigsqcup_{r\in\caR} r V$.
Thus the family of maps $(\theta_{M,U})_{U\in\CO(G)}$ induces an isomorphism
\begin{equation}
\label{eq:biB10}
\theta_M\colon\biB(G)\otimes_G M\longrightarrow M
\end{equation}
which is natural in $M$. Note that the map $\theta_M$ can be described quite explicitly.
For $g\in G$ and $m\in M$ the subgroup $W=\stab_G(g\cdot m)$ is open.
In particular, for $U\in\CO(G)$ the intersection $U\cap W$
is of finite index in $U$. Let $U=\bigsqcup_{r\in\caR} r\cdot (U\cap W)$. Then
\begin{equation}
\label{eq:thetex}
\theta_M(Ug\otimes_G m)=\frac{1}{|U:(U\cap W)|}\cdot \sum_{r\in\caR} r\cdot m.
\end{equation}
From the commutative diagram \eqref{eq:biB8} 
one concludes the following property.

\begin{prop}
\label{prop:fundbi}
Let $G$ be a t.d.l.c. group.
Then one has a natural isomorphism
$\theta\colon \biB(G)\otimes_G\argu\longrightarrow\iid_{\QGdis}$.
\end{prop}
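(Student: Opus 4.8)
The plan is to leverage the explicit isomorphisms $\theta_{M,U}\colon\Q[U\backslash G]\otimes_G M\to M^U$ from \eqref{eq:biB7} together with the compatibility square \eqref{eq:biB8}, and simply pass to the direct limit over $U\in\CO(G)$. First I would recall that $\biB(G)=\varinjlim_{U\in\CO(G)}\Q[G/U]$ as a left $\QG$-module, so that as a \emph{right} $\QG$-module it is $\varinjlim_{U\in\CO(G)}\Q[U\backslash G]$ with the transition maps $\nu_{U,V}$ of \eqref{eq:biB9} (this is where one uses that ${}^\times(\argu)$ identifies $\Q[G/U]$ with $\Q[U\backslash G]$ and converts $\eta_{U,V}$ into $\nu_{U,V}$). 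Since the functor $\argu\otimes_G M$ commutes with direct limits (cf. \cite[\S VIII.4, p.~195]{brown:coh}), one gets
\begin{equation}
\label{eq:fundbiproof1}
\biB(G)\otimes_G M\simeq\varinjlim_{U\in\CO(G)}\bigl(\Q[U\backslash G]\otimes_G M\bigr).
\end{equation}

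Next I would feed the isomorphisms $\theta_{M,U}$ into this colimit. The commutativity of \eqref{eq:biB8} says precisely that $(\theta_{M,U})_{U\in\CO(G)}$ is a morphism from the direct system $\bigl(\Q[U\backslash G]\otimes_G M,\ \nu_{U,V}\otimes\iid\bigr)$ to the direct system $\bigl(M^U,\ \lambda_{U,V}\bigr)$, where the latter transition maps are the averaging inclusions $M^U\hookrightarrow M^V$ appearing on the right-hand column of \eqref{eq:biB8} (these are injective since $M^U\subseteq M^V$). As each $\theta_{M,U}$ is an isomorphism, the induced map on colimits is an isomorphism
\begin{equation}
\label{eq:fundbiproof2}
\theta_M\colon\biB(G)\otimes_G M\ \overset{\sim}{\longrightarrow}\ \varinjlim_{U\in\CO(G)} M^U.
\end{equation}
It remains to identify $\varinjlim_{U} M^U$ with $M$ itself. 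Since $M$ is a discrete left $\QG$-module, every $m\in M$ has open stabilizer, hence lies in $M^U$ for some compact open $U$; thus the obvious maps $M^U\to M$ are jointly surjective, and they are injective, so $\varinjlim_U M^U\simeq M$. Composing \eqref{eq:fundbiproof2} with this gives the desired natural isomorphism $\theta\colon\biB(G)\otimes_G\argu\to\iid_{\QGdis}$, with the explicit formula \eqref{eq:thetex} falling out of the construction.

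The only genuinely delicate point — the ``hard part'' — is \textbf{naturality}: one must check that for a morphism $f\colon M\to N$ in $\QGdis$ the square relating $\theta_M$ and $\theta_N$ commutes. This reduces, via \eqref{eq:fundbiproof1}, to checking naturality of each $\theta_{M,U}$ in $M$, which in turn follows from naturality of $\xi_{M,U}$ (immediate from its formula $\xi_{M,U}(Ug\otimes_G m)=1\otimes_U gm$) and naturality of $\phi_{M,U}^{-1}$, i.e. of the invariants-to-coinvariants comparison map from Proposition~\ref{prop:phinat}; the latter is where one invokes that $\phi_{M,U}$ is an isomorphism of functors. One also needs that the transition systems on both sides are themselves natural in $M$, which is clear since $\nu_{U,V}\otimes_G\iid_M$ and the inclusions $M^U\hookrightarrow M^V$ are visibly natural. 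With naturality in hand, the colimit of a natural isomorphism of direct systems is a natural isomorphism, completing the argument. I expect the verification of \eqref{eq:thetex} to be a routine unwinding of $\theta_{M,U}$ and $\phi_{M,U}^{-1}$ that can be relegated to a sentence.
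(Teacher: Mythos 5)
Your proof is correct and follows the same route the paper takes: pass to the colimit over $U\in\CO(G)$ using that $\argu\otimes_G M$ preserves filtered colimits, feed in the isomorphisms $\theta_{M,U}$ via the compatibility square \eqref{eq:biB8}, and identify $\varinjlim_U M^U$ with $M$ by discreteness of $M$. (One minor slip of language: the transition maps $M^U\hookrightarrow M^V$ for $V\subseteq U$ are plain inclusions, not ``averaging'' maps — the averaging shows up on the coinvariants side $\rho_{U,V}$ and in $\lambda_{V,U}$ going the other way.)
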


\begin{rem}
\label{rem:gralg}
Let $G$ be a t.d.l.c. group. One verifies easily using \eqref{eq:thetex} that
\begin{equation}
\label{eq:hopf1}
\argu\cdot\argu=\theta_{\biB(G)}\colon\biB(G)\otimes_G\biB(G)\to\biB(G)
\end{equation}
defines the structure of an associative algebra on $(\biB(G),\cdot)$.
However, $(\biB(G),\cdot)$ contains a unit $1_{\biB(G)}\in\biB(G)$ if, and only
if, $G$ is compact. 
Nevertheless, the algebra $(\biB(G),\cdot)$ has an {\it augmentation} or {\it co-unit}, i.e.,
the canonical map 
\begin{equation}
\label{eq:hopf2}
\eps\colon\biB(G)\to\Q,\qquad 
\eps(xU)=1,\qquad x\in G,\ U\in\CO(G)
\end{equation}
is a mapping of $\QG$-bimodules satisfying
$\eps(a\cdot b)=\eps(a)\cdot\eps(b)$ for all $a,b\in\biB(G)$.
The algebra $\biB(G)$ comes also equipped with an {\it antipode}
\begin{equation}
\label{eq:hopf3}
S=\argu^\times\colon\biB(G)\to\biB(G),\qquad S(Ug)=g^{-1}U,\qquad g\in G,\ U\in\CO(G).
\end{equation}
Again by \eqref{eq:thetex} one verifies also that $\theta_M\colon \biB(G)\otimes_G M\to M$
defines the structure of a $\biB(G)$-module for any rational discrete $\QG$-module $M$.
\end{rem}


\subsection{The $\Hom$-$\otimes$-identity}
\label{ss:homten}
Let $G$ be a t.d.l.c. group, and let $Q,M\in\ob(\QGdis)$. Then one has
a canonical map (cf. \eqref{eq:biB10})
\begin{equation}
\label{eq:nathom}
\begin{gathered}
\zeta_{Q,M}\colon \Hom_G(Q,\biB(G))\otimes_G M\longrightarrow \Hom_G(Q,M),\\
\zeta_{Q,M}(h\otimes_G m)(q)=\theta_M(h(q)\otimes_G m),
\end{gathered}
\end{equation}
for $h\in\Hom_G(Q,\biB(G))$, $q\in Q$, $m\in M$.
Let $U\subseteq G$
be a compact open subgroup of $G$. From now on we will assume that the 
canonical embedding
\begin{equation}
\label{eq:homt1}
\eta_U\colon\Q[G/U]\longrightarrow\biB(G)
\end{equation}
is given by inclusion.
By the Eckmann-Shapiro-type lemma (cf. \S\ref{ss:ecksha}), the map
\begin{equation}
\label{eq:ecky1}
\hat{\argu}\colon\Hom_G(\Q[G/U],\biB(G))\longrightarrow\biB(G),\ \ \
\hat{h}=h(U),
\end{equation}
$h\in \Hom_G(\Q[G/U],\biB(G))$,
is injective satisfying 
\begin{equation}
\label{eq:ecky2}
\image(\hat{\argu})=\spn_{\Q}\{\,x\,U^x\mid x\in G\,\}\subseteq\biB(G).
\end{equation}
In particular, one has an isomorphism 
\begin{equation}
\label{eq:ecky3}
j_U\colon \Hom_G(\Q[G/U],\biB(G))\to\Q[U\backslash G]
\end{equation}
induced by $\hat{\argu}$.  Hence, by 
the Eckmann-Shapiro-type lemma (cf. \S\ref{ss:ecksha}), \eqref{eq:biB7},
and the commutativity of the diagram
\begin{equation}
\label{dia:homten}
\xymatrix{
\Hom_G(\Q[G/U],\biB(G))\otimes_G M
\ar[0,2]^-{\zeta_{\Q[G/U],M}}
\ar[d]_{j_U\otimes_G\iid_M}&&\Hom_G(\Q[G/U],M)\\
\Q[U\backslash G]\otimes_G M\ar[r]&M_U\ar[r]^-{\phi_{M,U}^{-1}}&M^U\ar[u]\\
}
\end{equation}
one concludes that $\zeta_{\Q[G/U],M}$ is an isomorphism
for all $M\in\ob(\QGdis)$. The additivity $\Hom_G(\argu,M)$ yields the following.

\begin{prop}
\label{prop:homten}
Let $G$ be a t.d.l.c. group, let $M, P\in\ob(\QGdis)$, and
assume further that $P$ is finitely generated and projective.
Then 
\begin{equation}
\label{eq:homten}
\zeta_{P,M}\colon\Hom_G(P,\biB(G))\otimes_G M\longrightarrow \Hom_G(P,M)
\end{equation}
is an isomorphism.
\end{prop}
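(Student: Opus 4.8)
The plan is to deduce the general case from the already-established case $P=\Q[G/U]$ by a standard ``dévissage over finitely generated projectives'' argument. First I would recall that by Proposition~\ref{prop:permod} together with Corollary~\ref{cor:charproj}, a finitely generated projective discrete left $\QG$-module $P$ is a direct summand of a finite direct sum $\coprod_{1\leq i\leq n}\Q[G/\caO_i]$ with each $\caO_i$ a compact open subgroup; indeed finite generation forces the indexing set to be finite. So it suffices to check that the natural transformation $\zeta_{\argu,M}$ is compatible with finite direct sums in the first variable and with passing to direct summands, and that it is an isomorphism on each $\Q[G/\caO_i]$, which is exactly what the commutative diagram \eqref{dia:homten} gives.

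Concretely, the key steps are: (1) observe that both functors $P\mapsto\Hom_G(P,\biB(G))\otimes_G M$ and $P\mapsto\Hom_G(P,M)$ are additive (contravariant) functors in $P$, so they send finite coproducts in $\QGdis$ to finite products of $\Q$-vector spaces, and $\zeta_{\argu,M}$ is a morphism of such functors — this uses the additivity of $\Hom_G(\argu,M)$ and of $\argu\otimes_G M$, and the naturality of $\zeta$ in the first argument, which follows from its explicit formula $\zeta_{Q,M}(h\otimes_G m)(q)=\theta_M(h(q)\otimes_G m)$. (2) Hence $\zeta_{P_1\oplus P_2,M}$ is (up to the canonical identifications) the direct sum $\zeta_{P_1,M}\oplus\zeta_{P_2,M}$, so it is an isomorphism if and only if both summands are. (3) Apply this with $P_1=P$, $P_2$ a complement so that $P\oplus P_2\simeq\coprod_{1\leq i\leq n}\Q[G/\caO_i]$; by step (2) again $\zeta_{\coprod_i\Q[G/\caO_i],M}=\bigoplus_i\zeta_{\Q[G/\caO_i],M}$, and each $\zeta_{\Q[G/\caO_i],M}$ is an isomorphism by the discussion preceding Proposition~\ref{prop:homten} (the commutativity of \eqref{dia:homten} combined with the fact that $j_U$ and $\phi_{M,U}$ are isomorphisms). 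Therefore $\zeta_{P,M}$, being a direct summand of an isomorphism, is itself an isomorphism.

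The only genuinely non-routine point is verifying that $\zeta_{\argu,M}$ really is natural in the first variable and respects the direct-sum decomposition on the nose — i.e., that the isomorphisms $\Hom_G(P_1\oplus P_2,-)\simeq\Hom_G(P_1,-)\times\Hom_G(P_2,-)$ intertwine $\zeta_{P_1\oplus P_2,M}$ with $\zeta_{P_1,M}\times\zeta_{P_2,M}$ after tensoring the left-hand side with $M$ over $G$. This is a diagram chase using the explicit formula for $\zeta$ and the fact that $\otimes_G M$ is additive; I expect it to be the main (though still mild) obstacle, since one must be slightly careful that $\otimes_G M$ commutes with the relevant finite biproduct and that the inclusion $\eta_U\colon\Q[G/U]\hookrightarrow\biB(G)$ behaves compatibly under the summand projections. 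Everything else is formal once the case $P=\Q[G/U]$ is in hand.
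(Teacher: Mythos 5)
Your proof is correct and is essentially the paper's own argument fleshed out: the paper dispatches the general case with the single remark that ``the additivity of $\Hom_G(\argu,M)$ yields the following,'' which is precisely the dévissage you carry out — naturality of $\zeta_{\argu,M}$ in the first variable (immediate from the explicit formula), compatibility with finite biproducts, and reduction to the already-verified case $P=\Q[G/U]$ via the splitting of a surjection $\coprod_{i=1}^{n}\Q[G/\caO_i]\twoheadrightarrow P$ that exists by finite generation plus projectivity.
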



\subsection{Dualizing finitely generated projective rational discrete $\QG$-mo\-du\-les}
\label{ss:dualcom}
From \eqref{eq:ecky3} one concludes that for any finitely generated projective
rational discrete left $\QG$-module $P$, $\Hom_G(P,\biB(G))$ is
a finitely generated projective rational discrete right $\QG$-module. In order 
to obtain a functor $\argu^\circledast\colon\QGdis^{\op}\to\QGdis$ we put for
$M\in\ob(\QGdis)$
\begin{equation}
\label{eq:ecky4}
\begin{aligned}
M^\circledast&=\Hom_G^\times(M,\biB(G))\\
&=
\{\,\varphi\in\Hom_{\Q}(M,\biB(G))\mid\forall g\in G\ \forall m\in M: \varphi(g\cdot m)=m\cdot g^{-1}\,\}.
\end{aligned}
\end{equation}
Obviously, $\argu^\times\circ\argu\colon \Hom_G(M,\biB(G))\to M^\circledast$ (cf. \eqref{eq:hopf3}) yields an isomorphism
of $\Q$-vector spaces. 
Note that for $g\in G$, $\varphi\in M^\circledast$ and $m\in M$ one has
\begin{equation}
\label{eq:ecky4.1}
(g\cdot\varphi)(m)=g\cdot\varphi(m).
\end{equation}
By construction, if $P$ is a finitely generated projective
rational discrete left $\QG$-module, then $P^\circledast$ is a finitely generated projective
rational discrete left $\QG$-module as well.
Let $M\in\ob(\QGdis)$, let $m\in M$ and let $\varphi\in M^\circledast$. Put
\begin{equation}
\label{eq:natd1}
\vartheta_M(m)(\varphi)=\varphi(m)^\times.
\end{equation}
Then for $g\in G$ one has
\begin{align}
\vartheta_M(m)(g\cdot\varphi)&=((g\cdot\varphi)(m))^\times=(g\cdot \varphi(m))^\times\notag\\
&=\varphi(m)^\times\cdot g^{-1}=\vartheta_M(m)(\varphi)\cdot g^{-1},\label{eq:natd2}
\end{align}
i.e., $\vartheta_M(m)\in M^{\circledast\circledast}$, and thus one has a $\Q$-linear map
$\vartheta_M\colon M\to M^{\circledast\circledast}$.
As
\begin{align}
\vartheta_M(g\cdot m)(\varphi)&=\varphi(g\cdot m)^\times=(\varphi(m)\cdot g^{-1})^\times\notag\\
&=g\cdot \varphi(m)^\times= (g\cdot \vartheta_M(m))(\varphi),\label{eq:natd3}
\end{align}
$\vartheta_M\colon M\to M^{\circledast\circledast}$ is a mapping of left $\QG$-modules.
It is straightforward to verify that one obtains a natural transformation
\begin{equation}
\label{eq:natd4}
\vartheta\colon \iid_{\QGdis}\longrightarrow \argu^{\circledast\circledast}.
\end{equation}
By \eqref{eq:ecky1} and \eqref{eq:ecky3} one has for $U\in\CO(G)$
that $\vartheta_{\Q[G/U]}(U)=j_U^\times$, where $j_U^\times$ is the map making
the diagram
\begin{equation}
\label{eq:natd5}
\xymatrix{
\Hom_G(\Q[G/U),\biB(G))\ar[r]^-{j_U}\ar[d]_{\argu^\times\circ\ldots}&\Q[U\backslash G]\ar[d]^{\argu^\times}\\
\Q[G/U]^\circledast\ar[r]^-{j_U^\times}&\biB(G)
}
\end{equation}
commute. This yields that $\vartheta_{\Q[G/U]}$ is an isomorphism. 
Hence the additivity of $\vartheta$ and the functors
$\iid_{\QGdis}$ and $\argu^{\circledast\circledast}$ imply that 
$\vartheta_P\colon P\to P^{\circledast\circledast}$ is an isomorphism
for every finitely generated projective rational discrete $\QG$-module $P$.


\subsection{T.d.l.c. groups of type $\FP$}
\label{ss:FP}
A t.d.l.c. group $G$ is said to be {\it of type $\FP$},
if 
\begin{itemize}
\item[(i)] $\ccd_\Q(G)=d<\infty$;
\item[(ii)] $G$ is of type $\FP_\infty$
\end{itemize}
(cf. \cite[\S VIII.6]{brown:coh}). By Lemma~\ref{lem:pdim} and Proposition~\ref{prop:FPn}, for such a group $G$
the trivial left $\QG$-module $\Q$ possesses a projective resolution $(P_\bullet,\der_\bullet,\eps)$
which is finitely generated and concentrated in degrees $0$ to $d$. One has the following property.

\begin{prop}
\label{prop:FP}
Let $G$ be a t.d.l.c. group of type $\FP$. Then
\begin{equation}
\label{eq:FP-1}
\ccd_\Q(G)=\max\{\,k\geq 0\mid \dH^k(G,\biB(G))\not=0\,\}.
\end{equation}
\end{prop}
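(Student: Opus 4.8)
The plan is to establish the two inequalities in \eqref{eq:FP-1} separately; write $d=\ccd_\Q(G)<\infty$. The inequality $d\geq\max\{\,k\mid\dH^k(G,\biB(G))\neq 0\,\}$ is immediate from the description of $\ccd_\Q(G)$ in \eqref{eq:cd1}, since $\dH^i(G,\argu)=0$ for every $i>d$. So all the work lies in showing $\dH^d(G,\biB(G))\neq 0$, and I would argue this by contradiction: assuming $\dH^d(G,\biB(G))=0$, I will deduce that the functor $\dExt^d_G(\Q,\argu)$ vanishes identically, whence $\pdim(\Q)\leq d-1$ by Lemma~\ref{lem:pdim}, contradicting $\ccd_\Q(G)=d$.

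First I would fix, using that $G$ is of type $\FP$, a projective resolution $(P_\bullet,\der_\bullet,\eps)$ of the trivial left $\QG$-module $\Q$ with each $P_k$ finitely generated and $P_k=0$ for $k>d$. Dualizing into the standard bimodule, set $C^k=\Hom_G(P_k,\biB(G))$ with the transposed differentials $\der^\bullet$; by \S\ref{ss:dualcom} (in particular \eqref{eq:ecky3}) each $C^k$ is a finitely generated projective rational discrete \emph{right} $\QG$-module, with $C^k=0$ for $k>d$. Since $P_\bullet\to\Q$ is a projective resolution, $H^k(C^\bullet)=\dExt^k_G(\Q,\biB(G))=\dH^k(G,\biB(G))$, so the standing hypothesis reads $H^d(C^\bullet)=0$.

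The crucial step is to bring the $\Hom$-$\otimes$ identity into play. By Proposition~\ref{prop:homten}, for each $k$ and each $M\in\ob(\QGdis)$ the map $\zeta_{P_k,M}$ of \eqref{eq:nathom} is an isomorphism; being natural in $P_k$, these maps assemble into an isomorphism of cochain complexes of $\Q$-vector spaces $C^\bullet\otimes_G M\simeq\Hom_G(P_\bullet,M)$, and hence into natural isomorphisms $H^k(C^\bullet\otimes_G M)\simeq\dExt^k_G(\Q,M)=\dH^k(G,M)$ for all $k$ and all $M$. Now, since $C^{d+1}=0$, the hypothesis $H^d(C^\bullet)=0$ says precisely that $\der^{d-1}\colon C^{d-1}\to C^d$ is surjective; as $C^d$ is projective this epimorphism splits, and since $\argu\otimes_G M$ carries split epimorphisms to split epimorphisms, $\der^{d-1}\otimes_G\iid_M$ is surjective for every $M$. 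Consequently $H^d(C^\bullet\otimes_G M)=\coker(\der^{d-1}\otimes_G\iid_M)=0$, i.e., $\dH^d(G,M)=0$ for every $M\in\ob(\QGdis)$. By Lemma~\ref{lem:pdim} this forces $\pdim(\Q)\leq d-1$, contradicting $\ccd_\Q(G)=d$; hence $\dH^d(G,\biB(G))\neq 0$, and \eqref{eq:FP-1} follows.

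The routine parts are the identification of $C^\bullet$ as a complex of finitely generated projective right $\QG$-modules and the bookkeeping with the differentials. The step I expect to be the real content --- and the one place where the hypothesis ``type $\FP$'' is genuinely used --- is the appeal to the $\Hom$-$\otimes$ identity to recover $\dH^\bullet(G,M)$ from $C^\bullet\otimes_G M$: it is exactly this that lets the vanishing of the top cohomology of the single complex $C^\bullet$ propagate to the vanishing of $\dH^d(G,M)$ for all coefficient modules $M$, which is the heart of the statement that $\ccd_\Q(G)$ is already detected by the coefficient $\biB(G)$.
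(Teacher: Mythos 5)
Your proof is correct, but it follows a genuinely different path from the paper's. The paper argues directly from the structure of $\biB(G)$ as a direct limit: it observes that $\dH^d(G,\argu)$ commutes with direct limits (using type $\FP_\infty$) and is right exact (using $\ccd_\Q(G)=d$), then reduces to showing $\dH^d(G,\Q[G/\caO])\neq 0$ for some compact open $\caO$, and finally uses the split injectivity of the transition maps $\eta_{U,V}$ in the directed system defining $\biB(G)$ to conclude $\dH^d(G,\biB(G))=\varinjlim_U\dH^d(G,\Q[G/U])\neq 0$. You instead dualize a finite, finitely generated projective resolution $P_\bullet$ of $\Q$ into $\biB(G)$ and invoke the $\Hom$-$\otimes$ identity of Proposition~\ref{prop:homten} to recover $\dH^k(G,M)$ as $H^k(C^\bullet\otimes_G M)$; this is in effect a proof of (a precursor to) Proposition~\ref{prop:dual1}, from which Proposition~\ref{prop:FP} drops out. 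Both arguments are valid, and each illuminates a different aspect: the paper's is more elementary and does not require the $\Hom$-$\otimes$ machinery (which the paper in fact develops only afterwards, in Propositions~\ref{prop:dual1} and~\ref{prop:dual}, so using it here would invert the logical order of the exposition), whereas yours makes the role of the dualizing complex $C^\bullet$ transparent. One small remark: your appeal to the splitting of $\der^{d-1}\colon C^{d-1}\twoheadrightarrow C^d$ is unnecessary; right exactness of $\argu\otimes_G M$ alone gives $H^d(C^\bullet\otimes_G M)=\coker(\der^{d-1})\otimes_G M=D_G\otimes_G M$, which vanishes if $D_G=0$.
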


\begin{proof}
Let $m=\max\{\,k\geq 0\mid \dH^k(G,\biB(G))\not=0\,\}$. Then $m\leq d=\ccd_\Q(G)$.
By definition, there exists a discrete left $\QG$-module $M$ such that $\dH^d(G,M)\not=0$.
As $G$ is of type $\FP_\infty$, the functor $\dH^d(G,\argu)$ commutes with direct limits 
(cf. \cite[\S VIII.4, Prop.~4.6]{brown:coh}).
Since $M=\varinjlim_{i\in I} M_i$, where $M_i$ are finitely generated $\QG$-submodules of $M$,
we may assume also that $M$ is finitely generated, i.e.,
there exist finitely many elements $m_1,\ldots, m_n\in M$ such that
$M=\sum_{1\leq j\leq n} \Q[G]\cdot m_j$. Let $\caO_j$ be a compact open subgroup contained
in $\stab_G(m_j)$. Then one has a canonical surjective map $\coprod_{1\leq j\leq n} \Q[G/\caO_j]\to M$.
Thus, as $\dH^d(G,\argu)$ is right exact, $\dH^d(G,\coprod_{1\leq j\leq n}\Q[G/\caO_j])\not=0$,
and therefore $\dH^d(G,\Q[G/\caO_j])\not=0$ for some element $j\in\{1,\ldots,d\}$.
Let $\caO=\caO_j$.
For any pair of compact open subgroups $U, V$ of $G$, $V\subseteq U$, which are contained in $\caO$ the map
$\eta_{U,V}\colon\Q[G/U]\to\Q[G/V]$ (cf. \eqref{eq:biB2}) is split injective (cf. Prop.~\ref{prop:prof}).
From the additivity of $\dH^d(G,\argu)$ one concludes that
$\dH^d(\eta_{U,V})\colon \dH^d(G,\Q[G/U])\to\dH^d(G,\Q[G/V])$ is injective. Hence
\begin{equation}
\label{eq:FP-2}
\textstyle{\dH^d(G,\biB(G))=\varinjlim_{U\in\CO_\caO(G)}\dH^d(G,\Q[G/U])\not=0,}
\end{equation}
i.e., $m\geq d$, and this yields the claim.
\end{proof}

For a t.d.l.c. group $G$ of type $\FP$ with $d=\ccd_\Q(G)$
we will call the rational discrete right $\QG$-module
\begin{equation}
\label{eq:dual}
D_G=\dH^d(G,\biB(G))
\end{equation}
the {\it rational dualizing module} of $G$.
It has the following fundamental property.

\begin{prop}
\label{prop:dual1}
Let $G$ be a t.d.l.c. group of type $\FP$ with $d=\ccd_\Q(G)$.
Then there exists a canonical isomorphism
\begin{equation}
\label{eq:dual2}
\upsilon\colon \dH^d(G,\argu)\longrightarrow D_G\otimes_G\argu
\end{equation}
of covariant additive right exact functors.
\end{prop}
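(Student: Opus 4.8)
The plan is to evaluate both functors on a finite-type projective resolution of $\Q$ and to identify them by means of the $\Hom$-$\otimes$ identity of Proposition~\ref{prop:homten}. Since $G$ is of type $\FP$, the trivial left $\QG$-module $\Q$ admits a projective resolution $(P_\bullet,\der_\bullet,\eps)$ in $\QGdis$ in which every $P_j$ is finitely generated projective and $P_j=0$ for $j>d$ (cf. the discussion preceding Proposition~\ref{prop:FP}). As $\QGdis$ has enough projectives, the groups $\dExt^\bullet_G(\Q,\argu)$ may be computed from this resolution, so $\dH^k(G,M)=H^k(\Hom_G(P_\bullet,M))$ for every $M\in\ob(\QGdis)$; since the cochain complex $\Hom_G(P_\bullet,M)$ is concentrated in degrees $0$ to $d$ this gives, naturally in $M$,
\[
\dH^d(G,M)=\coker\bigl(\Hom_G(\der_d,M)\colon\Hom_G(P_{d-1},M)\to\Hom_G(P_d,M)\bigr).
\]

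Put $C^j=\Hom_G(P_j,\biB(G))$. By \eqref{eq:ecky3} and the discussion in \S\ref{ss:dualcom}, each $C^j$ is a finitely generated projective rational discrete right $\QG$-module, and $C^\bullet$ is a cochain complex of such modules concentrated in degrees $0$ to $d$ with $H^d(C^\bullet)=\dH^d(G,\biB(G))=D_G$. The next step is to check that the maps $\zeta_{P_j,M}$ of Proposition~\ref{prop:homten} assemble into an isomorphism of cochain complexes
\[
\zeta_{P_\bullet,M}\colon C^\bullet\otimes_G M\longrightarrow\Hom_G(P_\bullet,M)
\]
natural in $M$: each $\zeta_{P_j,M}$ is bijective because $P_j$ is finitely generated projective, while compatibility with the differentials amounts to the naturality of $\zeta_{\argu,M}$ in the first argument, which is immediate from the defining formula $\zeta_{Q,M}(h\otimes_G m)(q)=\theta_M(h(q)\otimes_G m)$. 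Taking cohomology in top degree then yields a natural isomorphism $\dH^d(G,M)\cong H^d(C^\bullet\otimes_G M)$.

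Finally, $\argu\otimes_G M$ is right exact, so applying it to the exact sequence $C^{d-1}\to C^d\to D_G\to 0$ produces the exact sequence $C^{d-1}\otimes_G M\to C^d\otimes_G M\to D_G\otimes_G M\to 0$, whence $H^d(C^\bullet\otimes_G M)\cong D_G\otimes_G M$, again naturally in $M$. Composing the two natural isomorphisms gives the transformation $\upsilon\colon\dH^d(G,\argu)\to D_G\otimes_G\argu$. Both functors are right exact — $D_G\otimes_G\argu$ obviously, and $\dH^d(G,\argu)$ because the long exact $\dExt^\bullet_G(\Q,\argu)$-sequence together with $\ccd_\Q(G)=d$, i.e. $\dExt^{d+1}_G(\Q,\argu)=0$, kills the connecting homomorphism issuing from $\dH^d$ — so $\upsilon$ is an isomorphism of additive right exact functors.

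The step that requires the most care is the assertion that $\upsilon$ is \emph{canonical}, i.e. independent of the choice of $P_\bullet$. This follows from the comparison theorem, since any two finite-type projective resolutions of $\Q$ concentrated in degrees $\le d$ are chain homotopy equivalent over $\iid_\Q$ and the maps $\zeta_{\argu,\argu}$, $\theta_M$ and the cokernel identification are all natural with respect to chain maps; verifying that the resulting isomorphisms on $H^d$ genuinely agree — and hence that $\upsilon$ is well defined — is the one point where some bookkeeping is unavoidable. (Alternatively, one could deduce canonicity \emph{a posteriori} by noting that both functors commute with direct limits, so that $\upsilon$ is determined by its restriction to the permutation modules $\Q[G/\caO]$, $\caO\in\CO(G)$, where it is forced by Proposition~\ref{prop:homten}.)
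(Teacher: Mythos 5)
Your argument is correct and takes essentially the same route as the paper: compute $\dH^d(G,\argu)$ from a finite-type projective resolution $P_\bullet$ of $\Q$ concentrated in degrees $0$ to $d$, use the $\Hom$--$\otimes$ identity of Proposition~\ref{prop:homten} termwise to identify $\Hom_G(P_\bullet,M)$ with $\Hom_G(P_\bullet,\biB(G))\otimes_G M$, and invoke right exactness of $\argu\otimes_G M$ to identify the top cohomology with $D_G\otimes_G M$. The only divergence is cosmetic: the paper interposes an auxiliary projective resolution $Q_\bullet$ of $D_G$ together with a comparison chain map $\chi_\bullet$ before applying right exactness, whereas you apply right exactness directly to the presentation $C^{d-1}\to C^d\to D_G\to 0$, which is a little more economical and avoids the need for the comparison theorem at that stage (the paper keeps $Q_\bullet$ around because it is reused in Remark~\ref{rem:dual}).
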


\begin{proof}
Let $(P_\bullet,\der_\bullet^P,\eps^\Q)$ be a projective resolution of the
trivial left $\QG$-module $\Q$, which is finitely generated
and concentrated in degrees $0$ to $d$, and let $(Q_\bullet,\eth_\bullet,\eps^{D_G})$
be a projective resolution of the discrete right $\QG$-module $D_G$.
Let $(C_\bullet,\der_\bullet^C)$ be the chain complex of discrete right $\QG$-modules
given by
\begin{equation}
\label{eq:dual3}
C_k=\Hom_G(P_{d-k},\biB(G)),\ \ \ \der_k^C=\Hom_G(\der_{d-k+1},\biB(G))\colon C_k\to C_{k-1},\ \ \ 0\leq k\leq d,
\end{equation}
(cf. \eqref{eq:ecky4}). In particular, by construction, $H_0(C_\bullet,\der_\bullet^C)$ is canonically
isomorphic to $D_G$. Since $(C_\bullet,\der_\bullet)$ is a chain complex of projectives,
and as $(Q_\bullet,\eth_\bullet,\eps^{D_G})$ is exact, the comparison theorem in homological algebra implies that
there exists a mapping of chain complexes $\chi_\bullet\colon (C_\bullet,\der_\bullet^C)\to (Q_\bullet,\eth_\bullet)$
which is unique up to chain homotopy equivalence. For $M\in\ob(\QGdis)$, $\chi_\bullet$ induces a chain map
\begin{equation}
\label{eq:dual4}
\chi_{\bullet,M}=\chi_\bullet\otimes_G\iid_M\colon (C_\bullet\otimes_G M,\der_\bullet^C\otimes_G\iid_M)
\longrightarrow (Q_\bullet\otimes_G M,\eth_\bullet\otimes_G\iid_M).
\end{equation}
The right exactness of $\argu\otimes_G M$ implies that
\begin{equation}
\label{eq:dual5}
H_0(\chi_{\bullet,M})\colon H_0(C_\bullet\otimes_G M)\longrightarrow D_G\otimes_G M
\end{equation}
is an isomorphism. By the $\Hom$-$\otimes$ identity \eqref{eq:homten}, one has a natural isomorphism
of chain complexes 
\begin{equation}
\label{eq:dual6}
C_\bullet\otimes_G M\simeq \Hom_G(P_{d-\bullet},M).
\end{equation}
Hence $H_0(C_\bullet\otimes_G M)\simeq \dH^d(G,M)$, and this yields the claim.
\end{proof}

\begin{rem}
\label{rem:dual}
A straightforward modification of the proof of Proposition~\ref{prop:dual1}
shows that for a t.d.l.c. group $G$ of type $\FP$ one has natural isomorphisms
\begin{equation}
\label{eq:dual7}
\bar{\chi}_{\bullet,\argu}\colon \dH^{d-\bullet}(G,\argu)\longrightarrow
\underline{\dTor}^G_\bullet(C_\bullet,\argu\dbl 0\dbr),
\end{equation}
where $\underline{\dTor}^G_\bullet(\argu,\argu)$ denotes the {\it hyper-homology}
(cf. \cite[\S 2.7]{ben:coho1}, \cite[\S VIII.4, Ex.~7]{brown:coh}).
\end{rem}


\subsection{Rational duality groups}
\label{ss:dual}
A t.d.l.c. group $G$ is said to be a {\it rational duality
group of dimension $d\geq 0$}, if
\begin{itemize}
\item[(i)] $G$ is of type $\FP$;
\item[(ii)] $\dH^k(G,\biB(G))=0$ for all $k\not=d$.
\end{itemize}
By Proposition~\ref{prop:FP}, for such a group one has $d=\ccd_\Q(G)$.
These groups have the following fundamental property.

\begin{prop}
\label{prop:dual}
Let $G$ be t.d.l.c. group which is a rational duality group of di\-men\-sion $d\geq 0$,
and let $D_G$ denote its rational dualizing module. Then one has
natural isomorphisms of (co)homological functors
\begin{equation}
\label{eq:dual8}
\begin{aligned}
\dH^\bullet(G,\argu)&\simeq\dTor_{d-\bullet}^G(D_G,\argu),\\
\dH_\bullet(G,\argu)&\simeq\dExt^{d-\bullet}_G({}^\times D_G,\argu).
\end{aligned}
\end{equation}
\end{prop}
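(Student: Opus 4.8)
The plan is to derive both isomorphisms from Proposition~\ref{prop:dual1} together with the hypothesis that $\dH^k(G,\biB(G))=0$ for $k\neq d$. First recall from the $\FP$ hypothesis that $\Q$ admits a projective resolution $(P_\bullet,\der_\bullet,\eps)$ which is finitely generated and concentrated in degrees $0$ to $d$. Form the cochain complex $C_\bullet$ with $C_k=\Hom_G(P_{d-k},\biB(G))$ as in \eqref{eq:dual3}; since each $P_{d-k}$ is finitely generated projective, each $C_k$ is finitely generated projective (cf. \S\ref{ss:dualcom}), and by the $\Hom$-$\otimes$ identity \eqref{eq:homten} one has $C_\bullet\otimes_G M\simeq\Hom_G(P_{d-\bullet},M)$ naturally in $M$. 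Taking $M=\biB(G)$ and using that $\biB(G)$ is flat (Prop.~\ref{prop:biG}(a)), the homology of $C_\bullet\otimes_G\biB(G)\simeq\Hom_G(P_{d-\bullet},\biB(G))$ computes $\dH^{d-\bullet}(G,\biB(G))$, which by the duality hypothesis is concentrated in degree $0$, where it equals $D_G$. Hence $C_\bullet$ is a finitely generated \emph{projective resolution} of $D_G$ as a discrete right $\QG$-module.

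Now the first isomorphism follows directly: for any $M\in\ob(\QGdis)$ the complex $C_\bullet\otimes_G M$ computes $\dTor^G_\bullet(D_G,M)$ (since $C_\bullet$ is a projective resolution of $D_G$), while the natural isomorphism $C_\bullet\otimes_G M\simeq\Hom_G(P_{d-\bullet},M)$ shows this same complex computes $\dH^{d-\bullet}(G,M)$. Passing to homology yields the natural isomorphism of cohomological functors $\dH^\bullet(G,\argu)\simeq\dTor^G_{d-\bullet}(D_G,\argu)$; naturality in $M$ is inherited from the naturality of \eqref{eq:homten}.

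For the second isomorphism I would apply the left-right symmetry encoded by the functor ${}^\times(\argu)$. Applying $\Hom_G(\argu,\biB(G))$ (the $\argu^\circledast$ construction of \S\ref{ss:dualcom}) to the resolution $(P_\bullet,\der_\bullet)$ and using that $\vartheta_P\colon P\to P^{\circledast\circledast}$ is an isomorphism for finitely generated projective $P$, one sees that $C_\bullet^\circledast$ is a finitely generated projective resolution of $\Q$ again — equivalently, dualizing $C_\bullet$ recovers $P_\bullet$ up to the twist $\argu^\times$. Thus $C_\bullet\otimes_G\argu$, reinterpreted via ${}^\times D_G$, computes on one hand $\dTor$ against ${}^\times D_G$ and on the other hand, after the double-dual identification, $\dExt^{d-\bullet}_G({}^\times D_G,\argu)$; tensoring instead by a flat resolution of the trivial module and using Prop.~\ref{prop:fundbi} ($\biB(G)\otimes_G\argu\simeq\iid$) lets one rewrite $\dH_\bullet(G,\argu)=\dTor^G_\bullet(\Q,\argu)$ as the homology of $C_\bullet^\circledast\otimes_G\argu$, which by the $\Hom$-$\otimes$ identity on the right-module side equals $\dExt^{d-\bullet}_G({}^\times D_G,\argu)$. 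The bookkeeping here — keeping track of which variable is a left versus right module, and applying $\Hom$-$\otimes$ on the correct side — is the main obstacle; the homological input (flatness of $\biB(G)$, finiteness of the resolution, the vanishing hypothesis) is all in place, so the argument is essentially a careful transcription of the discrete-group duality argument (cf. \cite[\S 9.2]{bieri:hom}) into the present setting, with $\biB(G)$ playing the role of the group ring.
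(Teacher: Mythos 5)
Your proof of the first isomorphism is correct and matches the paper exactly: form $C_\bullet=\Hom_G(P_{d-\bullet},\biB(G))$, observe its homology is $\dH^{d-\bullet}(G,\biB(G))$ which by the duality hypothesis is $D_G$ concentrated in degree $0$, so $C_\bullet$ is a finitely generated projective resolution of $D_G$, and then Proposition~\ref{prop:homten} identifies $C_\bullet\otimes_G M$ with $\Hom_G(P_{d-\bullet},M)$, giving both $\dTor_\bullet^G(D_G,M)$ and $\dH^{d-\bullet}(G,M)$.

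For the second isomorphism your key ideas — dualizing via $\argu^\circledast$, invoking the double-dual isomorphism $\vartheta_P$, and applying the $\Hom$--$\otimes$ identity once more — are the right ones and are what the paper uses. However, your intermediate sentence claiming $C_\bullet\otimes_G\argu$, ``reinterpreted via ${}^\times D_G$, computes $\dTor$ against ${}^\times D_G$'' is not correct as stated: $C_\bullet$ is a resolution of $D_G$ as a \emph{right} module, and tensoring with it computes $\dTor_\bullet^G(D_G,\argu)$, not anything directly against ${}^\times D_G$. The bookkeeping you flag as the obstacle is precisely where the paper introduces a second complex: it takes the shifted dual $(P_\bullet^\circledast[d],\der_\bullet^\circledast[d])$, which is a finitely generated projective resolution of the \emph{left} module ${}^\times D_G$, and then sets $R_k=\Hom_G(P^\circledast[d]_{d-k},\biB(G))$. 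The double-dual isomorphism $\vartheta_{P_k}$ gives $R_\bullet\simeq P_\bullet^\times$, so $R_\bullet\otimes_G M$ computes $\dTor_\bullet^G(\Q,M)=\dH_\bullet(G,M)$; meanwhile the $\Hom$--$\otimes$ identity applied to $R_\bullet$ identifies the same complex with $\Hom_G(P^\circledast[d]_{d-\bullet},M)$, whose homology is $\dExt_G^{d-\bullet}({}^\times D_G,M)$. Filling in your sketch this way lands exactly on the paper's argument.
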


\begin{proof}
Let $(P_\bullet,\der_\bullet,\eps^\Q)$ be a projective resolution of the
trivial left $\QG$-module $\Q$, which is finitely generated
and concentrated in degrees $0$ to $d$. Then
\begin{equation}
\label{eq:dual9}
Q_k=\Hom_G(P_{d-k},\biB(G)),\ \ 
\eth_k=\Hom_G(\der_{d-k+1},\biB(G)),\ \ k\in\{0,\ldots,d\},
\end{equation}
is a chain complex of projective rational discrete right $\QG$-modules 
concentrated in degrees $0,\ldots, d$ which satisfies 
\begin{equation}
\label{eq:dual10}
H_k(Q_\bullet,\eth_\bullet)\simeq
\begin{cases}
D_G &\ \text{for $k=0$,}\\
\hfil 0\hfil &\ \text{for $k\not=0$,}
\end{cases}
\end{equation}
By Proposition~\ref{prop:homten}, one has for $M\in\ob(\QGdis)$ isomorphisms
\begin{equation}
\label{eq:dual11}
H_k(Q_\bullet\otimes_G M,\eth_\bullet\otimes_G\iid_M)\simeq
\dTor_k^G(D_G,M)\simeq \dExt_G^{d-k}(\Q,M)
\end{equation}
which are natural in $M$. This yields the first natural isomorphism in \eqref{eq:dual8}.

Let $(P_\bullet^\circledast[d],\der_\bullet^\circledast[d])$
be the chain complex $(P_\bullet^\circledast,\der_\bullet^\circledast)$
concentrated in homological degrees $-d,\ldots, -0$
moved $d$-places to the left such that it is
concentrated in homological degrees $0,\ldots, d$, i.e.,
\begin{equation}
\label{eq:dual12}
H_k(P_\bullet^\circledast[d],\der_\bullet^\circledast[d])\simeq
\begin{cases}
{}^\times D_G &\ \text{for $k=0$,}\\
\hfil 0\hfil &\ \text{for $k\not=0$.}
\end{cases}
\end{equation}
In particular, $(P_\bullet^\circledast[d],\der_\bullet^\circledast[d])$ is a projective
resolution of the rational discrete left $\QG$-module ${}^\times D_G$.
Then, by \eqref{eq:natd4} and the remarks following it, one has
an isomorphism of chain complexes of projective rational discrete left $\QG$-modules
\begin{equation}
\label{eq:dual13}
(P_\bullet^\circledast[d]^\circledast[d],\der_\bullet^\circledast[d]^\circledast[d])\simeq (P_\bullet,\der_\bullet).
\end{equation}
By the same arguments as used before, one obtains 
for $M\in\ob(\QGdis)$ and 
\begin{equation}
\label{eq:dual14}
R_k=\Hom_G(P^\circledast[d]_{d-k},\biB(G)),\ 
\delta_k=\Hom_G(\der^\circledast[d]_{d-k+1},\biB(G)),\ k\in\{0,\ldots,d\},
\end{equation}
isomorphisms
\begin{equation}
\label{eq:dual15}
H_k(R_\bullet\otimes_G M,\delta_\bullet\otimes_G\iid_M)\simeq
\dTor_k^G(\Q,M)\simeq \dExt_G^{d-k}({}^\times D_G,M)
\end{equation}
which are natural in $M$. This yields the claim.
\end{proof}


\subsection{Locally constant functions with compact support}
\label{ss:lcf}
Let $G$ be a t.d.l.c. group, and let
$\caC(G,\Q)$ denote the $\Q$-vector space of continuous
functions from $G$ to $\Q$, where $\Q$ is considered as a discrete topological space. Then $\caC(G,\Q)$ is a 
$\Q[G]$-bimodule, where the $G$-actions are given by
\begin{equation}
\label{eq:biB11}
(g\cdot f)(x)=f(g^{-1}\,x),\ \ 
(f\cdot g)(x)=f(x\,g^{-1}),\ \ g,x\in G,\ \ f\in \caC(G,\Q).
\end{equation}
For any compact open set $\Omega\subseteq G$ let $I_\Omega\colon G\to\Q$
denote the continuous function given by $I_\Omega(x)=1$ for $x\in\Omega$
and $I_\Omega(x)=0$ for $x\in G\setminus\Omega$.
Then $g\cdot I_\Omega=I_{g\,\Omega}$ and $I_\Omega\cdot g=I_{\Omega\,g}$. Moreover,
\begin{equation}
\label{eq:biB12}
\caC_c(G,\Q)=\spn_{\Q}\{\,I_{gU}\mid g\in G,\,U\in\CO(G)\,\}\subseteq \caC(G,\Q),
\end{equation}
coincides with the {\it set of locally constant functions from $G$ to $\Q$ with compact support}.
It is a rational discrete $\QG$-bisubmodule of $\caC(G,\Q)$.

The rational discrete left $\QG$-module $\biB(G)$ is isomorphic to 
the left $\QG$-module $\caC_c(G,\Q)$, but the isomorphism is not canonical.
Let $\caO\subseteq G$ be a fixed compact open subgroup of $G$, and put
$\CO_\caO(G)=\{\,U\in\CO(G)\mid U\subseteq \caO\,\}$.
The
$\Q$-linear map $\psi_U^{(\caO)}\colon\Q[G/U]\to \caC_c(G,\Q)$ given by
\begin{equation}
\label{eq:biB13}
\psi_U^{(\caO)}(xU)=|\caO: U|\cdot I_{xU}
\end{equation}
is an injective homomorphism of rational discrete left $\QG$-modules.
For $U,V\in \CO_\caO(G)$, $V\subseteq U$, one has $\psi_V^{(\caO)}\circ\eta_{U,V}=\psi^{(\caO)}_U$
(cf. \eqref{eq:biB2}), and thus $(\psi_U^{(\caO)})_{U\in\CO_{\caO}(G)}$ induces
an isomorphism of left $\QG$-modules 
\begin{equation}
\label{eq:biB14}
\psi^{(\caO)}\colon\biB(G)\to \caC_c(G,\Q),\qquad \psi^{(\caO)}\vert_{\Q[G/U]}=\psi_U^{(\caO)}.
\end{equation} 
This isomorphism has the following properties.

\begin{prop}
\label{prop:compar}
Let $G$ be a t.d.l.c. group with modular function $\Delta\colon G\to \Q^+$, i.e.,
if $\mu\colon \Bor(G)\to\R_0^+\cup\{\infty\}$ is a left invarinat Haar measure on $G$,
one has $\mu(S\cdot g)=\Delta(g)\cdot \mu(S)$ for all $S\in\Bor(G)$. Let $\caO\in\CO(G)$.
\begin{itemize}
\item[(a)]  If $\caU$ is a compact open subgroup of $G$ containing $\caO$, then
\begin{equation}
\label{eq:biB15}
\psi^{(\caU)}=|\caU:\caO|\cdot \psi^{(\caO)}.
\end{equation}
\item[(b)] For all $g,h\in G$ and $u\in\biB(G)$ one has
\begin{equation}
\label{eq:biB16}
\psi^{(\caO)}(g\cdot W\cdot h)= \Delta(h^{-1})\cdot g\cdot \psi^{(\caO)}(W)\cdot h.
\end{equation}
\end{itemize}
\end{prop}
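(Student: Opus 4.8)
The plan is to deduce both parts from direct computations on the coset generators of $\biB(G)$, using that $\psi^{(\caO)}\colon\biB(G)\to\caC_c(G,\Q)$ is a homomorphism of left $\QG$-modules. The preliminary observation I would make is that $\CO_{\caO}(G)$ is cofinal in the directed set $(\CO(G),\supseteq)$ along which $\biB(G)$ is formed: for $W\in\CO(G)$ the subgroup $W\cap\caO$ lies in $\CO_{\caO}(G)$ and is contained in $W$. Consequently the canonical maps $\Q[G/U]\to\biB(G)$ with $U\in\CO_{\caO}(G)$ are jointly surjective, so $\biB(G)$ is spanned over $\Q$ by the images of cosets $xU$ with $x\in G$ and $U\in\CO_{\caO}(G)$, and it suffices to verify both assertions on such elements.

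For (a): if $U\in\CO_{\caO}(G)$ then $U\subseteq\caO\subseteq\caU$, hence $U\in\CO_{\caU}(G)$ as well, and multiplicativity of the index in the tower $U\subseteq\caO\subseteq\caU$ gives $|\caU:U|=|\caU:\caO|\cdot|\caO:U|$. Feeding this into the defining formula \eqref{eq:biB13} for $\psi_U^{(\caU)}$ and $\psi_U^{(\caO)}$ evaluated at $xU$ immediately yields $\psi^{(\caU)}(xU)=|\caU:\caO|\cdot\psi^{(\caO)}(xU)$, which is (a).

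For (b): since $\psi^{(\caO)}$ is left $\QG$-linear the factor $g$ pulls out of both sides, so it is enough to prove $\psi^{(\caO)}(u\cdot h)=\Delta(h^{-1})\cdot\psi^{(\caO)}(u)\cdot h$ for $h\in G$, and we may take $u=xU$ with $U\in\CO_{\caO}(G)$. By \eqref{eq:biB6} one has $u\cdot h=xh\,U^h$ with $U^h=h^{-1}Uh$. The one genuinely delicate point — and the main obstacle — is that $U^h$ need not be contained in $\caO$, so $\psi^{(\caO)}$ cannot be read off $xh\,U^h$ directly; instead I would set $V=U^h\cap\caO\in\CO_{\caO}(G)$, rewrite the class of $xh\,U^h$ in $\biB(G)$ as the transfer $\eta_{U^h,V}(xh\,U^h)$ (cf. \eqref{eq:biB2}), apply $\psi_V^{(\caO)}$, and collapse the resulting sum of indicator functions using $\bigsqcup_r rV=U^h$ together with $hU^h=Uh$. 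This gives $\psi^{(\caO)}(u\cdot h)=(|\caO:V|/|U^h:V|)\,I_{xUh}$, whereas $\psi^{(\caO)}(u)\cdot h=|\caO:U|\,I_{xUh}$, so the proof reduces to the scalar identity $|\caO:V|/|U^h:V|=\Delta(h^{-1})\,|\caO:U|$.

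To settle that identity I would introduce the left Haar measure $\mu_{\caO}$ normalized by $\mu_{\caO}(\caO)=1$. Since $V$ has finite index in both $U^h$ and $\caO$, the left-hand ratio equals $\mu_{\caO}(\caO)/\mu_{\caO}(U^h)=\mu_{\caO}(U^h)^{-1}$; and by left-invariance of $\mu_{\caO}$ and the defining property $\mu_{\caO}(Sh)=\Delta(h)\mu_{\caO}(S)$ of the modular function, $\mu_{\caO}(U^h)=\mu_{\caO}(h^{-1}Uh)=\mu_{\caO}(Uh)=\Delta(h)\mu_{\caO}(U)=\Delta(h)\,|\caO:U|^{-1}$. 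Combining these gives $|\caO:V|/|U^h:V|=\Delta(h)^{-1}|\caO:U|=\Delta(h^{-1})|\caO:U|$, completing (b). Apart from this conjugation step everything is routine index and Haar-measure bookkeeping, relying only on the description of $\biB(G)$ as a direct limit along the transfer maps and on the definition of $\psi^{(\caO)}$.
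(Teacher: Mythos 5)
Your proof is correct and follows essentially the same route as the paper's: both parts come down to the Haar-measure bookkeeping $\mu_{\caO}(W^h)=\mu_{\caO}(Wh)=\Delta(h)\mu_{\caO}(W)$ applied to the index ratios produced by the transfer maps defining $\biB(G)$. The only cosmetic difference is that you fix the cofinal subfamily $\CO_{\caO}(G)$ up front and transfer $U^h$ down to $U^h\cap\caO$ directly, whereas the paper first rewrites $W\cdot h=h\cdot W^h$ and then invokes left $\QG$-linearity together with the formula from part (a); the underlying computation is identical.
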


\begin{proof}
(a) Let $g\in G$ and $W\in\CO(G)$. Then, by subsection~\ref{ss:bimo},
\begin{equation}
\label{eq:bimo1}
gW=\frac{1}{|W:W\cap\caO|}\cdot \sum_{r\in\caR} g\cdot r\cdot(W\cap\caO).
\end{equation}
Here we omitted the mappings $\eta_{W,W\cap\caO}$ in the notation.
Hence
\begin{align}
\psi^{(\caO)}(gW)&=\frac{|\caO:W\cap\caO|}{|W:W\cap\caO|}\cdot I_{gW}
=\frac{\mu(\caO)}{\mu(W)}\cdot I_{gW},\label{eq:bimo2}\\
\intertext{for some left invariant Haar measure $\mu$ on $G$, and}
\psi^{(\caU)}(gW)&=\frac{\mu(\caU)}{\mu(W)}\cdot I_{gW}.\label{eq:bimo3}
\end{align}
As $\mu(\caU)=|\caU:\caO|\cdot \mu(\caO)$, this yields the claim.

\noindent
(b) As $\psi^{(\caO)}$ is a homomorphism of left $\QG$-modules, it suffices
to prove the claim for $g=1$ and $u=W\in\CO(G)$.
If $W^h=h^{-1}\cdot W\cdot h=\bigsqcup_{s\in\caS} s\cdot (W^h\cap\caO)$, one obtains by (a) that
\begin{align}
\psi^{(\caO)}(Wh)&=h\cdot \psi^{(\caO)}(W^h)\notag\\
&=h\cdot \frac{1}{|W^h:W^h\cap\caO|}\cdot \sum_{s\in\caS}  \psi^{(\caO)}(s\cdot (W^h\cap\caO)).\notag\\
&=\frac{|\caO:W^h\cap\caO|}{|W^h:W^h\cap\caO|}\cdot I_{Wh}=\frac{\mu(\caO)}{\mu(W^h)}\cdot I_{Wh}.\label{eq:bimo5}
\end{align}
In particular, $\psi^{(\caO)}(Wh)=\mu(W)/ \mu(W^h)\cdot \psi^{(\caO)}(W)\cdot h$.
Since
\begin{equation}
\label{eq:bimo6}
\mu(W^h)=\mu(h^{-1}Wh)=\mu(Wh)=\Delta(h)\cdot \mu(W),
\end{equation}
this completes the proof.
\end{proof}

L
\begin{rem}
\label{rem:cosualg}
With a particular choice of Haar measure $\mu\colon \Bor(G)\to\R_0^+\cup\{\infty\}$ on $G$ one can make 
$(\caC_c(G,\Q),\ast_\mu)$ an associative algebra where $\ast_\mu$ is convolution with respect to $\mu$.
However, we have seen that $\biB(G)$ carries an algebra structure which is independent
of the choosen Haar measure $\mu$ (cf. Remark~\ref{rem:gralg}).
Both rational discrete $\QG$-bimodules $\biB(G)$ and $\caC_c(G,\Q)$ will turn out to be useful. 
The standard rational $\QG$-bimodule $\biB(G)$ seem to be the canonical choice for studying
$\Hom$-$\otimes$ identities or dualizing functors (cf. \S\ref{ss:homten}, \S\ref{ss:dualcom});
while $\caC_c(G,\Q)$ seem to be the right choice for relating the cohomology groups
$\dH^\bullet(G,\caC_c(G,\Q))$ with the cohomology with compact support of certain topological spaces
associated to $G$ (cf. \S\ref{ss:rattop}).
\end{rem}

\begin{rem}
\label{rem:biC}
Let $G$ be a t.d.l.c. group, and let $\Delta\colon G\to\Q^+$ denote its modular function.
Let $\Q(\Delta)$ denote the rational discrete left $\QG$-module which is isomorphic 
- as abelian group - to $\Q$ and which $G$-action is given by 
\begin{equation}
\label{eq:biC1}
g\cdot q=\Delta(g)\cdot q,\qquad g\in G,\ q\in\Q(\Delta).
\end{equation}
Let $\Q(\Delta)^{\times}$ denote the rational discrete right $\QG$-module associated to $\Q(\Delta)$,
i.e., $\Q(\Delta)^{\times}=\Q(\Delta)$ and for $g\in G$ and $q\in\Q(\Delta)^{\times}$ one has $q\cdot g=g^{-1}\cdot q$.
Then, by Proposition~\ref{prop:compar}(b), one has a (non-canonical) isomorphism
\begin{equation}
\label{eq:biC2}
\biB(G)\simeq_G\caC_c(G)\otimes\Q(\Delta)^\times
\end{equation}
of rational discrete right $\QG$-modules. Considering $\Q(\Delta)^\times$ as a trivial left $\QG$-module,
the isomorphism \eqref{eq:biC2} can be interpreted as an isomorphism of rational discrete $\QG$-bimodules.
\end{rem}


\subsection{The trace map}
\label{ss:trace}
For a compact open subgroup $\caO$ of $G$
let $\mu_\caO$ denote the left-invariant Haar measure on $G$ satisfying $\mu_\caO(\caO)=1$, i.e.,
if $\caU$ is a compact open subgroup of $G$ containing $\caO$, then
$\mu_\caO=|\caU\colon\caO|\cdot\mu_{\caU}$.
We also denote by 
\begin{equation}
\label{eq:defH}
\boh(G)=\Q\cdot\mu_\caO
\end{equation}
the $1$-dimensional $\Q$-vector space generated by all Haar measures $\mu_\caO$, $\caO\in\CO(G)$.
One has a $\Q$-linear map
\begin{equation}
\label{eq:trace1}
\tr=\psi^{(\caO)}(\argu)(1)\cdot\mu_{\caO}\colon\biB(G)\longrightarrow\boh(G),
\end{equation}
which is independent of the choice of the compact open subgroup $\caO$ of $G$, i.e., 
for all $u\in\biB(G)$ and all $\caO,\caU\in\CO(G)$ one has
\begin{equation}
\label{eq:trace2}
\tr(u)=\psi^{(\caO)}(u)(1)\cdot\mu_{\caO}=\psi^{(\caU)}(u)(1)\cdot\mu_{\caU}
\end{equation}
(cf. \eqref{eq:biB15}). In particular, by definition,
\begin{equation}
\label{eq:trace3}
\tr(\caO g)=
\begin{cases}
\mu_\caO&\ \text{if $\caO g=\caO$,}\\
\hfil 0\hfil&\ \text{if $\caO g\not=\caO$.}
\end{cases}
\end{equation}
In case that $G$ is unimodular one has the following.

\begin{prop}
\label{prop:trace}
Let $G$ be a unimodular t.d.l.c. group. Then one has
\begin{equation}
\label{eq:trace4}
\tr(g\cdot u)=\tr(u\cdot g)\qquad\text{for all $u\in\biB(G)$ and $g\in G$.}
\end{equation}
In particular, putting $\ubG=\biB(G)/\langle\, g\cdot u-u\cdot g\mid u\in\biB(G),\ g\in G\,\rangle_{\Q}$,
the $\Q$-linear map $\tr$ induces a $\Q$-linear map $\utr\colon\ubG\longrightarrow \boh(G)$.
\end{prop}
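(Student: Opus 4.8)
The plan is to reduce to coset generators and then compute directly. Since $\tr$ is $\Q$-linear and $\biB(G)$ is $\Q$-spanned by the cosets $x\caO$ with $x\in G$ and $\caO\in\CO(G)$, it suffices to verify \eqref{eq:trace4} for $u=x\caO$. For the left-hand side, the left $G$-action on $\biB(G)$ gives $g\cdot(x\caO)=(gx)\caO$, and evaluating $\tr$ on the generator $(gx)\caO\in\Q[G/\caO]$ through the isomorphism $\psi^{(\caO)}$ (cf. \eqref{eq:biB13}, \eqref{eq:biB14}, \eqref{eq:trace1}) yields $\tr\bigl((gx)\caO\bigr)=\mu_{\caO}$ if $gx\in\caO$ and $0$ otherwise. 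For the right-hand side, \eqref{eq:biB6} gives $(x\caO)\cdot g=xg\,\caO^{g}$ with $\caO^{g}=g^{-1}\caO g\in\CO(G)$; evaluating $\tr$ on this generator through $\psi^{(\caO^{g})}$ gives $\tr\bigl(xg\,\caO^{g}\bigr)=\mu_{\caO^{g}}$ if $xg\in\caO^{g}$ and $0$ otherwise.

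Two elementary facts then finish the argument. First, $xg\in g^{-1}\caO g$ if and only if $gx\in\caO$ (multiply on the left by $g$ and on the right by $g^{-1}$), so the two non-vanishing conditions above coincide. Second, one must compare the normalized Haar measures $\mu_{\caO}$ and $\mu_{\caO^{g}}$. Any two left-invariant Haar measures on $G$ are proportional, and by left-invariance of $\mu_{\caO}$ together with the defining property of the modular function $\Delta$ recalled in Proposition~\ref{prop:compar} one has $\mu_{\caO}(\caO^{g})=\mu_{\caO}(g^{-1}\caO g)=\mu_{\caO}(\caO g)=\Delta(g)\,\mu_{\caO}(\caO)=\Delta(g)$; hence $\mu_{\caO^{g}}=\Delta(g)^{-1}\mu_{\caO}$. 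Since $G$ is unimodular, $\Delta\equiv 1$, so $\mu_{\caO^{g}}=\mu_{\caO}$. Consequently both sides of \eqref{eq:trace4} equal $\mu_{\caO}$ when $gx\in\caO$ and both vanish otherwise; in every case they agree.

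For the final assertion, \eqref{eq:trace4} says precisely that $\tr$ annihilates every element $g\cdot u-u\cdot g$, hence the whole $\Q$-subspace $\langle\,g\cdot u-u\cdot g\mid u\in\biB(G),\ g\in G\,\rangle_{\Q}$; therefore $\tr$ factors uniquely through the quotient $\ubG$, producing the desired $\Q$-linear map $\utr\colon\ubG\to\boh(G)$.

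I expect no essential obstacle. The only real content is the bookkeeping of the conjugation $\caO\mapsto\caO^{g}$ and the resulting rescaling $\mu_{\caO^{g}}=\Delta(g)^{-1}\mu_{\caO}$ of the normalized Haar measure, which is exactly the point where unimodularity enters; the one thing to be careful about is that in evaluating $\tr$ on the right-translate one must use the normalization $\psi^{(\caO^{g})}$ attached to the \emph{conjugated} subgroup, not to $\caO$ itself.
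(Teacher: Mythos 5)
Your proof is correct. The reduction to coset generators $u = x\caO$ is legitimate by $\Q$-linearity, the identification of the non-vanishing conditions ($gx\in\caO$ versus $xg\in\caO^{g}$) is right, and the normalization comparison $\mu_{\caO^{g}}=\Delta(g)^{-1}\mu_{\caO}$ is exactly where unimodularity enters. The use of the independence of $\tr$ from the chosen compact open subgroup, \eqref{eq:trace2}, to evaluate the right-translate via $\psi^{(\caO^{g})}$ rather than $\psi^{(\caO)}$ is clean and correct.

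Your route differs in organization from the paper's. The paper keeps $\caO$ fixed, pushes $u$ into $\caC_c(G,\Q)$ via $\psi^{(\caO)}$, and invokes the already-established bimodule formula \eqref{eq:biB16}, namely $\psi^{(\caO)}(u\cdot g)=\Delta(g^{-1})\,\psi^{(\caO)}(u)\cdot g$, so that both $\tr(g\cdot u)$ and $\tr(u\cdot g)$ are read off as $\psi^{(\caO)}(u)(g^{-1})\cdot\mu_{\caO}$ once $\Delta\equiv 1$. You instead stay inside $\biB(G)$, conjugate the compact open subgroup, and absorb the modular function into the renormalization $\mu_{\caO^{g}}=\Delta(g)^{-1}\mu_{\caO}$. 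These are two faces of the same computation: what the paper packages as a twist in the right module structure of $\caC_c(G,\Q)$, you see as a change of base point in the directed system defining $\biB(G)$ and $\tr$. The paper's version is shorter because Proposition~\ref{prop:compar}(b) is already available; yours is more self-contained, essentially re-deriving the case $W=\caO$, $g=1$, $h=g$ of \eqref{eq:biB16} on the nose. Both pinpoint unimodularity as the one place the hypothesis is used.
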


\begin{proof}
Since $(g\cdot f)(1)=(f\cdot g)(1)=f(g^{-1})$ for all $g\in G$ and $f\in C(G,\Q)$, one concludes from
\eqref{eq:biB16} that
\begin{equation}
\label{eq:trace5}
\tr(g\cdot u)-\tr(u\cdot g)=(\psi^{(\caO)}(u)(g^{-1})-\psi^{(\caO)}(u)(g^{-1}))\cdot\mu_\caO=0.
\end{equation}
This yields the claim.
\end{proof}


\subsection{Homomorphism into the bimodule $\caC_c(G,\Q)$}
\label{ss:permbi} 
Let $G$ be a t.d.l.c. group.
In order to simplify notations we put 
$\caC_c(G)=\caC_c(G,\Q)$ 
(cf. \eqref{eq:biB12}).
For a rational discrete left $\QG$-module $M$ we put also 
\begin{equation}
\label{eq:defodot}
M^\odot=\Hom_G(M,\caC_c(G)),
\end{equation}
and consider $M^\odot$ as left $\QG$-module, i.e., for $g\in G$ and $h\in M^\odot$ one has
\begin{equation}
\label{eq:HomC1}
(g\cdot h)(m)= h(m)\cdot g^{-1}.
\end{equation}
One has a homomorphism of $\Q$-vector spaces
\begin{equation}
\label{eq:HomC2}
\argu_M^\vee\colon\Hom_G(M,\caC_c(G))\longrightarrow\Hom_{\Q}(M,\Q),
\end{equation}
which is given by evaluation in $1\in G$, i.e., for $m\in M$ and $h\in M^\odot$ one has
\begin{equation}
\label{eq:HomC3}
h^\vee_M(m)=h(m)(1).
\end{equation}
The $\Q$-vector space $M^\ast=\Hom_{\Q}(M,\Q)$ carries canonically the structure of a left
$\QG$-module, i.e., for $g\in G$ and $f\in M^\ast$ one has
\begin{equation}
\label{eq:HomC4}
(g\cdot f)(m)=f(g^{-1}\cdot m).
\end{equation}
However, $M^\ast$ is not necessarily a discrete $\QG$-module. For
$m\in M$, $g\in G$ and $h\in \Hom_G(M,\caC_c(G))$ one concludes from \eqref{eq:HomC1}
and \eqref{eq:HomC4} 
that
\begin{align}
(g\cdot h)^\vee_M(m)&=\big((g\cdot h)(m)\big)(1)=(h(m)\cdot g^{-1})(1)=h(m)(g)\notag\\
&=\big(g^{-1}\cdot h(m)\big)(1)=h(g^{-1}\cdot m)(1)=h^\vee_M(g^{-1}\cdot m),\label{eq:HomC5}
\end{align}
i.e., $\argu^\vee_M$ is a homomorphism of left $\QG$-modules.
This homomorphism has the following property.

\begin{prop}
\label{prop:mcom}
Let $G$ be a t.d.l.c. group, and let $M\in\ob(\QGdis)$.
Then $\argu^\vee_M\colon M^\odot\to M^\ast$ is injective.
\end{prop}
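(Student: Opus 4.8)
The plan is to use nothing more than the $G$-equivariance built into $\Hom_G(M,\caC_c(G))$ together with the explicit description \eqref{eq:biB11} of the left $\QG$-action on $\caC_c(G)$, which is just the left regular action on functions. The underlying principle is that an equivariant map into the regular representation is already determined by its ``value at $1$'', since every other value is obtained from a translate of the argument.

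Concretely, first I would take $h\in M^\odot=\Hom_G(M,\caC_c(G))$ with $h$ in the kernel of $\argu^\vee_M$, i.e.\ $h(m)(1)=h^\vee_M(m)=0$ for every $m\in M$ (cf.\ \eqref{eq:HomC3}), and aim to show that $h(m)=0$ as a function $G\to\Q$ for every $m\in M$. Fix $m\in M$ and $g\in G$. Since $h$ is a morphism of left $\QG$-modules one has $h(g\cdot m)=g\cdot h(m)$, and evaluating this identity of functions at $1\in G$ and invoking \eqref{eq:biB11} yields
\begin{equation}
h(m)(g^{-1})=(g\cdot h(m))(1)=h(g\cdot m)(1)=h^\vee_M(g\cdot m)=0.
\end{equation}
As $g$ ranges over $G$ the element $g^{-1}$ ranges over all of $G$, so $h(m)$ vanishes at every point of $G$; hence $h(m)=0$ in $\caC_c(G)$. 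Since $m$ was arbitrary, $h=0$, which is the claimed injectivity.

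There is no genuine obstacle here: the statement is a formal consequence of equivariance and of the fact that $\caC_c(G)$ carries the left regular $\QG$-action, so that evaluation at $1$ records all values of an equivariant map once one is allowed to translate the argument. The only mild point of care is bookkeeping of variances: the left $\QG$-module structure placed on $M^\odot$ in \eqref{eq:HomC1} is built from the \emph{right} regular action on $\caC_c(G)$ and plays no role in the argument; what is used is solely that elements of $M^\odot$ intertwine the given left $\QG$-structures on $M$ and on $\caC_c(G)$. (In particular, discreteness of $M$ is not even needed for this statement.)
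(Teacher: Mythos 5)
Your proof is correct and is essentially the paper's own argument, just phrased directly rather than contrapositively: both reduce to the identity $h(m)(g)=h^\vee_M(g^{-1}\cdot m)$ coming from equivariance of $h$ together with the left-regular action on $\caC_c(G)$ in \eqref{eq:biB11}. Your closing remarks about which module structures are actually used and about discreteness being irrelevant are accurate.
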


\begin{proof}
Let $h\in M^\odot$, $h\not=0$, i.e., there exists
$m\in M$ with $h(m)\not=0$. In particular, there exists $g\in G$ such that $h(m)(g)\not=0$.
Since 
\begin{equation}
\label{eq:inj1}
h(m)(g)= (g^{-1}\cdot h(m))(1)=h(g^{-1}\cdot m)(1)=h^\vee_M(g^{-1}\cdot m),
\end{equation}
this implies $h^\vee_M\not=0$.
\end{proof}

Let $\alpha\colon B\to M$ be a homomorphism of rational discrete left $\QG$-modules.
Then for $h\in M^\odot$ and $b\in B$ one has (cf. \eqref{eq:HomC3})
\begin{align}
\alpha^\odot(h)^\vee_B(b)&=
\big(\alpha^\odot(h)(b)\big)(1)
=h(\alpha(b))(1)\notag\\
&=h^\vee_M(\alpha(b))=\alpha^\ast(h^\vee_M)(b),\label{eq:HomC6}
\end{align}
i.e., the diagram 
\begin{equation}
\label{eq:HomC7}
\xymatrix{
M^\odot\ar[r]^{\alpha^\odot}\ar[d]_{\argu^\vee_M}& B^\odot\ar[d]^{\argu^\vee_B}\\
M^\ast\ar[r]^{\alpha^\ast}& B^\ast
}
\end{equation}
is commutative. 


\subsection{Proper signed discrete left $G$-sets}
\label{ss:propG}
Let $G$ be a t.d.l.c. group.
A signed discrete left $G$-set $\Omega$ (cf. \S\ref{ss:signedperm})
will be said to be {\it proper}, if $G$ has finitely many orbits on $\Omega$ and
$\stab_G(\omega)$ is compact for all $\omega\in\Omega$. 
In particular,
$\Q[\uOmega]$ is a finitely generated projective rational discrete left $\QG$-module (cf. \eqref{eq:sdpm4}). 
For $\omega\in\Omega$ define
$\omega^\ast\in\Q[\Omega]^\ast$ by
\begin{equation}
\label{eq:propG1}
\omega^\ast(\xi)=\begin{cases}
\hfil 1\hfil&\ \text{if $\xi=\omega$, }\\
\hfil-1\hfil&\ \text{if $\xi=\bomega$}, \qquad and\\
\hfil 0\hfil&\ \text{if $\xi\not\in\{\omega,\bomega\}$.}
\end{cases}
\end{equation}
In particular, if $g\in G$ then
\begin{equation}
\label{eq:propG2}
g\cdot \omega^\ast=(g\cdot \omega)^\ast.
\end{equation}
For a proper signed discrete left $G$-set $\Omega$ we put also
\begin{equation}
\label{eq:propG3}
\Q[\uOmega^\ast]=\spn_{\Q}\{\,\omega^\ast\mid \omega\in\Omega\}\subseteq\Q[\Omega]^\ast.
\end{equation}
In particular, as $\Omega^\ast=\{\,\omega^\ast\mid\omega\in\Omega\,\}$ is a proper signed discrete left $G$-set, $\Q[\uOmega^\ast]$ is a 
finitely generated projective discrete left $\QG$-module.
One has the following property.

\begin{prop}
\label{prop:propG1}
Let $G$ be a t.d.l.c. group, and let $\Omega$
be a proper signed discrete left $G$-set. Then
$\image(\argu^\vee_{\Q[\uOmega]})=\Q[\uOmega^\ast]$. In particular, 
$\argu^\vee_{\Q[\uOmega]}$ induces an isomorphism
$\argu_{\Omega}^\vee\colon \Q[\uOmega]^\odot\longrightarrow\Q[\uOmega^\ast]$.
\end{prop}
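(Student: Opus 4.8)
The plan is to compute $\image(\argu^\vee_{\Q[\uOmega]})$ directly, using that $\argu^\vee_{\Q[\uOmega]}\colon\Q[\uOmega]^\odot\to\Q[\uOmega]^\ast$ is already known to be injective (Proposition~\ref{prop:mcom}), so that only the description of the image is at stake. First I would record a convenient bookkeeping fact. Choose a set $\Omega^+\subseteq\Omega$ of representatives for the involution $\bar{\argu}$ (which is fixed-point free, as implicit in \eqref{eq:propG1}); then $\{\uomega\mid\omega\in\Omega^+\}$ is a $\Q$-basis of $\Q[\uOmega]$ by \eqref{eq:sdpm2}, each $\omega^\ast$ descends to a functional on $\Q[\uOmega]$ (it kills the relations $\xi+\bar\xi$), for $\omega\in\Omega^+$ the functional $\omega^\ast$ is the corresponding dual basis vector (since $\bomega\notin\Omega^+$), and $\overline{\omega}^{\,\ast}=-\omega^\ast$. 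Hence $\Q[\uOmega^\ast]$ is precisely the subspace of $\Q[\uOmega]^\ast$ consisting of those functionals vanishing on all but finitely many of the basis vectors $\uomega$, $\omega\in\Omega^+$. Thus the assertion splits into (a) $\omega^\ast\in\image(\argu^\vee_{\Q[\uOmega]})$ for every $\omega\in\Omega$, and (b) $\argu^\vee_{\Q[\uOmega]}(h)$ has finite support with respect to this basis for every $h\in\Q[\uOmega]^\odot$.

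For (a) I fix $\omega\in\Omega$ and put $P=\stab_G(\{\omega,\bomega\})$, which is compact and open by properness (it contains $\stab_G(\omega)$ with index at most two), together with the sign character $\chi=\sgn_\omega\colon P\to\{\pm1\}$ of \eqref{eq:sdpm3}, so that $\ker\chi=\stab_G(\omega)$ (with the convention $\chi=1$ when $G$ acts on this orbit without inversion). Let $\phi\in\caC_c(G)$ be supported on $P$ with $\phi(p)=\chi(p)$ for $p\in P$; this is locally constant of compact support since $\ker\chi$ is open of finite index in $P$, and a direct check gives $g\cdot\phi=\chi(g)\phi$ for all $g\in P$. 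By the universal property of induction (cf.\ \S\ref{ss:open}), $\phi$ determines a homomorphism $\idn_P^G(\Q(\chi))\to\caC_c(G)$ of discrete left $\QG$-modules sending $g\otimes1\mapsto g\cdot\phi$; transporting along the summand of \eqref{eq:sdpm4} corresponding to the $G\times C_2$-orbit of $\omega$ and extending by zero yields $h\in\Q[\uOmega]^\odot$ with $h(g\cdot\uomega)=g\cdot\phi$ on that orbit and $h=0$ elsewhere. Since $h(g\cdot\uomega)(1)=\phi(g^{-1})$ equals $1,-1,0$ according as $g\in\stab_G(\omega)$, $g\in P\setminus\stab_G(\omega)$, or $g\notin P$, and since $\omega^\ast(g\cdot\omega)$ takes exactly the same values in these three cases (and analogously on $g\cdot\bomega$, using $\underline{\bomega}=-\uomega$), one gets $h^\vee(\underline\xi)=\omega^\ast(\xi)$ for every $\xi\in\Omega$; that is $\argu^\vee_{\Q[\uOmega]}(h)=\omega^\ast$. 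As the $\omega^\ast$ span $\Q[\uOmega^\ast]$, this gives $\Q[\uOmega^\ast]\subseteq\image(\argu^\vee_{\Q[\uOmega]})$.

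For (b) I take $h\in\Q[\uOmega]^\odot$ and pick representatives $\omega_1,\dots,\omega_m$ of the finitely many $G$-orbits on $\Omega$. For each $k$ the function $h(\underline{\omega_k})\in\caC_c(G)$ is invariant under left translation by $H_k=\stab_G(\omega_k)$ — because $p\cdot\underline{\omega_k}=\underline{\omega_k}$ forces $p\cdot h(\underline{\omega_k})=h(\underline{\omega_k})$ — hence, having compact support and $H_k$ being open, it is a finite $\Q$-combination of indicators of left cosets $H_k x_{k,1},\dots,H_k x_{k,n_k}$. Now for $\xi=g\cdot\omega_k$ one has $h^\vee(\underline\xi)=h(g\cdot\underline{\omega_k})(1)=h(\underline{\omega_k})(g^{-1})$, which vanishes unless $g^{-1}\in H_kx_{k,l}$ for some $l$, i.e.\ $g\in x_{k,l}^{-1}H_k$; but then $g\cdot\omega_k=x_{k,l}^{-1}\cdot\omega_k$ because $H_k=\stab_G(\omega_k)$. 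Consequently $h^\vee(\underline\xi)\ne0$ only for $\xi$ in the finite set $\{\,x_{k,l}^{-1}\cdot\omega_k\mid 1\le k\le m,\ 1\le l\le n_k\,\}$, so $h^\vee$ has finite support with respect to the basis of the first paragraph, whence $h^\vee\in\Q[\uOmega^\ast]$. Together with (a) and the injectivity from Proposition~\ref{prop:mcom} this gives the asserted isomorphism $\argu^\vee_\Omega\colon\Q[\uOmega]^\odot\to\Q[\uOmega^\ast]$.

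The only genuinely t.d.l.c.-specific ingredients are the compactness of $P$ and of the stabilizers and the elementary fact that a compactly supported, locally constant, left-$H$-invariant function on $G$ (with $H$ open) is a finite combination of indicators of left $H$-cosets; everything else is linear bookkeeping. I expect the main nuisance to be the consistent handling of the involution $\bar{\argu}$ and the sign characters throughout — in particular the "with inversion" case $\bomega\in G\omega$, which is exactly why the $\chi$-twisted function $\phi$ rather than a plain indicator must be used in step (a) — together with the routine verifications that $\{\uomega\mid\omega\in\Omega^+\}$ is a basis and that $\omega^\ast$ serves as a dual basis vector.
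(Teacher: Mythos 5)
Your proof is correct, and it is organized differently from the paper's. After reducing via \eqref{eq:sdpm4} to a single $G\times C_2$-orbit, the paper splits into the ``without inversion'' and ``with inversion'' cases and in each exhibits an explicit $\Q$-basis of $\Q[\uOmega]^\odot$ (the $h_x$, $x\in\caR$, coming from the identification with ${}^\caO\caC_c(G)$; resp.\ the $k_x$, $x\in\caS$, spanning the $(-1)$-eigenspace of $\sigma_\circ$ there), then checks directly that this basis is carried to the dual basis $(x\cdot\omega)^\ast$ of $\Q[\uOmega^\ast]$, so surjectivity onto $\Q[\uOmega^\ast]$ and injectivity (via Prop.~\ref{prop:mcom}) come out in one stroke. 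You instead prove two containments. For $\Q[\uOmega^\ast]\subseteq\image(\argu^\vee_{\Q[\uOmega]})$ you treat both cases uniformly by feeding the $\chi$-twisted indicator of $P=G_{\pm\omega}$ through the adjunction $\Hom_G(\idn_P^G\Q(\chi),\caC_c(G))\simeq\Hom_P(\Q(\chi),\rst^G_P\caC_c(G))$, absorbing the inversion dichotomy into the sign character $\chi=\sgn_\omega$. For the reverse containment you isolate the finite-support principle: $h(\underline{\omega_k})\in\caC_c(G)$ is compactly supported, locally constant and left $H_k$-invariant for $H_k=\stab_G(\omega_k)$ compact open, hence a finite combination of indicators of left $H_k$-cosets, whence $h^\vee$ is supported on a finite subset of $\Omega^+$. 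Both proofs ultimately rest on the same t.d.l.c.\ inputs — properness of the action and the coset description of left-$H$-invariant compactly supported locally constant functions — but your version is more uniform across the inversion dichotomy and makes the finite-support mechanism explicit, while the paper's explicit-basis computation is more direct at the cost of the two-case split.
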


\begin{proof}
In suffices to prove the claim for a proper signed discrete left $G$-set $\Omega$ with 
one $G\times C_2$-orbit (cf. \eqref{eq:sdpm4}). We distinguish the two cases.

{\bf Case 1:} $G$ acts without inversion on $\Omega$. 
Let $\omega\in\Omega$ and put $\caO=\stab_G(\omega)$.
By hypothesis,
$\Omega=G\cdot \omega\sqcup G\cdot\bomega$. 
Let $\caR\subseteq G$ be a set of representatives for
$G/\caO$. For $x\in\caR$ there exists $h_x\in \Q[\uOmega]^\odot=\Hom_G(\Q[\uOmega],\caC_c(G))$ given by
$h_x(\omega)=I_{\caO x^{-1}}$. Moreover,
as ${}^\caO\caC_c(G)=\spn_{\Q}\{\,I_{\caO x^{-1}}\mid x\in\caR\,\}$ (cf. \eqref{eq:ecky3} and \eqref{eq:biB14}), one has
$\Q[\uOmega]^\odot=\spn_{\Q}\{\,h_x\mid x\in\caR\,\}$.
For $\varpi\in G\cdot\omega$ there exists a unique element
$y\in\caR$ such that $\varpi= y\cdot\omega$. Hence
\begin{equation}
\label{eq:propG4}
(h_x)^\vee_{\Q[\uOmega]}(\varpi)=
(h_x)^\vee_{\Q[\uOmega]}(y\cdot\omega)
=h_x(y\cdot \omega)(1)=I_{y\caO x^{-1}}(1)=I_{y\caO}(x)=\delta_{x,y}.
\end{equation}
Thus $(h_x)^\vee_{\Q[\uOmega]}=(x\cdot\omega)^\ast$. From this fact one concludes that
$\argu^\vee_{\Q[\Omega]}$ induces a mapping
$\argu_{\Omega}\colon \Q[\uOmega]^\odot\longrightarrow\Q[\uOmega^\ast]$,
and that $\argu_{\Omega}$ is surjective. Hence the claim follows from
Proposition~\ref{prop:mcom}.

{\bf Case 2:} $G$ acts with inversion on $\Omega$.
Let $\omega\in\Omega$ and $\sigma\in G$ be such that $\sigma\cdot\omega=\bomega$.
Put $\caU=\stab_G(\{\omega,\bomega\})$ and $\caO=\stab_G(\omega)$.
In particular, $\caO$ is normal in $\caU$ and $\sigma\in\caU$.
Let $\caS\subseteq G$ be a set of representatives for
$G/\caU$, and put $\caR=\caS\sqcup\caS\sigma$. Then $\caR$ is a set of representatives for $G/\caO$.
For $x\in\caS$ there exists $k_x\in \Q[\uOmega]^\odot=\Hom_G(\Q[\uOmega],\caC_c(G))$ given by
$k_x(\omega)=I_{\caO x^{-1}}-I_{\caO\sigma^{-1} x^{-1}}$. Moreover, as before one has
${}^\caO\caC_c(G)=\spn_{\Q}\{\,I_{\caO z^{-1}}\mid z\in\caR\,\}$, and
$\spn_{\Q}\{\,k_x\mid x\in\caS\,\}$ coincides with the eigenspace of the endomorphism
$\sigma_\circ\in\End({}^\caO\caC_c(G))$ with respect to the eigenvalue $-1$.
Thus $\Q[\uOmega]^\odot=\spn_{\Q}\{\,k_x\mid x\in\caS\,\}$.

By hypothesis, $\Omega=G\cdot\omega$.
Let $\Omega^+=\caS\cdot\omega$. Then $\Omega=\Omega^+\sqcup\Omega^-$,
where $\Omega^-=\{\,\bar{\varpi}\mid \varpi\in\Omega^+\,\}$.
Hence for $\varpi\in \Omega^+$ there exists a unique element
$y\in\caS$ such that $\varpi= y\cdot\omega$. This yields
\begin{align}
(k_x)^\vee_{\Q[\uOmega]}(\varpi)&=
(k_x)^\vee_{\Q[\uOmega]}(y\cdot\omega)
=k_x(y\cdot \omega)(1)=I_{y\caO x^{-1}}(1)-I_{y\caO\sigma^{-1} x^{-1}}(1).\notag\\
&=I_{y\caO x^{-1}}(1)=\delta_{x,y}\label{eq:propG5}
\end{align}
Here we used the fact that
$I_{y\caO\sigma^{-1} x^{-1}}(1)=I_{y\caU x^{-1}}(1)-I_{y\caO x^{-1}}(1)=0$. Hence
$(k_x)^\vee_{\Q[\uOmega]}=(x\cdot\omega)^\ast$ and the claim follows by the
same argument as in the previous case.
\end{proof}

One also concludes the following.

\begin{prop}
\label{prop:propG2}
Let $G$ be a t.d.l.c. group, let $\Xi$ and $\Omega$ be proper signed discrete
left $G$-sets, and let $\alpha\colon\Q[\uXi]\to\Q[\uOmega]$ be a proper 
homomorphism of left $\QG$-modules (cf. \eqref{eq:sset5}).
Then one has a commutative diagram
\begin{equation}
\label{eq:propG6}
\xymatrix{
\Q[\uOmega]^\odot\ar[r]^{\alpha^\odot}\ar[d]_{\argu^\vee_\Omega}&
\Q[\uXi]^\odot\ar[d]^{\argu^\vee_\Xi}\\
\Q[\uOmega^\ast]\ar[r]^{\ualpha^\ast}&\Q[\uXi^\ast],
}
\end{equation}
where the vertical maps are isomorphisms.
\end{prop}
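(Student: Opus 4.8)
The plan is to realise the square of the proposition as a restriction of the commutative square \eqref{eq:HomC7}. First I would apply \eqref{eq:HomC7} to the homomorphism $\alpha\colon\Q[\uXi]\to\Q[\uOmega]$ (so that $B=\Q[\uXi]$ and $M=\Q[\uOmega]$); this yields a commutative diagram of left $\QG$-modules whose top row is $\alpha^\odot\colon\Q[\uOmega]^\odot\to\Q[\uXi]^\odot$, whose bottom row is $\alpha^\ast\colon\Q[\uOmega]^\ast\to\Q[\uXi]^\ast$, and whose vertical arrows are $\argu^\vee_{\Q[\uOmega]}$ and $\argu^\vee_{\Q[\uXi]}$. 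By Proposition~\ref{prop:mcom} these vertical arrows are injective, and by Proposition~\ref{prop:propG1} their images are exactly $\Q[\uOmega^\ast]$ and $\Q[\uXi^\ast]$; hence $\argu^\vee_{\Q[\uOmega]}$ factors as the isomorphism $\argu^\vee_\Omega\colon\Q[\uOmega]^\odot\to\Q[\uOmega^\ast]$ followed by the inclusion $\Q[\uOmega^\ast]\hookrightarrow\Q[\uOmega]^\ast$, and similarly for $\Xi$.

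Next I would observe that $\alpha^\ast$ necessarily carries $\Q[\uOmega^\ast]$ into $\Q[\uXi^\ast]$: from the commutativity just recalled, together with the surjectivity of $\argu^\vee_{\Q[\uOmega]}$ onto $\Q[\uOmega^\ast]$ and the equality $\image(\argu^\vee_{\Q[\uXi]})=\Q[\uXi^\ast]$, one gets $\alpha^\ast(\Q[\uOmega^\ast])=\image(\argu^\vee_{\Q[\uXi]}\circ\alpha^\odot)\subseteq\Q[\uXi^\ast]$. Thus $\alpha^\ast$ restricts to a $\QG$-homomorphism $\Q[\uOmega^\ast]\to\Q[\uXi^\ast]$, and the diagram of the proposition, taken with this restriction as its bottom arrow, is precisely the restriction of \eqref{eq:HomC7} along the monomorphisms $\Q[\uOmega^\ast]\hookrightarrow\Q[\uOmega]^\ast$ and $\Q[\uXi^\ast]\hookrightarrow\Q[\uXi]^\ast$. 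Consequently it commutes, and its vertical maps $\argu^\vee_\Omega$, $\argu^\vee_\Xi$ are isomorphisms by Proposition~\ref{prop:propG1}.

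The one remaining point, which I expect to be the only non-formal step, is to identify this restriction of $\alpha^\ast$ with the map $\ualpha^\ast$ appearing in the statement (the map induced by $\alpha$ via \eqref{eq:sset5}). I would do this by evaluating on the generators $\omega^\ast$: here $\alpha^\ast(\omega^\ast)=\omega^\ast\circ\alpha$, and using the properness of $\alpha$ (cf. \eqref{eq:sset5}) together with the equivariance formula \eqref{eq:propG2} for the dual generators, one rewrites $\omega^\ast\circ\alpha$ as the corresponding $\Q$-combination of the $\xi^\ast$, which is the definition of $\ualpha^\ast(\omega^\ast)$; if $\ualpha^\ast$ is \emph{defined} to be this restriction, the step is vacuous. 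The main obstacle is thus purely the sign bookkeeping with the dual generators of the signed permutation modules; everything else is a restriction of the already-established square \eqref{eq:HomC7} combined with Proposition~\ref{prop:propG1}.
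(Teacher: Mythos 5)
Your proposal is correct and follows essentially the same route as the paper's own proof, which simply deduces the statement from Proposition~\ref{prop:propG1} together with the commutativity of the diagram \eqref{eq:HomC7}. Your extra care in checking that $\alpha^\ast$ restricts to $\Q[\uOmega^\ast]\to\Q[\uXi^\ast]$ and in identifying that restriction with $\ualpha^\ast$ is a reasonable unpacking of what the paper leaves implicit.
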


\begin{proof}
This is an immediate consequence of Proposition~\ref{prop:propG1} and the commutativity
of the diagram \eqref{eq:HomC7}.
\end{proof}


\section{Discrete actions of t.d.l.c. groups on graphs}
\label{s:dag}
The notion of {\it graph} which will be used throughout the paper
coincides with the notion used by J-P.~Serre in \cite{serre:trees}, i.e.,
a {\it graph} $\Gamma=(\caV(\Gamma),\caE(\Gamma))$ will consist
of a {\it set of vertices} $\caV(\Gamma)$, a {\it set of edges} $\caE(\Gamma)$, an
{\it origin mapping} $o\colon \caV(\Gamma)\to \caE(\Gamma)$,
a {\it terminus mapping} $t\colon \caV(\Gamma)\to \caE(\Gamma)$ and an {\it edge inversion
mapping} $\bar{\phantom{x}}\colon \caE(\Gamma)\to \caE(\Gamma)$ satisfying
\begin{equation}
\label{eq:grid}
t(\beue)=o(\eue),\qquad
o(\beue)=t(\eue),\qquad
\bar{\beue}=\eue,\qquad
\beue\not=\eue
\end{equation}
for all $\eue\in \caE(\Gamma)$ (cf. \cite[\S I.2.1]{serre:trees}).
In particular, $\caE(\Gamma)$ is a {\it signed set} (cf. \S\ref{ss:ssets}).
Such a graph is said to be {\it combinatorial}, if the
map
\begin{equation}
\label{eq:graph1}
(t,o)\colon\caE(\Gamma)\longrightarrow\caV(\Gamma)\times\caV(\Gamma)
\end{equation}
is injective.
Moreover, $\Gamma$ is said to be
{\it locally finite}, if $\eust_\Gamma(v)=\{\,\eue\in\caE(\Gamma)\mid o(\eue)=v\,\}$ is finite
for every vertex $v\in\caV(\Gamma)$.


\subsection{The exact sequence associated to a graph}
\label{ss:exgraph}
For a graph $\Gamma$ let 
\begin{align}
\boV(\Gamma)&=\Q[\caV(\Gamma)]\label{eq:defV}\\
\intertext{denote the free $\Q$-vector space over the set of vertices of $\Gamma$,
and let}
\boE(\Gamma)&=\Q[\underline{\caE(\Gamma)}]=
\Q[\caE(\Gamma)]/\langle\, \eue+\beue\mid\eue\in\caE(\Gamma)\,\rangle,\label{eq:defE}
\end{align}
(cf. \eqref{eq:sset1}), 
i.e., if $\ueue\in\boE[\Gamma]$ denotes the image of $\eue\in\caE(\Gamma)$ in $\boE(\Gamma)$,
one has $\ubeue=-\ueue$. One has a canonical $\Q$-linear mapping  
$\der\colon\boE(\Gamma)\to\boV(\Gamma)$
given by
\begin{equation}
\label{eq:canG}
\der(\ueue)=t(\eue)-o(\eue),\qquad\eue\in\caE(\Gamma),
\end{equation}
which has the following well known properties (cf. \cite[\S 2.3, Cor.~1]{serre:trees}).

\begin{fact}
\label{fact:graph}
Let $\Gamma=(\caV(\Gamma),\caE(\Gamma))$ be a graph,
and let $\der\colon\boE(\Gamma)\to\boV(\Gamma)$ be the map given by
\eqref{eq:canG}.
Then
\begin{itemize}
\item[(a)] $\kernel(\der)\simeq H_1(|\Gamma|,\Q)$,
where $|\Gamma|$ denotes the topological realization of $\Gamma$.
\item[(b)] $\coker(\der)\simeq\Q[\caV(\Gamma)/\!\!\sim]$, where $\sim$ is the connectedness relation, i.e.,
$\Gamma$ is connected, if and only if, $\coker(\der)\simeq\Q$.
\end{itemize}
In particular, $\Gamma$ is a tree if, and only if, $\kernel(\der)=0$ and
$\coker(\der)\simeq \Q$.
\end{fact}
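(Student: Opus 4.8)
The plan is to recognise the two-term complex $\boE(\Gamma)\xrightarrow{\der}\boV(\Gamma)$ as the cellular chain complex of the topological realization $|\Gamma|$ with rational coefficients, and then read off (a) and (b) from elementary algebraic topology, exactly as in \cite[\S I.2.3]{serre:trees}. First I would fix an orientation of $\Gamma$, i.e.\ a subset $\caE(\Gamma)^+\subseteq\caE(\Gamma)$ meeting each pair $\{\eue,\beue\}$ in exactly one element. Then $\{\ueue\mid\eue\in\caE(\Gamma)^+\}$ is a $\Q$-basis of $\boE(\Gamma)$ and $\caV(\Gamma)$ is a $\Q$-basis of $\boV(\Gamma)$; with respect to these bases $\der$ is precisely the cellular boundary map $C_1(|\Gamma|,\Q)\to C_0(|\Gamma|,\Q)$ for the $CW$-structure on $|\Gamma|$ whose $0$-cells are the vertices and whose $1$-cells are the geometric edges (the cell of $\eue$ being attached along $o(\eue)$ and $t(\eue)$). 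Since $|\Gamma|$ is one-dimensional, its cellular complex computes its singular homology, so $\kernel(\der)=H_1(|\Gamma|,\Q)$, which is (a), and $\coker(\der)=H_0(|\Gamma|,\Q)$.

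For (b), $H_0$ of any space is the free $\Q$-module on its set of path-components; two vertices lie in the same path-component of $|\Gamma|$ iff they are joined by an edge-path iff they are equivalent under $\sim$, and every point of $|\Gamma|$ lies in the path-component of some vertex, so the set of path-components is $\caV(\Gamma)/\!\!\sim$. Hence $\coker(\der)\simeq\Q[\caV(\Gamma)/\!\!\sim]$. (If one prefers to avoid topology here: $\image(\der)$ is spanned by the elements $t(\eue)-o(\eue)$, and the $\Q$-linear map $\boV(\Gamma)\to\Q[\caV(\Gamma)/\!\!\sim]$ sending each vertex to its class kills $\image(\der)$ and admits a section given by a choice of one vertex per component, so it induces the isomorphism on the quotient.) In particular $\coker(\der)\simeq\Q$ exactly when there is a single class, i.e.\ when $\Gamma$ is connected.

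It remains to prove the last assertion. If $\Gamma$ is a tree then it is connected, so $\coker(\der)\simeq\Q$ by (b), and $|\Gamma|$ is contractible, so $\kernel(\der)=H_1(|\Gamma|,\Q)=0$ by (a). Conversely, suppose $\kernel(\der)=0$ and $\coker(\der)\simeq\Q$; then $\Gamma$ is connected and $H_1(|\Gamma|,\Q)=0$. A connected graph is homotopy equivalent to a wedge of circles (one for each edge outside a spanning tree), so $H_1(|\Gamma|,\Z)$ is free abelian; hence $H_1(|\Gamma|,\Q)=0$ forces $H_1(|\Gamma|,\Z)=0$, so that wedge is trivial, $|\Gamma|$ is contractible, and $\Gamma$ is a tree. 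The only point needing genuine care is the identification step of the first paragraph — matching the signed-set convention $\ubeue=-\ueue$ and the formula $\der(\ueue)=t(\eue)-o(\eue)$ with the standard cellular boundary, and checking that the resulting identification is independent of the chosen orientation $\caE(\Gamma)^+$; once this bookkeeping is done, everything else is standard.
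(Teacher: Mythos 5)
Your proof is correct and takes essentially the same route the paper relies on: the paper gives no proof beyond the citation to \cite[\S I.2.3, Cor.~1]{serre:trees}, which is exactly the identification of $\boE(\Gamma)\xrightarrow{\der}\boV(\Gamma)$ with the cellular chain complex of $|\Gamma|$ (via a choice of orientation) that you carry out, followed by the standard reading of $H_1$, $H_0$, and the wedge-of-circles homotopy type for the tree characterization. Your bookkeeping of the signed-set convention and the backward direction (free abelianness of $H_1(|\Gamma|,\Z)$ plus $H_1(|\Gamma|,\Q)=0$ forces triviality) are both handled correctly, including for infinite graphs.
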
 

Let $\boL(\Gamma)=\kernel(\der)$, i.e., if $\Gamma$ is a connected graph the sequence
\begin{equation}
\label{eq:graph2}
\xymatrix{
0\ar[r]&\boL(\Gamma)\ar[r]&\boE(\Gamma)\ar[r]^{\der}&\boV(\Gamma)\ar[r]^{\eps}&\Q\ar[r]&0
}
\end{equation}
is exact.


\subsection{Rough Cayley graphs}
\label{ss:rcgraph}
Let $G$ be a t.d.l.c. group, let $\caO$ be a compact open subgroup of $G$, and let $S\subseteq G\setminus \caO$
be a symmetric subset of $G$ intersecting $\caO$ trivially, i.e., $s\in S$ implies $s^{-1}\in S$.
The {\it rough Cayley graph} $\Gamma=\Gamma(G,S,\caO)$ associated with $(G,S,\caO)$ is the graph
given by $\caV(\Gamma)=G/\caO$ and 
\begin{equation}
\label{eq:rcg1}
\caE(\Gamma)=\{\,(g\caO,gs\caO)\mid g\in G,\ s\in S\,\}.
\end{equation}
The origin mapping is given by the projection on the first coordinate,
the terminus mapping is given by the projection on the second coordinate,
while the edge inversion mapping permutes the first and second coordinate.
The definition we have chosen here follows the approach used in \cite[\S 2]{bero:rcg}.
However, in our setup edges of a graph are directed, and we do not claim $\Gamma$ to be connected.
By construction, $G$ has a discrete left action on $\Gamma=\Gamma(G,S,\caO)$.
It is straightforward to verify that $\Gamma$ is combinatorial. One has the following properties.

\begin{prop}
\label{prop:rcg}
Let $G$ be a t.d.l.c. group, let $\caO$ be a compact open subgroup of $G$, let $S$
be a symmetric subset of $G\setminus \caO$, and let $\Gamma=\Gamma(G,S,\caO)$ be
the rough Cayley graph associated with $(G,S,\caO)$.
\begin{itemize}
\item[(a)] $G$ acts transitively on the set of vertices of $\Gamma$;
\item[(b)] if $S$ is a finite set, then $\Gamma$ is locally finite;
\item[(c)] $\Gamma$ is connected if, and only if,
$G$ is generated by $\caO$ and $S$, i.e., $G=\langle\, S,\caO\,\rangle$.
\end{itemize}
\end{prop}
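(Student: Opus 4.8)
The plan is to verify the three items by directly unwinding the definition of $\Gamma=\Gamma(G,S,\caO)$; each one reduces to elementary coset bookkeeping. Part~(a) is immediate from the construction: $\caV(\Gamma)=G/\caO$, and the discrete left $G$-action on $\Gamma$ acts on vertices by left translation $g'\cdot(g\caO)=g'g\,\caO$, which is transitive on $G/\caO$.

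For part~(b) I would describe the star of a vertex explicitly. Since $\Gamma$ is combinatorial, an edge is determined by the ordered pair of its endpoints, so $\eust_\Gamma(g\caO)$ embeds into the set of possible termini of edges with origin $g\caO$. By \eqref{eq:rcg1} such an edge has the form $(g\caO,\,gus\,\caO)$ with $u\in\caO$ and $s\in S$ (writing $gu$ for an arbitrary representative of $g\caO$), hence $|\eust_\Gamma(g\caO)|\le\sum_{s\in S}|\{\,us\,\caO\mid u\in\caO\,\}|$. For fixed $s\in S$ the map $\caO\to G/\caO$, $u\mapsto us\,\caO$, has fibres the left cosets in $\caO$ of the subgroup $\caO\cap s\caO s^{-1}$, which is open in the compact group $\caO$ and therefore of finite index; so each summand is finite, and if $S$ is finite so is the sum. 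Thus every vertex has finite star, i.e., $\Gamma$ is locally finite.

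For part~(c) I would identify the connected component of the base vertex $\caO=1\cdot\caO$. The claim is that it equals $\{\,g\caO\mid g\in\langle S,\caO\rangle\,\}$. Given a path $\caO=v_0,v_1,\dots,v_n=g\caO$ in $\Gamma$, choose representatives $g_i$ of the $v_i$ inductively: put $g_0=1$, and if $g_{i-1}$ represents $v_{i-1}$ then the $i$-th edge, having origin $v_{i-1}$, forces $v_i=g_{i-1}u_i s_i\,\caO$ for some $u_i\in\caO$ and $s_i\in S$, so set $g_i=g_{i-1}u_i s_i$; then $g_n\in\langle S,\caO\rangle$ and $g_n\caO=v_n=g\caO$, whence $g\in\langle S,\caO\rangle$. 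Conversely, since $\caO$ is a subgroup and $S$ is symmetric, any $p\in\langle S,\caO\rangle$ can be written $p=u_0 s_1 u_1 s_2 u_2\cdots s_n u_n$ with $u_i\in\caO$, $s_i\in S$; putting $w_k=(u_0 s_1 u_1\cdots s_k u_k)\,\caO$ for $0\le k\le n$ gives $w_0=\caO$, $w_n=p\,\caO$, and each pair $w_{k-1},w_k$ is joined by an edge of $\Gamma$ (use the representative $u_0 s_1 u_1\cdots s_{k-1}u_{k-1}$ of $w_{k-1}$ together with the generator $s_k\in S$), so $w_0,\dots,w_n$ is a path from $\caO$ to $p\,\caO$. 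This proves the claim. Finally $\langle S,\caO\rangle$ is a union of left $\caO$-cosets, and $\Gamma$ is connected if and only if the component of $\caO$ is all of $G/\caO$ (path-connectedness being an equivalence relation on vertices); hence $\Gamma$ is connected if and only if $\langle S,\caO\rangle=G$.

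The only point demanding care is the two-way dictionary in part~(c) between combinatorial paths in $\Gamma$ issuing from $\caO$ and words in the alphabet $S\cup\caO$: one must choose, along a given path, representatives whose accumulated product stays inside $\langle S,\caO\rangle$, and conversely read off a general such word as a chain of pairwise adjacent vertices. Making the two directions match is where the symmetry of $S$ and the subgroup property of $\caO$ are used, but there is no genuine obstacle here.
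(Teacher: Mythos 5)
Your proof is correct and follows essentially the same route as the paper: part (a) is a one-liner, part (b) reduces to the finiteness of $|\caO : \caO\cap s\caO s^{-1}|$ for each $s\in S$ (the paper phrases this via the decomposition of the double coset $\caO s\caO$ into finitely many right $\caO$-cosets, but it is the same count, and both use that $\Gamma$ is combinatorial so edges in a fixed star are determined by their termini), and part (c) is the same two-way dictionary between paths issuing from the base vertex $\caO$ and words $\omega_0 s_1 \omega_1\cdots s_n\omega_n$ with $\omega_i\in\caO$, $s_i\in S$. No gaps.
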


\begin{proof}
(a) is obvious.

\noindent
(b) Suppose that $S$ is finite. 
By (a), it suffices to show that $\eust_\Gamma(\caO)$ is a finite set.
By definition, $\eust_\Gamma(\caO)=\{\,(\omega\caO,\omega s\caO\mid \omega\in\caO,\ s\in S\,\}$.
Since $\caO$ is compact open in $G$, any double coset $\caO s\caO$ is the union
of finitely many right cosets $g_1(s)\caO$,\ldots, $g_{\alpha(s)}(s)\caO$, i.e., 
 $t(\eust_\Gamma(\caO))=\{\,g_j(s)\caO\mid s\in S,\ 1\leq j\leq\alpha(s)\,\}$.
As $\Gamma$ is combinatorial, the restriction of $t$ to $\eust_\Gamma(\caO)$ is injective.
This yields the claim.

\noindent
(c) Suppose that $\Gamma$ is connected, and let $g\in G$. 
Then there exists a path $\eup=\eue_0\cdots\eue_n$ from $\caO$ to $g\caO$,
i.e., $o(\eue_0)=\caO$, $t(\eue_n)=g\caO$ and $o(\eue_j)=t(\eue_{j-1})$ for $1\leq j\leq n$.
Let $g_j\in G$, $s_j\in S$ such that $\eue_j=(g_j\caO,g_js_j\caO)$ for $0\leq j\leq n$, i.e.,
$g_0=\omega_0\in\caO$. By induction, one concludes that $g_j\in \langle\, S,\caO\,\rangle$
for all $1\leq j\leq n$. By construction, there exists $\omega_n\in\caO$ such that
$g_ns_n\omega_n=g$. In particular, $g\in \langle\, S,\caO\,\rangle$, and thus $G=\langle\, S,\caO\,\rangle$.

Suppose now that $G=\langle\, S,\caO\,\rangle$.
By (a), it suffices to show
that for any vertex $g\caO\in\caV(\Gamma)$, there is a path from $\caO$ to $g\caO$.
By hypothesis, there exists elements $s_1,\ldots, s_n\in S$ and $\omega_0,\ldots,\omega_n\in\caO$ such that
$g=\omega_0s_1\cdots s_n\omega_n$. Hence for $\eue_0=(\omega_0\caO,\omega_0s_1\caO)$ and
$\eue_k=(\omega_0s_1\cdots s_{k-1}\omega_{k-1}\caO,\omega_0s_1\cdots s_{k}\caO)$ for $1\leq k\leq n$
one verifies easily that
$\eup=\eue_0\cdots\eue_n$ is a path from $\caO$ to $g\caO$. Thus $\Gamma$ is connected. 
\end{proof}


\subsection{T.d.l.c. groups of type \FP$_1$}
\label{ss:FP1}
It is well known that a discrete group is fini\-te\-ly generated if, and only if, it is of type $\FP_1$ 
(cf. \cite[Ex.~1d of \S I.2 and Ex.~1 of \S VIII]{brown:coh}). 
For t.d.l.c. groups one has the following.

\begin{thm}
\label{thm:FP1}
Let $G$ be a t.d.l.c. group. Then the following are equivalent.
\begin{itemize}
\item[(i)] The group $G$ is compactly generated.
\item[(ii)]  The discrete $\QG$-module 
$\ker(\Q[G/\caO]\stackrel{\eps}{\rightarrow} \Q)$ is finitely generated
for any compact open subgroup $\caO$ of $G$, where $\eps$ is the augmentation map.
\item[(iii)]  There exists a compact open subgroup $\caO$ of $G$ such that  
$\ker(\Q[G/\caO]\stackrel{\eps}{\rightarrow} \Q)$ is finitely generated.
\item[(iv)] The group $G$ is of type $\FP_1$.
\end{itemize}
\end{thm}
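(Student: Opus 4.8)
The plan is to establish $(i)\Rightarrow(ii)$, the trivial $(ii)\Rightarrow(iii)$, then $(iii)\Rightarrow(iv)$ and $(iv)\Rightarrow(ii)$, and finally $(iii)\Rightarrow(i)$, which closes the circle. The two implications among $(ii),(iii),(iv)$ that do not go through the geometry are essentially formal: for $(iii)\Rightarrow(iv)$, if $\caO$ is a compact open subgroup with $K=\ker(\Q[G/\caO]\xrightarrow{\eps}\Q)$ finitely generated, choose a finite coproduct $P_1$ of modules $\Q[G/\caU_j]$ with $\caU_j$ compact open (finitely generated and projective by Proposition~\ref{prop:permod}) together with a surjection $P_1\twoheadrightarrow K$; composing with $K\hookrightarrow\Q[G/\caO]$ gives a partial projective resolution $P_1\to\Q[G/\caO]\xrightarrow{\eps}\Q\to 0$ of finite type of length $1$, so $\Q$ is of type $\FP_1$ by Proposition~\ref{prop:FPn}, i.e.\ $G$ is of type $\FP_1$. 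Conversely, if $G$ is of type $\FP_1$, then for every compact open $\caO$ the resolution $\Q[G/\caO]\xrightarrow{\eps}\Q\to 0$ of finite type concentrated in degree $0$ has, by the characterisation~(ii) of Proposition~\ref{prop:FPn}, finitely generated kernel; this is exactly $(ii)$, hence also $(iii)$.

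The substance is the equivalence of $(i)$ with the homological conditions, and the tool is the rough Cayley graph together with the exact sequence~\eqref{eq:graph2}. For $(i)\Rightarrow(ii)$, fix an arbitrary compact open subgroup $\caO$ and a compact generating set $K$ of $G$. Since $\caO$ is open, $K$ is covered by finitely many left cosets of $\caO$; taking a finite set of representatives together with their inverses and discarding those lying in $\caO$ yields a finite symmetric set $S\subseteq G\setminus\caO$ with $G=\langle S,\caO\rangle$. Then $\Gamma=\Gamma(G,S,\caO)$ is connected and locally finite by Proposition~\ref{prop:rcg}, so $G$ acts on $\caE(\Gamma)$ with compact open stabilisers and, by vertex-transitivity and finiteness of $\eust_\Gamma(\caO)$, with finitely many orbits; hence $\boE(\Gamma)=\Q[\underline{\caE(\Gamma)}]$ is a finitely generated (projective) rational discrete $\QG$-module by~\eqref{eq:sdpm4}. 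Because $\Gamma$ is connected, \eqref{eq:graph2} is exact, so $\ker(\Q[G/\caO]\xrightarrow{\eps}\Q)=\image(\der)$ is a quotient of $\boE(\Gamma)$ and therefore finitely generated. As $\caO$ was arbitrary, this is $(ii)$.

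For $(iii)\Rightarrow(i)$, let $\caO$ be such that $K=\ker(\Q[G/\caO]\xrightarrow{\eps}\Q)$ is finitely generated. Using $K=\spn_\Q\{\,g\caO-\caO\mid g\in G\,\}$, write a finite $\QG$-module generating set of $K$ in terms of such differences; this produces a finite subset $\{g_1,\dots,g_k\}\subseteq G$, which we may assume to lie outside $\caO$, with $K$ generated as a $\QG$-module by the $g_j\caO-\caO$. Equivalently, putting $S=\{g_1^{\pm 1},\dots,g_k^{\pm 1}\}\setminus\caO$, one has $\spn_\Q\{\,hs\caO-h\caO\mid h\in G,\ s\in S\,\}=K$, i.e.\ $\image(\der)=\ker(\eps)$ for the rough Cayley graph $\Gamma=\Gamma(G,S,\caO)$. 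By Fact~\ref{fact:graph}(b) this forces $\coker(\der)\simeq\Q$, so $\Gamma$ is connected, and then Proposition~\ref{prop:rcg}(c) gives $G=\langle S,\caO\rangle$; since $S$ is finite and $\caO$ compact, $G$ is compactly generated.

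The step I expect to be the main obstacle is $(iii)\Rightarrow(i)$: one must convert the purely homological hypothesis ``$\ker(\eps)$ is finitely generated as a $\QG$-module'' into the combinatorial conclusion ``finitely many group elements together with $\caO$ generate $G$''. The two observations that make this work are that the augmentation submodule of a rational discrete permutation module is spanned by the differences $g\caO-\caO$, and that the span of the $G$-translates of finitely many such differences coincides with $\image(\der)$ for the associated rough Cayley graph, so that connectedness of that graph — and hence compact generation via Proposition~\ref{prop:rcg}(c) — is detected by $\coker(\der)\simeq\Q$.
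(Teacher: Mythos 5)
Your proof is correct and follows essentially the same route as the paper: the equivalence of (ii), (iii), (iv) via Schanuel/Proposition~\ref{prop:FPn}, and the two geometric implications via the rough Cayley graph, the exact sequence~\eqref{eq:graph2}, and Fact~\ref{fact:graph}(b) combined with Proposition~\ref{prop:rcg}(c). The only cosmetic difference is that you argue finite generation of $\boE(\Gamma)$ by counting $G$-orbits on $\caE(\Gamma)$ rather than exhibiting the explicit generating set $\{\underline{(\caO,s\caO)}\mid s\in S\}$, and you split the equivalence of (iii) and (iv) into two separate implications.
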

	
\begin{proof}
The implication (ii)$\Rightarrow$(iii) is trivial, and, by Proposition~\ref{prop:FPn}, (iii) and (iv) are equivalent.
Suppose that $G$ is compactly generated, and let $\caO$ be any compact open subgroup of $G$.
Then there exists a finite symmetric subset $S\subseteq G\setminus \caO$ such that $G=\langle\, S,\caO\,\rangle$.
Let $\Gamma=\Gamma(G,S,\caO)$ be the rough Cayley graph associated with $(G,S,\caO)$.
Then, as $\Gamma$ is connected (cf. Prop.~\ref{prop:rcg}), one has a short exact sequence (cf. \eqref{eq:graph2})
\begin{equation}
\label{eq:FP1-1}
\xymatrix{
\boE(\Gamma)\ar[r]^-{\der}&\Q[G/\caO]\ar[r]^-{\eps}&\Q\ar[r]&0.}
\end{equation}
Moreover, as $\boE(\Gamma)$ is a rational discrete left $\QG$-module being generated by the finite subset
$\{\,(\underline{\caO,s\caO})\mid s\in S\,\}$, $\kernel(\eps)$ is finitely generated. This shows 
(i)$\Rightarrow$(ii).

Let $\caO$ be a compact open subgroup of $G$ such that $K=\ker(\eps_\caO\colon\Q[G/\caO]\to \Q)$ is a finitely generated discrete left $\QG$-module, i.e., there exist finitely many elements $u_1,\ldots, u_n\in K$ generating
$K$ as $\QG$-module. The set $\caB=\{\,g\caO-\caO\mid g\in G\setminus\caO\,\}$ is a generating set of $K$ as
$\Q$-vector space, i.e., any element $u_j$ is a linear combination of finitely many elements of $\caB$.
Hence there exists a finite subset $S\subset G\setminus\caO$ such that
any element $u_j$ is a linear combination of $\Sigma=\{\,s\caO-\caO\mid s\in S\,\}$. Adjoining the inverses of elements
in $S$ if necessary, we may also assume that $S$ is symmetric, i.e., $s\in S$ implies $s^{-1}\in S$.
Note that, by construction, one has 
\begin{equation}
\label{eq:FP1-2}
\textstyle{K=\sum_{1\leq j\leq n}\QG\cdot u_j =\sum_{s\in S} \QG\cdot (s\caO-\caO).}
\end{equation}
Let $\Gamma=\Gamma(G,S,\caO)$ be the rough Cayley graph associated with $(G,S,\caO)$,
and let $\der\colon\boE(\Gamma)\to\Q[G/\caO]$ denote the canonical map, where
we have identified $\boV(\Gamma)$ with $\Q[G/\caO]$.
Then $\image(\der)\subseteq\kernel(\eps_\caO)$, and $s\caO-\caO\in\image(\der)$ for all $s\in S$
(cf. \eqref{eq:canG}, \eqref{eq:rcg1}).
In particular, by \eqref{eq:FP1-2}, $\image(\der)=K$, and $\coker(\der)\simeq\Q$, i.e.,
$\Gamma$ is connected (cf. Fact~\ref{fact:graph}(b)). 
Therefore, $G=\langle\, S,\caO\,\rangle$ (cf. Prop.~\ref{prop:rcg}(c)),
and $G$ is compactly generated.
This yields the implication (iii)$\Rightarrow$(i) and completes the proof.
\end{proof}


\subsection{T.d.l.c. groups acting discretely on a tree}
\label{ss:tdlctree}
Let $G$ be a group acting discretely on a graph $\Gamma$, i.e.,
all vertex stabilizers $G_v=\stab_G(v)$, $v\in\caV(\Gamma)$, and all
edge stabilizers $G_{\eue}=\stab_G(\eue)$, $\eue\in\caE(\Gamma)$ are open.
For $\eue\in\caE(\Gamma)$ the set $\{\eue,\beue\}$ will be called
a {\it geometric edge} of $\Gamma$. By $\caE^g(\Gamma)$ we denote
the set of all geometric edges of $\Gamma$, i.e.,
$\caE^g(\Gamma)=\caE(\Gamma)/C_2$, where $C_2$ is the cyclic
group of order $2$ where the involution is acting by edge inversion.
Let $G_{\{\eue\}}$ denote the stabilizer of the geometric edge $\{\eue,\beue\}$, i.e.,
one has a {\it sign character}
\begin{equation}
\label{eq:signgeo}
\sgn_\eue\colon G_{\{\eue\}}\longrightarrow\{\pm1\}.
\end{equation}
By $\Q_\eue$ we denote the $1$-dimensional rational discrete left $\Q[G_{\{\eue\}}]$-module
with action given by \eqref{eq:signgeo}, i.e.,
for $g\in G_{\{\eue\}}$ and $q\in\Q_\eue$ one has $g\cdot q=\sgn_{\eue}(g)\cdot q$.

For a tree $\caT$ one has $\boL(\caT)=0$. Hence
the exact sequence \eqref{eq:graph2} specializes to a short exact sequence.
This fact implies the following version
of a result of I.~Chiswell (cf. \cite[p.~179]{brown:coh}, \cite{ian:tree}).

\begin{prop}
\label{prop:treeact}
Let $G$ be a t.d.l.c. group acting discretely on a tree $\caT$.
Let $\caR_{\caV}\subseteq\caV(\caT)$ be a set of representatives
of the $G$-orbits on $\caV(\caT)$, and let 
$\caR_{\caE}\subseteq\caE(\caT)$ be a set of representatives
of the $G\times C_2$-orbits on $\caE(\caT)$.
\begin{itemize}
\item[(a)] For $M\in\ob(\QGdis)$ one has a long exact sequence
\begin{equation}
\label{eq:chis1}
\xymatrix@C=.45cm{
\ldots\ar[r]&\dH^k(G,M)\ar[r]&\prod_{v\in\caR_{\caV}} \dH^k(G_v,M)\ar[r]&
\prod_{\eue\in\caR_{\caE}}\dH^k(G_{\{\eue\}},M\otimes\Q_\eue)\ar[r]&\ldots
}
\end{equation}
\item[(b)] Suppose that $G_v$ is compact for all $v\in\caV(\caT)$. Then
$\ccd_\Q(G)\leq 1$.
In particular, \eqref{eq:chis1} specializes to an exact sequence
\begin{equation}
\label{eq:chis2}
\xymatrix{
0\ar[r]&\dH^0(G,M)\ar[r]&
\prod_{v\in\caR_{\caV}} \dH^0(G_v,M)\ar[d]\\
0&\dH^1(G,M)\ar[l]&\prod_{\eue\in\caR_{\caE}}\dH^0(G_{\{\eue\}}, M\otimes\Q_{\eue})\ar[l] 
}
\end{equation} 
\end{itemize}
\end{prop}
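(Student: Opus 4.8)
The plan is to feed the fundamental short exact sequence attached to the tree into the long exact sequence for $\dExt_G^\bullet(\argu,M)$ in its first variable, and then to identify the resulting terms by means of the Eckmann--Shapiro-type lemma of \S\ref{ss:ecksha}. To begin, since $\caT$ is a tree one has $\boL(\caT)=\kernel(\der)=0$ by Fact~\ref{fact:graph}, so the exact sequence \eqref{eq:graph2} collapses to a short exact sequence of discrete left $\QG$-modules
\[
0\longrightarrow\boE(\caT)\stackrel{\der}{\longrightarrow}\boV(\caT)\stackrel{\eps}{\longrightarrow}\Q\longrightarrow 0,
\]
discreteness of the modules being exactly the discreteness of the $G$-action. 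Decomposing $\caV(\caT)$ into $G$-orbits gives $\boV(\caT)\simeq\coprod_{v\in\caR_\caV}\idn_{G_v}^G(\Q)$, and decomposing the signed $G$-set $\caE(\caT)$ (with the generator of $C_2$ acting by edge inversion) into $G\times C_2$-orbits gives, via \eqref{eq:sdpm4}, $\boE(\caT)=\Q[\underline{\caE(\caT)}]\simeq\coprod_{\eue\in\caR_\caE}\idn_{G_{\{\eue\}}}^G(\Q_\eue)$; for an edge whose orbit carries no inversion one has $G_{\{\eue\}}=G_\eue$ and $\Q_\eue=\Q$, so both cases of \eqref{eq:sdpm4} are subsumed uniformly.

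Applying $\dExt_G^\bullet(\argu,M)$ to the displayed sequence, I would use that $\Hom_G(\coprod_iA_i,\argu)=\prod_i\Hom_G(A_i,\argu)$ together with exactness of products of $\Q$-vector spaces to see that $\dExt_G^k$ carries these coproducts to the corresponding products. Since $G_v$ and $G_{\{\eue\}}$ are open in $G$ (discreteness of the action), \eqref{eq:ecksha2} gives $\dExt_G^k(\idn_{G_v}^G(\Q),M)\simeq\dH^k(G_v,M)$ and $\dExt_G^k(\idn_{G_{\{\eue\}}}^G(\Q_\eue),M)\simeq\dExt_{G_{\{\eue\}}}^k(\Q_\eue,M)$; and because tensoring over $\Q$ with $\Q_\eue$ is an exact auto-equivalence of the category of discrete $\Q[G_{\{\eue\}}]$-modules which is its own quasi-inverse (as $\sgn_\eue^2=1$), hence preserves injectives, one gets $\dExt_{G_{\{\eue\}}}^k(\Q_\eue,M)\simeq\dH^k(G_{\{\eue\}},M\otimes\Q_\eue)$ via the natural identification $\Hom_{G_{\{\eue\}}}(\Q_\eue,\argu)\simeq(\argu\otimes\Q_\eue)^{G_{\{\eue\}}}$. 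Substituting these identifications into the long exact $\dExt$-sequence of the short exact sequence above produces \eqref{eq:chis1}, the maps coming out as the expected restriction and connecting maps by functoriality; this is part (a).

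For (b), assume every $G_v$ is compact. For an edge $\eue$ with $v=o(\eue)$ one has $G_\eue\subseteq G_v$, so $G_\eue$ is compact, and since $[G_{\{\eue\}}:G_\eue]\leq2$ the group $G_{\{\eue\}}$ is compact too; hence every $G_v$ and every $G_{\{\eue\}}$ is profinite, and by Proposition~\ref{prop:prof} their discrete cohomology vanishes in all positive degrees. Looking at \eqref{eq:chis1} around $\dH^k(G,M)$ with $k\geq2$, both neighbouring terms $\prod_{\eue\in\caR_\caE}\dH^{k-1}(G_{\{\eue\}},M\otimes\Q_\eue)$ and $\prod_{v\in\caR_\caV}\dH^k(G_v,M)$ vanish, so $\dH^k(G,M)=0$ for all $k\geq2$ and all $M\in\ob(\QGdis)$, whence $\ccd_\Q(G)\leq1$ by \eqref{eq:cd1}; the surviving low-degree portion of \eqref{eq:chis1} is then exactly \eqref{eq:chis2}. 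I expect the only mildly delicate point to be the bookkeeping of the sign module $\Q_\eue$ — in particular its self-duality — in the edge term; all the rest is the long exact sequence of $\dExt$, Eckmann--Shapiro, and the vanishing of higher discrete cohomology for profinite groups.
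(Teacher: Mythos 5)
Your proof is correct, and for part (a) it follows the paper's argument essentially verbatim: take the short exact sequence $0\to\boE(\caT)\to\boV(\caT)\to\Q\to 0$ coming from $\boL(\caT)=0$, decompose $\boV(\caT)$ and $\boE(\caT)$ by orbits as in \eqref{eq:graph3}, apply $\dExt^\bullet_G(\argu,M)$, and identify the terms by Eckmann--Shapiro together with the self-duality of the sign module $\Q_\eue$.

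For part (b) you take a mildly different route to $\ccd_\Q(G)\leq 1$. You use the long exact sequence from (a) together with the vanishing of $\dH^k(G_v,\argu)$ and $\dH^{k}(G_{\{\eue\}},\argu)$ in positive degrees (Proposition~\ref{prop:prof}) to kill $\dH^k(G,M)$ for $k\geq 2$. The paper instead observes directly that, once all $G_v$ (hence all $G_\eue$ and $G_{\{\eue\}}$) are compact open, the modules $\boV(\caT)$ and $\boE(\caT)$ are projective discrete permutation modules by Proposition~\ref{prop:permod}, so \eqref{eq:graph4} is itself a projective resolution of $\Q$ of length $1$, giving $\ccd_\Q(G)\leq 1$ from the definition \eqref{eq:pdim3}. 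The two arguments rest on the same fact --- projectivity of discrete permutation modules with compact stabilizers is what makes profinite cohomology vanish in positive degrees --- so neither is stronger; the paper's phrasing is just a bit more structural in that it exhibits the resolution explicitly, which is also the object that gets reused in the later duality and Euler--characteristic computations.
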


\begin{proof}
(a) By hypothesis, one has isomorphisms
\begin{equation}
\label{eq:graph3}
\begin{aligned}
\boV(\caT)&\textstyle{\simeq \coprod_{v\in\caR_{\caV}} \idn_{G_v}^G(\Q)},\\
\boE(\caT)&\textstyle{\simeq \coprod_{\eue\in\caR_{\caE}} \idn_{G_{\{\eue\}}}^G(\Q_{\eue})},
\end{aligned}
\end{equation}
and a short exact sequence 
\begin{equation}
\label{eq:graph4}
\xymatrix{
0\ar[r]&\boE(\caT)\ar[r]^-{\der}&\boV(\caT)\ar[r]^-{\eps}&\Q\ar[r]&0.
}
\end{equation}
of rational discrete left $\QG$-modules.
Hence \eqref{eq:chis1} is a direct consequence
of the
long exact sequence associated to $\dExt^\bullet_G(\argu,M)$ and \eqref{eq:graph4},
the Eckmann-Shapiro lemma (cf. \S\ref{ss:ecksha}),
and the isomorphisms
$\dExt_{G_{\{\eue\}}}^k(\Q_\eue,M)\simeq\dH^k(G_{\{\eue\}},M\otimes\Q_{\eue})$ which are natural in $M$.

\noindent
(b) By hypothesis, $G_v$ is compact open, and $G_\eue$ is open for all $v\in\caV(\caT)$
and $\eue\in\caE(\caT)$. As $G_\eue\subseteq G_{t(\eue)}$, this implies that $G_\eue$ and $G_{\{\eue\}}$ are also
compact and open. Hence \eqref{eq:graph4} is a projective resolution of $\Q$ in $\QGdis$, and thus
$\ccd_{\Q}(G)\leq 1$. The exact sequence \eqref{eq:chis2} follows from \eqref{eq:chis1} and the fact
that in this case one has
$\ccd_{\Q}(G_v)=\ccd_{\Q}(G_{\{\eue\}})=0$ (cf. Prop.~\ref{prop:cd}(a)). 
\end{proof}

\begin{rem}
\label{rem:stall}
If $G$ is a compactly generated t.d.l.c. group, then $\dH^1(G,\biB(G))$ must have dimension $0$, $1$ or $\infty$.
It is shown in \cite{ila:stall} that in the latter case $G$ is either isomorphic to a free product with amalgamation
$H\coprod_{\caU}\caO$, where $H$ is an open subgroup of $G$, $\caO$ is a compact open subgroup of $G$ and
$|\caO:\caO\cap H|\not=1$,
or $G$ is isomorphic to an HNN-extension $\HNN(H,\varphi)$ for some open subgroup $H$, $\caO$ and $\caU$
are compact open subgroups of $H$ with at least one of them different from $H$
and $\varphi\colon\caO\to\caU$ is an isomorphism, i.e.,
Stallings' decomposition theorem holds for compactly generated t.d.l.c. groups.
As a consequence a compactly generated t.d.l.c. group $G$ satisfying $\dim(\dH^1(G,\biB(G)))\geq 2$ must have
a non-trivial discrete action on some tree $\caT$.
\end{rem}


\subsection{Fundamental groups of graphs of profinite groups}
\label{ss:fund}
Let $\Lambda$ be a connected graph. 
A {\it graph of profinite groups} $(\caA,\Lambda)$ based on the graph $\Lambda$
consists of
\begin{itemize}
\item[(i)] a profinite group $\caA_v$ for every vertex $v\in\caV(\Lambda)$;
\item[(ii)] a profinite group $\caA_{\eue}$ for every edge $\eue\in\caE(\Lambda)$
satisfying $\caA_{\eue}=\caA_{\beue}$;
\item[(iii)] an open embedding $\iota_{\eue}\colon\caA_{\eue}\rightarrow\caA_{t(\eue)}$ 
for every edge $\eue\in\caE(\Lambda)$.
\end{itemize}
For a graph of groups $(\caA,\Lambda)$ one defines the group
$F(\caA,\Lambda)$ (cf. \cite[\S I.5.1]{serre:trees})
as the group generated by $\caA_v$, $v\in\caV(\Lambda)$, 
and $\eue\in\caE(\Lambda)$ subject to the relations
\begin{equation}
\label{eq:relF}
\eue^{-1}=\beue\ \ \text{and}\ \ \eue\,\iota_\eue(a)\eue^{-1}=\iota_{\beue}(a)
\ \ \text{for all $\eue\in\caE(\Lambda)$, $a\in\caA_\eue$.}
\end{equation}
Let $\Xi\subseteq\Lambda$ be a maximal subtree of $\Lambda$.
The {\it fundamental group} $\pi_1(\caA,\Lambda,\Xi)$ of the graph of groups $(\caA,\Lambda)$ 
with respect to $\Xi$
is  given by
\begin{equation}
\label{eq:fund}
\Pi=\pi_1(\caA,\Lambda,\Xi)=F(\caA,\Lambda)/\langle\, \eue\mid\eue\in\caE(\Xi)\,\rangle
\end{equation}
(cf. \cite[\S I.5.1]{serre:trees}).
Choosing an orientation $\caE^+\subset\caE(\Lambda)$ of $\Lambda$ 
one may construct a tree $\caT=\caT(\caA,\Lambda,\Xi,\caE^+)$
with a canonical left $\Pi$-action (cf. \cite[\S I.5.1, Thm.~12]{serre:trees}).
The fundamental group of a graph of profinite groups carries naturally the structure of 
t.d.l.c. group. Indeed, the set of all open subgroups of vertex stabilizers are a 
neighborhood basis of the identity element of the topology.
Thus from the Main Theorem of Bass-Serre theory one concludes the following.

\begin{prop}
\label{prop:fgrgr}
Let $(\caA,\Lambda)$ be a graph of profinite groups, let $\Xi\subseteq\Lambda$ be a maximal subtree
of $\Lambda$, and let $\caE^+\subset\caE(\Lambda)$ be an orientation of $\Lambda$.
Then $\Pi=\pi_1(\caA,\Lambda,\Xi)$ carries naturally the structure of a t.d.l.c. group,
and its action on $\caT=\caT(\caA,\Lambda,\Xi,\caE^+)$ is discrete and proper,
i.e., vertex stabilizers and edge stabilizers are compact and open.
In particular, $\ccd_{\Q}(\Pi)\leq 1$.
\end{prop}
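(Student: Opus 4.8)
The plan is to derive everything from the Main Theorem of Bass--Serre theory (cf.\ \cite[\S I.5]{serre:trees}) together with Proposition~\ref{prop:treeact}(b). First I would put a topology on $\Pi$. By Bass--Serre theory the canonical maps $\caA_v\to\Pi$, $v\in\caV(\Lambda)$, are injective, $\Xi$ is a fundamental domain for the $\Pi$-action on $\caT=\caT(\caA,\Lambda,\Xi,\caE^+)$, the stabilizer of the vertex corresponding to $v\in\caV(\Xi)$ is the image of $\caA_v$, the stabilizer of the edge corresponding to $\eue\in\caE(\Xi)$ is the image of $\iota_\eue(\caA_\eue)$, every other vertex (resp.\ edge) stabilizer is a $\Pi$-conjugate of one of these, and the action of $\Pi$ on $\caT$ has trivial kernel. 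I then declare
\[
\caB=\{\,U\leq\Pi\mid U\ \text{open in}\ \caA_v\ \text{for some}\ v\in\caV(\Lambda)\,\}
\]
to be a basis of neighbourhoods of $1$ in $\Pi$. To check that this defines a Hausdorff group topology I would verify the usual axioms: each $U\in\caB$ is a subgroup; the family is downward directed, since two such subgroups can be compared inside a common vertex group by moving along the edges of $\Xi$ and using that hypothesis~(iii) makes each $\caA_\eue$ an open subgroup of both adjacent vertex groups; and for $g\in\Pi$, $U\in\caB$ the conjugate $gUg^{-1}$ again lies in $\caB$, because conjugating a vertex stabilizer of $\caT$ produces another vertex stabilizer. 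Hausdorffness is precisely the triviality of the kernel of the action on $\caT$. With this topology each $\caA_v$ is a compact open subgroup, so $\Pi$ is a t.d.l.c.\ group.

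For the action on $\caT$: by the description above every vertex stabilizer and every edge stabilizer is a $\Pi$-conjugate of some $\caA_v$ or some $\iota_\eue(\caA_\eue)$, hence profinite, hence compact; and each such subgroup lies in $\caB$ (the vertex groups by definition, and $\iota_\eue(\caA_\eue)$ because it is open in $\caA_{t(\eue)}$ by hypothesis~(iii)), so it is also open. Thus the $\Pi$-action on $\caT$ is discrete and proper. Since $\caT$ is a tree, Proposition~\ref{prop:treeact}(b) applied to $G=\Pi$ now gives $\ccd_{\Q}(\Pi)\leq 1$.

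The main obstacle is the first step --- genuinely checking that $\caB$ is a neighbourhood basis of $1$ for a group topology on $\Pi$, rather than treating it as folklore. The two delicate points are (a) mutual compatibility of the profinite topologies on the vertex groups, which comes from the openness of the edge embeddings in hypothesis~(iii) and the fact that adjacent vertex groups in $\Xi$ share the open subgroup $\caA_\eue$; and (b) continuity of conjugation, which one reduces via the Bass--Serre tree to the $\Pi$-orbit structure of vertex stabilizers. Once these are in place the remaining assertions are routine: local compactness and total disconnectedness follow from the existence of a compact open subgroup isomorphic to a profinite group, and the cohomological dimension bound is a black-box application of Proposition~\ref{prop:treeact}(b).
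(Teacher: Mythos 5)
Your plan mirrors the paper's proof, which simply cites the Main Theorem of Bass--Serre theory together with Proposition~\ref{prop:treeact}(b) and treats the topology as standard; you sensibly try to flesh out the topology construction, which the paper leaves as a one-line remark. Two of your verifications, however, do not hold as stated.

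First, the conjugation axiom. You claim that $gUg^{-1}$ ``again lies in $\caB$'' because ``conjugating a vertex stabilizer of $\caT$ produces another vertex stabilizer,'' and later that every vertex or edge stabilizer of $\caT$ ``lies in $\caB$.'' But the $\caB$ you define consists only of subgroups open in some $\caA_v$ with $v\in\caV(\Lambda)$, i.e.\ open in a stabilizer of a vertex inside the fundamental domain $\Xi\subseteq\caT$. A conjugate $g\caA_vg^{-1}$ stabilizes $g\cdot v$, which in general leaves $\Xi$, so $gUg^{-1}\notin\caB$ for the $\caB$ you wrote down. The axiom actually requires only that $gUg^{-1}$ \emph{contain} a member of $\caB$, which is true, but it rests on a lemma you only gesture at: for any two vertices $P,Q$ of $\caT$, the intersection $\stab_\Pi(P)\cap\stab_\Pi(Q)$ is open in each of $\stab_\Pi(P)$ and $\stab_\Pi(Q)$. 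One proves this by induction on the distance between $P$ and $Q$, reducing along the geodesic to the adjacent case, where the intersection is an edge stabilizer, hence conjugate to some $\iota_\eue(\caA_\eue)$ and open in both adjacent vertex stabilizers by the openness of the edge embeddings. That single lemma is also what makes $\caB$ a filter base, and it deserves to be isolated rather than hidden in two heuristic sentences.

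Second, Hausdorffness. You list ``the action of $\Pi$ on $\caT$ has trivial kernel'' among the facts delivered by Bass--Serre theory and then assert ``Hausdorffness is precisely the triviality of the kernel of the action on $\caT$.'' Neither statement is correct. The kernel of the action can be nontrivial: for a single-edge amalgam in which $\caA_\eue$ is normal in both $\caA_{o(\eue)}$ and $\caA_{t(\eue)}$ (for instance $\Z_p\ast_{p\Z_p}\Z_p$), the image of $\caA_\eue$ is normal in $\Pi$ and fixes every vertex of $\caT$. Moreover Hausdorffness does not depend on the kernel at all: since each $\caA_v$ is profinite and $\caA_v\in\caB$, any $g$ in $\bigcap_{U\in\caB}U$ lies in every open subgroup of some fixed $\caA_v$, and the intersection of all open subgroups of a profinite group is trivial, so $g=1$. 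Both issues are fixable and do not change the strategy or the conclusion, but they are genuine gaps in the verification that $\caB$ defines a Hausdorff group topology.
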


\begin{proof}
This follows from the Main Theorem of Bass-Serre theory and Proposition~\ref{prop:treeact}(b).
\end{proof}

\begin{rem}
\label{rem:fgrgr} 
The Main Theorem of Bass-Serre theory would yield more information
than stated in Proposition~\ref{prop:fgrgr}. Indeed, a discrete and proper action
of a t.d.l.c. group $G$ on a tree $\caT$ can be used to define a graph of profinite groups
$(\caA,\Lambda)$ such that $G\simeq\pi_1(\caA,\Lambda,\Xi)$.
\end{rem}


\subsection{Nested t.d.l.c. groups}
\label{ss:nn}
A t.d.l.c. group $G$ is said to be {\it nested}, if there exists
a countable ascending sequence of compact open subgroups $(\caO_k)_{k\geq 0}$ of $G$
such that 
\begin{equation}
\label{eq:nest0}
G=\textstyle{\bigcup_{k\geq 0} \caO_k.}
\end{equation}
Such a group can be represented as the fundamental group
of the graph of profinite groups 
\begin{equation}
\label{eq:nest1}
(\caA,\Lambda)\colon\qquad\xymatrix@M=0pt@R=3pt{
\caO_1&\caO_2&\caO_3&\caO_4&\caO_5\\
\bullet\ar@{-}[r]_{\caO_1}&\bullet\ar@{-}[r]_{\caO_2}&\bullet\ar@{-}[r]_{\caO_3}&
\bullet\ar@{-}[r]_{\caO_4}&\bullet\ar@{-}[r]_{\caO_5}&\ldots
}
\end{equation}
based on the straight infinite half line $\Lambda$, where all injections are the canonical ones, i.e., one has
\begin{equation}
\label{eq:nest2}
G\simeq\pi_1(\caA,\Lambda,\Lambda).
\end{equation}
Thus from Proposition~\ref{prop:fgrgr} one concludes the following.

\begin{prop}
\label{prop:nested}
Let $G$ be a nested t.d.l.c. group. Then $\ccd_\Q(G)\leq 1$.
\end{prop}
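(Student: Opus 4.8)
The plan is to realize the nested group $G$ as the fundamental group of the graph of profinite groups displayed in \eqref{eq:nest1}, and then to invoke Proposition~\ref{prop:fgrgr}.

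First I fix a countable ascending chain of compact open subgroups $(\caO_k)_{k\geq 0}$ with $G=\bigcup_{k\geq 0}\caO_k$ as in \eqref{eq:nest0}. For each $k$ the subgroup $\caO_k$ is open in the compact group $\caO_{k+1}$, so the inclusion $\caO_k\hookrightarrow\caO_{k+1}$ is an open embedding of profinite groups; consequently the straight half-line $\Lambda$, with the $k$-th vertex carrying the group $\caO_k$, the $k$-th edge carrying the group $\caO_k$, and the two edge embeddings being the inclusion $\caO_k\hookrightarrow\caO_{k+1}$ and the identity of $\caO_k$, constitutes a bona fide graph of profinite groups $(\caA,\Lambda)$ in the sense of \S\ref{ss:fund}. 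Since $\Lambda$ is a tree, its maximal subtree $\Xi$ equals $\Lambda$ itself, and imposing the relations \eqref{eq:relF} while killing all edges identifies, for every $a\in\caO_k$, the image of $a$ in $\caO_{k+1}$ with $a$; hence $\pi_1(\caA,\Lambda,\Lambda)$ is the colimit of the chain $\caO_0\hookrightarrow\caO_1\hookrightarrow\cdots$ (cf. \cite[\S I.5.1]{serre:trees}), which as an abstract group is $\bigcup_{k\geq 0}\caO_k=G$. Moreover the t.d.l.c. topology on $\pi_1(\caA,\Lambda,\Lambda)$, for which the open subgroups of the vertex stabilizers form a neighbourhood basis of $1$, has $\caO_0$ among its open subgroups; since $\caO_0$ is compact open in $G$ its own open subgroups already form a neighbourhood basis of $1$ in $G$, so this topology agrees with the given one. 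This is precisely the isomorphism \eqref{eq:nest2} of topological groups.

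Applying Proposition~\ref{prop:fgrgr} (choosing the evident orientation $\caE^+$ of $\Lambda$) now gives that $G\cong\pi_1(\caA,\Lambda,\Lambda)$ acts discretely and properly on the Bass-Serre tree $\caT(\caA,\Lambda,\Lambda,\caE^+)$, whence $\ccd_\Q(G)\leq 1$.

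I do not expect a genuine obstacle here: the only step needing attention is checking that the abstract colimit of the chain $(\caO_k)$, equipped with the Bass-Serre topology, reproduces the original topological group $G$, and this is immediate because every $\caO_k$ is compact open in $G$. Should one prefer to avoid the language of graphs of groups, one can argue directly: $G$ acts on the tree whose vertex set is $\bigsqcup_{k\geq 0}G/\caO_k$ with an edge joining $g\caO_k$ to $g\caO_{k+1}$ for all $g\in G$ and $k\geq 0$; every vertex and every edge stabilizer is one of the compact open subgroups $\caO_k$ up to conjugacy, so Proposition~\ref{prop:treeact}(b) applies verbatim and yields $\ccd_\Q(G)\leq 1$.
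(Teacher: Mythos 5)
Your proposal is correct and is essentially the paper's own argument: the paper represents $G$ as $\pi_1(\caA,\Lambda,\Lambda)$ for the graph of profinite groups \eqref{eq:nest1} on the infinite half-line and invokes Proposition~\ref{prop:fgrgr}. Your added verification that the Bass-Serre topology on the colimit recovers the original topology on $G$, and your alternative direct description of the tree via $\bigsqcup_{k\geq 0}G/\caO_k$ with Proposition~\ref{prop:treeact}(b), are just the same argument spelled out in a bit more detail.
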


\begin{rem}
\label{rem:Qp}
Let $p$ be a prime number. Then 
$\Q_p=\bigcup_{k\geq 0} p^{-k}\Z_p$,
the additive group of the {\it $p$-adic numbers} 
is a nested t.d.l.c. group, but
$\Q_p$ is not compact. Hence $\ccd_{\Q}(\Q_p)=1$.
\end{rem}


\subsection{Generalized presentations of a t.d.l.c. groups}
\label{ss:genrep}
Let $G$ be a t.d.l.c. group. 
A {\it generalized presentation} of $G$ is a graph of profinite groups $(\caA,\Lambda)$ together with a continuous open surjective
homomorphism
\begin{equation}
\label{eq:pres}
\phi\colon\pi_1(\caA,\Lambda,\Xi)\longrightarrow G,
\end{equation}
such that $\phi\vert_{\caA_v}$ is injective for all $v\in\caV(\Lambda)$.

\begin{prop} 
\label{prop:pres}
Let $G$ be a t.d.l.c. group.
\begin{itemize}
\item[(a)] $G$ has a generalized presentation
$((\caA,\Lambda_0),\phi)$, where $\Lambda_0$ is a graph with a single vertex.
\item[(b)] For any generalized presentation $((\caA,\Lambda),\phi)$, $K=\kernel(\phi)$
is a discrete free group contained in the {\it quasi-center}
\begin{equation}
\label{eq:qz}
\QZ(\Pi)=\{\,g\in\Pi\mid C_\Pi(g)\ \text{is open in $\Pi$}\,\}.
\end{equation}
of $\Pi=\pi_1(\caA,\Lambda,\Xi)$. In particular, 
\begin{equation}
\label{eq:relmod}
R(\phi)=K/[K,K]\otimes_\Z\Q
\end{equation}
is a rational discrete left $\QG$-module.
\item[(c)] If $((\caA_i,\Lambda_i),\phi_i)$, $i=1,2$, are two presentations of $G$, then
$R(\phi_1)$ and $R(\phi_2)$ are stably equivalent, i.e., there exist projective rational discrete
$\QG$-modules $P_1$ and $P_2$ such that $R(\phi_1)\oplus P_1\simeq R(\phi_2)\oplus P_2$.
\end{itemize}
\end{prop}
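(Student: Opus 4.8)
The plan is to treat the three parts in turn, using Bass--Serre theory together with the exact sequence \eqref{eq:graph2}.

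\emph{Part (a).} By van Dantzig's theorem fix a compact open subgroup $\caO$ of $G$. I would take $\Lambda_0$ to be the graph with one vertex $v_0$ and one geometric loop $\{\eue_g,\beue_g\}$ for each $g\in G$, and set $\caA_{v_0}=\caO$, $\caA_{\eue_g}=\caO\cap g^{-1}\caO g$, with $\iota_{\eue_g}$ the inclusion into $\caO$ and $\iota_{\beue_g}(a)=g\,a\,g^{-1}$; all of these are open embeddings of profinite groups. Then $\Pi=\pi_1(\caA,\Lambda_0,\{v_0\})=F(\caA,\Lambda_0)$, and the homomorphism $\phi\colon\Pi\to G$ determined by $\phi|_{\caO}=\mathrm{id}_{\caO}$ and $\phi(\eue_g)=g$ respects the relations \eqref{eq:relF}, since $g\,\iota_{\eue_g}(a)\,g^{-1}=\iota_{\beue_g}(a)$ holds in $G$; and it is clearly surjective. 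As $\caO=\caA_{v_0}$ is a compact open subgroup of $\Pi$ on which $\phi$ restricts to the inclusion $\caO\hookrightarrow G$, the map $\phi$ is continuous and open, and $\phi|_{\caA_{v_0}}$ is injective. Hence $((\caA,\Lambda_0),\phi)$ is a generalized presentation of $G$.

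\emph{Part (b).} Let $((\caA,\Lambda),\phi)$ be any generalized presentation, $\Pi=\pi_1(\caA,\Lambda,\Xi)$, $K=\kernel(\phi)$, and let $\caT$ be a Bass--Serre tree for it, on which $\Pi$ acts without inversion and with profinite vertex stabilizers, each of the shape $g\caA_w g^{-1}$. If $k=g\,a\,g^{-1}\in K$ with $a\in\caA_w$, then $\phi(a)=1$, so $a=1$ by injectivity of $\phi|_{\caA_w}$; thus $K$ acts freely on $\caT$, hence $K$ is a free group by Bass--Serre theory. Picking a base vertex $v_0$ with $\stab_\Pi(v_0)=\caA_{v_0}$ gives $\caA_{v_0}\cap K=1$ with $\caA_{v_0}$ open, so $K$ is a discrete subgroup. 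A discrete normal subgroup is contained in the quasi-center: for $k\in K$ the map $g\mapsto g\,k\,g^{-1}$ is continuous from $\Pi$ to the discrete space $K$, so $C_\Pi(k)$, the preimage of $\{k\}$, is open; hence $K\subseteq\QZ(\Pi)$. Finally $G=\Pi/K$ acts on $R(\phi)=K/[K,K]\otimes_\Z\Q$ by conjugation, and this action is discrete, because the stabilizer of the class of $k\in K$ contains both $K=\kernel(\phi)$ and the open subgroup $C_\Pi(k)$, so it is open with open $\phi$-image, and stabilizers of finite $\Q$-combinations are finite intersections of such.

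\emph{Part (c).} For $i=1,2$ write $\Pi_i=\pi_1(\caA_i,\Lambda_i,\Xi_i)$, $K_i=\kernel(\phi_i)$, and let $\caT_i$ be a Bass--Serre tree. Since $K_i$ acts freely and without inversion on $\caT_i$, the quotient graph $\bar\caT_i=K_i\backslash\caT_i$ is connected and carries a discrete action of $G=\Pi_i/K_i$ whose vertex and edge stabilizers are $\phi_i$-images of conjugates of vertex, resp.\ edge, groups of $(\caA_i,\Lambda_i)$, hence compact and open. By \eqref{eq:graph2} this gives an exact sequence of rational discrete left $\QG$-modules
\[
0\longrightarrow\boL(\bar\caT_i)\longrightarrow\boE(\bar\caT_i)\longrightarrow\boV(\bar\caT_i)\longrightarrow\Q\longrightarrow0,
\]
in which $\boV(\bar\caT_i)$ is projective by Proposition~\ref{prop:permod} and $\boE(\bar\caT_i)$ by \eqref{eq:sdpm4} together with Propositions~\ref{prop:prof} and~\ref{prop:rstidn}. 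By Fact~\ref{fact:graph}(a) one has $\boL(\bar\caT_i)\simeq H_1(|\bar\caT_i|,\Q)$; and since $|\caT_i|$ is contractible and $|\caT_i|\to|\bar\caT_i|$ is the universal cover with deck group $K_i$, the space $|\bar\caT_i|$ is a $K(K_i,1)$, whence $H_1(|\bar\caT_i|,\Q)\simeq K_i/[K_i,K_i]\otimes_\Z\Q=R(\phi_i)$, compatibly with the $G$-actions because $\Pi_i$ normalizes the deck group $K_i$. Applying the generalized Schanuel lemma (cf.\ \cite[Chap.~VIII, Lemma~4.2 and Prop.~4.3]{brown:coh}) to these two $4$-term exact sequences yields
\[
R(\phi_1)\oplus\boE(\bar\caT_2)\oplus\boV(\bar\caT_1)\simeq R(\phi_2)\oplus\boE(\bar\caT_1)\oplus\boV(\bar\caT_2),
\]
i.e.\ the asserted stable equivalence, with $P_1=\boE(\bar\caT_2)\oplus\boV(\bar\caT_1)$ and $P_2=\boE(\bar\caT_1)\oplus\boV(\bar\caT_2)$.

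\emph{The main obstacle} is the identification $\boL(\bar\caT_i)\simeq R(\phi_i)$ as $\QG$-modules: one has to check that the $G$-action on $H_1(|\bar\caT_i|,\Q)$ coming from the $G$-action on $\bar\caT_i$ matches, under the isomorphism $\pi_1(|\bar\caT_i|)\cong K_i$, the conjugation action of $\Pi_i/K_i$ on $K_i/[K_i,K_i]$. This is a routine but genuinely necessary compatibility statement from covering-space and Bass--Serre theory; the remaining steps (discreteness of the various module structures, projectivity of the signed permutation module $\boE(\bar\caT_i)$) are straightforward once the earlier results are invoked.
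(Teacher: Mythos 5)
Your proof is correct and follows essentially the same route as the paper: Bass--Serre theory, the tree action, Stallings' theorem for freeness, and the generalized Schanuel lemma applied to the four-term sequence \eqref{eq:graph2} for the quotient graphs. The only stylistic departure is in part (a), where you attach a loop for every $g\in G$ rather than for a symmetric generating set $S$ with $G=\langle\caO,S\rangle$ as the paper does; both constructions yield a valid generalized presentation on a single-vertex graph, yours being slightly more wasteful but requiring no choice of generating set.
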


\begin{proof} (a) By van Dantzig's theorem there exists a
compact open subgroup $\caO$ of $G$.
Choose a subset $S$ of $G$ such that $G=<\caO,S>$. 
Without loss of generality we may assume that $S=S^{-1}$, and choose a subset $S^+\subseteq S$ such that
$S=S^+\cup(S^+)^{-1}$.
Let $\Lambda_0$ be the graph with a single vertex $v$ with a loop attached for any $s\in S^+$, i.e., $\caV(\Lambda_0)=\{v\}$
and $\caE(\Lambda_0)=\{\,\eue_s,\beue_s\mid s\in S^+\,\}$.
Let $(\caA,\Lambda_0)$ be the graph of profinite groups based on $\Lambda_0$ defined as follows:
\begin{itemize}
\item[-] $\caA_v=\caO$;
\item[-] $\caA_{\eue_s}=\caA_{\beue_s}=\caO\cap s^{-1}\caO s$ for all $s\in S^+$;
\item[-] $\alpha_{\eue_s}\colon\caA_{\eue_s}\longrightarrow s^{-1}\caO s\stackrel{i_s}{\longrightarrow }\caA_v$ for all $s\in S^+$, where $i_s$ is left conjugation by $s$;
\item[-] $\alpha_{\beue_s}\colon\caA_{\eue_s}\to\caA_v$ is the canonical inclusion for all $s\in S^+$.
\end{itemize}
Clearly, $(\caA,\Lambda_0)$ is a graph of profinite groups, and for the canonical map
\begin{equation}
\label{eq:pres1}
\phi_0\colon\caE(\Lambda_0)\cup \caO\rightarrow G,\qquad \phi(\eue_s)=\phi(\beue_s)^{-1}=s,\ \phi_0(w)=w,\ w\in\caO,
\end{equation}
the relations \eqref{eq:relF} are satisfied. The edge set of the maximal subtree $\Xi$ is empty.
Hence $\caF(\caA,\Lambda_0)\to\pi_1(\caA,\Lambda_0,\Xi)$ is an isomorphism. This show that
there exists a unique group homomorphism $\phi\colon \pi_1(\caA,\Lambda_0,\Xi)\to G$
such that $\phi\vert_{\caE(\Lambda_0)\cup \caO}=\phi_0$.
By construction, $\phi$ is surjective and $\phi\vert_{\caA_v}$ is injective. Hence $((\caA,\Lambda_0),\phi)$
is a generalized presentation.

\noindent
(b) Let $\caT=\caT(\caA,\Lambda,\Xi,\caE^+)$. Then, by construction, any non-trivial element of $K$ acts without inversion of edges and without fixed points on $\caT$. Thus, by Stallings' theorem (cf. \cite[\S I.3.3, Thm.~4]{serre:trees}), $K$ is a free discrete group.
As $K$ is normal in $\Pi$ and discrete, $K$ must be contained in $\QZ(\Pi)$.
This implies that $R(\phi)$ is a rational discrete left $\QG$-module.

\noindent
(c) Let $\Pi_i=\pi_1(\caA_i,\Lambda_i,\Xi_i)$, and let $\caT_i=\caT(\caA_i,\Lambda_i,\Xi_i,\caE^+_i)$ be trees associated to
$(\caA_i,\Lambda_i)$. In particular, $\Pi_i$ is acting without inversion on $\caT_i$.
Put $K_i=\kernel(\phi_i)$. Then for the quotient graphs $\Gamma_i=\caT_i/\!\!/K_i$,
\begin{equation}
\label{eq:pres2}
\xymatrix{
\boE(\Gamma_i)\ar[r]^-{\der_i}&\boV(\Gamma_i)\ar[r]^-{\eps_i}&\Q}
\end{equation}
are partial projective resolutions of $\Q$ in $\QGdis$. Hence, by
Schanuel's lemma (cf. \cite[\S VIII.4, Lemma~4.2]{brown:coh}),
$\kernel(\der_1)$ and $\kernel(\der_2)$ are stably equivalent.
By Fact~\ref{fact:graph}(a), one has
$\kernel(\der_1)\simeq R(\phi_1)$ and $\kernel(\der_2)\simeq R(\phi_2)$. 
This yields the claim.
\end{proof}

\begin{rem}
\label{rem:relmod}
Let $((\caA,\Lambda),\phi)$ be a presentation of the t.d.l.c. group $G$.
One should think of $R(\phi)$ as the {\it relation module} of the 
presentation $((\caA,\Lambda),\phi)$. By Proposition~\ref{prop:pres},
its stable isomorphism type is an invariant of the t.d.l.c. group $G$.
\end{rem}

\subsection{Compactly presented t.d.l.c. groups}
\label{ss:comppres}
A t.d.l.c. group $G$ is said to be {\it compactly presented},
if there exists a presentation $((\caA,\Lambda),\phi)$, such that
\begin{itemize}
\item[(i)] $\Lambda$ is a finite connected graph, and
\item[(ii)] $K=\kernel(\phi)$ is finitely generated as a normal subgroup of $\Pi=\pi_1(\caA,\Lambda,\Xi)$.
\end{itemize}
Condition (i) implies that such a group $G$ must be compactly generated.
Hence $G$ is of type $\FP_1$ (cf. Thm.~\ref{thm:FP1}). From condition (ii) follows that $R(\phi)$
is a finitely generated rational discrete $\QG$-module. Thus, as $R(\phi)\simeq\boL(\Gamma)$, where 
$\caT=\caT(\caA,\Lambda,\Xi,\caE^+)$ and $\Gamma=\caT/\!\!/K$ (cf. \eqref{eq:graph2}),
$G$ must be of type $\FP_2$ (cf. Prop.~\ref{prop:FPn}).

It has been an open problem for a long time whether discrete groups of type $\FP_2$ are indeed
finitely presented. Finally, it has been answered negatively by M.~Betsvina and N.~Brady in \cite{bebr:fin}.
In our context the following question arises.

\begin{ques}
\label{ques:pres}
Does there exist a non-discrete t.d.l.c. group $G$ which is of type $\FP_2$,
but which is not compactly presented?
\end{ques}


\section{$G$-spaces for topological groups of type \OS}
\label{s:classspace}
Throughout this section we assume that $G$ is a topological group of type $\OS$, and that
$\euF\subseteq\Osgrp(G)$ is a non-empty subset of open subgroups of $G$ satisfying
\begin{itemize}
\item[($F_1$)] for $A\in\euF$ and $g\in G$ one has ${}^gA=gAg^{-1}\in\euF$;
\item[($F_2$)] for $A,B\in\euF$ one has $A\cap B\in\euF$;
\end{itemize}
In some cases it will be helpful to assume that the class $\euF$ satisfies additionally the following 
condition.
\begin{itemize}
\item[($F_3$)] for $A,B\in\Osgrp(G)$, $B\subseteq A$, $|A:B|<\infty$ and $B\in\euF$ one has $A\in\euF$.
\end{itemize} 


\subsection{$\euF$-discrete $G$-spaces}
\label{ss:FdisG}
Let $G$ be a topological group of type \OS.
We fix a non-empty set of open subgroups $\euF$
satisfying $(F_1)$ and $(F_2)$.
A non-trivial topological space $X$ together with a continuous
left $G$-action 
$\argu\cdot\argu\colon G\times X\to X$
will be called a left {\it $G$-space}.
Such a space will be said to be {\it $\euF$-discrete}, if
$\stab_G(x)\in\euF$ for all $x\in X$.
A continuous map $f\colon X\to Y$ of left $G$-spaces which commutes with the $G$-action
is called a {\it mapping of $G$-spaces}.
The non-empty $G$-space $X$ together with an increasing filtration $(X^n)_{n\geq 0}$ of closed subspaces
$X^n\subseteq X$ is called a {\it $(G,\euF)$-CW-complex}, if
\begin{itemize}
\item[($C_1$)] $X=\bigcup_{n\geq 0} X^n$;
\item[($C_2$)] $X^0$ is an $\euF$-discrete subspace of $X$;
\item[($C_3$)] for $n\geq 1$ there is an $\euF$-discrete $G$-space $\Lambda_n$, $G$-maps
$f\colon S^{n-1}\times \Lambda_n\to X^{n-1}$ and $\widehat{f}\colon B^n\times\Lambda_n\to X^n$
such that
\begin{equation}
\label{eq:Gsp1}
\xymatrix{
S^{n-1}\times\Lambda_n\ar[d]\ar[r]^-{f}&X^{n-1}\ar[d]\\
B^n\times\Lambda_n\ar[r]^-{\widehat{f}}&X^n}
\end{equation}
is a pushout diagram,
where $S^{n-1}$ denotes the unit sphere and $B^n$ the unit ball in euclidean $n$-space
(with trivial $G$-action);
\item[($C_4$)] a subspace $Y\subset X$ is closed if, and only if, $Y\cap X^n$ is closed for all
$n\geq 0$.
\end{itemize}
The {\it dimension} of $X$ is defined by
\begin{equation}
\label{eq:dimCW}
\dim(X)=\min( \{\,k\in\N_0\mid X^k=X\,\}\cup\{\infty\}).
\end{equation}
Moreover, $X$ is said to be of {\it type $\tF_n$}, $n\in\N_0\cup\{\infty\}$,
if $G$ has finitely many orbits on $\Lambda_k$ for $0\leq k\leq n$.

\subsection{The $\euF$-universal $G$-CW-complex}
\label{ss:Funi}
Let $G$ be a topological group of type \OS, and let
$\euF\subseteq\Osgrp(G)$ be a non-empty set of open subgroups
satisfying $(F_1)$ and $(F_2)$.
A left $G$-CW-complex $X$ said to be
{\it $\euF$-universal} or an {\it $\uE_\euF(G)$-space}, if
\begin{itemize}
\item[($U_1$)] $X$ is contractable;
\item[($U_2$)] for all $x\in X$ one has $\stab_G(x)\in\euF$;
\item[($U_3$)] for all $A\in\euF$ the set of $A$-fixed points $X^A$ is non-empty and contractable.
\end{itemize} 
It is well known that an $\euF$-universal
$G$-CW-complex $\uE_\euF(G)$ exists and is unique up to $G$-homotopy.
The $G$-CW-complex $\uE_\euF(G)$ is universal
with respect to all $G$-CW-complexes with point stabilizers in $\euF$, i.e.,
if $X$ is a $G$-CW-complex with $\stab_G(x)\in\euF$ for all $x\in X$, then
there exists a $G$-map $X\to\uE_\euF(G)$ which is unique up to $G$-homotopy
(cf. \cite[Remark~2.5]{misva:proper}, \cite[Chap.~1, Ex.~6.18.7]{tdieck:trans}).
This space can be explicitly constructed as the union of the $n$-fold joins 
of the discrete $G$-space $\Omega_{\euF}=\bigsqcup_{A\in\euF} G/A$
(cf. \cite[\S2]{misva:proper}, \cite[Chap.~1, \S 6]{tdieck:trans}).
This notion leads to an obvious notion of dimension, i.e.,
\begin{equation}
\label{eq:tdim}
\dim_\euF(G)=\min\{\,\dim(X)\mid \text{$X$ an $\uE_\euF(G)$-space}\,\}\in\N_0\cup\{\infty\}
\end{equation}
may be considered as the {\it topological $\euF$-dimension of $G$}.
Although this might be the most natural dimension to be considered,
we prefered to introduce and study a combinatorial version of this
dimension, the {\it simplicial $\euF$-dimension} of $G$.
This choice is motivated by the fact, that for the examples
we are interested in the $\uE_\euF(G)$-spaces arise naturally as the
topological realization of certain simplicial complexes.


\subsection{Simplicial $G$-complexes}
\label{ss:simpGcom}
A simplicial complex (cf. \S\ref{ss:simpcom}) $\Sigma$ with a left $G$-action will be called
a {\it simplicial $G$-complex}. The simplicial $G$-complex $\Sigma$
is said to be {\it of type \tF$_n$}, 
$n\in\N_0$, if $G$ has finitely many orbits on $\Sigma_q$ for all
$q\in\{0,\ldots,n\,\}$,
and {\it of type $\tF_\infty$}, if it is of type \tF$_n$ for all $n\in\N_0$.
For a simplicial $G$-complex $\Sigma$
and a $q$-simplex $\omega=\{x_0,\ldots,x_q\}\in\Sigma_q$ we denote by
\begin{equation}
\label{eq:Fdis}
\begin{aligned}
\stab_G(\omega)&=\textstyle{\bigcap_{0\leq j\leq q}\stab_G(x_j),}\\
\stab_G\{\omega\}&=\{\,g\in G\mid g\cdot\omega=g\,\},
\end{aligned}
\end{equation}
the {\it pointwise and setwise stabilizer} of $\omega$, respectively.
In particular, one has $\stab_G(\omega)\subseteq\stab_G\{\omega\}$, and
$|\stab_G\{\omega\}:\stab_G(\omega)|<\infty$.
The simplicial $G$-complex $\Sigma$ will be said to be {\it $G$-tame}\footnote{In case
that $\Sigma=\Sigma(\Gamma)$ for a graph $\Gamma$ (cf. Ex.~\ref{ex:graph}), then $\Sigma$ is a $G$-tame 
simplicial $G$-complex if, and only if, $G$ acts without inversion of edges on the graph $\Gamma$.}
if $\stab_G(\omega)=\stab_G\{\omega\}$ for all $\omega\in\Sigma$.
For the convenience of the reader some basic properties of simplicial complexes
are recollected in \S\ref{s:sc}.

\subsection{$\euF$-discrete simplical $G$-complexes}
\label{ss:Fdissimp}
Let $G$ be a topological group of type \OS, and let
$\euF\subseteq\Osgrp(G)$ be a non-empty set of open subgroups
satisfying $(F_1)$, $(F_2)$ and $(F_3)$.
A simplicial $G$-complex $\Sigma$ will be said to be {\it $\euF$-discrete},
if $\stab_G(x)\in\euF$ for all $x\in\Sigma_0$.
Thus, by $(F_2)$, for an $\euF$-discrete simplicial $G$-complex
one has $\stab_G(\omega)\in\euF$ and, by $(F_3)$, $\stab_G\{\omega\}\in\euF$
for all $\omega\in\Sigma$.
This property has the following consequence. 

\begin{fact}
\label{fact:FdsGc}
Let $G$ be topological group of type \OS, and let $\euF\subseteq\Osgrp(G)$
be a non-empty set of open subgroups of $G$ satisfying $(F_1)$, $(F_2)$ and $(F_3)$.
Then for every 
$\euF$-discrete simplicial $G$-complex $\Sigma$, its topological realization $|\Sigma|$
(cf. \S\ref{ss:geore}) is an $\euF$-discrete $G$-CW-complex.
\end{fact}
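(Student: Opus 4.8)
The plan is to pass to the barycentric subdivision in order to eliminate inversions, and then to read off the $(G,\euF)$-CW-structure on $|\Sigma|$ cell by cell. First, the simplicial $G$-action on $\Sigma$ induces a continuous $G$-action on the realization $|\Sigma|$ (cf. \S\ref{ss:geore}): the space $|\Sigma|$ carries the weak topology with respect to its closed simplices, and every $g\in G$ carries each closed simplex homeomorphically onto another, so the action is continuous. Thus $|\Sigma|$ is a non-trivial $G$-space. The obstruction to recognising $|\Sigma|$ as a $(G,\euF)$-CW-complex with the open simplices of $\Sigma$ as cells is that $G$ may act \emph{with inversions}: a simplex $\omega$ can be fixed setwise by some $g\in G$ which permutes its vertices, so that $g$ does not act trivially on the cell $|\omega|$, in conflict with the trivial $G$-action on $B^n$ required in $(C_3)$.

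To circumvent this I would replace $\Sigma$ by its barycentric subdivision $\Sigma'=\mathrm{sd}(\Sigma)$. The vertex set of $\Sigma'$ is $\Sigma$ itself, and the stabilizer of the vertex corresponding to a simplex $\omega$ is the setwise stabilizer $\stab_G\{\omega\}$; since $\stab_G(\omega)\in\euF$ by $(F_2)$ and $|\stab_G\{\omega\}:\stab_G(\omega)|<\infty$, condition $(F_3)$ gives $\stab_G\{\omega\}\in\euF$ (cf. \S\ref{ss:Fdissimp}), so $\Sigma'$ is again an $\euF$-discrete simplicial $G$-complex. Moreover $G$ acts on $\Sigma'$ tamely: a simplex of $\Sigma'$ is a flag $\omega_0\subsetneq\cdots\subsetneq\omega_k$ of simplices of $\Sigma$, and any $g\in G$ fixing such a flag setwise preserves the dimensions $\dim(\omega_i)$, hence fixes each $\omega_i$, hence fixes the flag pointwise. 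Finally the canonical homeomorphism $|\Sigma'|\cong|\Sigma|$ is $G$-equivariant, so it suffices to exhibit the structure on $X=|\Sigma'|$.

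Next I would set $X^n\subseteq X$ equal to the realization of the $n$-skeleton of $\Sigma'$, which is closed in $X$. Then $(C_1)$ and $(C_4)$ hold because $|\Sigma'|$ carries the weak topology with respect to its skeleta (every point lies in a unique closed simplex), and $(C_2)$ holds because $X^0=\Sigma'_0$ is a discrete subspace all of whose point stabilizers lie in $\euF$. For $(C_3)$, take $\Lambda_n$ to be the $G$-set of $n$-simplices of $\Sigma'$; by tameness $\stab_G(\tau)=\stab_G\{\tau\}\in\euF$ for each $\tau\in\Lambda_n$, so $\Lambda_n$ is an $\euF$-discrete $G$-space, and since the stabilizer of each $\tau$ acts trivially on the vertices of $\tau$ one may choose, within each $G$-orbit, an ordering of the vertices of a representative and transport it equivariantly. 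The associated characteristic maps assemble into $\widehat{f}\colon B^n\times\Lambda_n\to X^n$ and the attaching maps into $f\colon S^{n-1}\times\Lambda_n\to X^{n-1}$, and the resulting square is a pushout — this is the usual presentation of the realization of a simplicial complex as a CW-complex, made $G$-equivariant. Hence $X$ is a $(G,\euF)$-CW-complex, and for $x\in X$ with carrying simplex $\tau_x$ one has $\stab_G(x)=\stab_G(\tau_x)\in\euF$ by $(F_2)$; thus $|\Sigma|$ is an $\euF$-discrete $G$-CW-complex. The only real difficulty is the inversion issue — verifying that $\mathrm{sd}(\Sigma)$ carries a tame $G$-action and that its cells can be oriented $G$-equivariantly; the remainder is the routine equivariant transcription of the statement that $|\mathrm{sd}(\Sigma)|$ is a CW-complex, with the membership of all stabilizers in $\euF$ controlled by $(F_2)$ and $(F_3)$ throughout.
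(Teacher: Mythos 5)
Your proof is correct, and it is considerably more careful than the paper's. The paper's own argument is two sentences: it declares the $G$-CW-structure to be ``straightforward'' and then verifies $\euF$-discreteness of point stabilizers directly, observing via Fact~\ref{fact:simpcom0} that each $z\in|\Sigma|$ has a unique carrying simplex $\omega$, so $\stab_G(\omega)\subseteq\stab_G(z)\subseteq\stab_G\{\omega\}$ and $(F_3)$ closes the argument. What the word ``straightforward'' sweeps under the rug is precisely the inversion problem you isolate: since $(C_3)$ requires the trivial $G$-action on $B^n$, the open simplices of $\Sigma$ itself cannot in general serve as $G$-cells when some $g$ fixes a simplex setwise but permutes its vertices. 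Your remedy --- passing to the barycentric subdivision $\mathrm{sd}(\Sigma)$, checking that it is again $\euF$-discrete via $(F_2)$ and $(F_3)$, observing that the $G$-action on a subdivision is automatically tame because a flag-preserving automorphism must respect dimensions, and noting that $|\mathrm{sd}(\Sigma)|\cong|\Sigma|$ $G$-equivariantly --- is exactly the device needed, and the rest of your construction (skeletal filtration, $\Lambda_n=\mathrm{sd}(\Sigma)_n$, $G$-equivariant choice of vertex orderings within each orbit) correctly assembles the $(G,\euF)$-CW pushouts. In short, you supply a complete argument where the paper supplies only the stabilizer check; the two reach the same conclusion, with the paper's sandwich $\stab_G(\omega)\subseteq\stab_G(z)\subseteq\stab_G\{\omega\}$ being a slightly slicker way of seeing $\euF$-discreteness than rederiving it from the cell structure as you do at the end, but yours spells out the CW-part that the paper leaves to the reader.
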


\begin{proof}
The property of being a $G$-CW-complex is straightforward.
For any point $z\in |\Sigma|$, there exists a unique $q$-simplex
$\omega\in\Sigma$ such that $z$ is an interior point of $|\Sigma(\omega)|$ 
(cf. Fact~\ref{fact:simpcom0}). Hence
$\stab_G(\omega)\subseteq\stab_G(z)\subseteq\stab_G\{\omega\}$, and thus
$\stab_G(z)\in\euF$ by $(F_3)$.
\end{proof}


\subsection{Contractable $\euF$-discrete simplical $G$-complexes}
\label{ss:Fdis}
The $\euF$-discrete simplical $G$-complex $\Sigma$ will be said
to be {\it contractable}, if $|\Sigma|$ is contractable.
But note that we do not claim that $|\Sigma|$ is $G$-homotopic to a point
(cf. \cite[Thm.~2.2]{misva:proper}).

\begin{example}
\label{ex:FGcom}
Let $G$ be a topological group of type \OS,
and let $\euF\subseteq\Osgrp(G)$ be a non-empty set of open subgroups
satisfying $(F_1)$, $(F_2)$ and $(F_3)$. Let $A\in\euF$, and put
$\Omega(A)=\Sigma[G/A]$ (cf. Ex.~\ref{ex:simpcom}).
Then $\Omega(A)$ is an $\euF$-discrete simplical $G$-complex.
By Fact~\ref{fact:simpcom1}, $|\Omega(A)|$ is contractable, i.e.,
$\Omega(A)$ is a contractable $\euF$-discrete simplicial $G$-complex.
\end{example}

We define the {\it simplicial $\euF$-dimension of $G$} by
\begin{equation}
\label{eq:sdim}
\begin{aligned}
\sdim_\euF(G)&=\min(\{\,n\in\N_0\mid \text{$\exists$ 
a contractable $\euF$-discrete}\\
&\qquad \text{simplicial $G$-complex $\Sigma$: $\dim(\Sigma)=n$}\,\}\cup\{\infty\})
\end{aligned}
\end{equation}
(cf. \eqref{eq:simpcom4}). For $n\in\N_0\cup\{\infty\}$ 
we call $G$ to be {\it of type $\sF_n(\euF)$}, if there exists
a contractable $\euF$-discrete simplicial $G$-complex $\Sigma$ which is of type $\tF_n$ (cf. \S\ref{ss:simpGcom}).
Moreover, $G$ will be said to be {\it of type $\sF(\euF)$},
if there exists a contractable $\euF$-discrete simplicial $G$-complex $\Sigma$ of type $\tF_\infty$
satisfying $\dim(\Sigma)<\infty$.


\subsection{The Bruhat-Tits property}
\label{ss:BT}
The $\euF$-discrete simplicial $G$-complex $\Sigma$ is said to have the 
{\it Bruhat-Tits property}, if for all $A\in\euF$ one has $|\Sigma|^A\not=\emptyset$.
For every point $z\in|\Sigma|^A$ there exists a unique $q$-simplex $\omega(z)\in\Sigma$ such that
$z$ is contained in the interior of $|\Sigma(\omega(z))|$  (cf. Fact~\ref{fact:simpcom0}). Hence
$A\subseteq\stab_G(z)\subseteq\stab_G\{\omega(z)\}$.


\subsection{Classes of open subgroups for t.d.l.c. groups}
\label{ss:Ftdlc}
Let $G$ be a t.d.l.c. group. Obviously, the class of all open subgroups
$\euO=\Osgrp(G)$ and the class $\euC=\CO(G)$ of all compact open subgroups
satisfy the conditions $(F_1)$, $(F_2)$ and $(F_3)$.
The same is true for 
\begin{equation}
\label{eq:Fnest}
\euvN=\{\,\caO\in\Osgrp(G)\mid \exists\, \caU\subseteq\caO,\,\text{$\caU$ open and nested,}\, |\caO:\caU|<\infty\,\},
\end{equation}
the class of all {\it virtually nested open subgroups}  of $G$ (cf. \S\ref{ss:nn}).

\begin{prop}
\label{prop:nestedclass}
Let $G$ be a t.d.l.c. group. Then $\euvN$ satisfies the conditions $(F_1)$, $(F_2)$ and $(F_3)$.
\end{prop}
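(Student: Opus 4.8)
The plan is to reduce the three conditions to two elementary closure properties of the class of \emph{nested} t.d.l.c.\ groups, and then to combine them with the finite–index bookkeeping built into the definition of $\euvN$. The first property is that an open subgroup $K$ of a nested t.d.l.c.\ group $H=\bigcup_{k\geq 0}\caO_k$ is again nested: one just observes that $K=\bigcup_{k\geq 0}(K\cap\caO_k)$, where each $K\cap\caO_k$ is a subgroup, is open in $K$ (intersection of two open subgroups), and is compact (a closed subset of the compact group $\caO_k$, since open subgroups are closed), and the sequence is ascending. The second property is that if $A\subseteq G$ is open and $\caU\subseteq A$ is open and nested, then for any $g\in G$ the conjugate $g\caU g^{-1}$ is open and nested, because conjugation by $g$ is a topological automorphism of $G$ and therefore carries an ascending chain of compact open subgroups exhausting $\caU$ to one exhausting $g\caU g^{-1}$. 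Both verifications are routine.

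Granting these, $(F_1)$ and $(F_3)$ are immediate. For $(F_1)$, if $A\in\euvN$ with witness $\caU\subseteq A$ (open, nested, $|A:\caU|<\infty$), then $g\caU g^{-1}\subseteq gAg^{-1}$ is open and nested by the second property, and $|gAg^{-1}:g\caU g^{-1}|=|A:\caU|<\infty$. For $(F_3)$, if $B\subseteq A$ in $\Osgrp(G)$ with $|A:B|<\infty$ and $B\in\euvN$ with witness $\caU\subseteq B$, then the same $\caU$ is a witness for $A$, since $|A:\caU|=|A:B|\cdot|B:\caU|<\infty$.

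The only condition needing a genuine (though short) argument is $(F_2)$. Given $A,B\in\euvN$ with witnesses $\caU\subseteq A$ and $\caV\subseteq B$, I would propose $\caU\cap\caV$ as a witness for $A\cap B$: it is open, and, being an open subgroup of the nested group $\caU$, it is nested by the first property. The point to check is $|A\cap B:\caU\cap\caV|<\infty$. Here one cannot simply multiply $|A:\caU|$ by $|B:\caV|$, since $\caU$ need not lie in $B$ nor $\caV$ in $A$; instead one first notes, via the standard injection of coset spaces, that $|A\cap B:\caU\cap B|\leq|A:\caU|<\infty$ and likewise $|A\cap B:\caV\cap A|\leq|B:\caV|<\infty$, then observes $(\caU\cap B)\cap(\caV\cap A)=\caU\cap\caV$, and concludes $|A\cap B:\caU\cap\caV|\leq|A\cap B:\caU\cap B|\cdot|A\cap B:\caV\cap A|<\infty$. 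This finite–index estimate is the main (and essentially the only) obstacle. For completeness one may also note that $\euvN$ is non-empty: by van Dantzig's theorem $G$ contains a compact open subgroup, which is trivially nested.
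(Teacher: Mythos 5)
Your proof is correct, and the argument for $(F_2)$ is genuinely different from the paper's. The paper normalizes one witness: given $\caO\in\euvN$ with witness $\caU$, it replaces $\caU$ by a finite-index normal subgroup of $\caO$, and then for any $\caV\in\euvN$ (in fact any open $\caV$) observes that $\caU\cap\caV$ is the kernel of $\caO\cap\caV\to\caO/\caU$, giving $|\caO\cap\caV:\caU\cap\caV|\leq|\caO:\caU|<\infty$ — the witness for $\caV$ is never used. You instead keep both witnesses $\caU\subseteq A$ and $\caV\subseteq B$ and bound the index via the two coset-space injections together with Poincar\'e's lemma, $|A\cap B:\caU\cap\caV|\leq|A\cap B:\caU\cap B|\cdot|A\cap B:\caV\cap A|\leq|A:\caU|\cdot|B:\caV|$. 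Both routes rest on the same key lemma (an open subgroup of a nested group is nested, which you spell out carefully); the paper's argument is slightly more economical and in fact proves the stronger statement that $\euvN$ is closed under intersection with arbitrary open subgroups, while yours is more symmetric, avoids the normalization step, and uses only the standard index inequality. Either is a complete proof.
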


\begin{proof}
Obviously, $\euvN$ satisfies $(F_1)$ and $(F_3)$.
Let $\caO\in \euvN$, and let $\caU\subseteq\caO$ be an open, nested subgroup of finite index, i.e.,
$\caU=\bigcup_{k\geq 0} \caU_k$, $\caU_k\subseteq\caU_{k+1}$,
$\caU_k\in\CO(G)$. Since $|\caO:\caU|<\infty$, we may also assume that $\caU$ is normal in $\caO$.
Let $\caV\in\euvN$. Then $\caV\cap\caU$ is a nested open subgroup of $G$. Moreover,
as $\caV\cap\caU=\kernel(\caV\to\caO/\caU)$, one has that $|\caV:\caV\cap\caU|<\infty$.
Hence $\euvN$ satisfies $(F_2)$ as well.
\end{proof}

\begin{rem}
\label{rem:nest} The proof of Proposition~\ref{prop:nested} show also that
$\euN$ - the class of all nested open subgroups of a t.d.l.c. group $G$ -
satisfies the conditions $(F_1)$ and $(F_2)$.
\end{rem}

\begin{rem}
\label{rem:thomp}
It is a remarkable fact, that although the Higman-Thompson groups $F_{n,r}$, $T_{n,r}$, $G_{n,r}$
are not of finite cohomological dimension, they still satisfy the
$\FP_\infty$-condition (cf. \cite{brown:fini}). The Neretin group $N_{n,r}$ of spheromorphism 
of a regular rooted tree $\caT_{n,r}$ can be seen as a t.d.l.c. analogue
of the Higman-Thompson groups (cf. Remark~\ref{rem:nere}). In particular, this group has a factorization
\begin{equation}
\label{eq:factN}
N_{n,r}=\caO\cdot\caF_{n,r},\qquad \caO\cap\caF_{n,r}=\{1\},
\end{equation}
where $\caF_{n,r}$ is a discrete subgroup isomorphic to the Richard Thompson group $F_{n,r}$
and $\caO$ is a nested open subgroup of $N_{n,r}$. Hence the following questions
arise.

\begin{ques}
\label{ques:ner}
Is $N_{n,r}$ of type $\sF(\euN)_\infty$ or even of type $\sF(\euC)_\infty$?
\end{ques}

\begin{ques}
\label{ques:ner2}
Does there exist an  $\uE_{\euN}(N_{n,r})$-space which is of type $F_\infty$?
\end{ques}
\end{rem}

The finiteness conditions 
we discussed in subsection \S\ref{ss:Fdis} for  $\euC$-discrete simplicial $G$-complexes
for a t.d.l.c. group $G$ have the following consequence for
its cohomological finiteness conditions.

\begin{prop}
\label{prop:ftopcd}
Let $G$ be a t.d.l.c. group. Then one has the following.
\begin{itemize}
\item[(a)] $\sdim_{\euC}(G)\geq \ccd_{\Q}(G)$.
\item[(b)] Let $n\in\N_0\cup\{\infty\}$.
If $G$ is of type $\sF_n(\euC)$, then $G$ is of type $\FP_n$.
\item[(c)] If $G$ is of type $\sF(\euC)$, then $G$ is of type $\FP$.
\end{itemize}
\end{prop}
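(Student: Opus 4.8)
The plan is to realize the trivial rational discrete $\QG$-module $\Q$ by the augmented simplicial chain complex of a suitable contractible $\euC$-discrete simplicial $G$-complex, and then to read off all three statements from the properties of that resolution. So first I would fix a contractible $\euC$-discrete simplicial $G$-complex $\Sigma$ with $\dim(\Sigma)=n$ and, for each $q\ge 0$, form the rational simplicial chain module $C_q(\Sigma)$ in degree $q$, i.e.\ the signed rational discrete permutation module $\Q[\underline{\Sigma_q}]$ attached to the signed $G$-set of oriented $q$-simplices (in the spirit of the construction of $\boE(\Gamma)$ in \S\ref{ss:exgraph}), together with the usual boundary maps $\der_q\colon C_q(\Sigma)\to C_{q-1}(\Sigma)$ and the augmentation $\eps\colon C_0(\Sigma)\to\Q$, all of which are morphisms of discrete left $\QG$-modules. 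Since $\euC$ satisfies $(F_1)$, $(F_2)$ and $(F_3)$ (cf.\ \S\ref{ss:Ftdlc}), for every simplex $\omega\in\Sigma$ both $\stab_G(\omega)$ and $\stab_G\{\omega\}$ are compact open subgroups of $G$; hence, by \eqref{eq:sdpm4}, each $C_q(\Sigma)$ decomposes as
\begin{equation*}
C_q(\Sigma)\simeq\coprod_{\omega\in\caR_{\mathrm{w}}}\idn_{G_{\pm\omega}}^G(\Q(\sgn_\omega))\oplus\coprod_{\varpi\in\caR_{\mathrm{wo}}}\idn_{G_{\varpi}}^G(\Q),
\end{equation*}
a coproduct of modules induced from compact open subgroups. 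Each one-dimensional module $\Q(\sgn_\omega)$ is projective over the profinite group $G_{\pm\omega}$ by Proposition~\ref{prop:prof}, and induction from an open subgroup maps projectives to projectives (cf.\ \S\ref{ss:open} and Proposition~\ref{prop:rstidn}); consequently every $C_q(\Sigma)$ is projective in $\QGdis$.

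Next I would use the contractibility of $|\Sigma|$: simplicial and singular homology of $|\Sigma|$ coincide, so $H_0(C_\bullet(\Sigma))\simeq\Q$ and $H_q(C_\bullet(\Sigma))=0$ for $q\ge 1$, which means that
\begin{equation*}
0\longrightarrow C_n(\Sigma)\longrightarrow\cdots\longrightarrow C_1(\Sigma)\longrightarrow C_0(\Sigma)\overset{\eps}{\longrightarrow}\Q\longrightarrow 0
\end{equation*}
is exact. Thus it is a projective resolution of $\Q$ in $\QGdis$ concentrated in degrees $0$ to $n$, so $\ccd_{\Q}(G)=\pdim(\Q)\le n=\dim(\Sigma)$. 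Taking the infimum over all contractible $\euC$-discrete simplicial $G$-complexes $\Sigma$ gives $\ccd_{\Q}(G)\le\sdim_{\euC}(G)$, which is (a).

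For (b), suppose moreover that $\Sigma$ is of type $\tF_n$, i.e.\ $G$ has only finitely many orbits on $\Sigma_q$ for $q\le n$. Then in the decomposition above each $C_q(\Sigma)$ with $q\le n$ is a finite coproduct of modules $\idn_{G_{\pm\omega}}^G(\Q(\sgn_\omega))$ and $\idn_{G_{\varpi}}^G(\Q)$, hence a finitely generated projective discrete $\QG$-module; truncating the resolution above in degree $n$ therefore yields a partial projective resolution of $\Q$ of finite type of length $n$, and $G$ is of type $\FP_n$ by Proposition~\ref{prop:FPn}. The case $n=\infty$ follows by letting $n$ vary. Part (c) is then the conjunction of (a) and (b): if $G$ is of type $\sF(\euC)$, witnessed by a contractible $\euC$-discrete simplicial $G$-complex $\Sigma$ of type $\tF_\infty$ with $\dim(\Sigma)<\infty$, then $\ccd_{\Q}(G)\le\dim(\Sigma)<\infty$ by (a) and $G$ is of type $\FP_\infty$ by (b), so $G$ is of type $\FP$.

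The only genuinely delicate point is the bookkeeping in the first step: one must check carefully that the oriented chain modules $C_q(\Sigma)$ are indeed the signed rational discrete permutation modules of \S\ref{ss:signedperm} --- in particular, that the case in which some $g\in G$ fixes a $q$-simplex while reversing its orientation is exactly the ``with inversion'' case producing the sign character $\sgn_\omega$ --- and that $\der_q$ and $\eps$ are morphisms in $\QGdis$. Once this identification is made, together with the fact that all simplex stabilizers lie in $\euC$, everything reduces to Propositions~\ref{prop:prof}, \ref{prop:permod}, \ref{prop:rstidn} and \ref{prop:FPn} and the comparison of simplicial and singular homology of $|\Sigma|$.
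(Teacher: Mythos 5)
Your proof is correct and follows essentially the same route as the paper: realize $\Q$ by the augmented simplicial chain complex $(C_\bullet(\Sigma),\der_\bullet)$, decompose each $C_q(\Sigma)$ as a coproduct of modules induced from the (compact open) set-wise simplex stabilizers twisted by the orientation sign character, use Proposition~\ref{prop:prof} and Proposition~\ref{prop:rstidn} to see these are projective (finitely generated when there are finitely many orbits), and conclude via contractibility of $|\Sigma|$. The only cosmetic difference is that you phrase the decomposition through the signed permutation module formalism of \S\ref{ss:signedperm} and \eqref{eq:sdpm4}, whereas the paper records the same isomorphism directly as \eqref{eq:simpiso1}; the ``delicate point'' you flag about orientation-reversing stabilizer elements is exactly the identification $G_{\pm\uomega}=\stab_G\{\omega\}$ and is handled the same way.
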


\begin{proof}
Let $\Sigma$ be a $\euC$-discrete simplicial $G$-complex, let
$\omega=\{x_0,\ldots,x_q\}\in\Sigma_q$, and let 
$\uomega=x_0\wedge\cdots\wedge x_q\in\tSigma_q$ (cf. \eqref{eq:simpcom8}).
Then $\stab_G\{\omega\}$
is acting on $\{\pm\uomega\}$, i.e., one has a continuous group homomorphism
\begin{equation}
\label{eq:signch}
\sgn_\omega\colon\stab_G\{\omega\}\longrightarrow\{\pm1\}.
\end{equation}
Let $\Q_\omega$ denote the $1$-dimensional rational discrete left $\stab_G\{\omega\}$-module associated
to $\sgn_\omega$, i.e., for $g\in\stab_G\{\omega\}$ and $z\in\Q_\omega$ one has
$g\cdot z=\sgn_\omega(g)\cdot z$.
Thus if $\caR_q\subseteq\Sigma_q$ is a set of representatives for the $G$-orbits on $\Sigma_q$, one has
an isomorphism
\begin{equation}
\label{eq:simpiso1}
\textstyle{C_q(\Sigma)\simeq \coprod_{\omega\in\caR_q} \idn_{\stab_G\{\omega\}}^G(\Q_\omega)}
\end{equation}
of rational discrete left $\QG$-modules (cf. \S\ref{ss:osc}).
In particular, $C_q(\Sigma)$ is a projective rational discrete left $\QG$-module.

If $\Sigma$ is contractable, one has
$H_0(|\Sigma|,\Q)\simeq\Q$ and $H_k(|\Sigma|,\Q)=0$ for $k\not=0$.
Hence $(C_\bullet(\Sigma),\der_\bullet)$ is a
projective resolution of the rational discrete left $\QG$-module $\Q$ (cf. Fact~\ref{fact:simpcom3}).
In particular, $\ccd_{\Q}(G)\leq \dim(\Sigma)$. This yields (a).

If $\Sigma$ is contractable and of type $\tF_n$, $n\in\N_0\cup\{\infty\}$, then,
by \eqref{eq:simpiso1}, $C_k(\Sigma)$ is a finitely generated
projective rational discrete left $\QG$-module for $0\leq k\leq n$. This yields (b).
Part (c) follows by similar arguments.
\end{proof}


\subsection{Simplicial t.d.l.c. rational duality groups}
\label{ss:rattop}
As in the discrete case  it is pos\-sible to detect
from a group action
that a t.d.l.c. group $G$ is indeed a rational duality group. 

\begin{thm}
\label{thm:ratdual}
Let $G$ be a t.d.l.c. group, and let
$\Sigma$ be a $\euC$-discrete simplicial $G$-complex such that
\begin{itemize}
\item[($D_1$)] $\Sigma$ is contractable;
\item[($D_2$)] $\dim(\Sigma)<\infty$ and $\Sigma$ is of type $\tF_\infty$;
\item[($D_3$)] $\Sigma$ is locally finite.
\end{itemize}
Then $d=\ccd_{\Q}(G)<\dim(\Sigma)$ and one has
\begin{equation}
\label{eq:rattopthm}
{}^\times D_G=H^d_c(|\Sigma|,\Q)\otimes\Q(\Delta).
\end{equation}
In particular, if
\begin{itemize}
\item[($D_4$)] $H^k_c(|\Sigma|,\Q)=0$ for $k\not=d$,
\end{itemize}
then $G$ is a t.d.l.c. rational duality group of dimension $d$.
\end{thm}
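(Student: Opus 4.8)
The strategy is to build an explicit finite-length projective resolution of $\Q$ out of the simplicial chains of $\Sigma$, dualize it into $\biB(G)$, and then identify the resulting cochain complex with compactly supported cohomology of $|\Sigma|$.

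First I would invoke Proposition~\ref{prop:ftopcd}: under $(D_1)$ and $(D_2)$, the augmented simplicial chain complex $(C_\bullet(\Sigma),\der_\bullet,\eps)$ is a projective resolution of the trivial left $\QG$-module $\Q$, each $C_k(\Sigma)$ is a finitely generated projective rational discrete $\QG$-module (by local finiteness of the $G$-action on each $\Sigma_q$, which follows from $(D_2)$), and the resolution has length $\dim(\Sigma)<\infty$. Hence $G$ is of type $\FP$ with $\ccd_\Q(G)=d\le\dim(\Sigma)$. Actually I expect $d<\dim(\Sigma)$ rather than merely $\le$: this is the Borel--Serre-type phenomenon, and I would deduce it \emph{a posteriori} from the vanishing of $H^d_c$ in the top degree once the identification \eqref{eq:rattopthm} is in place together with $(D_4)$ --- or, more carefully, I would note that since $\Sigma$ is an honest contractible complex of dimension $\dim(\Sigma)$ with a cocompact-type $G$-action, the top chain module surjects but the relevant Ext does not survive; the cleanest route is to postpone this inequality and read it off from the cohomological computation. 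So the first genuine step is: apply $\argu^\circledast=\Hom_G^\times(\argu,\biB(G))$ to the resolution $C_\bullet(\Sigma)$, producing by \S\ref{ss:dualcom} a cochain complex $(C_\bullet(\Sigma)^\circledast,\der^\circledast)$ of finitely generated projective \emph{right} $\QG$-modules (equivalently left, after $\argu^\times$), whose cohomology in degree $k$ is $\dExt^k_G(\Q,\biB(G))=\dH^k(G,\biB(G))$, using that $\biB(G)$ is the coefficient module and $(C_\bullet(\Sigma))^\circledast$ computes $\dExt^\bullet_G(\Q,\biB(G))$ because $C_\bullet(\Sigma)$ is a projective resolution and $\Hom$-$\otimes$ duality (Prop.~\ref{prop:homten}) identifies $\Hom_G(C_k,\biB(G))\otimes_G M\simeq\Hom_G(C_k,M)$.

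Next I would identify this dual complex geometrically. By \eqref{eq:simpiso1}, $C_q(\Sigma)\simeq\coprod_{\omega\in\caR_q}\idn_{\stab_G\{\omega\}}^G(\Q_\omega)$; applying the Eckmann--Shapiro-type computation \eqref{eq:ecky3} --- which says $\Hom_G(\idn_{\caO}^G(\Q),\biB(G))\simeq\Q[\caO\backslash G]$, and its signed analogue via Proposition~\ref{prop:propG1} --- gives $C_q(\Sigma)^\circledast\simeq$ the module of \emph{finitely supported} sign-twisted functions on the $q$-simplices of $\Sigma$, which is precisely the compactly-supported simplicial cochain group $C^q_c(|\Sigma|,\Q)$, twisted by the modular character. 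Here the twist by $\Q(\Delta)$ enters exactly as in Remark~\ref{rem:biC}: the identification $\biB(G)\simeq\caC_c(G)\otimes\Q(\Delta)^\times$ is the source of the $\otimes\Q(\Delta)$ factor, because $\Hom_G(\idn_\caO^G(\Q),\caC_c(G))$ via the map $\argu^\vee$ of \S\ref{ss:permbi} lands in the genuinely compactly-supported functions (Prop.~\ref{prop:propG1}), with no twist, whereas $\biB(G)$ carries the modular twist relative to $\caC_c(G)$. So I would run the computation first with coefficients $\caC_c(G)$ to get $C_\bullet(\Sigma)^\odot\simeq C^\bullet_c(|\Sigma|,\Q)$ on the nose (using Prop.~\ref{prop:propG2} for the compatibility of the boundary maps with $\ualpha^\ast$), obtaining $\dExt^k_G(\Q,\caC_c(G))\simeq H^k_c(|\Sigma|,\Q)$, and then tensor through by $\Q(\Delta)$ to pass to $\biB(G)$, which yields $\dH^k(G,\biB(G))\simeq H^k_c(|\Sigma|,\Q)\otimes\Q(\Delta)$ as right $\QG$-modules; taking $k=d$ and applying $\argu^\times$ gives \eqref{eq:rattopthm}. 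The final clause is then immediate: condition $(D_4)$ says $\dH^k(G,\biB(G))=0$ for $k\ne d$, which together with type $\FP$ (already established) is exactly the definition of a rational t.d.l.c. duality group of dimension $d$, and Proposition~\ref{prop:FP} forces $d=\ccd_\Q(G)$, giving also the strict inequality $d<\dim(\Sigma)$ as soon as $H^{\dim\Sigma}_c(|\Sigma|,\Q)$ can be shown nonzero-or-zero appropriately --- in fact $d<\dim(\Sigma)$ should be derived separately from the fact that a finite-dimensional contractible locally finite complex with a type-$\tF_\infty$ action cannot have $\dH^{\dim\Sigma}(G,\biB(G))\ne 0$ unless the action is cocompact in top degree in a way excluded here; I would handle this by the standard argument that the top boundary map $\der_{\dim\Sigma}$ is injective with projective cokernel, so the resolution can be shortened.

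\textbf{Main obstacle.} The delicate point is the careful bookkeeping of the modular character --- getting the $\Q(\Delta)$ twist on the correct side and verifying that the isomorphisms $C_q(\Sigma)^\circledast\simeq C^q_c$ assemble into a \emph{chain map} compatible with the simplicial coboundary (not merely a degreewise isomorphism), which requires the naturality diagram \eqref{eq:propG6} of Proposition~\ref{prop:propG2} applied to each boundary map $\der_q$. A secondary subtlety is justifying that $\dExt^\bullet_G(\Q,-)$ computed from the \emph{projective} resolution $C_\bullet(\Sigma)$ with coefficients in $\biB(G)$ (or $\caC_c(G)$) genuinely agrees with $\dH^\bullet(G,-)$ --- this is standard but uses that $\dExt$ may be computed from a projective resolution in the first variable, which holds here because $\QGdis$ has enough projectives and $\dExt^k_G(P,I)=0$ balances correctly; I would cite the balancing of $\dExt$ together with Prop.~\ref{prop:homten}. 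Everything else is routine assembly.
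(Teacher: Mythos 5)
Your main line of argument — dualize the augmented simplicial chain complex into $\caC_c(G)$ via Proposition~\ref{prop:propG1} and~\ref{prop:propG2} to identify $\dH^\bullet(G,\caC_c(G))$ with $H^\bullet_c(|\Sigma|,\Q)$, pass to $\biB(G)$ by tensoring with $\Q(\Delta)$ as in Remark~\ref{rem:biC}, and then invoke Propositions~\ref{prop:ftopcd} and~\ref{prop:FP} to read off $d$ and $D_G$ — is exactly what the paper does, and the sequence of intermediate steps you list (Eckmann--Shapiro to identify the dual modules, naturality of $\argu^\vee$ to assemble a chain map, balancing of $\dExt$) is the correct bookkeeping.

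The one place where you go astray is the inequality $d<\dim(\Sigma)$. That strict inequality is a typo in the statement; it should read $d\le\dim(\Sigma)$, and the paper's own proof establishes only the weak inequality, via $\ccd_\Q(G)=\max\{k : H^k_c(|\Sigma|,\Q)\ne 0\}$ and the fact that $C^k_c(\Sigma)=0$ for $k>\dim\Sigma$. Your attempted argument for strictness is wrong on both counts: the claim that a finite-dimensional contractible locally finite complex with a type-$\tF_\infty$ action ``cannot have $\dH^{\dim\Sigma}(G,\biB(G))\ne 0$'' is false --- this is precisely what \emph{does} happen for a semi-simple $p$-adic group acting on its Bruhat--Tits building (Theorem~\ref{thm:algeu}, which is deduced from this very theorem, has $d=\rk(G)=\dim\Sigma(\caC,S)$); and the ``standard argument that the top boundary map is injective with projective cokernel, so the resolution can be shortened'' is a non sequitur --- injectivity of $\der_{\dim\Sigma}$ merely says the resolution has length $\dim\Sigma$, and shortening would require $\image(\der_{\dim\Sigma})$ to be a direct summand of $C_{\dim\Sigma-1}(\Sigma)$, which is not automatic and again fails in the building case. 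Drop the attempt to prove strictness and state only $d\le\dim(\Sigma)$; otherwise the proof is sound.
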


\begin{proof}
By $(D_1)$, $(C_\bullet(\Sigma),\der_\bullet)$ is a projective resolution of $\Q$ in $\QGdis$,
and by $(D_2)$, $C_q(\Sigma)$ is a finitely generated projective rational discrete $\QG$-modules for all $q\geq 0$.
As $\Sigma$ is locally finite by $(D_3)$, $\der_q\colon C_q(\Sigma)\to C_{q-1}(\Sigma)$, $q\geq 1$, has
an adjoint map $\der_q^\ast\colon \Q[\utSigma^\ast_{q-1}]\to\Q[\utSigma^\ast_q]$, where one has to replace
$\tSigma_0$ by $\tSigma_0^{\pm}$ (cf. \S\ref{ss:ssets}). Moreover,
\begin{equation}
\label{eq:rattop1}
\argu^\vee_{\tSigma_q}\colon C_q(\Sigma)^{\odot}\longrightarrow \Q[\utSigma_q^\ast]
\end{equation}
is an isomorphism (cf. Prop.~\ref{prop:propG1}), and one has
the commutative diagram
\begin{equation}
\label{eq:rattop2}
\xymatrix@C=2cm{
C_q(\Sigma)^\odot\ar[r]^{\argu^\vee_{\tSigma_q}}\ar[d]_{\der_{q+1}^\odot}&\Q[\utSigma_q^\ast]\ar@{=}[r]
\ar[d]^{\uder^\ast_{q+1}}& C^q_c(\Sigma)\ar[d]^{\eth^q}\\
C_{q+1}(\Sigma)^\odot\ar[r]^{\argu^\vee_{\tSigma_{q+1}}}&\Q[\utSigma_{q+1}^\ast]\ar@{=}[r]&C^{q+1}_c(\Sigma)
}
\end{equation}
(cf. Prop.~\ref{prop:propG2}). In particular, one obtains canonical isomorphisms (cf. Fact~\ref{fact:simpcom4})
\begin{equation}
\label{eq:rattop3}
\dH^k(G,\caC_c(G))\simeq H^k_c(|\Sigma|,\Q)^{\times}
\end{equation}
of rational discrete right $\QG$-modules, and non-canonical isomorphisms
\begin{equation}
\label{eq:rattop4}
{}^\times\dH^k(G,\biB(G))\simeq H^k_c(|\Sigma|,\Q)\otimes\Q(\Delta)
\end{equation}
of rational discrete left $\QG$-modules (cf. Remark~\ref{rem:biC}).
In particular, as $G$ is of type $\FP$ (cf. Prop.~\ref{prop:ftopcd}(c)), one has
\begin{equation}
\label{eq:rattop5}
\ccd_{\Q}(G)=\max\{\,k\in\N_0\mid H^k_c(|\Sigma|,\Q)\not=0\,\}
\end{equation}
(cf. Prop.~\ref{prop:FP}). Thus for $d=\ccd_{\Q}(G)$ one has an isomorphism
\begin{equation}
\label{eq:rattop6}
{}^\times D_G=H^d_c(|\Sigma|,\Q)\otimes\Q(\Delta).
\end{equation}
This yields the claim.
\end{proof}

A t.d.l.c. group $G$ admitting a $\euC$-discrete simplicial $G$-complex
$\Sigma$ satisfying $(D_j)$, $1\leq j\leq 4$, will be called a {\it simplicial t.d.l.c. rational duality group}.
It should be mentioned that the dimension of $\Sigma$ as simplicial complex does not have to coincide
with $d=\ccd_{\Q}(G)$.

\subsection{Universal $\euC$-discrete simplicial $G$-complexes}
\label{ss:unisimp}
Let $\Sigma$ be a $\euC$-discrete simplical $G$-complex,
and let $\delta\colon |\Sigma|\times|\Sigma|\to\R^+_0$
be a $G$-invariant metric on its topological realization.
Then $(\Sigma,\delta)$ is called a {\it $\euC$-discrete $\CAT(0)$ simplicial  $G$-complex},
if 
\begin{itemize}
\item[(E$_1$)] $(|\Sigma|,\delta)$ is a complete $\CAT(0)$-space\footnote{For details on $\CAT(0)$-spaces see \cite{habr:cat0}.},
\item[(E$_2$)] $\Sigma$ is $G$-tame.
\end{itemize}
Note that property (E$_1$) implies that $|\Sigma|$ is contractable (cf. \cite[Prop.~11.7]{ab:build}).
Let $\caO$ be a compact open subgroup of $G$.
By hypothesis, any orbit $\caO\cdot x$, $x\in|\Sigma|$, is finite and thus in particular bounded.
In particular, $\caO$ has a fixed point $z$ in $|\Sigma|$ (cf. \cite[Thm.~11.23]{ab:build}).
Let $\omega(z)\in\Sigma$ denote the unique simplex such that $z$ is contained in the interior of $\omega(z)$
(cf. Fact~\ref{fact:simpcom0}). Then, by
property (E$_2$), 
$\caO\subseteq\stab_G(z)=\stab_G\{\omega(z)\}=\stab_G(\omega(z))$, i.e.,
$\Sigma$ has the Bruhat-Tits property (cf. \S\ref{ss:BT}). By property (E$_2$), 
$\Sigma^{\caO}$ is a simplicial subcomplex of $\Sigma$, and, by (E$_1$),
$(|\Sigma^{\caO}|,\delta)$ is a complete CAT(0)-space. In particular, $|\Sigma^{\caO}|$ is contractable.
This shows the following.

\begin{prop}
\label{prop:cat0}
Let $G$ be a t.d.l.c. group, and let
$\Sigma$ be a $\euC$-discrete $\CAT(0)$ simplical $G$-complex.
Then $|\Sigma|$ is an $\uE_{\euC}(G)$-space.
\end{prop}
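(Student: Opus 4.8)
The plan is to verify directly the three defining properties (U$_1$), (U$_2$), (U$_3$) of an $\uE_{\euC}(G)$-space for $X=|\Sigma|$ with respect to the family $\euF=\euC=\CO(G)$, since — as recalled in \S\ref{ss:Funi} — a $G$-CW-complex satisfying these is an $\uE_{\euC}(G)$-space and such a space is unique up to $G$-homotopy. Property (U$_1$) is immediate: by (E$_1$) the pair $(|\Sigma|,\delta)$ is a complete $\CAT(0)$-space, and any $\CAT(0)$-space is contractible (cf. \cite[Prop.~11.7]{ab:build}). Property (U$_2$) is the Bruhat–Tits discussion preceding the statement: for $z\in|\Sigma|$ let $\omega(z)\in\Sigma$ be the unique simplex having $z$ in its interior (cf. Fact~\ref{fact:simpcom0}); then $\stab_G(\omega(z))\subseteq\stab_G(z)\subseteq\stab_G\{\omega(z)\}$, and since $\Sigma$ is $\euC$-discrete one has $\stab_G(\omega(z))\in\euC$ by $(F_2)$, while $G$-tameness (E$_2$) forces $\stab_G(\omega(z))=\stab_G\{\omega(z)\}$; hence $\stab_G(z)=\stab_G(\omega(z))\in\euC$.

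For (U$_3$) I would argue as in the paragraph just before the statement. Fix $\caO\in\euC$. Because $\caO$ is compact open, every orbit $\caO\cdot x$, $x\in|\Sigma|$, is finite, hence a bounded subset of the complete $\CAT(0)$-space $(|\Sigma|,\delta)$; by the Bruhat–Tits fixed point theorem (cf. \cite[Thm.~11.23]{ab:build}) its circumcentre is an $\caO$-fixed point, so $|\Sigma|^{\caO}\neq\emptyset$. It remains to see that $|\Sigma|^{\caO}$ is contractible. Here I would first use $G$-tameness to identify $|\Sigma|^{\caO}$ with the topological realization of a subcomplex $\Sigma^{\caO}\subseteq\Sigma$: if $g\in\caO$ fixes a point in the interior of a simplex $\omega$, then $g\cdot\omega=\omega$ setwise, and by (E$_2$) $g$ fixes $\omega$ pointwise, whence $\caO$ fixes all of $|\Sigma(\omega)|$; thus $|\Sigma|^{\caO}$ is a union of closed simplices. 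Then $|\Sigma^{\caO}|$ is a closed convex subset of the $\CAT(0)$-space $|\Sigma|$ (fixed-point sets of isometries of $\CAT(0)$-spaces are convex), so $(|\Sigma^{\caO}|,\delta)$ is itself a complete $\CAT(0)$-space and therefore contractible.

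The main point requiring care is the identification in the last step that $\Sigma^{\caO}$ is genuinely an abstract simplicial subcomplex and that its realization is exactly the convex set $|\Sigma|^{\caO}$ with the induced metric; the substantive inputs — the fixed point theorem and the contractibility of complete $\CAT(0)$-spaces — are both quoted from \cite{ab:build}, so the remaining work is essentially the bookkeeping needed to fit them together. Assembling (U$_1$), (U$_2$) and (U$_3$), together with Fact~\ref{fact:FdsGc} which guarantees that $|\Sigma|$ is an $\euC$-discrete $G$-CW-complex in the first place, then yields the claim.
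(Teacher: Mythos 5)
Your argument follows the paper's proof essentially verbatim: contractibility from (E$_1$), compactness of stabilizers from $\euC$-discreteness together with $G$-tameness, non-emptiness of $|\Sigma|^{\caO}$ from the Bruhat--Tits fixed point theorem applied to a bounded orbit, and contractibility of $|\Sigma|^{\caO}$ from convexity of fixed point sets in a complete $\CAT(0)$-space. The only material you add is the explicit justification (via $G$-tameness) that $|\Sigma|^{\caO}$ is the realization of a genuine subcomplex $\Sigma^{\caO}$, a detail the paper compresses into a single clause, but which is a welcome clarification rather than a departure in method.
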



\subsection{Compactly generated t.d.l.c. groups of rational discrete cohomological dimension $1$}
\label{sss:fund}
Let $G$ be a compactly generated t.d.l.c. group satisfying $\ccd_{\Q}(G)=1$.
Then $G$ is of type $\FP$ (cf. Thm.~\ref{thm:FP1}).
As $G$ is not compact (cf. Prop.~\ref{prop:cd}(a)), one has
\begin{equation}
\label{eq:cd1ex}
\dH^0(G,\biB(G))=\Hom_G(\Q,\biB(G))=0
\end{equation}
(cf. \eqref{eq:invbiB}). 
Hence $G$ is a rational t.d.l.c. duality group.

Suppose that $\Pi=\pi_1(\caA,\Lambda,\Xi)$ is the fundamental group
of a finite graph of profinite groups $(\caA,\Lambda)$, i.e., $\Lambda$ is a finite connected graph.
Moreover, $\Pi$ acts on the tree $\caT=\caT(\caA,\Lambda,\Xi,\caE^+)$ without inversion
of edges. L
We also assume that $\Pi$ is not compact. Then $\ccd_{\Q}(G)=1$ (cf. Prop.~\ref{prop:fgrgr}).
Hence the simplicial version of the tree $\caT$ - which we will denote by the same symbol -
has the following properties:
\begin{itemize}
\item[(1)] $\caT$ is a locally finite simplicial complex of dimension $1$;
\item[(2)] $\caT$ is a $\euC$-discrete simplicial $\Pi$-complex;
\item[(3)] $\Pi$ has finitely many orbits on the simplicial complex $\caT$, i.e., $\caT$ is of type $\tF_\infty$;
\item[(4)] $\caT$ is $\Pi$-tame;
\item[(5)] the standard metric $\delta\colon |\caT|\times |\caT|\to\R^+_0$ gives
$(|\caT|,\delta)$ the structure of a complete $\CAT(0)$-space.
\item[(6)]  $H^0_c(|\caT|,\Q)=0$.
\end{itemize}
Hence (1), (2), (3) and (5) imply that $\Pi$ is a simplicial t.d.l.c. duality group.
Moreover, by Theorem~\ref{thm:ratdual}, one has
\begin{equation}
\label{eq:ratdual1}
{}^\times D_{\Pi}\simeq H^1_c(|\caT|,\Q)\otimes\Q(\Delta).
\end{equation}
From (4) and (5) one concludes that $\caT$ is a $\euC$-discrete $\CAT(0)$-simplicial $\Pi$-complex,
and thus an $\uE_{\euC}(\Pi)$-space (cf. Prop.~\ref{prop:cat0}).


\subsection{Algebraic groups defined over a non-discrete, non-archimedean local field}
\label{sss:alg}
Let $K$ be a non-discrete non-archimedean local field\footnote{By a local field we understand
a non-discrete locally compact field. Such a field comes equipped with a valuation $v$,
and, in case that $v$ is non-archimedean, $v$ is also discrete. Hence such a field is either
isomorphic to a finite extension of the $p$-adic numbers $\Q_p$ or isomorphic to a
Laurent power series ring $\F_q[t^{-1},t]\!]$ for some finite field $\F_q$.} with residue field $\F$;
in particular, $\F$ is finite.
Let $G$ be a semi-simple, simply-connected algebraic group defined over $K$,
and let $G(K)$ denote the group of $K$-rational points. In particular, $G(K)$
carries naturally the structure of a t.d.l.c. group. Suppose that $G(K)$ is not compact.
Then $G(K)$ has a type preserving action
on an affine building $(\caC,S)$, the Bruhat-Tits building, where $(W,S)$ is an affine Coxeter group.
Moreover, $d=|S|-1$ coincides with the algebraic $K$-rank of $G$. For further details see
 \cite{bt:1}, \cite{bt:2}, \cite{tits:1} and \cite{weiss:build}.

The {\it Tits realization} $\Sigma(\caC,S)$ of the Bruhat-Tits building $(\caC,S)$
is a discrete simplicial $G(K)$-complex of dimension $d$
with the following properties:
\begin{itemize}
\item[(1)] For $\omega\in\Sigma(\caC,S)$ its stabilizer $\stab_{G(K)}(\omega)$ is a parahoric subgroup
and hence compact, i.e., $\Sigma(\caC,S)$ is a $\euC$-discrete simplicial $G(K)$-complex.
\item[(2)] $G(K)$ has finitely many orbits on $\Sigma(\caC,S)$, i.e.,
$\Sigma(\caC,S)$ is of type $\tF_\infty$.
\item[(3)] As $G(K)$ acts type preserving, $\Sigma(\caC,S)$ is $G(K)$-tame.
\item[(4)] $\Sigma(\caC,S)$ is locally finite.
\item[(5)] There exists a $G(K)$-invariant metric $\delta\colon |\Sigma(\caC,S)|\times|\Sigma(\caC,S)|\to\R^+_0$
making $(|\Sigma(\caC,S)|,\delta)$ a complete $\CAT(0)$-space (cf. \cite[Thm.~11.16]{ab:build}).
In particular, $|\Sigma(\caC,S)|$ is contractable.
\item[(6)] By a theorem of A.~Borel and J-P.~Serre,
one has $H_c^k(|\Sigma(\caC,S)|,\Q)=0$ for $k\not=d$ 
and $H_c^d(|\Sigma(\caC,S)|,\Q)\not=0$ (cf. \cite{bose:cr}, \cite{bose:top}).
\end{itemize}
From these properties one concludes the following.

\begin{thm}
\label{thm:algeu}
Let $G$ be a semi-simple, simply-connected algebraic group defined over 
the non-discrete non-archimedean local field $K$.
\begin{itemize}
\item[(a)] The topological realization $|\Sigma(\caC,S)|$
of the Tits realization of the affine building $(\caC,S)$
is an $\uE_{\euC}(G(K))$-space.
\item[(b)] $G(K)$ is a rational t.d.l.c.\,duality group of dimension
$\rk(G)=\dim(\Sigma(\caC,S))$. 
\item[(c)] For $d=\ccd_{\Q}(G(K))$ one has ${}^\times D_{G(K)}\simeq H_c^d(|\Sigma(\caC,S)|,\Q)$.
\end{itemize}
\end{thm}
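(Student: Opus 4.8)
The plan is to deduce Theorem~\ref{thm:algeu} from the abstract criteria already established, namely Theorem~\ref{thm:ratdual} and Proposition~\ref{prop:cat0}, by feeding into them the six properties of the Tits realization $\Sigma=\Sigma(\caC,S)$ listed immediately above the statement. Thus the proof is essentially a verification that $\Sigma$, viewed as a simplicial $G(K)$-complex, satisfies the hypotheses of those two results, together with a short reconciliation of the various dimensions involved. Note that by property~(1) the complex $\Sigma$ is $\euC$-discrete, which is the basic hypothesis shared by both results.

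For part~(a) I would first observe that the $G(K)$-invariant metric $\delta$ of property~(5) makes $(|\Sigma|,\delta)$ a complete $\CAT(0)$-space (condition~(E$_1$) of \S\ref{ss:unisimp}), and that property~(3), i.e. the type-preserving nature of the $G(K)$-action, forces $\Sigma$ to be $G(K)$-tame (condition~(E$_2$)). Hence $\Sigma$ is a $\euC$-discrete $\CAT(0)$ simplicial $G(K)$-complex, and Proposition~\ref{prop:cat0} immediately gives that $|\Sigma|$ is an $\uE_{\euC}(G(K))$-space.

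For parts~(b) and~(c) I would verify the hypotheses $(D_1)$--$(D_4)$ of Theorem~\ref{thm:ratdual}: $(D_1)$ holds because a complete $\CAT(0)$-space is contractible (property~(5)); $(D_2)$ holds since $\dim(\Sigma)=|S|-1<\infty$ and $\Sigma$ is of type $\tF_\infty$ by property~(2); $(D_3)$ is property~(4); and $(D_4)$ is precisely the Borel--Serre vanishing recorded in property~(6). Theorem~\ref{thm:ratdual} then yields that $G(K)$ is a rational t.d.l.c. duality group of dimension $\ccd_{\Q}(G(K))$ with ${}^\times D_{G(K)}\simeq H^{\ccd_{\Q}(G(K))}_c(|\Sigma|,\Q)\otimes\Q(\Delta)$. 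It then remains to pin down $\ccd_{\Q}(G(K))$: by the identity used in the proof of Theorem~\ref{thm:ratdual} (cf.~\eqref{eq:rattop5}) one has $\ccd_{\Q}(G(K))=\max\{\,k\mid H^k_c(|\Sigma|,\Q)\neq 0\,\}$, which by the non-vanishing half of property~(6) equals $d=|S|-1$; since $|S|-1$ is the algebraic $K$-rank of $G$ and also $\dim(\Sigma(\caC,S))$, this proves $\ccd_{\Q}(G(K))=\rk(G)=\dim(\Sigma(\caC,S))$, establishing~(b). For~(c) I would invoke that $G(K)$ is unimodular, since the semi-simple group $G$ has no non-trivial rational characters, so its modular function $\Delta$ is trivial and $\Q(\Delta)\simeq\Q$; dropping this factor from the displayed isomorphism gives ${}^\times D_{G(K)}\simeq H^d_c(|\Sigma(\caC,S)|,\Q)$.

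I do not expect a genuine obstacle here: all the analytic and geometric substance is imported, the deep ingredients being the Bruhat--Tits fixed-point and $\CAT(0)$ results behind property~(5) and the Borel--Serre theorem behind property~(6). The only points that need a little attention are the use of the type-preserving hypothesis to guarantee tameness (without it one would have to pass to a barycentric subdivision), the dimension bookkeeping---Theorem~\ref{thm:ratdual} in general only gives $\ccd_{\Q}(G)\le\dim(\Sigma)$, so one genuinely needs the non-vanishing in property~(6) to force equality and to identify the common value with $\rk(G)$---and the explicit appeal to unimodularity in~(c), which removes the modular twist $\Q(\Delta)$ that the abstract theorem carries along.
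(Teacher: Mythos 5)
Your proof is correct and takes essentially the same route as the paper, which simply cites properties (1), (3), (5) together with Proposition~\ref{prop:cat0} for part~(a), and properties (1), (2), (4), (5), (6) together with Theorem~\ref{thm:ratdual} for parts~(b) and~(c). You spell out the verification of hypotheses $(E_1)$--$(E_2)$ and $(D_1)$--$(D_4)$ and, usefully, make explicit the appeal to unimodularity of $G(K)$ needed to suppress the $\Q(\Delta)$ twist in~(c), a step the paper's two-line proof leaves tacit.
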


\begin{proof}
(a) is a direct consequence of
the properties (1), (3), (5) and Proposition~\ref{prop:cat0}.

\noindent
(b) and (c) follow from the properties (1), (2), (4), (5), (6) and Theorem~\ref{thm:ratdual}.
\end{proof}

Note that $G(K)$ is in particular a simplicial t.d.l.c. rational duality group
and
\begin{equation}
\label{eq:algloc}
\dim_\euC(G(K))=\sdim_\euC(G(K))=\ccd_{\Q}(G(K)).
\end{equation}


\subsection{Topological Kac-Moody groups}
\label{ss:km}
Let $\F$ be a finite field, and let $G(\F)$ denote the rational points
of an almost split Kac-Moody group defined over $\F$ (cf. \cite[12.6.3]{remy:kac})
with infinite Weyl group $(W,S)$.
Such a group acts naturally on a twin building $\Xi$ (cf. \cite[1.A]{rr:KM}).
Let $\Sigma(\Xi^+)$ denote the Davis-Moussang realization of the positive part of $\Xi$
(cf. \cite{dav:rel}, \cite{mou:rel}).
In particular,
\begin{itemize}
\item[(1)] $\Sigma(\Xi^+)$ is a locally finite simplicial complex of finite dimension.
\end{itemize}
Moreover, $G(\F)$ is acting on $\Xi^+$ and hence on $\Sigma(\Xi^+)$, and thus
one has a homomorphism of groups $\pi\colon G(\F)\to\Aut(\Sigma(\Xi^+))$.
As $\Sigma(\Xi^+)$ is locally finite, $\Aut(\Sigma(\Xi^+))$ carries naturally the structure
of a t.d.l.c. group. The closure $\bG(\F)$ of $\image(\pi)$ is called the
{\it topological Kac-Moody group} associate to $G(\F)$ (cf. \cite[1.B and 1.C]{rr:KM}).
By construction, 
\begin{itemize}
\item[(2)] $\Sigma(\Xi^+)$ is a $\euC$-discrete $\bG(\F)$-simplicial complex.
\end{itemize}
Indeed, if $\F$ is of positive characteristic $p$, then $\stab_{\bG(\F)}(\omega)$ is a virtual pro-$p$ group
for all $\omega\in\Sigma(\Xi^+)$ (cf. \cite[p.~198, Theorem]{rr:KM}).
Let $G(\F)^\circ$ denote the subgroup of $G(\F)$ which elements
are type preserving on $\Xi$, and let $\bG(\F)^\circ$ denote
the closure of $\pi(G(\F)^\circ)$. Then $\bG(\F)^\circ$
is open and of finite index in $\bG(\F)$. As
$G(\F)^\circ$ is type preserving, one concludes that
\begin{itemize}
\item[(3)] $\Sigma(\Xi^+)$ is $\bG(\F)^\circ$-tame.
\end{itemize}
Since $G(\F)$ (and thus $\bG(\F)$) acts $\delta$-$2$-transitive on chambers of $\Xi$, one has that
\begin{itemize}
\item[(4)] $\Sigma(\Xi^+)$ is a $\euC$-discrete simplicial $\bG(\F)^\circ$-complex of type $\tF_\infty$ (cf. \S\ref{ss:simpGcom}).
\end{itemize}
By a remarkable result of M.W.~Davis (cf. \cite{dav:rel}) the topological realization
$|\Sigma(\Xi^+)|$ admits a $\bG(\F)^\circ$-invariant $\CAT(0)$-metric $\met\colon
|\Sigma(\Xi^+)|\times|\Sigma(\Xi^+)|\to\R^+_0$ (cf. \cite[Thm.~12.66]{ab:build}). Hence
Proposition~\ref{prop:cat0} implies  the following.

\begin{thm}
\label{thm:ebarkm}
Let $\F$ be a finite field,
let $G(\F)$ denote the rational points of an almost split Kac-Moody group defined over $\F$
with infinite Weyl group, and let
$\bG(\F)$ be the associated topological Kac-Moody group.
Then the topological realization $|\Sigma(\Xi^+)|$ of the David-Moussang realization $\Sigma(\Xi^+)$
is an $\uE_{\euC}(\bG(\F)^\circ)$-space.
\end{thm}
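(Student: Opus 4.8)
The plan is to recognize $(\Sigma(\Xi^+),\met)$ as a $\euC$-discrete $\CAT(0)$ simplicial $\bG(\F)^\circ$-complex in the sense of \S\ref{ss:unisimp} and then apply Proposition~\ref{prop:cat0} verbatim. Most of the input is already assembled in the properties (1)--(4) listed above, so what is left is to verify the defining axioms (E$_1$) and (E$_2$) together with the requirement that $\Sigma(\Xi^+)$ be $\euC$-discrete. Property (2) states precisely that $\stab_{\bG(\F)^\circ}(\omega)$ is open and compact (in fact virtually pro-$p$) for every simplex $\omega\in\Sigma(\Xi^+)$, so $\Sigma(\Xi^+)$ is a $\euC$-discrete simplicial $\bG(\F)^\circ$-complex; and property (3) is exactly the $\bG(\F)^\circ$-tameness required by (E$_2$).

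For (E$_1$) I would invoke M.~W.~Davis's theorem (\cite{dav:rel}, cf. \cite[Thm.~12.66]{ab:build}): the topological realization $|\Sigma(\Xi^+)|$ carries a metric $\met$ for which $(|\Sigma(\Xi^+)|,\met)$ is $\CAT(0)$, and since $\met$ is built from the simplicial structure it is invariant under simplicial automorphisms, hence under $\bG(\F)^\circ$. The one remaining point is completeness, and here I would use property (1): the Davis metric is piecewise Euclidean with only finitely many isometry types of cells on a locally finite, finite-dimensional complex, so by the standard completeness criterion for such complexes (e.g.\ \cite{habr:cat0}) the metric space $(|\Sigma(\Xi^+)|,\met)$ is complete. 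This establishes (E$_1$).

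With (E$_1$) and (E$_2$) in hand, $(\Sigma(\Xi^+),\met)$ is a $\euC$-discrete $\CAT(0)$ simplicial $\bG(\F)^\circ$-complex, and Proposition~\ref{prop:cat0} yields that $|\Sigma(\Xi^+)|$ is an $\uE_{\euC}(\bG(\F)^\circ)$-space, as claimed. I do not expect a genuine obstacle inside the argument itself: the only substantial ingredient is Davis's construction of the $\CAT(0)$-metric on the Davis--Moussang realization, which is quoted rather than reproved, and everything else is a routine check of the hypotheses of Proposition~\ref{prop:cat0}.
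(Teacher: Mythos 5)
Your proposal is correct and follows essentially the same route as the paper: establish that $\Sigma(\Xi^+)$ is a $\euC$-discrete, $\bG(\F)^\circ$-tame simplicial complex, invoke Davis's theorem (\cite{dav:rel}, \cite[Thm.~12.66]{ab:build}) for the $\bG(\F)^\circ$-invariant complete $\CAT(0)$-metric, and conclude via Proposition~\ref{prop:cat0}. The paper bundles invariance and completeness directly into the citation of Davis's theorem, whereas you unpack these two points explicitly (invariance from the simplicial construction, completeness from local finiteness and finite dimension); this is a harmless elaboration rather than a different argument.
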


Note that by Davis' theorem, 
\begin{itemize}
\item[(5)] $\Sigma(\Xi^+)$ is a contractable $\euC$-discrete simplicial $\bG(\F)$-complex.
\end{itemize}
Hence Theorem~\ref{thm:ratdual} implies that
\begin{equation}
\label{eq:kac1}
d=\ccd_{\Q}(\bG(\F))=\ccd_{\Q}(\bG(\F)^\circ)\leq\dim(\Sigma(\Xi^+))<\infty,
\end{equation}
$\bG(\F)$ is of type $\FP$, and since $\bG(\F)$ is unimodular (cf. \cite[p.~811, Thm.(i)]{rem:super}),
\begin{equation}
\label{eq:kac2}
{}^\times D_{\bG(\F)}\simeq H_c^d(|\Sigma(\Xi^+)|,\Q).
\end{equation}
Moreover, by \eqref{eq:rattop5},
\begin{equation}
\label{eq:kac3}
\ccd_{\Q}(\bG(\F))=\max\{\,k\geq 0\mid H^k_c(|\Sigma(\Xi^+)|,\Q)\not=0\,\}.
\end{equation}
However, $\bG(\F)$ may or may not be a rational t.d.l.c. duality group (cf. Remark~\ref{rem:hyp} and \ref{rem:notdu}).
The cohomology with compact support of $|\Sigma(\Xi^+)|$ with coefficients in $\Z$
was computed in \cite{ddjmo:comp}. In more detail,
\begin{align}
H^k_c(|\Sigma(\Xi^+)|,\Z)&=
\coprod_{T\in\caS} H^k(K,K^{S-T})\otimes \hA^T(\Xi^+),
\label{eq:kac4}\\
\intertext{and a similar formula holds for the Davis realization of the Coxeter complex $\Sigma(C)$
associated to $(W,S)$ (cf. \cite{ddjo:cox}), i.e.,}
H^k_c(|\Sigma(C)|,\Z)&=
\coprod_{T\in\caS} H^k(K,K^{S-T})\otimes \hA^T(C),\label{eq:kac5}
\end{align}
Here $\caS$ denotes the set of non-trivial spherical subsets of $S$, and $K$ is a 
simplicial complex associated to $(W,S)$ build up from the spherical subsets of $S$.
A more detailed analysis of the coeffcient modules $\hA_T(\Xi^+)$ and $\hA_T(C)$
yields the following.

\begin{thm}
\label{thm:kmcd}
Let $\F$ be a finite field,
let $G(\F)$ denote the rational points of an almost split Kac-Moody group defined over $\F$
with infinite Weyl group $(W,S)$, and let
$\bG(\F)$ be the associated topological Kac-Moody group.
Then
\begin{equation}
\label{eq:km4}
\ccd_{\Q}(\bG(\F)^\circ)=\ccd_{\Q}(W).
\end{equation}
Moreover, $\bG(\F)^\circ$ (and thus $\bG(\F)$) is a rational t.d.l.c. duality group if,
and only if, $W$ is a rational discrete duality group. 
\end{thm}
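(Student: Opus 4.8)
The plan is to translate both $\ccd_{\Q}(\bG(\F)^\circ)$ and $\ccd_{\Q}(W)$, together with the two duality properties, into statements about cohomology with compact support, and then to play off the explicit computations \eqref{eq:kac4} and \eqref{eq:kac5} against each other. The Davis--Moussang realization $\Sigma(\Xi^+)$ satisfies hypotheses $(D_1)$--$(D_3)$ of Theorem~\ref{thm:ratdual} (this is exactly properties (1)--(5) of \S\ref{ss:km}); hence $\bG(\F)^\circ$ is of type $\FP$ (cf. \eqref{eq:kac1} and Prop.~\ref{prop:ftopcd}(c)) and, by \eqref{eq:kac3},
\begin{equation}
\ccd_{\Q}(\bG(\F)^\circ)=\max\{\,k\geq 0\mid H^k_c(|\Sigma(\Xi^+)|,\Q)\neq 0\,\},
\end{equation}
with $\bG(\F)^\circ$ being a rational t.d.l.c. duality group precisely when $H^\bullet_c(|\Sigma(\Xi^+)|,\Q)$ is concentrated in a single degree (this is condition $(D_4)$, in view of \eqref{eq:rattop4}). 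The first step is to record the parallel statement on the Coxeter side. Viewing $W$ as a (discrete, hence) t.d.l.c. group, it acts on the Davis realization $\Sigma(C)$ of its Coxeter complex, and, $S$ being finite, $\Sigma(C)$ is contractible (by Davis' $\CAT(0)$ metric, cf. \cite{dav:rel}), locally finite, finite dimensional, of type $\tF_\infty$, and $\euC$-discrete (its simplex stabilizers are finite spherical parabolic subgroups). So Theorem~\ref{thm:ratdual} applies to $W$ and gives that $W$ is of type $\FP$ over $\Q$ with
\begin{equation}
\ccd_{\Q}(W)=\max\{\,k\geq 0\mid H^k_c(|\Sigma(C)|,\Q)\neq 0\,\},
\end{equation}
and that $W$ is a rational discrete duality group exactly when $H^\bullet_c(|\Sigma(C)|,\Q)$ is concentrated in one degree (this can also be deduced from \cite{dav:rel} and \cite[\S 9.2]{bieri:hom} directly). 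Thus everything reduces to comparing the set of degrees in which $H^\bullet_c(|\Sigma(\Xi^+)|,\Q)$ is non-zero with the corresponding set for $H^\bullet_c(|\Sigma(C)|,\Q)$.

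Next I would feed in the decompositions \eqref{eq:kac4} and \eqref{eq:kac5}. Since $\Q$ is flat over $\Z$, the universal coefficient theorem identifies $H^k_c(|\Sigma(\Xi^+)|,\Q)$ with $H^k_c(|\Sigma(\Xi^+)|,\Z)\otimes_{\Z}\Q$, and tensoring \eqref{eq:kac4} with $\Q$ therefore yields
\begin{equation}
H^k_c(|\Sigma(\Xi^+)|,\Q)\simeq\coprod_{T\in\caS} H^k(K,K^{S-T};\Q)\otimes(\hA^T(\Xi^+)\otimes_{\Z}\Q),
\end{equation}
together with the analogous identity obtained from \eqref{eq:kac5} with $\Sigma(\Xi^+)$ and $\hA^T(\Xi^+)$ replaced by $\Sigma(C)$ and $\hA^T(C)$. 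The factor $H^k(K,K^{S-T};\Q)$ occurs in both formulas, and $\caS$ is finite, so $H^k_c(|\Sigma(\Xi^+)|,\Q)\neq 0$ if and only if there is a spherical $T\in\caS$ with $H^k(K,K^{S-T};\Q)\neq 0$ and $\hA^T(\Xi^+)\otimes_{\Z}\Q\neq 0$, and similarly for $\Sigma(C)$ with $\hA^T(C)$.

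The crux---and what I expect to be the main obstacle---is the comparison of coefficient modules: for every $T\in\caS$ one should have $\hA^T(\Xi^+)\otimes_{\Z}\Q\neq 0$ if and only if $\hA^T(C)\otimes_{\Z}\Q\neq 0$. The two groups $\hA^T(\Xi^+)$ and $\hA^T(C)$ are built from the same combinatorial datum attached to $(W,S)$---the nerve and its subcomplexes indexed by spherical subsets---the passage from the Coxeter complex to the twin building amounting to replacing certain incidence multiplicities by their $q$-analogues, $q=|\F|$. The cleanest route seems to be to exhibit $\rk\,\hA^T(\Xi^+)$ as the value at $q$ of an integral polynomial which specializes at $1$ to $\pm\rk\,\hA^T(C)$ and is divisible by the reduced-(co)homology factor controlling the vanishing of the latter; since a non-zero integral polynomial has no roots at prime powers $\geq 2$, the two ranks then vanish simultaneously. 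Establishing this requires unwinding the constructions of \cite{ddjmo:comp} and \cite{ddjo:cox}. Granting it, the graded $\Q$-vector spaces $H^\bullet_c(|\Sigma(\Xi^+)|,\Q)$ and $H^\bullet_c(|\Sigma(C)|,\Q)$ are non-zero in exactly the same degrees; hence $\ccd_{\Q}(\bG(\F)^\circ)=\ccd_{\Q}(W)$, and one of the two is concentrated in a single degree if and only if the other is, which together with the first paragraph proves the duality equivalence for $\bG(\F)^\circ$. Finally, the statements for $\bG(\F)$ follow by passing to the finite-index open subgroup $\bG(\F)^\circ$: one has $\ccd_{\Q}(\bG(\F))=\ccd_{\Q}(\bG(\F)^\circ)$ by \eqref{eq:kac1} (see also Remark~\ref{rem:finind}), and the rational t.d.l.c. duality property is inherited by finite-index open overgroups.
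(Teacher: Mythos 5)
Your reduction to cohomology with compact support via Theorem~\ref{thm:ratdual} and \eqref{eq:rattop4}, and the idea of playing \eqref{eq:kac4} against \eqref{eq:kac5}, is exactly the paper's strategy, and the UCT step (tensoring with the flat module $\Q$) is fine. But the ``crux'' you single out -- that $\hA^T(\Xi^+)\otimes_\Z\Q\neq 0$ iff $\hA^T(C)\otimes_\Z\Q\neq 0$ for all $T\in\caS$ -- is left as an open obstacle, and the route you sketch (realizing $\rk\,\hA^T(\Xi^+)$ as the value at $q$ of an integral polynomial specializing at $1$ to $\pm\rk\,\hA^T(C)$ and arguing about divisibility) is never carried out, is not obviously well posed, and is in any case unnecessary.

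The paper dispatches this step in one line by citing the structure theory of \cite{ddjmo:comp}: for every $T\in\caS$ both $\hA^T(\Xi^+)$ and $\hA^T(C)$ are \emph{nonzero free} $\Z$-modules (cf.\ \cite[\S 6, p.\ 570, Remark and Definition 7.4]{ddjmo:comp}). Consequently, after tensoring with $\Q$, the coefficient factor is always nonzero on both sides, so the vanishing of $H^k_c(|\Sigma(\Xi^+)|,\Q)$ and of $H^k_c(|\Sigma(C)|,\Q)$ is governed purely by whether some $H^k(K,K^{S-T})$ has free rank $\geq 1$ -- the same condition in both decompositions. In particular your biconditional holds for the trivial reason that both sides are always true; there is nothing to compare. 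With that fact inserted, the rest of your argument (equality of the maximal nonvanishing degree, concentration in one degree on one side iff on the other, and passage to the finite-index overgroup $\bG(\F)$ via Remark~\ref{rem:finind}) is correct and coincides with the paper's. So the proposal has a genuine gap at the central step, filled in the paper by a citation you did not locate rather than by the $q$-polynomial argument you propose.
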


\begin{proof}
Note that $\hA^T(\Xi^+)$ and $\hA^T(C)$ are free $\Z$-modules. Moreover,
for $T\in\caS$ one has $\hA^T(C)\not=0$ and $\hA^T(\Xi^+)\not=0$
(cf. \cite[\S 6, p.~570, Remark and Definition~7.4]{ddjmo:comp}).
As the abelian groups $H^k(K,K^{S-T})$ are finitely generated,
the universal coefficient theorem implies that
\begin{align}
\ccd_{\Q}(W)&=\max\{\,k\geq 0\mid \exists\, T\in\caS:\ H^k(K,K^{S-T})\ \text{is not torsion}\,\}\notag\\
&=\max\{\,k\geq 0\mid H^k_c(|\Sigma(\Xi^+)|,\Q)\not=0\,\}\notag\\
&=\ccd_{\Q}(\bG(\F))\label{eq:km5}
\end{align}
(cf. Thm.~\ref{thm:ratdual}). Let $d=\ccd_{\Q}(W)=\ccd_{\Q}(\bG(\F))$.
Then $W$ is a rational duality group if, and only if,
$H^k(K,K^{S_T})$ is a torsion group for all $T\in\caS$ and all $k\not=d$.
By \eqref{eq:kac5}, this is equivalent to $H^k_c(|\Sigma(\Xi^+)|,\Q)=0$ for all $k\not=d$.
Thus Theorem~\ref{thm:ratdual} yields the claim.
\end{proof}

\begin{rem}
\label{rem:hyp}
Let $\F$ be a finite field,
let $G(\F)$ denote the rational points of a split Kac-Moody group defined over $\F$, and
let $\bG(\F)$ be the associated topological Kac-Moody group. Suppose further that
\begin{itemize}
\item[(1)] the associated generalized Cartan matrix is symmetrizable, and that
\item[(2)] the associated Weyl group $(W,S)$ is hyperbolic (cf. \cite[\S 6.8]{hum:refl}).
\end{itemize}
Condition (2) implies that $W$ is a lattice in the real Lie group $\Ort(n,1)$, $|S|=n+1$ (cf. \cite[p.~140, and the references therein]{hum:refl}). 
Moreover, as $W$ is the Weyl group of a (split) Kac-Moody Lie algebra with property (1),
the Tits representation of the Weyl group $W$ is integral (cf. \cite[\S16.2]{car:kac}). Hence 
$W$ is an arithmetic lattice in $\Ort(n,1)$. Thus
by A.~Borel's and J-P.~Serre's theorem (cf. \cite{bose:coar}), 
$W$ is a virtual duality group, and thus, in particular, a rational duality group. Hence Theorem~\ref{thm:kmcd} implies that
$\bG(\F)$ is a rational discrete t.d.l.c. duality group.
\end{rem}

\begin{rem}
\label{rem:notdu}
Consider the Coxeter group $(W,S)$ associated to the Coxeter diagram
\begin{equation}
\label{eq:coxdia}
\begin{xy} 0;<2cm,0cm>:
(0,1)*=0{\bullet}="0", 
(0,0)*=0{\bullet}="1",
(-0.87,-0.5)*=0{\bullet}="2",
(0.87,-0.5)*=0{\bullet}="3",
(-0.41,-0,15)*=0{\scriptstyle{\infty}},
(0.41,-0,15)*=0{\scriptstyle{\infty}},
(0.1,0.5)*=0{\scriptstyle{\infty}},
\ar@{-} "0";"2",
\ar@{--} "1";"3",
\ar@{--} "1";"2",
\ar@{-} "2";"3",
\ar@{--} "0";"1",
\ar@{-} "0";"3"
\end{xy}
\end{equation}
Then $W\simeq W(\widetilde{A}_2)\coprod W(A_1)$. In particular, $\ccd_{\Q}(W)=2$.
By Stallings' decomposition theorem (cf. \cite{stall:ends}), $H^1(W,\Q[W])\not=0$. Thus 
$W$ is not a rational duality group. 
Let $G(\F)$ denote the rational points of a split Kac-Moody group defined over $\F$
with associated Coxeter group is $(W,S)$, and let
$\bG(\F)$ be the associated topological Kac-Moody group. Then Theorem~\ref{thm:kmcd} implies,
that $\bG(\F)$ is of rational cohomological dimension $2$, but that $\bG(\F)$ is not a 
rational t.d.l.c. duality group, i.e., $\dH^1(\bG(\F),\biB(\bG(\F))\not=0$.
The t.d.l.c. version of Stalling's decomposition theorem established in \cite{ila:stall} implies that
$\bG(\F)$ can be decomposed non-trivially as a free product with amalgamation in a compact open subgroup
(where one factor is a compact open subgroup as well), or as an HNN-extension with amalgamation in a compact
open subgroup. It would be interesting to understand how this decomposition is related to the root datum
of $G(\F)$.
\end{rem}

We close this section with a question which is motivated by
\eqref{eq:algloc}.

\begin{ques}
\label{ques:km}
For which topological Kac-Moody groups $\bG(\F)$, $\F$ a finite field,
does $\dim_{\euC}(\bG(\F))=\ccd_{\Q}(\bG(\F))$ hold (cf. \S\ref{eq:tdim})?
\end{ques}


\section{The Euler-Poincar\'e characteristic\\ of a 
unimodular t.d.l.c. group of type $\FP$}
\label{s:euler}
In this subsection we indicate how one can associate to any
unimodular t.d.l.c. group $G$ of type $\FP$ an {\it Euler-Poincar\'e characteristic}
\begin{equation}
\label{eq:euler1}
\chi(G)\in\boh(G)=\Q\cdot\mu_\caO,
\end{equation}
where $\caO$ is a compact open subgroup, and $\mu_\caO$ denotes the left invariant
Haar measure on $G$ satisfying $\mu_\caO(\caO)=1$.
If $h\in\Q^+\cdot \mu_\caO$ we simply write $h>0$; similarly
$h<0$ shall indicate that $h\in\Q^-\cdot \mu_\caO$.

In some particular cases discussed in \S\ref{sss:fund} and \S\ref{sss:alg} it will be possible to calculate
the Euler-Poincar\'e characteristic $\chi(G)$ explicitly. It is quite likely, that similar calculation can be done
also for the examples decribed in \S\ref{ss:km}. 
However, one of the most interesting question from our perspective which was our original motivation
to introduce and study this notion we were not able to answer.

\begin{ques}
\label{ques:cd1}
Let $G$ be a compactly generated t.d.l.c. group satisfying $\ccd_{\Q}(G)=1$.
Does this imply that $\chi(G)\leq 0$?
\end{ques}

An affirmative answer of Question~\ref{ques:cd1} would resolve the problem of
accessibility for compactly generated t.d.l.c. groups
of rational discrete cohomological dimension $1$ in analogy to the discrete case (cf. \cite{linn:acc}).


\subsection{The Hattori-Stallings rank of a finitely generated projective rational discrete $\QG$-module}
\label{ss:HS}
Let $G$ be a t.d.l.c. group, and let
$P$ be a finitely generated projective rational discrete $\QG$-module.
The evaluation map 
\begin{equation}
\label{eq:euler2}
\begin{gathered}
\ev_P\colon \Hom_G(P,\biB(G))\otimes_{\Q}P\to\biB(G),\qquad
\ev_P(\phi\otimes p)=\phi(p),\\
\phi\in\Hom_G(P,\biB(G)),\qquad p\in P,
\end{gathered}
\end{equation} 
induces a mapping $\uev_P\colon \Hom_G(P,\biB(G))\otimes_{G} P\to\ubG$
such that the diagram
\begin{equation}
\label{eq:euler3}
\xymatrix{
\Hom_G(P,\biB(G))\otimes_{\Q} P\ar[r]\ar[d]_{\ev_P}&\Hom_G(P,\biB(G))\otimes_G P\ar[d]^{\uev_P}\\
\biB(G)\ar[r]&\ubG
}
\end{equation}
commutes, where the horizontal maps are the canonical ones
and $\ubG$ is as defined in  \S\ref{ss:trace}. Then - provided that $G$ is unimodular -
one has a map
\begin{equation}
\label{eq:euler4}
\xymatrix{
\rho_P\colon\ 
\Hom_G(P,P)\ar[r]^-{\zeta_{P,P}^{-1}}&\Hom_G(P,\biB(G))\otimes_G P\ar[r]^-{\uev_P}&
\ubG\ar[r]^-{\utr}&\boh(G).}
\end{equation}
(cf. Prop.~\ref{prop:homten}, Prop.~\ref{prop:trace}).
The value $r_P=\rho_P(\iid_P)\in\boh(G)$ will be called the {\it Hattori-Stallings rank} of $P$.

\begin{prop}
\label{prop:hs}
Let $G$ be a unimodular t.d.l.c. group, and let $\caO\subseteq G$ be a com\-pact open subgroup.
Then $r_{\Q[G/\caO]}=\mu_{\caO}$.
\end{prop}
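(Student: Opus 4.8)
The plan is to evaluate the three maps composing $\rho_P$ in turn, for $P=\Q[G/\caO]\simeq\idn_\caO^G(\Q)$, on the identity endomorphism. Write $v_\caO\in P$ for the image of the trivial coset $1\cdot\caO$; then $\stab_G(v_\caO)=\caO$ and $v_\caO$ generates $P$ as a left $\QG$-module. Following the convention of \S\ref{ss:homten}, regard the canonical map $\eta_\caO\colon\Q[G/\caO]\to\biB(G)$ as an inclusion, so that $\eta_\caO\in\Hom_G(P,\biB(G))$ and $\eta_\caO(v_\caO)$ is the element $\caO\in\biB(G)$.

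First I would identify $\zeta_{P,P}^{-1}(\iid_P)$. Since $P$ is finitely generated projective, $\zeta_{P,P}$ is an isomorphism (Prop.~\ref{prop:homten}), so it is enough to produce a preimage of $\iid_P$; I claim it is $\eta_\caO\otimes_G v_\caO$. Both $\zeta_{P,P}(\eta_\caO\otimes_G v_\caO)$ and $\iid_P$ lie in $\Hom_G(P,P)$, hence are $\QG$-linear, so it suffices to check that they agree on the generator $v_\caO$. By the defining formula \eqref{eq:nathom} together with the explicit description \eqref{eq:thetex} of $\theta_P$ (applied with $W=\stab_G(v_\caO)=\caO$, so the sum has a single term), $\zeta_{P,P}(\eta_\caO\otimes_G v_\caO)(v_\caO)=\theta_P(\caO\otimes_G v_\caO)=v_\caO=\iid_P(v_\caO)$. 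Hence $\zeta_{P,P}^{-1}(\iid_P)=\eta_\caO\otimes_G v_\caO$.

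Next, by the commutative square \eqref{eq:euler3} defining $\uev_P$, the element $\eta_\caO\otimes_G v_\caO$ is carried to the class in $\ubG$ of $\ev_P(\eta_\caO\otimes v_\caO)=\eta_\caO(v_\caO)=\caO\in\biB(G)$. Finally, using that $G$ is unimodular so that $\utr$ is defined (cf. Prop.~\ref{prop:trace}), $\utr$ sends this class to $\tr(\caO)$, which equals $\mu_\caO$ by \eqref{eq:trace3} (the case $\caO g=\caO$ with $g=1$). Composing the three steps, $r_{\Q[G/\caO]}=\rho_P(\iid_P)=\utr\bigl(\uev_P(\eta_\caO\otimes_G v_\caO)\bigr)=\tr(\caO)=\mu_\caO$.

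The one step deserving care is the claim $\zeta_{P,P}^{-1}(\iid_P)=\eta_\caO\otimes_G v_\caO$: the reduction ``it suffices to compare on $v_\caO$'' is legitimate precisely because $\zeta_{P,P}$ takes values in $\Hom_G$ rather than $\Hom_\Q$, so both sides are genuine $\QG$-module maps, and $v_\caO$ generates $P$ over $\QG$. Once this is in place the remaining computations are a direct unwinding of \eqref{eq:thetex}, \eqref{eq:euler3} and \eqref{eq:trace3}, and unimodularity enters only through the definition of $\rho_P$ itself.
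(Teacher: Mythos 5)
Your proof is correct and follows essentially the same route as the paper's: identify $\zeta_{P,P}^{-1}(\iid_P)$ as $\eta_\caO\otimes_G v_\caO$, push it through $\uev_P$ to $\underline{\caO}\in\ubG$, and apply $\utr$ using \eqref{eq:trace3}. The only difference is that you verify the identification of $\zeta_{P,P}^{-1}(\iid_P)$ by a direct evaluation at the $\QG$-generator $v_\caO$ via the explicit formula \eqref{eq:thetex}, whereas the paper chases the commutative diagram \eqref{dia:homten} through $j_\caO$, $\phi^{-1}_{\Q[G/\caO],\caO}$ and the Eckmann--Shapiro isomorphism -- two presentations of the same computation.
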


\begin{proof}
Let $\eta_{\caO}\colon\Q[G/\caO]\to\biB(G)$ be the canonical embedding (cf. \eqref{eq:homt1}).
Then $j_{\caO}(\eta_{\caO})=\caO\in \caO\backslash G$ (cf. \eqref{eq:ecky3}).
Hence,
\begin{align}
(j_\caO\otimes_G\iid_{\Q[G/\caO]})(\eta_\caO\otimes\caO)&=\caO\otimes_G\caO\label{eq:euler5}\\
\intertext{and}
\phi^{-1}_{\Q[G/\caO],\caO}(\caO\otimes_G\caO)&=\caO\in\Q[G/\caO]^\caO.\label{eq:euler6}
\end{align}
Under the Eckmann-Shapiro isomorphism
$\alpha\!\colon\Q[G/\caO]^{\caO}\to\Hom_G(\Q[G/\caO],\Q[G/\caO])$
the element $\caO$ is mapped to $\iid_{\Q[G/\caO]}$.
Hence, by \eqref{dia:homten}, 
\begin{equation}
\label{eq:euler7}
\zeta_{\Q[G/\caO],\Q[G/\caO]}(\eta_\caO\otimes_G\caO)=\iid_{\Q[G/\caO]},
\end{equation}
and $\uev_{\Q[G/\caO]}(\zeta^{-1}_{\Q[G/\caO],\Q[G/\caO]}(\iid_{\Q[G/\caO]}))=\underline{\caO}$,
where $\underline{\caO}$ denotes the canonical image of $\caO$ in $\ubG$. Thus,
by the definition of $\utr\colon\ubG\to\boh(G)$ (cf. \eqref{eq:trace3}),
\begin{equation}
\label{eq:euler8}
r_{\Q[G/\caO]}=\tr(\caO)=\mu_{\caO}
\end{equation}
and this yields the claim.
\end{proof}

\begin{rem}
\label{rem:HS}
Let $G$ be a unimodular t.d.l.c. group, and let $P\in\ob(\QGdis)$ be
finitely generated and projective. In \cite{it:hecke} it is shown, that
$r_P\geq 0$, and $r_P=0$ if, and only if, $P=0$.
\end{rem}


\subsection{The Euler-Poincar\'e characteristic}
\label{ss:euler}
Let $G$ be a unimodular t.d.l.c. group,
and let $P_1,P_2\in\ob(\QGdis)$ be finitely generated and projective.
Then, by \eqref{eq:euler4}, one has
$\rho_{P_1\oplus P_2}(\iid_{P_1\oplus P_2})=\rho_{P_1}(\iid_{P_1})+\rho_{P_2}(\iid_{P_2})$, i.e.,
\begin{equation}
\label{eq:euler9}
r_{P_1\oplus P_2}=r_{P_1}+r_{P_2}.
\end{equation}
Let $(P_\bullet,\delta_\bullet)$ be a finite projective resolution of $\Q$ in $\QGdis$, i.e.,
there exists $N\geq 0$ such that $P_k=0$ for $k>N$, and $P_k$ is finitely generated for all $k\geq 0$.
From the identity \eqref{eq:euler10} one concludes that the value
\begin{equation}
\label{eq:euler10}
\chi(G)=\textstyle{\sum_{k\geq 0} (-1)^k\cdot r_{P_k}\in \boh(G)}
\end{equation}
is independent from the choice of the projective resolution $(P_\bullet,\delta_\bullet)$
(cf. \cite[Proposition~4.1]{toto:eulhec}) for a similar argument). We will call $\chi(G)$ the {\it Euler-Poincar\'e characteristic}
of $G$.

\subsection{Examples}
\label{ss:ExEuler}
\subsubsection{Compact t.d.l.c. groups}
\label{sss:coeul}
Let $\caO$ be a compact t.d.l.c. group.
Then Proposition~\ref{prop:hs} implies that $\chi(G)=r_{\Q}=\mu_{\caO}$.


\subsubsection{Fundamental groups of finite graphs of profinite groups}
\label{sss:fundeul}
Let $(\caA,\Lambda)$ be a finite graph of profinite groups.
Assume further that $\Pi=\pi_1(\caA,\Lambda,x_0)$ is unimodular\footnote{For a given
finite graph of profinite groups it is possible to decide when $\Pi$ is unimodular.
However, the complexity of this problem will depend on the rank of $H^1(|\Lambda|,\Z)$.
E.g., if $\Lambda$ is a finite tree, then $\Pi$ will be unimodular.}. Then,
by Proposition~\ref{prop:hs},
\begin{equation}
\label{eq:fundeul}
\chi(\Pi)=\sum_{v\in\caV(\Lambda)} \mu_{\caA_v}-\sum_{\eue\in\caE^+(\Lambda)} \mu_{\caA_\eue}.
\end{equation}
In particular, if $(\caA,\Lambda)$ is a finite graph of finite groups one obtains
\begin{equation}
\label{eq:fundeul2}
\chi(\Pi)=\Big(\sum_{v\in\caV(\Lambda)} \frac{1}{|\caA_v|}-\sum_{\eue\in\caE^+(\Lambda)} \frac{1}{|\caA_\eue|}\Big)\cdot\mu
=\chi_\Pi\cdot\mu_{\{1\}},
\end{equation}
where $\chi_\Pi$ denotes the Euler characteristic of the discrete group $\Pi$
(cf. \cite[\S II.2.6, Ex.~3]{serre:trees}). One has the following:

\begin{prop}
\label{prop:muless}
Let $(\caA,\Lambda)$ be a finite graph of profinite groups, such that
$\Pi=\pi_1(\caA,\Lambda,x_0)$ is unimodular and non-compact. Then
$\chi(\Pi)\leq 0$.
\end{prop}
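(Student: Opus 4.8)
The plan is to reduce the claim to a statement about the underlying finite graph of finite groups that one obtains by passing to a suitable quotient. Concretely, since $\Pi=\pi_1(\caA,\Lambda,x_0)$ is unimodular, formula \eqref{eq:fundeul} expresses $\chi(\Pi)$ as $\sum_{v\in\caV(\Lambda)}\mu_{\caA_v}-\sum_{\eue\in\caE^+(\Lambda)}\mu_{\caA_\eue}$. Fix a compact open subgroup $\caO$ of $\Pi$ and normalise $\mu=\mu_\caO$; each profinite vertex or edge group $\caA_w$ contributes $\mu_{\caA_w}=[\caU:\caA_w]^{-1}\mu_{\caU}$ for a large enough compact open $\caU$, so after clearing denominators $\chi(\Pi)$ is a rational number times $\mu$, and the sign of $\chi(\Pi)$ is the sign of the rational number
\[
c(\caA,\Lambda)=\sum_{v\in\caV(\Lambda)}\frac{1}{[\caU:\caA_v]}-\sum_{\eue\in\caE^+(\Lambda)}\frac{1}{[\caU:\caA_\eue]}.
\]
Using the compatibility $\mu_{\caA_\eue}=[\caA_{t(\eue)}:\iota_\eue(\caA_\eue)]\,\mu_{\caA_{t(\eue)}}$ that comes from the open embeddings $\iota_\eue$, one may rewrite each edge contribution relative to its terminal vertex group; this is exactly the bookkeeping behind the discrete Euler-characteristic formula $\chi_\Pi=\sum 1/|\caA_v|-\sum 1/|\caA_\eue|$ in \cite[\S II.2.6]{serre:trees}, and it is where unimodularity of $\Pi$ is genuinely used (it guarantees the consistency of the index factors around each cycle of $\Lambda$).

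First I would handle the extreme case where $\Lambda$ is a single vertex with loops, which by Proposition~\ref{prop:pres}(a) is the generic shape of a generalised presentation; here $\chi(\Pi)=(1-\sum_{\eue\in\caE^+(\Lambda)}[\caA_v:\iota_\eue(\caA_\eue)]^{-1})\,\mu_{\caA_v}$. Since $\Pi$ is non-compact, the associated Bass--Serre tree $\caT$ is infinite, so $\Pi$ cannot be just $\caA_v$ together with a single HNN edge in which both edge-inclusions are surjective; more precisely, non-compactness forces at least one edge $\eue$ with $[\caA_v:\iota_\eue(\caA_\eue)]\ge 2$, and then since also $[\caA_v:\iota_{\beue}(\caA_\eue)]\ge 1$, unimodularity forces $[\caA_v:\iota_\eue(\caA_\eue)]=[\caA_v:\iota_{\beue}(\caA_\eue)]$, hence both are $\ge 2$, giving $\chi(\Pi)\le(1-1)\,\mu_{\caA_v}=0$. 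Wait --- more carefully, with several loops each contributes a non-negative term $[\caA_v:\iota_\eue(\caA_\eue)]^{-1}\le 1/2$ from an essential edge and $\le 1$ otherwise; the point is that the \emph{first} loop alone already contributes $1$, killing the vertex term, and every further loop only decreases the sum. So in the one-vertex case $\chi(\Pi)\le 0$, with equality iff there is exactly one loop and it is an honest HNN-extension with surjective edge maps --- but that would make $\Pi$ compact-by-$\Z$, hence non-compact, consistent with the inequality being non-strict.

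For a general finite connected $\Lambda$, the plan is induction on the first Betti number $b_1(|\Lambda|)$ together with collapsing of a spanning tree. Collapsing a spanning tree $\Xi$ of $\Lambda$ does not change $\chi$ (each collapsed edge cancels a vertex against an edge in the alternating sum, using that the edge group maps isomorphically onto... no: rather, contracting a tree edge $\eue$ replaces the two vertex groups $\caA_{o(\eue)},\caA_{t(\eue)}$ and removes the edge group $\caA_\eue$, and the change in $c(\caA,\Lambda)$ is governed by the Euler-characteristic additivity; this is the standard reduction to a one-vertex graph of groups realising the same $\Pi$, cf. Proposition~\ref{prop:pres}(a),(c)). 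Thus without loss of generality $\Lambda$ has one vertex, and we are back in the previous paragraph. The one subtlety to verify is that contracting the spanning tree genuinely preserves unimodularity and the value $\chi(\Pi)$; this follows since $\chi(G)$ was shown in \S\ref{ss:euler} to depend only on $\Pi$ (independence of the projective resolution), and the contracted graph of profinite groups has the same fundamental group $\Pi$ by \cite[\S I.5.1]{serre:trees}.

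I expect the main obstacle to be the bookkeeping of Haar-measure normalisations in the non-tree case: when $b_1(|\Lambda|)>0$ the identity $\mu_{\caA_\eue}=[\caA_{t(\eue)}:\iota_\eue(\caA_\eue)]\mu_{\caA_{t(\eue)}}$ must be checked to be consistent around cycles, and it is precisely unimodularity of $\Pi$ (equivalently, that the modular homomorphism $\Delta$ vanishes on $\Pi$) that makes the product of index ratios around any loop equal to $1$. Once that consistency is in hand, the inequality $\chi(\Pi)\le 0$ is the purely combinatorial observation that in a connected graph with a non-trivial (infinite) fundamental group, i.e. $|\caE^+(\Lambda)|\ge|\caV(\Lambda)|$ after the tree collapse — or that $\Pi$ non-compact forces an essential edge — the negative edge-terms dominate the positive vertex-terms. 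So the write-up would be: (1) reduce to one vertex via tree collapse, citing preservation of $\Pi$ and of $\chi$; (2) in the one-vertex case, use non-compactness to exhibit an essential loop; (3) conclude $\chi(\Pi)=(1-\sum_{\eue}[\caA_v:\iota_\eue(\caA_\eue)]^{-1})\mu_{\caA_v}\le 0$, using that the essential loop already contributes $\ge 1/2+1/2=1$ to the subtracted sum after pairing $\eue$ with $\beue$ and invoking unimodularity for the equality of the two indices.
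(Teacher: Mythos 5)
Your proposed reduction to a one-vertex graph of groups does not work, and there is a sign error in the Haar-measure normalization that you carry through the entire argument.

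The normalization: since $\iota_\eue$ embeds $\caA_\eue$ as a subgroup of index $n_\eue=[\caA_v:\iota_\eue(\caA_\eue)]$ in $\caA_v$, the measure normalized at the \emph{smaller} group is the \emph{larger} one, $\mu_{\caA_\eue}=n_\eue\,\mu_{\caA_v}$, not $n_\eue^{-1}\mu_{\caA_v}$. In the one-vertex case \eqref{eq:fundeul} therefore reads $\chi(\Pi)=\bigl(1-\sum_{\eue\in\caE^+(\Lambda)}n_\eue\bigr)\mu_{\caA_v}$ with each $n_\eue\ge 1$: any loop at all already forces $\chi(\Pi)\le 0$, so the unimodularity argument to push the indices up to $\ge 2$ is unnecessary. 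That argument is also wrong on its own terms: the HNN loop with both inclusions onto has $\Pi\simeq\caA_v\rtimes\Z$, which is non-compact with all $n_\eue=1$ and $\chi(\Pi)=0$, contradicting your claim that non-compactness forces some $n_\eue\ge 2$.

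The more serious gap is the reduction itself. Contracting a spanning-tree edge $\eue$ with $o(\eue)\neq t(\eue)$ replaces the two vertex groups by the amalgam $\caA_{o(\eue)}*_{\caA_\eue}\caA_{t(\eue)}$, which is not compact once either inclusion has index $\ge 2$; so the collapsed object leaves the class of finite graphs of \emph{profinite} groups and \eqref{eq:fundeul} no longer applies. You sensed this difficulty (the ``no: rather'' passage) but did not resolve it. Citing Proposition~\ref{prop:pres}(a) does not repair it: that yields a generalized presentation $\phi\colon\pi_1(\caA',\Lambda_0,\Xi)\to\Pi$, which is in general a proper quotient map, so evaluating \eqref{eq:fundeul} on $(\caA',\Lambda_0)$ computes $\chi$ of the covering group, not of $\Pi$. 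The paper's proof instead collapses only those edges for which $\alpha_\eue$ is already surjective (the amalgam is then still isomorphic to $\caA_{o(\eue)}$, hence profinite), observes that afterwards every surviving edge has both indices $\ge 2$, so that $\mu_{\caA_\eue}\ge\mu_{\caA_{o(\eue)}}+\mu_{\caA_{t(\eue)}}$, and sums this over a maximal subtree to dominate $\sum_v\mu_{\caA_v}$ — a direct argument that stays inside the class of finite graphs of profinite groups throughout, with no need to shrink $\Lambda$ to a point.
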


\begin{proof}
Suppose that one of the open embeddings
$\alpha_\eue\colon\caA_\eue\to\caA_{t(\eue)}$ is surjective.
Then removing the edge $\eue$ form $\Lambda$,
idetifying $t(\eue)$ with $o(\eue)$ and taking the induced
graph of profinite groups $(\caA^\prime,\Lambda^\prime)$
does not change the fundamental group, i.e.,
one has a topological isomorphism $\pi_1(\caA,\Lambda,x_0)\simeq
\pi_1(\caA^\prime,\Lambda^\prime,x_0^\prime)$.
Thus we may assume that the finite graph of profinite groups has the property
that none of the injections $\alpha_\eue\colon\caA_\eue\to\caA_{t(\eue)}$
is surjective. Since $\Pi$ is not compact, $\Lambda$ cannot be a single vertex.
Thus, as $|\caA_{t(\eue)}:\alpha_\eue(\caA_{\eue})|\geq 2$ one has 
$\mu_{\caA_\eue}\geq\mu_{\caA_{t(\eue)}}+\mu_{\caA_{o(\eue)}}$
for any edge $\eue\in\caE(\Lambda)$.
Choosing a maximal subtree of $\Lambda$ then yields the claim.
\end{proof}


\subsubsection{The automorphism group of a locally finite regular tree}
\label{sss:auttree}
Let $\caT_{d+1}$ be a $(d+1)$-regular tree, $1\leq d<\infty$, and let
$G=\Aut(\caT_{d+1})^\circ$ denote the group of automorphism
not inverting the orientation of edges. Then, as $G$ is
the fundamental group of a finite graph of profinite groups based on a tree
with $2$ vertices, $G$ is unimodular.
One concludes from Bass-Serre theory and \S\ref{sss:fundeul} that
\begin{equation}
\label{eq:auttree1}
\chi(\Aut(\caT_{d+1})^\circ)=\frac{1-d}{1+d}\cdot\mu_{G_\eue},
\end{equation}
where $G_\eue$ is the stabilizer of an edge.
Moreover, $\chi(\Aut(\caT_{d+1})^\circ)<0$.


\subsubsection{Chevalley groups over non-discrete non-archimedean local fields}
\label{sss:cheveul}
Let $X$ be a simply-con\-nec\-ted simple Chevalley group scheme,
and let $K$ be a non-archimedean local field with finite residue field $\F$.
Put $q=|\F|$. Let $W$ be the finite (or spherical) Weyl group associated to $X$,
and let $\tW$ denote the associated affine Weyl group associated to $W$.
We also fix a Coxeter generating system $\bDelta$ of $\tW$.
Let $n=\rk(W)=\rk(\tW)-1=|\bDelta|-1$ denote the rank of $W$ as Coxeter  group.
It is well-known that $X(K)$ modulo its center is simple. In particular, $X(K)$
is a unimodular t.d.l.c. group.

Let $\Sigma$ be the affine building associated to $X(K)$ (cf. \S\ref{sss:alg}).
The stablizer $\Iw=\stab_{X(K)}(\omega)$ of a simplex $\omega\in\Sigma_n$ of maximal dimension
is also called a {\it Iwahori subgroup} of $X(K)$.
Let $\Sigma(\omega)=\{\,\varpi\in\Sigma\mid \varpi\subseteq\omega\,\}=\caP(\omega)\setminus\{\emptyset\}$.
For $\varpi\in\Sigma(\omega)$ let $P_\varpi=\stab_{X(K)}(\varpi)$ denote the {\it parahoric subgroup}
associated to $\varpi$, i.e.,
$P_\omega=\Iw$. Then
\begin{equation}
\label{eq:algeul}
\chi(X(K))=\sum_{\varpi\in\Sigma(\omega)} (-1)^{|\varpi|}\cdot\mu_{P_{\varpi}},
\end{equation}
where $|\varpi|$ is the degree of $\varpi$, i.e., $\varpi\in\Sigma_{|\varpi|}$ (cf. \S\ref{sss:alg}).
Any parahoric subgroup $P_\varpi$, $\varpi\in\Sigma(\omega)$, corresponds
to a unique proper subset $\bDelta(\varpi)$ of $\bDelta$ of cardinality
$|\omega|-|\varpi|$, i.e., $\bDelta(\omega)=\emptyset$. Moreover,
\begin{equation}
\label{eq:algeul2}
|P_\varpi:\Iw|=p_{W(\bDelta(\varpi))}(q),
\end{equation}
where $p_{W(\bDelta(\varpi))}(t)\in\Z[t]$ is the Poincar\'e polynomial associated
to the finite Weyl group $W(\bDelta(\varpi))$. Thus, by \cite[\S5.12, Proposition]{hum:refl}, one obtains that
\begin{align}
\chi(X(K))&=\Big(\sum_{\varpi\in\Sigma(\omega)} \frac{(-1)^{|\varpi|}}{|P_\varpi:\Iw|}\Big)\cdot\mu_{\Iw}\notag\\
&=(-1)^{n+1}\cdot \Big(\sum_{\substack{I\subseteq \bDelta\\ I\not=\bDelta}} \frac{(-1)^{|I|-1}}{p_{W(I)}(q)}\Big)\cdot\mu_{\Iw}
=\frac{(-1)^{n+1}}{p_{\tW}(q)}\cdot\mu_{\Iw}.\label{eq:algeul3}\\
\intertext{By R.~Bott's theorem, one has
$p_{\tW}(t)=p_W(t)/\prod_{1\leq i\leq n} (1-t^{m_i})$, where $m_i=d_i-1$ are the exponents
of $W$ (cf. \cite[\S 8.9, Theorem]{hum:refl}). 
Thus $\chi(X(K))$ can be rewritten as}
\chi(X(K))&=-\frac{1}{p_W(q)}\cdot \prod_{1\leq i\leq n} (q^{m_i}-1)\cdot\mu_{\Iw}.\label{eq:algeul4}
\end{align}
In particular, $\chi(X(K))<0$.

\begin{rem}
\label{rem:zeta}
One may speculate whether in a situation which is controlled by
geometry the Euler-Poincar\'e characteristic should be related
to a meromorphic {\it {$\zeta$-function}} evaluated in $-1$.
There are some phenomenon supporting this idea which arise
even in a completely different context (cf. \cite{brown:zeta}).
Indeed, using {\it Hecke algebras} one may define
a formal Dirichlet series $\zeta_{G,\caO}(s)$ for
certain t.d.l.c. groups $G$ and any compact open subgroup $\caO$ of 
such a group $G$. It turns out that for the example discussed in \S\ref{sss:cheveul}
one obtains $\chi(X(K))=(-1)^{n+1}\cdot\zeta_{X(K),\Iw}(-1)^{-1}\cdot\mu_{\Iw}$ (cf. \cite{it:hecke}).
Nevertheless, further investigations seem necessary in order to shed light on
such a possible connection.
\end{rem}


We close the discussion of the Euler-Poincar\'e characteristic
with a question which is motivated by the examples discussed
in \S\ref{sss:fundeul} and \S\ref{sss:cheveul}.

\begin{ques}
\label{ques:cat0eul}
Suppose that $G$ is a t.d.l.c. group admitting a $\euC$-discrete simplicial $G$-complex $\Sigma$
such that
\begin{itemize}
\item[(1)] $\dim(\Sigma)=\ccd_{\Q}(G)<\infty$;
\item[(2)] $\Sigma$ is of type $\tF_\infty$;
\item[(3)] $\Sigma$ is locally finite;
\item[(4)] $\Sigma$ is $G$-tame;
\item[(5)] $|\Sigma|$ admits a $G$-invariant metric
$\delta\colon |\Sigma|\times |\Sigma|\to\R^+_0$ making 
$(|\Sigma|,\delta)$ a complete $\CAT(0)$-space (cf. \S\ref{ss:unisimp}).
\end{itemize}
What further condition ensures that $\chi(G)\leq 0$?
\end{ques}

The most trivial examples (cf. \S\ref{sss:coeul}) show that the conditions (1)-(5) are not sufficient
(for $G$ compact $\Sigma$ can be chosen to consist of 1 point).


\appendix
\section{Simplicial complexes}
\label{s:sc}

\subsection{Simplicial complexes}
\label{ss:simpcom}
A {\it simplicial complex} $\Sigma$ is a non-empty set of  
non-trivial finite subsets of a set $X$ with the property
that if $B\in\Sigma$ and $A\subseteq B$ then $A\in\Sigma$.
Any simplicial complex $\Sigma$ carries a canonical
grading $\Sigma=\bigsqcup_{q\geq 0}\Sigma_q$, where
\begin{equation}
\label{eq:simpcom1}
\Sigma_q=\{\, A\in\Sigma\mid \card(A)=q+1\,\}.
\end{equation}
\begin{example}
\label{ex:simpcom}
Let $X$ be a set, and let $\Sigma[X]\subseteq\caP(X)\setminus\{\emptyset\}$ denote the set of all non-trivial finite subsets of $X$.
Then $\Sigma[X]$ is a simplicial complex -
the simplicial complex generated by $X$.
\end{example}

\begin{example}
\label{ex:graph}
Let $\Gamma=(\caV(\Gamma),\caE(\Gamma))$ be a combinatorial graph (cf. \S\ref{ss:tdlctree}).
Then $\Sigma(\Gamma)$ given by 
$\Sigma_0(\Gamma)=\caV(\Gamma)$ and $\Sigma_1(\Gamma)=\{\,\{o(\eue),t(\eue)\}\mid
\eue\in\caE(\Gamma)\,\}$ is a
simplicial complex - the {\it simplicial complex associated with the graph} $\Gamma$.
\end{example}

In a simplicial complex $\Sigma$ and $n\geq 0$ the subset 
\begin{equation}
\label{eq:simpcom2}
\Sigma^{(n)}=\{\,A\in\Sigma\mid \card(A)\leq n+1\,\}
\end{equation}
is again simplicial subcomplex of $\Sigma$ - the {\it $n$-skeleton} 
of $\Sigma$.
A subset $Y\subseteq\Sigma_0$ defines a simplicial subcomplex
\begin{equation}
\label{eq:simpcom3}
\Sigma(Y)=\{\,C\in\Sigma\mid C\subseteq Y\,\}.
\end{equation}
For a simplicial complex $\Sigma$ one defines
the {\it dimension} by
\begin{equation}
\label{eq:simpcom4}
\dim(\Sigma)=\min(\{\,q\in\N_0\mid \Sigma_{q+1}=\emptyset\,\}\cup\{\infty\}).
\end{equation} 
In particular, $\dim(\Sigma(X))=\card(X)-1$.
A simplicial complex of dimension $0$ is just a set,
while simplicial complexes of dimension $1$ are unoriented graphs (cf. Ex.~\ref{ex:graph}).
For $A\in\Sigma_q$ one defines the {\it set of neighbours} $N_\Sigma(A)$
of $A$ by
\begin{equation}
\label{eq:simpcom5}
N_\Sigma(A)=\{\,B\in\Sigma_{q+1}\mid A\subset B\,\}.
\end{equation}
Moreover, $\Sigma$ is called {\it locally finite}
if for all $A\in\Sigma_q$ one has $\card(N_\Sigma(A))<\infty$.


\subsection{The topological realization of a simplicial complex}
\label{ss:geore}
With any simplicial complex $\Sigma\subseteq\caP(X)\setminus\{\emptyset\}$
one can associate a topological space $|\Sigma|$ -
the {\it topological realization} of $\Sigma$ - which is a {\it CW-complex}.
The space $|\Sigma|$ can be constructed as follows:
Let $V=\R[X]$ denote the free $\R$-vector space over the set $X$,
and for $A\in\Sigma_q$, $A=\{\,a_0,\ldots,a_q\}$, put
\begin{equation}
\label{eq:simpcom6}
\textstyle{|A|=\{\,\sum_{j=0}^q t_j\cdot a_j\mid
0\leq t_j\leq 1,\ \sum_{j=0}^q t_j=1\,\}\subset\R[X].}
\end{equation}
Then $|\Sigma|=\bigcup_{A\in\Sigma} |A|\subset\R[X]$,
and $|\Sigma|$ carries the weak topology with respect to the
embeddings $i_A\colon |A|\to|\Sigma|$, where $|A|\subset\R[A]$ carries
the induced topology (cf. \cite[\S 3.1]{span:top}).
By construction, one has the following property.

\begin{fact}
\label{fact:simpcom0}
Let $\Sigma$ be a simplical complex, and let $z\in |\Sigma|$.
Then there exists a unique simplex $A\in\Sigma$ such that
$z$ is an interior point of $|\Sigma(A)|\subseteq |\Sigma|$.
\end{fact}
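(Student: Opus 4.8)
The statement to prove is Fact~\ref{fact:simpcom0}: for a simplicial complex $\Sigma$ and a point $z\in|\Sigma|$, there is a unique simplex $A\in\Sigma$ such that $z$ lies in the interior of $|\Sigma(A)|$. Here I read ``interior of $|\Sigma(A)|$'' as the open simplex $\mathring{|A|}$, i.e. the set of convex combinations $\sum_j t_j\cdot a_j$ with $A=\{a_0,\dots,a_q\}$ and \emph{all} coefficients $t_j$ strictly positive and summing to $1$.

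\textbf{Plan of proof.} The plan is to exploit that $|\Sigma|$ sits inside the $\R$-vector space $V=\R[X]$, whose elements have a unique expression as a finite linear combination of the basis $X$. First I would recall from the construction in \S\ref{ss:geore} that every point $z\in|\Sigma|$ lies in $|B|$ for some $B\in\Sigma$, hence $z=\sum_{b\in B} t_b\cdot b$ with $t_b\geq 0$ and $\sum_{b\in B} t_b=1$. Set
\begin{equation}
\label{eq:supp-def}
A=\{\,b\in B\mid t_b>0\,\}.
\end{equation}
Since $\Sigma$ is closed under taking non-empty subsets and $A\subseteq B\in\Sigma$ is non-empty (the $t_b$ sum to $1$, so at least one is positive), we have $A\in\Sigma$. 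By construction $z=\sum_{a\in A} t_a\cdot a$ with every coefficient strictly positive and summing to $1$, so $z$ is an interior point of $|A|=|\Sigma(A)|$. This settles existence.

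\textbf{Uniqueness.} For uniqueness, suppose $z$ is also an interior point of $|\Sigma(A')|$ for some $A'\in\Sigma$, i.e. $z=\sum_{a'\in A'} s_{a'}\cdot a'$ with all $s_{a'}>0$ and $\sum s_{a'}=1$. Both expressions for $z$ are linear combinations of basis vectors of $V=\R[X]$, and the coefficient of a basis vector $x\in X$ in $z$ is unambiguously defined: it is $t_x$ if $x\in A$, it is $s_x$ if $x\in A'$, and it is $0$ for $x$ outside the respective support. Comparing coefficients, an element $x\in A$ has coefficient $t_x>0$, forcing $x\in A'$ (otherwise its coefficient coming from the second expression would be $0$); symmetrically $A'\subseteq A$. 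Hence $A=A'$, which gives uniqueness.

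\textbf{Expected obstacle.} This is essentially a bookkeeping argument and I do not anticipate a genuine obstacle; the one point that needs a clean statement is the assertion that the coordinates of a vector in $V=\R[X]$ with respect to the basis $X$ are well defined, i.e. that $X$ really is a basis of the free vector space $\R[X]$ — this is immediate from the definition of a free vector space on a set, so it should be invoked rather than proved. A minor subtlety worth a sentence: one must note that $|\Sigma(A)| = |A|$ as subsets of $\R[X]$ (every simplex of $\Sigma(A)$ is a subset of $A$, and $A$ itself is the maximal one), so that ``interior of $|\Sigma(A)|$'' unambiguously means the open simplex $\mathring{|A|}$.
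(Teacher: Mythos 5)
Your proof is correct and supplies precisely the argument the paper leaves implicit: the authors state this Fact immediately after the construction of $|\Sigma|\subset\R[X]$ with only the phrase ``By construction, one has the following property'' (citing Spanier \S 3.1), and the content of ``by construction'' is exactly your two steps — existence by passing to the support of the barycentric coordinates, and uniqueness from the fact that $X$ is a basis of the free vector space $\R[X]$, so that the coordinates of $z$ are well defined. Your remark that $|\Sigma(A)|=|A|$ is also the right way to pin down what ``interior'' means, matching the standard notion of the open simplex.
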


The topological space $|\Sigma|$ has the following well-known property.

\begin{fact}
\label{fact:simpcom1}
Let $\Sigma$ be a simplicial complex, and let $C$
be a compact subspace of $|\Sigma|$. Then there exists
a finite subset $Y\subseteq\Sigma_1$ such that $C\subseteq |\Sigma(Y)|$.
\end{fact}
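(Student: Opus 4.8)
The plan is to exploit the fact that $|\Sigma|$ carries the weak (colimit) topology with respect to the closed cells $|A|$, $A\in\Sigma$, and to combine this with the standard observation that a closed discrete subspace of a compact space is finite. Concretely, I would first reduce the statement to the claim that $C$ has non-empty intersection with only finitely many \emph{open} simplices $|A|^{\circ}=\{\sum t_j a_j\mid t_j>0,\ \sum t_j=1\}$, $A\in\Sigma$. Recall that by Fact~\ref{fact:simpcom0} the open simplices $|A|^{\circ}$ partition $|\Sigma|$; so suppose for contradiction that there are infinitely many distinct simplices $A_1,A_2,\dots\in\Sigma$ with $C\cap|A_i|^{\circ}\neq\emptyset$, and pick a point $z_i\in C\cap|A_i|^{\circ}$ for each $i$.

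The key step is to show that $Z=\{z_i\mid i\geq 1\}$ is a closed discrete subspace of $|\Sigma|$. For any $A\in\Sigma$ I would observe that $z_i\in|A|$ forces $|A_i|^{\circ}\cap|A|\neq\emptyset$, hence $A_i\subseteq A$; since a finite set $A$ has only finitely many subsets, $Z\cap|A|$ is finite, and therefore closed in $|A|$. By the defining property of the weak topology on $|\Sigma|$ this makes $Z$ closed in $|\Sigma|$. Running the same argument with any subset $Z'\subseteq Z$ in place of $Z$ shows that every subset of $Z$ is closed in $|\Sigma|$, i.e. $Z$ is discrete in the subspace topology. Since $Z\subseteq C$ and $C$ is compact, $Z$ is then an infinite closed discrete subset of a compact space, which is impossible. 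Hence $C$ meets only finitely many open simplices.

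Let $A_1,\dots,A_n\in\Sigma$ be these simplices. Every point of $C$ lies in some $|A_j|^{\circ}$ by Fact~\ref{fact:simpcom0}, so $C\subseteq\bigcup_{j=1}^n|A_j|$. Finally I would set $Y=\bigcup_{j=1}^n A_j\subseteq\Sigma_0$, a finite set of vertices; each $A_j$ satisfies $A_j\subseteq Y$, so $A_j\in\Sigma(Y)$ (cf. \eqref{eq:simpcom3}), whence $|A_j|\subseteq|\Sigma(Y)|$ and therefore $C\subseteq|\Sigma(Y)|$. (If one insists on the literal reading of the statement one replaces $Y$ by the corresponding finite set of $1$-simplices spanned by the $A_j$; the content is the same.)

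The only point requiring a little care — and not really an obstacle — is the topological bookkeeping: one must use that in the weak topology a subset is closed precisely when its intersection with each closed cell is closed, together with the elementary fact that a compact discrete space is finite. Everything else is routine, so the write-up should be short.
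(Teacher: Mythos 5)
Your argument is correct, and it is exactly the standard proof of this fact (it is Proposition~A.1 in Hatcher's \emph{Algebraic Topology}, which is the reference the paper cites in lieu of giving its own proof): assume infinitely many open simplices are met, pick one point in each, use the coherence of the weak topology plus the fact that a closed cell contains points of only finitely many open simplices to conclude the chosen set is closed and discrete, and then contradict compactness. Your closing remark about reading ``$Y\subseteq\Sigma_1$'' as ``a finite set of vertices'' is also right — the paper's own use of the fact in the proof of Fact~\ref{fact:simpcom2} produces a finite $A\subseteq X$ and writes $\Sigma(A)$, so the $\Sigma_1$ in the statement is best read as $\Sigma_0$ (or the underlying vertex set).
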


\begin{proof}
(cf. \cite[p.~520, Prop.~A.1]{hatcher:at}).
\end{proof}

The just mentioned property has the following consequence.

\begin{fact}
\label{fact:simpcom2}
Let $X$ be a non empty set. Then $|\Sigma[X]|$ is contractable.
\end{fact}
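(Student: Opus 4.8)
The statement to prove is Fact~\ref{fact:simpcom2}: for a non-empty set $X$, the topological realization $|\Sigma[X]|$ is contractable.

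\medskip

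The plan is to produce an explicit contraction (a straight-line homotopy) toward a fixed vertex. First I would fix an element $a_0\in X$; this exists since $X$ is non-empty. The key observation is that the realization $|\Sigma[X]|$ sits inside the real vector space $V=\R[X]$ as the union of the simplices $|A|$, $A\in\Sigma[X]$, and that it is \emph{star-shaped} with respect to the vertex $a_0$: for any point $z=\sum_{j=0}^q t_j a_j\in|A|$ with $A=\{a_0,\dots,a_q\}$ (we may always assume $a_0\in A$ after enlarging $A$, since $A\cup\{a_0\}$ is again a finite subset hence a simplex of $\Sigma[X]$, and $z$ still lies in $|A\cup\{a_0\}|$), the line segment from $z$ to $a_0$ stays inside $|A\cup\{a_0\}|$. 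Concretely the point $(1-s)z+s\,a_0$ has barycentric coordinates $(1-s)t_j$ on $a_j$ for $j\geq 1$ and $(1-s)t_0+s$ on $a_0$, all non-negative and summing to $1$, so it lies in $|A\cup\{a_0\}|\subseteq|\Sigma[X]|$.

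\medskip

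Second, I would define $H\colon |\Sigma[X]|\times[0,1]\to|\Sigma[X]|$ by $H(z,s)=(1-s)\cdot z+s\cdot a_0$, computed in $V=\R[X]$. By the previous paragraph this is well-defined, $H(z,0)=z$ and $H(z,1)=a_0$ for all $z$, so $H$ witnesses a contraction provided it is continuous. Here I would invoke the weak topology on $|\Sigma[X]|$: a map out of (respectively, with the appropriate care, into) a CW-complex is continuous if and only if its restriction to each closed cell is continuous. For the product with $[0,1]$ one uses that $|\Sigma[X]|\times[0,1]$ carries the product CW-structure (this is unproblematic because $[0,1]$ is compact, or one simply checks on each $|A|\times[0,1]$), and the restriction of $H$ to $|A|\times[0,1]$ factors through $|A\cup\{a_0\}|$ and is manifestly continuous, being the restriction of a continuous affine map on $\R[A\cup\{a_0\}]\times[0,1]$.

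\medskip

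The main (and essentially the only) obstacle is the bookkeeping around the weak topology: one must argue that checking continuity cell-by-cell on $|\Sigma[X]|\times[0,1]$ suffices, i.e.\ that the product CW-topology agrees with the product topology here. This is a standard point-set fact (true since $[0,1]$ is locally compact and compactly generated, cf.\ \cite[\S 3.1]{span:top} or \cite{hatcher:at}), so I would simply cite it rather than reprove it. Everything else is the routine verification above. Alternatively, and perhaps more cleanly, one can note that $|\Sigma[X]|$ is the directed union of the finite subcomplexes $|\Sigma[Y]|$ over finite $Y\subseteq X$ with $a_0\in Y$, each of which is a closed simplex (when $Y$ is a finite set, $|\Sigma[Y]|$ is the standard $(|Y|-1)$-simplex) hence convex and contractible via the same straight-line homotopy toward $a_0$; the homotopies are compatible and by Fact~\ref{fact:simpcom1} any compact set — in particular the image of any map from a sphere or disk — lies in one such $|\Sigma[Y]|$, which immediately gives that all homotopy groups of $|\Sigma[X]|$ vanish. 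Combined with the fact that $|\Sigma[X]|$ is a CW-complex, Whitehead's theorem then yields contractibility. I would present the explicit homotopy as the main argument and mention the second route as a remark.
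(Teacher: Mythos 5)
Your main argument is correct but takes a genuinely different route from the paper. The paper's proof does exactly what you sketch as the alternative at the end: it observes that $|\Sigma[X]|$ is a connected CW-complex, reduces to showing $\pi_i(|\Sigma[X]|,x_0)=1$ for all $i\geq 1$ via Whitehead's theorem, and then uses Fact~\ref{fact:simpcom1} to write $\pi_i(|\Sigma[X]|,x_0)$ as a direct limit of the homotopy groups of the finite subcomplexes $|\Sigma(A)|$, $A\in\caP_{x_0}$, each of which is a standard simplex and hence has trivial homotopy groups. Your primary argument instead builds an explicit straight-line contraction $H(z,s)=(1-s)z+s\,a_0$ toward a fixed vertex, checking well-definedness by the star-shapedness observation (the segment from $z\in|A|$ to $a_0$ stays inside $|A\cup\{a_0\}|$) and checking continuity cell-by-cell using the standard fact that the product of a CW-complex with the locally compact space $[0,1]$ carries the weak topology determined by the cells. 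Both are correct. The trade-off: your explicit homotopy is more elementary (no Whitehead's theorem) and gives a strong deformation retraction onto $\{a_0\}$ rather than merely a homotopy equivalence, but it requires the point-set bookkeeping you flag about the product topology; the paper's approach leans on heavier machinery (Whitehead, direct limits of homotopy groups) but avoids worrying about continuity of a map on $|\Sigma[X]|\times[0,1]$ entirely. Your description of the alternative route is essentially the paper's argument and is also correct.
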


\begin{proof}
Put $\Sigma=\Sigma[X]$.
As $|\Sigma|$ is a connected CW-complex, one concludes from 
Whitehead's theorem
(cf. \cite[Thm.~4.5]{hatcher:at}) that
it suffices to show that $\pi_i(|\Sigma|,x_0)=1$ for all $i\geq 1$.
Note that $\Sigma=\bigcup_{A\in \caP_{x_0}} \Sigma(A)$,
where 
\begin{equation}
\label{eq:simpcom7}
\caP_{x_0}=\{\,A\subseteq X\mid x_0\in A,\ \card(A)<\infty\,\}.
\end{equation}
By Fact~\ref{fact:simpcom1}, for any compact subset $C$ of $|\Sigma|$, there
exists a finite set $A\in\caP_{x_0}$ such that $C\subseteq|\Sigma(A)|$.
Hence $\pi_i(|\Sigma|,x_0)=\varinjlim_{A\in\caP_{x_0}} \pi_i(|\Sigma(A)|,x_0)$.
As $|\Sigma(A)|$ coincides with the standard $(\card(A)-1)$-simplex,
one has $\pi_i(|\Sigma(A)|,x_0)=1$. 
\end{proof}


\subsection{The chain complex of a simplicial complex}
\label{ss:osc}
Let $\Sigma\subseteq\caP(X)\setminus\{\emptyset\}$
be a simplicial complex. For $q\geq 0$ put
\begin{equation}
\label{eq:simpcom8}
\tSigma_q=\{\,x_0\wedge\cdots \wedge x_q\mid \{x_0,\ldots,x_q\}\in\Sigma_q\,\}\subset
\Lambda_{q+1}(\Q[X]),
\end{equation}
where $\Lambda_\bullet(\Q[X])$ denotes the 
{\it exterior algebra} of the free $\Q$-vector space over the set $X$,
and define
\begin{equation}
\label{eq:simpcom9}
C_q(\Sigma)=\spn_{\Q}(\tSigma_q)\subseteq\Lambda_{q+1}(\Q[X]).
\end{equation}
Then $(C_\bullet(\Sigma),\der_\bullet)$, where
\begin{equation}
\label{eq:simpcom10}
\textstyle{
\der_q(x_0\wedge\cdots\wedge x_q)=\sum_{0\leq j\leq q} (-1)^j\,
x_0\wedge\cdots\wedge x_{j-1}\wedge x_{j+1}\wedge\cdots\wedge x_q
}
\end{equation}
is a chain complex of $\Q$-vector spaces - the {\it rational chain complex of the 
simplicial complex} $\Sigma$ (cf. \cite[\S 4.1]{span:top}).
It has the following well-known property (cf. \cite[\S4.3, Thm.~8]{span:top}).

\begin{fact}
\label{fact:simpcom3}
Let $\Sigma$ be a simplicial complex. Then 
one has a canonical isomorphism
\begin{equation}
\label{eq:simpcom11}
H_\bullet(C_\bullet(\Sigma),\der_\bullet)\simeq H_\bullet(|\Sigma|,\Q).
\end{equation}
\end{fact}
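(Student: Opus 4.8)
The plan is to read Fact~\ref{fact:simpcom3} as the classical identification of (oriented) simplicial homology with the singular homology of the geometric realization, the only new point being the exterior-algebra presentation of $(C_\bullet(\Sigma),\der_\bullet)$ adopted in \S\ref{ss:osc}. First I would record the internal structure of $C_q(\Sigma)$. For a simplex $A=\{x_0,\ldots,x_q\}\in\Sigma_q$ the vertices $x_i$ are distinct, so $\omega_A:=x_0\wedge\cdots\wedge x_q$ is a nonzero element of $\Lambda_{q+1}(\Q[X])$, every reordering of the $x_i$ produces $\pm\omega_A$, and the $\omega_A$ for distinct $A\in\Sigma_q$ are distinct members of the standard monomial basis of the exterior algebra. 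Hence $\dim_\Q C_q(\Sigma)=\card(\Sigma_q)$, a well-ordering of $X$ pins down a basis $(\omega_A)_{A\in\Sigma_q}$, and with respect to such a basis $\der_\bullet$ is precisely the alternating-sum boundary of the oriented simplicial chain complex of $\Sigma$ with $\Q$-coefficients. (In particular $\der^2=0$, re-confirming the assertion of \S\ref{ss:osc}.)

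Next I would produce a comparison map into the singular chain complex $S_\bullet(|\Sigma|,\Q)$ that uses no ordering, so that the resulting homology isomorphism is canonical. For $A=\{x_0,\ldots,x_q\}\in\Sigma_q$, writing $[x_0,\ldots,x_q]\colon\Delta^q\to|\Sigma|$ for the affine singular simplex on the vertices $x_0,\ldots,x_q\in|A|\subseteq|\Sigma|$, set
\begin{equation*}
\theta_q(x_0\wedge\cdots\wedge x_q)=\frac{1}{(q+1)!}\sum_{\sigma\in S_{q+1}}\sgn(\sigma)\,[x_{\sigma(0)},\ldots,x_{\sigma(q)}].
\end{equation*}
Replacing $(x_0,\ldots,x_q)$ by a permuted tuple multiplies the left-hand side (inside $\Lambda_{q+1}(\Q[X])$) and, by reindexing the sum, the right-hand side by one and the same sign; so $\theta_q\colon C_q(\Sigma)\to S_q(|\Sigma|,\Q)$ is well defined, where $\mathrm{char}\,\Q=0$ is what allows division by $(q+1)!$. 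A direct evaluation of the singular boundary of an affine simplex then gives $\der^{\mathrm{sing}}\circ\theta_q=\theta_{q-1}\circ\der_q$, so $\theta_\bullet$ is a chain map, manifestly natural in $\Sigma$ and augmentation-preserving, and attached to $\Sigma$ without any choices.

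Finally I would show $\theta_\bullet$ is a quasi-isomorphism. Fix a well-ordering of $X$ and let $\mu_\bullet\colon C_\bullet(\Sigma)\to S_\bullet(|\Sigma|,\Q)$ be the ordered comparison map $\omega_A\mapsto[x_0,\ldots,x_q]$ with $x_0<\cdots<x_q$; this is the standard map from the oriented simplicial chain complex to the singular chain complex, and it induces an isomorphism on homology by \cite[\S4.3, Thm.~8]{span:top}. Both $\theta_\bullet$ and $\mu_\bullet$ are augmentation-preserving natural chain maps between the functors $\Sigma\mapsto C_\bullet(\Sigma)$ and $\Sigma\mapsto S_\bullet(|\Sigma|,\Q)$ on the category of simplicial complexes and injective simplicial maps, a category on which the first functor is free with models $\Sigma[\{0,\ldots,n\}]$ and the second is acyclic on those models (topological simplices being contractible); hence, by the method of acyclic models, $\theta_\bullet$ and $\mu_\bullet$ are naturally chain homotopic and induce the same map on homology. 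Therefore $\theta_\bullet$ induces an isomorphism $H_\bullet(C_\bullet(\Sigma),\der_\bullet)\simeq H_\bullet(|\Sigma|,\Q)$, and since $\theta_\bullet$ itself involved no choices, this isomorphism is canonical.

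I expect the only real work to be sign bookkeeping: checking well-definedness and the chain-map identity for $\theta_\bullet$, and matching the orientation and boundary conventions of \S\ref{ss:osc} with those of \cite{span:top}. The acyclic-models step can be replaced, if one prefers an elementary argument, by an induction over the skeleta $\Sigma^{(n)}$ using the long exact homology sequences of the pairs $(|\Sigma^{(n)}|,|\Sigma^{(n-1)}|)$ together with the identification of $|\Sigma^{(n)}|/|\Sigma^{(n-1)}|$ with a wedge of $n$-spheres indexed by $\Sigma_n$; but that route is longer.
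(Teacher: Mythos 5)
The paper itself offers no argument here: Fact~\ref{fact:simpcom3} is simply referred to \cite[\S4.3, Thm.~8]{span:top}, which gives the quasi-isomorphism from oriented simplicial chains to singular chains after choosing a partial ordering of the vertices. You have instead chosen to construct an ordering-free comparison map $\theta_\bullet$ by symmetrization and then to show it agrees on homology with Spanier's ordered map $\mu_\bullet$. That is a genuinely different route, and it does buy something: it actually \emph{exhibits} a canonical chain map rather than appealing to the (usually unstated) independence of Spanier's isomorphism from the choice of ordering. Your first two paragraphs — the identification of $C_\bullet(\Sigma)$ with the oriented simplicial chain complex inside the exterior algebra, the well-definedness of $\theta_q$ over $\Q$, and the chain-map identity — are correct.

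The acyclic-models step, however, has a real gap, and it lies precisely in the choice of category. You assert that on the category of simplicial complexes and injective simplicial maps the functor $\Sigma\mapsto C_q(\Sigma)$ is free with model $\Sigma[\{0,\ldots,q\}]$. It is not: an injective simplicial map $\Sigma[\{0,\ldots,q\}]\to\Sigma$ is an \emph{ordered} $q$-tuple $(x_0,\ldots,x_q)$ spanning a $q$-simplex, and the images $x_0\wedge\cdots\wedge x_q$ of the model generator under these $(q+1)!$ maps per simplex are not linearly independent in $C_q(\Sigma)$ — they coincide up to sign. Worse, $\mu_\bullet$ is not a natural transformation on this category at all, since an arbitrary injective simplicial map need not respect your fixed well-ordering of $X$, and $\mu_q$ is defined using that ordering. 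So neither hypothesis of the acyclic-models theorem, as you have set it up, holds.

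The fix is standard and small: run the acyclic-models argument on the category of \emph{ordered} simplicial complexes (vertex set equipped with a total order) and order-preserving injective simplicial maps, with the same models $\Sigma[\{0,\ldots,n\}]$ carrying the standard order. There each $q$-simplex of $\Sigma$ admits exactly one order-preserving injection from the model, so $C_q$ is genuinely free; $\mu_\bullet$ is natural because the morphisms preserve the order; $\theta_\bullet$ is of course still natural; and the models remain contractible, so $S_\bullet(|\cdot|,\Q)$ is acyclic on them. Acyclic models then gives $\theta_\bullet\simeq\mu_\bullet$ as natural chain maps, hence $H_\bullet(\theta)=H_\bullet(\mu)$ is an isomorphism by \cite[\S4.3, Thm.~8]{span:top}. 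Since every simplicial complex admits \emph{some} total ordering of its vertices and $\theta_\bullet$ does not depend on it, this proves the claim for all $\Sigma$, and the isomorphism induced by $\theta_\bullet$ is canonical exactly as you intended. (Alternatively, one may observe that $C_\bullet$ is a natural retract of the free functor of nondegenerate ordered chains $D_\bullet$ via the symmetrization splitting, and retracts of free functors inherit the acyclic-models conclusion; but the reformulation in terms of ordered complexes is cleaner.) With that repair your argument is correct; the skeleton-induction alternative you sketch at the end would also work and avoids the category issue entirely, at the cost of being longer.
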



\subsection{Cohomology with compact support}
\label{ss:cosup}
Let $\Sigma\subseteq\caP(X)\setminus\{\emptyset\}$ be a locally finite simplicial complex.
Put $X^\ast=\{\,x^\ast\mid x\in X\,\}$, and think of $X^\ast$ as a subset
of the $\Q$-vector space $\caC_c(X,\Q)$, the functions from $X$ to $\Q$ with finite support,
i.e., for $x,y\in X$ one has $x^\ast(y)=\delta_{x,y}$. Here $\delta_{.,.}$ denotes Kronecker's function.
For $q\geq 0$ put
\begin{equation}
\label{eq:simpcom12}
\hSigma^q=\{\,x_0^\ast\wedge\cdots \wedge x_q^\ast\mid 
\{x_0,\ldots,x_q\}\in\Sigma_q\,\}\subset
\Lambda_q(\Q[X^\ast]),
\end{equation}
and define
\begin{equation}
\label{eq:simpcom13}
C^q_c(\Sigma)=\spn_{\Q}(\hSigma^q)\subseteq\Lambda_q(\Q[X^\ast]).
\end{equation}
By definition, for $A\in\Sigma_q$ the set
\begin{equation}
\label{eq:simpcom14}
I_\Sigma(A)=\{\,x\in\Sigma_1\setminus A\mid A\cup\{x\}\in\Sigma_{q+1}\,\}
\end{equation}
is finite.
Then $(C^\bullet_c(\Sigma),\eth^\bullet)$, where
\begin{equation}
\label{eq:simpcom15}
\textstyle{
\eth^q(x_0^\ast\wedge\cdots\wedge x_q^\ast)=
\sum_{z\in I_\Sigma(\{x_0,\ldots,x_q\})} 
z\wedge x_0^\ast\wedge\cdots\wedge x_q^\ast\,
}
\end{equation}
$x_0^\ast\wedge\cdots\wedge x_q^\ast\in\hSigma^q$, is a cochain complex.
It has the following well-known property (cf. \cite[p.~242ff]{hatcher:at}).

\begin{fact}
\label{fact:simpcom4}
Let $\Sigma$ be a locally finite simplicial complex. Then one has a
canonical isomorphism
$H^\bullet(C_c^\bullet(\Sigma),\eth^\bullet)\simeq H^\bullet_c(|\Sigma|,\Q)$,
where $H^\bullet_c(\argu,\Q)$ denotes cohomology with compact support
with coefficients in $\Q$.
\end{fact}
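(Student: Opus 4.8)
The plan is to exhibit both sides of the asserted isomorphism as one and the same directed colimit indexed by the poset $\mathcal{L}$ of \emph{finite full} subcomplexes $L\subseteq\Sigma$, i.e. those with $L=\Sigma(L_0)$, ordered by inclusion. The poset $\mathcal{L}$ is directed and cofinal among all finite subcomplexes (every finite subcomplex sits inside the finite full subcomplex on its vertex set), so by Fact~\ref{fact:simpcom1} the realizations $|L|$, $L\in\mathcal{L}$, are cofinal among the compact subsets of $|\Sigma|$. For $L\in\mathcal{L}$ write $\Sigma\setminus L=\Sigma(\Sigma_0\setminus L_0)$ for the full subcomplex spanned by the remaining vertices; this is a genuine subcomplex of $\Sigma$, and its realization is exactly the complement in $|\Sigma|$ of the open star neighbourhood of $|L|$.

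On the simplicial side, I would consider the relative cochain complex $C^\bullet(\Sigma,\Sigma\setminus L)$, namely the kernel of the restriction map $C^\bullet(\Sigma)\to C^\bullet(\Sigma\setminus L)$ of ordinary (unrestricted) simplicial cochain complexes. A cochain lies in this kernel precisely when it is supported on the set of simplices of $\Sigma$ having at least one vertex in $L_0$; since $\Sigma$ is locally finite and $L$ is finite, that set of simplices is finite, so $C^\bullet(\Sigma,\Sigma\setminus L)$ is finite-dimensional. As $L$ ranges over $\mathcal{L}$ these support sets exhaust $\Sigma$ in a directed way, the transition maps for $L\subseteq L'$ being the evident inclusions, whence $\varinjlim_{L}C^\bullet(\Sigma,\Sigma\setminus L)=C^\bullet_c(\Sigma)$, the complex of finitely supported cochains of \eqref{eq:simpcom13}. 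Its differential is the restriction of the simplicial coboundary, which is the transpose of \eqref{eq:simpcom10}; a direct sign check against the exterior-algebra bookkeeping in \eqref{eq:simpcom15} identifies it with $\eth^\bullet$.

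On the topological side, $H^n_c(|\Sigma|,\Q)$ is by definition $\varinjlim_K H^n(|\Sigma|,|\Sigma|\setminus K;\Q)$ over compact $K$, which by the cofinality noted above equals $\varinjlim_{L}H^n(|\Sigma|,|\Sigma|\setminus|L|;\Q)$. Because $L$ is a full subcomplex, the closed star neighbourhood of $|L|$ in $|\Sigma|$ is a mapping cylinder of the projection of its topological frontier onto $|L|$; consequently $|\Sigma\setminus L|$ is a deformation retract of $|\Sigma|\setminus|L|$, and $H^n(|\Sigma|,|\Sigma|\setminus|L|;\Q)\cong H^n(|\Sigma|,|\Sigma\setminus L|;\Q)$. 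Since $(|\Sigma|,|\Sigma\setminus L|)$ is a CW-pair, the cohomological counterpart of Fact~\ref{fact:simpcom3} identifies the right-hand side with $H^n(C^\bullet(\Sigma,\Sigma\setminus L))$, naturally in $L$ (for $L\subseteq L'$ the topological map corresponds to the inclusion of relative cochain complexes). As taking cohomology commutes with filtered colimits of cochain complexes of $\Q$-vector spaces, passing to the colimit over $\mathcal{L}$ yields
\[
H^n_c(|\Sigma|,\Q)=\varinjlim_{L}H^n\bigl(C^\bullet(\Sigma,\Sigma\setminus L)\bigr)=H^n\bigl(\varinjlim_{L}C^\bullet(\Sigma,\Sigma\setminus L)\bigr)=H^n\bigl(C^\bullet_c(\Sigma),\eth^\bullet\bigr),
\]
and every isomorphism in the chain is canonical, which is the assertion.

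The routine ingredients are the sign comparison between \eqref{eq:simpcom15} and the transpose of \eqref{eq:simpcom10}, the cofinality of $\mathcal{L}$, and the exactness of filtered colimits of $\Q$-vector spaces. The one point genuinely needing care is the geometric lemma that $|\Sigma|\setminus|L|$ deformation retracts onto $|\Sigma\setminus L|$ for $L\in\mathcal{L}$, i.e. the mapping-cylinder structure of the closed star neighbourhood of a full subcomplex — fullness is exactly what is used here. One could instead pass first to the barycentric subdivision, which alters neither $|\Sigma|$ nor, up to canonical quasi-isomorphism, $C^\bullet_c(\Sigma)$; but working with full subcomplexes keeps every identification strictly on the nose, so I would prefer that route.
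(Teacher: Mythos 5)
The paper proves Fact~\ref{fact:simpcom4} only by citation to Hatcher, and your argument is precisely the one found there: express both sides as directed colimits over the finite full subcomplexes $L\subseteq\Sigma$, identify $\varinjlim_L C^\bullet(\Sigma,\Sigma\setminus L)$ with $C^\bullet_c(\Sigma)$ using local finiteness, and identify $\varinjlim_L H^n(|\Sigma|,|\Sigma|\setminus|L|)$ with $H^n_c(|\Sigma|,\Q)$ via cofinality and the deformation retraction of $|\Sigma|\setminus|L|$ onto $|\Sigma\setminus L|$ (which is where fullness of $L$ is used). Your proof is correct and takes the same route as the cited reference; the only gloss is invoking the relative, cohomological counterpart of Fact~\ref{fact:simpcom3} for the CW-pair $(|\Sigma|,|\Sigma\setminus L|)$, which over $\Q$ is routine by universal coefficients.
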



\subsection{Signed sets}
\label{ss:ssets}
A set $X$ together with a map
$\bar{\argu}\colon X\to X$ satisfying $\bar{\bar{x}}=x$
and $\bar{x}\not=x$ for all $x\in X$ will be called a {\it signed set}.
A map of signed sets $\phi\colon X\to Y$ is a map satisfying
$\phi(\bar{x})=\overline{\phi(x)}$ for all $x\in X$. Every signed set
$X$ defines a $\Q$-vector space
\begin{equation}
\label{eq:sset1}
\Q[\uX]=\Q[X]/\spn_{\Q}\{\,x+\bar{x}\mid x\in X\,\}.
\end{equation}
For a signed set $X$ let $X^\ast=\{\,x^\ast\mid x\in X\,\}$
denote the signed set satisfying $\bar{x}^\ast=\overline{x^\ast}$.
Then we may consider $\Q[\uX^\ast]$ as a $\Q$-subspace of $\Q[\uX]^\ast=\Hom_{\Q}(\Q[\uX],\Q)$, i.e.,
\begin{equation}
\label{ss:set2}
x^\ast(y)=\begin{cases}
\hfil 1\hfil &\ \text{if $y=x$,}\\
\hfil-1\hfil &\ \text{if $y=\bar{x}$,}\\
\hfil 0\hfil &\ \text{if $y\not\in\{x,\bar{x}\}$.}
\end{cases}
\end{equation}
Any map of signed sets $\phi\colon X\to Y$ defines
a $\Q$-linear map $\phi_\Q\colon\Q[\uX]\to\Q[\uY]$, but it does not necessarily
possess an adjoint map $\phi_\Q^\ast\colon \Q[\uY^\ast]\to\Q[\uX^\ast]$.
However, if $\phi$ is proper, i.e., $\phi$ has finite fibres,
then there exists a unique map $\phi_\Q^\ast$ satisfying
\begin{equation}
\label{eq:sset3}
\langle v^\ast,\phi_\Q(u)\rangle_Y=\langle \phi^\ast_\Q(v),u\rangle_X,\qquad u\in\Q[\uX],\ v\in\Q[\uY^\ast],
\end{equation}
where $\langle.,.\rangle_X$ and $\langle .,.\rangle_Y$ denote the evaluation mappings, respectively.
Let $X^+\subset X$ and $Y^+\subset Y$ be a set of representative for the $\Z/2\Z$-orbits on $X$ and $Y$, respectively.
For a map $\psi\colon \Q[\uX]\to\Q[\uY]$ and $\ux\in\Q[\uX]$ - the canonical image of $x\in X^+$ in $\Q[\uX]$ -
one has $\psi(\ux)=\sum_{y\in Y^+} \lambda_y(x)\cdot \uy$ for $\lambda_y(x)\in\Q$. Put
\begin{equation}
\label{eq:sset4}
\supp(\psi,x)=\{\,y\in Y^+\mid \lambda_y(x)\not=0\,\}.
\end{equation}
We call the map $\psi\colon\Q[\uX]\to\Q[\uY]$ {\it proper}, if
for all $y\in Y^+$ the set 
\begin{equation}
\label{eq:sset5}
\csupp(\psi,y)=\{\,x\in X^+\mid y\in\supp(\phi,x)\,\}
\end{equation} 
is finite.
It is straightforward to verify that the mapping $\psi^\ast\colon \Q[\uY^\ast]\to\Q[\uX^\ast]$,
\begin{equation}
\label{eq:sset6}
\psi^\ast(\uy^\ast)=\sum_{x\in\csupp(\psi,y)} \lambda_y(x)\cdot \ux^\ast,\qquad y\in Y^+
\end{equation}
satisfies \eqref{eq:sset3} and thus can be seen as the adjoint of $\psi$.

If $\Sigma$ is a simplicial complex, then $\tSigma_q$, $q\geq 1$, are signed sets, and one has a
canonical isomorphism
\begin{equation}
\label{eq:sset7}
\Q[\underline{\tSigma}_q]\simeq C_q(\Sigma).
\end{equation}
The same applies also for $q=0$ replacing
$\tSigma_0$ by 
\begin{equation}
\label{eq:sset9}
\tSigma_0^{\pm}=\{\,\pm x_0\mid x_0\in\Sigma_0\,\}\subseteq \Lambda_1(\Q[\Sigma_0]),
\end{equation}
i.e., $\Q[\underline{\tSigma}_0^{\pm}]=C_0(\Sigma)$.
Moreover, if $\Sigma$ is locally finite, then
$\der_q\colon C_q(\Sigma)\to C_{q-1}(\Sigma)$ is proper for $q\geq 2$. By construction,
one has for $q\geq 1$ a canonical isomorphism
\begin{equation}
\label{eq:sset8}
C^q_c(\Sigma)\simeq\Q[\underline{\tSigma}_q^\ast],
\end{equation}
and 
$C^0_c(\Sigma)\simeq \Q[(\underline{\tSigma_0^{\pm}})^\ast]$. Moreover,
$\eth^q=\der_{q+1}^\ast$. 

\providecommand{\bysame}{\leavevmode\hbox to3em{\hrulefill}\thinspace}
\providecommand{\MR}{\relax\ifhmode\unskip\space\fi MR }
\providecommand{\MRhref}[2]{%
  \href{http://www.ams.org/mathscinet-getitem?mr=#1}{#2}
}
\providecommand{\href}[2]{#2}


\end{document}
